\title{Phase transition for detecting  a small  \\  community in a large network}
\author{\qquad \quad \quad  \,  \textbf{Jiashun Jin} \\
\, \quad \, Carnegie Mellon University \\
\quad \,  \texttt{jiashun@stat.cmu.edu} 
\and 
\hspace{-7 cm} \textbf{Zheng Tracy Ke} \\
\hspace{-7 cm} Harvard University \\ 
\hspace{-7 cm} \texttt{zke@fas.harvard.edu} \\
\AND \\
\qquad \qquad \textbf{Paxton Turner} \\ 
\qquad \quad \,  Harvard University \\
\texttt{paxtonturner@fas.harvard.edu} \\
\and \\
\textbf{Anru R. Zhang} \\
Duke University  \\
\texttt{anru.zhang@duke.edu}
}
\numberwithin{equation}{section}
\theoremstyle{plain}
\newtheorem{thm}{Theorem}[section] 
\newtheorem{lemma}{Lemma}[section] 
\newtheorem{cor}{Corollary}[section]
\newtheorem{prop}{Proposition}[section]
\newtheorem{defi}{Definition}[section]
\newtheorem{conjecture}{Conjecture}[section]
\newcommand{\bed}{\begin{defi}}
\newcommand{\eed}{\end{defi}}
\newtheorem*{lemma*}{Lemma}
\newtheorem*{thm*}{Theorem}
\newtheorem*{cor*}{Corollary}
\newtheorem*{lemma**}{\sout{Lemma}}
\newtheorem*{thm**}{\sout{Theorem}}
\newtheorem*{cor**}{\sout{Corollary}}
\newcommand{\eps}{\varepsilon}
\newcommand{\bitem}{\begin{itemize}}
\newcommand{\eitem}{\end{itemize}}
\newcommand{\goto}{\rightarrow}
\newcommand{\beqn}{\begin{equation}}
\newcommand{\eeqn}{\end{equation}}
\newcommand{\balign}{\begin{align}}
\newcommand{\ealign}{\end{align}}
\newcommand{\s}{\sigma}
\newcommand{\tr}{\mathrm{tr}}
\newcommand{\beq}{\begin{equation}}
\newcommand{\eeq}{\end{equation}}
\newcommand{\diag}{\mathrm{diag}}
\newcommand{\vp}{\varphi} 
\newcommand{\E}{\mathbb{E}} 
\newcommand{\mb}[1]{\mathbb{#1}} 
\newcommand{\mf}[1]{\mathbf{#1}} 
\newcommand{\mc}[1]{\mathcal{#1}} 
\newcommand{\rp}[1]{^{(#1)}} 
\newcommand{\ti}{\tilde } 
\newcommand{\p}{\mb{P}}
\newcommand{\R}{\mb{R}}
\newcommand{\var}{\mathrm{Var}}
\newcommand{\cov}{\mathrm{Cov}}
\newcommand{\T}{\mathsf{T}}
\newcommand{\teta}{\tilde{\eta}} 
\newcommand\num{\addtocounter{equation}{1}\tag{\theequation}}
\newcommand{\les}{\lesssim} 
\newcommand{\oO}{\overline{\Omega}} 
\newcommand{\gam}{\beta \circ \theta}
\begin{document}

\maketitle

	\vspace{-0.5cm}

\begin{abstract}
	How to detect a small  community  in a large network is an interesting problem, including clique detection as a special case, where a naive degree-based $\chi^2$-test was shown to be powerful   in the presence of an Erd\H{o}s-Renyi background. Using Sinkhorn's theorem, we show that the signal captured by the $\chi^2$-test may be a modeling artifact, and it may disappear once we replace the Erd\H{o}s-Renyi model by a broader network model.  We show that the recent SgnQ test is more appropriate for such a setting. The test is optimal in detecting communities with sizes comparable to the whole network, but has never been studied for our setting, which is substantially different and more challenging.   Using a degree-corrected block model (DCBM), we establish phase transitions of this testing problem concerning the size of the small community and the edge densities in small and large communities.   When the size of the small community is larger than $\sqrt{n}$, the SgnQ test is optimal  for it attains the computational lower bound (CLB), the information lower bound for methods allowing polynomial computation time. When the size of the small community is smaller than $\sqrt{n}$, we establish the parameter regime where the SgnQ test has full power and make some conjectures of the CLB. We also study the classical information lower bound (LB) and show that there is always a gap between the CLB and LB in our range of interest. 	
\end{abstract}

\doparttoc 
\faketableofcontents 


\vspace{-0.4cm}

\section{Introduction} \label{sec:intro} 

Consider an undirected network with $n$ nodes and $K$ communities. 
We assume $n$ is large and the network is connected for convenience. 
We are interested in testing whether $K = 1$ or $K > 1$ and the sizes 
of some of the communities are much smaller than $n$  (communities are scientifically meaningful but mathematically hard to define;  intuitively, they are clusters of nodes that have more edges ``within" than ``across" \citep{Jin2015, zhao2012consistency}). 
The problem is a special case of network global testing, a topic that has 
received a lot of attention (e.g., \cite{JKL2018, JKL2019}).  However, 
existing works focused on the so-called {\it balanced case}, 
where the sizes of communities are at the same order. Our case is 
{\it severely unbalanced}, where the sizes of some communities are much 
smaller than $n$ (e.g., $n^{\eps}$).  

The problem also includes 
clique detection (a problem of primary interest in graph learning  \citep{alon1998finding, ron2010finding}) as a special case.  
Along this line, \cite{Ery1, Ery0} have made 
remarkable progress. In detail,  they considered the problem of testing whether a 
graph is generated from a  one-parameter  Erd\H{o}s-Renyi  model or 
a two-parameter model: 
for any nodes $1 \leq i, j \leq n$, the probability that they have an edge equals $b$ if $i, j$ both are in a small planted subset and equals $a$ otherwise. 
A remarkable conclusion of  these papers 
is: a naive degree-based  $\chi^2$-test is optimal, provided that the clique size is in a certain range.  
Therefore, at first glance, it seems that the problem has been elegantly solved, at least to some extent. 

Unfortunately, recent progress in network testing tells a very different story: 
the signal captured by the $\chi^2$-test may be a modeling artifact. It 
may disappear once we replace the models in \cite{Ery1, Ery0} 
by a properly broader model. When this happens,  the $\chi^2$-test will be asymptotically powerless in the whole range of parameter space.   

We explain the idea with the popular {\it Degree-Corrected Block Model (DCBM)} \citep{karrer2011stochastic}, though it is valid in broader settings.  Let $A\in\mathbb{R}^{n,n}$ be the network adjacency matrix, where $A(i,j)\in\{0,1\}$ indicates whether there is an edge between nodes $i$ and $j$, $1\leq i, j\leq n$. By convention, we do not allow for self-edges, so the diagonals of $A$ are always 0.  
Suppose there are $K$ communities, ${\cal C}_1, \ldots, {\cal C}_K$.  For each node $i$, $1 \leq i \leq n$, we use a parameter $\theta_i$ to model the degree heterogeneity and $\pi_i$ to model the membership:  
when $i \in {\cal C}_k$,  $\pi_i(\ell) = 1$ if $\ell = k$ and $\pi_i(\ell) = 0$ otherwise.  
For a $K \times K$ symmetric and irreducible non-negative matrix $P$ that models the community structure, DCBM  assumes that  the upper triangle of $A$ contains independent Bernoulli random variables satisfying\footnote{In this work we use $M'$ to denote the transpose of a matrix or vector $M$.}
\begin{equation} \label{Model-1} 
	\mathbb{P}(A(i, j) = 1)  = \theta_i \theta_j \pi_i' P \pi_j, \qquad 1 \leq i, j \leq n.  
\end{equation} 
In practice, we interpret $P(k, \ell)$  
as the baseline connecting probability between communities $k$ and $\ell$.  
Write $\theta = (\theta_1, \theta_2, \ldots, \theta_n)'$, $\Pi=[\pi_1,\pi_2, \ldots,\pi_n]'$,  and $\Theta = \diag(\theta) \equiv  \diag(\theta_1,\theta_2,\ldots,\theta_n)$.  Introduce $n \times n$  matrices $\Omega$ and $W$ by $\Omega = \Theta \Pi P \Pi' \Theta$ and  $W=A-\mathbb{E}[A]$. We can re-write (\ref{Model-1})  as 
\begin{equation}  \label{Model-2}
	A = \Omega - \mathrm{diag}(\Omega) + W. 
\end{equation} 
We call $\Omega$ the {\it Bernoulli probability matrix}  and $W$ the noise matrix. When $\theta_i$ in the same community are equal,  DCBM reduces to the Stochastic Block Model (SBM) \citep{SBM}. When $K = 1$,  the SBM reduces to  the Erd\H{o}s-Renyi model, where $\Omega(i, j)$ take the same value for all 
$1 \leq i, j \leq n$.

We first describe why the signal captured by the $\chi^2$-test in  \cite{Ery1, Ery0} is a modeling artifact. Using Sinkhorn's matrix scaling theorem \citep{Sinkhorn}, it is possible to build a null DCBM with $K = 1$ that has no community structure and an alternative DCBM with $K \geq 2$ and clear community structure such that the two models have the \textit{same} expected degrees. Thus, we do not expect that degree-based test such as $\chi^2$ can tell them apart. We make this Sinkhorn argument precise in Section \ref{subsec:identifiability} and show the failure of $\chi^2$ in  Theorem \ref{thm:chi2}. 

In the Erd\H{o}s-Renyi setting in \cite{Ery1}, the null has one parameter and the alternative has two parameters.  In such a setting, we cannot have degree-matching.   
In these cases, a naive degree-based $\chi^2$-test may have good power, but it is due to the  
very specific models they choose.  For clique detection in more realistic settings, 
we prefer to use a broader model such as the DCBM, where by the degree-matching argument above, the 
$\chi^2$-test is asymptotically powerless.

This motivates us to look for a different test.   One candidate is the scan statistic \cite{Ery2}. However, 
a scan statistic is only computationally feasible when each time we scan a very small subset of nodes. 
For example, if each time we only scan a finite number of nodes, then the computational cost is polynomial; we call the test the {\it Economic Scan Test (EST)}.  Another candidate may come from the Signed-Polygon test family \citep{JKL2019},  including the Signed-Quadrilateral (SgnQ) as a special case. Let $\hat{\eta} = ({\bf 1}_n A {\bf 1}_n)^{-1/2}  A {\bf 1}_n$ and $\widehat{A} = A - \hat{\eta} \hat{\eta}$.  Define 
$Q_n = \sum_{i_1, i_2, i_3, i_4 (dist)} \widehat{A}_{i_1 i_2} \widehat{A}_{i_2 i_3} \widehat{A}_{i_3 i_4} \widehat{A}_{i_4 i_1}$ where the shorthand $(dist)$ indicates we sum over distinct indices.   The SgnQ test statistic is 
\begin{equation} \label{DefineSgnQ} 
	\psi_n = \bigl[Q_n - 2(\|\hat{\eta}\|^2-1)^2\bigr] / \sqrt{8(\|\hat{\eta}\|^2-1)^4}. 
\end{equation} 
SgnQ is computationally attractive because it can be evaluated in time $O(n^2 \bar{d})$, where $\bar{d}$ is the average degree of the network \citep{JKL2019}.  

Moreover, it was shown in \cite{JKL2019} that (a)  when $K = 1$ (the null case),  $\psi_n \goto N(0,1)$, and (b) 
when $K > 1$ and all communities are at the same order (i.e., a balanced alternative case), 
the SgnQ test achieves the classical information lower bound (LB) for global testing and so is optimal. 
Unfortunately, our case is much more delicate: the signal of interest is contained in a community with a size 
that is much smaller than $n$ (e.g., $n^{\eps}$), so the signal can be easily overshadowed by 
the noise term of $Q_n$. Even in the simple alternative case where we only have two communities (with sizes $N$ and $(n-N)$),  it is unclear (a) how the lower bounds vary as $N / n \goto 0$, and especially whether there is a gap between the computation lower bound (CLB) and classical information lower bound (LB), and (b) to what extent the SgnQ test attains the CLB and so is optimal.

\vspace{-2mm}
\subsection{Results and contributions} 

We consider the problem of detecting a small community in the DCBM. In this work, we specifically focus on the case $K = 2$ as this problem already displays a rich set of phase transitions, and we believe it captures the essential behavior for constant $K > 1$.  Let $N \ll n$ denote the size of this small community under the alternative. Our first contribution analyzes the power of SgnQ for this problem, extending results of \cite{JKL2019} that focus on the balanced case. Let $\lambda_1 = \lambda_1(\Omega)$. In Section \ref{subsec:SgnQnull}, we  define a population counterpart $\tilde \Omega$ of $\hat A$ and let $\widetilde \lambda = \lambda_1(\tilde \Omega)$. We show that SgnQ has full power if $\widetilde \lambda_1 / \sqrt{ \lambda_2} \to \infty$, which reduces to $N(a-c)/\sqrt{nc} \to \infty$ in the SBM case.

For optimality, we obtain a computational lower bound (CLB), relying on the low-degree polynomial conjecture, which is a standard approach in studying CLB (e.g.,  \cite{kunisky2019notes}). Consider a case where $K = 2$ and we have a small community with size $N$. 
Suppose the edge probability within the community and outside the community are $a$ and $c$, where 
$a > c$. 
The quantity $(a-c)/\sqrt{c}$ acts as the {\it Node-wise Signal-to-Noise Ratio (SNR)} for the detection problem.\footnote{Note that the node-wise SNR captures the ratio of the mean difference and standard deviation of Bernoulli($a$) versus Bernoulli($c$), which motivates our terminology.}
When $N\gg \sqrt{n}$, we find that the CLB is completely determined by $N$ and node-wise SNR; moreover, SgnQ matches with the CLB and is optimal. 
When $N\ll \sqrt{n}$, the situation is more subtle: if the node-wise SNR $(a-c)/\sqrt{c} \goto 0$ (weak signal case), we show the problem is computationally hard and the LB depends on $N$ and the node-wise SNR. If $(a-c)/\sqrt{c}\gg n^{1/2}$ (strong signal case), then SgnQ solves the detection problem. In the range $1 \ll (a-c)/\sqrt{c} \ll n^{1/2}$ (moderate signal case), the CLB depends on not only $N$ and the node-wise SNR but also the background edge density $c$. In this regime, we make conjectures of the CLB, from the study of the aforementioned economic scan test (EST).  Our results are summarized in Figure \ref{fig:Phase} and explained in full detail in Section \ref{subsec:phase}. 

\begin{wrapfigure}[18]{r}{0.5\textwidth}
	\vspace{-10pt}
	\includegraphics[trim={0cm 0cm 0cm 0cm},clip,width=.4\textwidth,angle=-90]{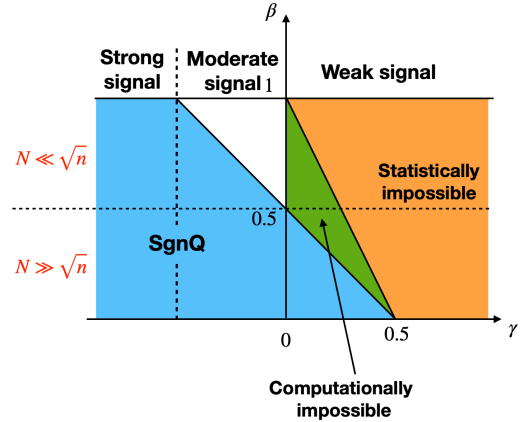}
	\caption{Phase diagram ($(a-c)/\sqrt{c}=n^{-\gamma}$ and $N=n^{1-\beta}$).}\label{fig:Phase}
	\vspace{-10pt}
\end{wrapfigure}

We also obtain the classical information lower bound (LB), 
and discover that as $N / n \goto 0$,  there is big gap between CLB and LB. Notably the LB is achieved by an (inefficient) signed scan test. In the balanced case in \cite{JKL2019}, the SgnQ test is optimal among all tests (even those that are allowed unbounded computation time), and such a gap does not exist.   

We also show that that the naive degree-based $\chi^2$-test is asymptotically powerless due to the aforementioned degree-matching phenomenon. 

Our statistical lower bound, computational lower bound, and the powerlessness of $\chi^2$ based on degree-matching are also valid for all $K>2$ since any model with $K \geq 2$ contains $K = 2$ as a special case. We also expect that our lower bounds are tight for these broader models and that our lower bound constructions for $K = 2$ represent the least favorable cases when community sizes are severely unbalanced. 


Compared to \cite{Ery0, Ery1}, we consider network global testing in a more realistic setting, and show that optimal tests there (i.e., a naive degree-based $\chi^2$ test) may be 
asymptotically powerless here.  Compared with \cite{Ery2}, our setting is very different (they considered a setting where both the null and alternative are DCBM with $K = 1$). Compared to the study in the balanced case (e.g., \cite{JKL2018, JKL2019, GaL2017a}), our 
study is more challenging for two reasons. First, in the balanced case, there is no gap between the UB (the upper bound provided by the SgnQ test)  and LB, so there is no need to derive the CLB, which is usually technical demanding. Second, the size of the smaller community can get as small as $n^{\eps}$, where $\eps >  0$ is any constant. Due this imbalance in community sizes, the techniques of \cite{JKL2019} do not directly apply.   As a result, our proof involves the careful study of the $256$ terms that compose SgnQ, which requires using bounds tailored specifically for the severely unbalanced case.


Our study of the CLB is connected to that of \cite{hajek2015computational} in the Erd\"{o}s-Renyi setting of \cite{Ery1}. \cite{hajek2015computational} proved via computational reducibility that the naive $\chi^2$-test is the optimal polynomial-time test (conditionally on the planted clique hypothesis). We also note work of \cite{chen2016statistical} that studied a $K$-cluster generalization of the Erd\"{o}s-Renyi model of \cite{Ery1,Ery0} and provided conjectures of the CLB.  Compared to our setting, these models are very different because the expected degree profiles of the null and alternative  differ significantly.  In this work we consider the DCBM model, where due to the subtle phenomenon of degree matching between the null and alternative hypotheses, both CLB and LB are different from those obtained by \cite{hajek2015computational}.


{\bf Notations:}
We use ${\bf 1}_n$ to denote a $n$-dimensional vector of ones.  
For a vector $\theta = (\theta_1, \ldots, \theta_n)$, 
$\diag(\theta)$ is the diagonal matrix where the $i$-th diagonal entry is $\theta_i$. 
For a matrix $\Omega \in \mathbb{R}^{n\times n}$, $\diag(\Omega)$ is the diagonal matrix where the $i$-th diagonal entry is $\Omega(i, i)$.  For a vector $\theta \in \mathbb{R}^n$, $\theta_{max} = \max\{\theta_1, \ldots, \theta_n\}$ and $\theta_{min} = \min\{\theta_1, \ldots, \theta_n\}$. 
For two positive sequences $\{a_n\}$ and $\{b_n\}$, we write $a_n \asymp b_n$ if $c_1 \leq a_n/b_n \leq c_2$ for constants $c_2 > c_1 > 0$. We say $a_n \sim b_n$ if $(a_n/b_n) = 1 + o(1)$.

\section{Main results} 

In Section~\ref{subsec:identifiability}, following our discussion on Sinkhorn's theorem in Section \ref{sec:intro},  we introduce calibrations (including conditions on  identifiability and balance) that are appropriate for severely unbalanced DCBM and illustrate with some examples. 
In Sections~\ref{subsec:SgnQnull}-\ref{subsec:chi2}, we analyze the power of the SgnQ test and compare it with the $\chi^2$-test. In Sections~\ref{subsec:statLB}-\ref{subsec:compLB}, 
we discuss the information lower bounds (both the LB and CLB) and show that SgnQ test is optimal among polynomial time tests, when $N\gg\sqrt{n}$.
In Section~\ref{subsec:EST}, we study the EST and make some conjectures of the CLB when $N\ll \sqrt{n}$. In Section~\ref{subsec:phase}, we summarize our results and present the phase transitions.   

\vspace{-2mm}

\subsection{DCBM for severely unbalanced networks: identifiability, balance metrics, and global testing }  
\label{subsec:identifiability}

	In the DCBM (\ref{Model-1})-(\ref{Model-2}),  $\Omega = \Theta \Pi P \Pi' \Theta$. It is known that the matrices $(\Theta, \Pi,  P)$ are not identifiable. One issue is that $(\Pi, P)$ are only unique up to a permutation: for a $K \times K$ permutation matrix $Q$, $\Pi P \Pi = (\Pi Q) (Q' P Q) (\Pi Q)'$.  
This issue is easily fixable in applications so is usually neglected. 
A bigger issue is that,  $(\Theta, P)$ are not uniquely defined. 
For example, 
fixing a positive diagonal matrix $D\in\mathbb{R}^{K\times K}$,  let $P^* =DPD$ and $\Theta^* = \diag(\theta_1^*, \theta_2^*, \ldots, \theta_n^*)$  where 
$\theta_i^* =\theta_i/\sqrt{D(k,k)}$ if $i \in {\cal C}_k$, $1 \leq k \leq K$.   It is seen that $\Theta \Pi P \Pi' \Theta= \Theta^* \Pi  P^* \Pi' \Theta^*$,   so $(\Theta, P)$ are not uniquely defined.  

To motivate our identifiability condition, we formalize the degree-matching argument discussed in the introduction. Fix $(\theta, P)$ and let $h = (h_1, \ldots, h_K)'$ and $h_k > 0$ is the fraction of nodes in community $k$, $1 \leq k \leq K$. 
By the main result of \cite{Sinkhorn}, there is a unique positive diagonal matrix $D = \diag(d_1, \ldots, d_K)$ such that $D P D h = {\bf 1}_K$.  Consider a pair of two DCBM, a null with $K =1$ and an alternative with $K > 1$, 
with parameters  $\Omega= \Theta {\bf 1}_n {\bf 1}_n'  \Theta \equiv \theta \theta'$ and $\Omega^*(i,j) = \theta_i^* \theta_j^* \pi_i ' P \pi_j$ with $\theta_i^* = d_k \theta_i$ if $i \in {\cal C}_k$, $1 \leq k \leq K$, respectively. Direct calculation shows that node $i$ has the same expected degree under the null and alternative. 



There are many ways to resolve the issue. For example, in the balanced case (e.g., \cite{JKL2019, JKLW2022}), we can resolve it by requiring that $P$ has unit diagonals. However, for our case, 
this is inappropriate. Recall that, in practice, $P(k, \ell)$ represents   
as the baseline connecting probability between community $k$ and $\ell$.  If we forcefully 
rescale $P$ to have a unit diagonal here, both $(P, \Theta)$ lose their practical meanings.    

Motivated by the degree-matching argument, 
we propose an identifiability condition that is more appropriate for the severely unbalanced DCBM.    
By our discussion in Section \ref{sec:intro}, for any DCBM with a Bernoulli probability matrix $\Omega$, we can always use Sinkhorn's 
theorem to define $(\Theta, P)$ (while $\Pi$ is unchanged)  such that for the new $(\Theta, P)$, $\Theta = \Theta \Pi P \Pi' \Theta$ 
and $P h \propto {\bf 1}_K$, where $h = (h_1, \ldots, h_K)'$ and $h_k > 0$ is the fraction of nodes in community $k$, $1 \leq k \leq K$.    This motivates the following identifiability condition (which is more appropriate for our case): 
\beq \label{identifiable}
\|\theta\|_1=n, \qquad Ph \propto {\bf 1}_K, \quad\mbox{where $h_k$ is fraction of nodes in ${\cal C}_k$, $1 \leq k \leq K$}.  
\eeq
\begin{lemma} \label{lem:identifiability}
	For any $\Omega$ that satisfies the DCBM (\ref{Model-2}) {\color{black}{and has positive diagonal elements}}, we can always find $(\Theta, \Pi, P)$ such that 
	$\Omega = \Theta \Pi P \Pi' \Theta$ and (\ref{identifiable}) holds.  Also,   any $(\Theta, P)$ that satisfy  
	$\Omega =  \Theta \Pi P \Pi' \Theta$ and (\ref{identifiable})  are unique.  
\end{lemma}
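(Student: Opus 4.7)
The plan is to reduce both existence and uniqueness to Sinkhorn's matrix-scaling theorem. Fix any initial factorization $\Omega = \Theta_0 \Pi P_0 \Pi' \Theta_0$ guaranteed by the DCBM assumption. The most general reparameterization preserving the product $\Theta \Pi P \Pi' \Theta$ (with $\Pi$ fixed) is the per-community rescaling $\theta_i = \theta_{0,i}/d_k$ for $i \in \cC_k$ together with $P = D P_0 D$, where $D = \diag(d_1, \ldots, d_K)$ has strictly positive diagonal. This leaves exactly $K$ free positive parameters, which must be pinned down by the $K$ scalar conditions of (\ref{identifiable}): one equation $\|\theta\|_1 = n$ and the $K-1$ proportionality equations in $Ph \propto {\bf 1}_K$.

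For existence, I change variables via $\widetilde D := D \diag(h^{1/2})$ and $B := \diag(h^{1/2}) P_0 \diag(h^{1/2})$. A short calculation shows that $D P_0 D h = c {\bf 1}_K$ is equivalent to $\widetilde D B \widetilde D \, {\bf 1}_K = c h$, i.e., the row sums of $\widetilde D B \widetilde D$ equal $c h$. The matrix $B$ is symmetric, nonnegative, and irreducible (inherited from $P_0$ and the positivity of $h$), so Sinkhorn's theorem in the variant with prescribed positive row marginals produces a positive diagonal $\widetilde D$, unique up to a single positive scalar, achieving this. The remaining scalar freedom is then absorbed by imposing $\|\theta\|_1 = n$.

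For uniqueness, I first argue that any two factorizations $(\Theta, P)$ and $(\Theta^*, P^*)$ of $\Omega$ (with the same $\Pi$) must be related by the per-community rescaling above. This step invokes the positive-diagonal hypothesis on $\Omega$ crucially: from $\Omega_{ii} = \theta_i^2 P(k,k) > 0$ one obtains $\theta_i > 0$ and $P(k,k) > 0$ for every $i \in \cC_k$, and for distinct $i, j \in \cC_k$ matching $\Omega_{ij} = \theta_i \theta_j P(k,k) = \theta_i^* \theta_j^* P^*(k,k)$ forces the ratio $\theta_i^*/\theta_i$ to be constant on $\cC_k$ (the singleton-community case is handled by reading this ratio off the diagonal entry directly); cross-community entries of $\Omega$ then yield $P^* = D P D$. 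With this reduction in hand, two choices $(\Theta, P)$ and $(\Theta^*, P^*)$ both satisfying (\ref{identifiable}) correspond to two scalings $\widetilde D$ in Sinkhorn's theorem with identical data, hence differ by one positive scalar, and that scalar is fixed by $\|\theta\|_1 = \|\theta^*\|_1 = n$. The main subtlety will be invoking the correct variant of Sinkhorn's theorem (symmetric with prescribed positive marginals, rather than the doubly stochastic version) and carefully verifying its hypotheses in this setting; a secondary technical point is the singleton-community case, which the positive-diagonal assumption on $\Omega$ is precisely tailored to handle.
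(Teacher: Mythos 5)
Your proposal is essentially correct, and its existence half is the same argument as the paper's. The paper invokes a matrix-scaling lemma (Lemma 3.1 of Jin--Ke--Luo, in the spirit of Sinkhorn): for a $K\times K$ symmetric matrix with strictly positive diagonal, nonnegative off-diagonal entries, and a positive vector $h$, there is a unique positive diagonal $D$ with $DPDh=\mathbf{1}_K$; it applies this directly to $P$ and then rescales to enforce $\|\theta\|_1=n$. Your change of variables $\widetilde D=D\,\diag(h^{1/2})$, $B=\diag(h^{1/2})P_0\,\diag(h^{1/2})$ merely rewrites that statement in prescribed-row-marginal form, and your absorption of the free proportionality constant via $\|\theta\|_1=n$ matches the paper's rescaling step. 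Where you genuinely diverge is uniqueness: the paper skips your ``gauge-freedom'' reduction entirely by applying the same scaling lemma at the $n\times n$ level to $\Omega$ itself --- both candidate parameterizations yield $D^{(i)}=(\alpha^{(i)}n)^{-1/2}(\Theta^{(i)})^{-1}$ satisfying $D^{(i)}\Omega D^{(i)}\mathbf{1}_n=\mathbf{1}_n$, so uniqueness of that scaling plus $\|\theta^{(i)}\|_1=n$ gives $\Theta^{(1)}=\Theta^{(2)}$ and then $P^{(1)}=P^{(2)}$ --- whereas you first show by an elementary entrywise argument that two factorizations sharing $\Pi$ differ by a per-community rescaling, and only then use the $K\times K$ uniqueness. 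Your route is more self-contained and makes the role of the positive diagonal explicit; the paper's is shorter because one citation covers both the reduction and the uniqueness.

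Two small repairs are needed. First, in the existence step you justify Sinkhorn's hypotheses by irreducibility of $B$; irreducibility alone does not guarantee a symmetric scaling with arbitrary prescribed positive marginals (for $B=\left(\begin{smallmatrix}0 & 1\\ 1 & 0\end{smallmatrix}\right)$ the vector $DBD\mathbf{1}_2$ always has equal coordinates, so the target $(1,2)'$ is unattainable). The hypothesis you actually need is a strictly positive diagonal of $B$, equivalently of $P_0$, which is exactly what the positive-diagonal assumption on $\Omega$ supplies ($\Omega_{ii}=\theta_{0,i}^2P_0(k,k)>0$) and is the form in which the paper states its scaling lemma; verify and invoke that instead. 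Second, in your reduction step, for a community of size exactly two the off-diagonal entries only give $r_ir_j=P(k,k)/P^*(k,k)$ for the ratios $r_i=\theta_i^*/\theta_i$, which does not by itself force the ratio to be constant; use the diagonal entries (as you already do for singletons), which give $r_i^2=P(k,k)/P^*(k,k)$ for every $i\in\mathcal{C}_k$ and handle all community sizes uniformly.
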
 

Moreover, for network balance, the following two vectors in $\mathbb{R}^K$ are natural metrics: \begin{equation} \label{Definedg} 
	d = (\|\theta\|_1)^{-1}  \Pi' \Theta {\bf 1}_n, \qquad g = (\|\theta\|)^{-2} \Pi' \Theta^2 \Pi {\bf 1}_K,   
\end{equation} 
In the balanced case (e.g., \cite{JKL2019, JKLW2022}),  we usually assume the entries of $d$ and $g$ are at the same order.  For our setting, this is not the case. 

{ \color{black}{ Next we introduce the null and alternative hypotheses that we consider. Under each hypothesis, we impose the identifiability condition \eqref{identifiable}.  } }

{\bf \color{black}{General null model for the DCBM.  } } When $K = 1$ and $h = 1$,  $P$ is scalar (say, $P = \alpha$),  and $\Omega = \alpha \theta \theta'$ satisfies $\|\theta\|_1 = n$ by \eqref{identifiable}.  The expected total degree is $\alpha (\|\theta\|_1^2 - \|\theta\|^2)\sim \alpha\|\theta\|_1^2 = n^2\alpha $ under mild conditions, so we view $\alpha$ as the parameter for network sparsity.  In this model, 
$d = g = 1$.  

{\bf \color{black}{  Alternative model for the DCBM }  }.  We assume $K = 2$ and that the sizes of the two communities, ${\cal C}_0$ and ${\cal C}_1$,   are $(n-N)$  and $N$, respectively. 
For some positive numbers $a, b, c$, we have 
\begin{equation} \label{2group-DCBM}
	P = \left[
	\begin{array}{ll} 
		a & b \\
		b & c \\  
	\end{array}  
	\right],  
	\qquad
	\mbox{and} 
	\qquad 
	\Omega(i,j) = \left\{
	\begin{array}{ll} 
		\theta_i \theta_j  \cdot a, & \qquad  \mbox{if $i, j \in {\cal C}_1$}, \\ 
		\theta_i \theta_j \cdot c, &\qquad \mbox{if $i, j \in {\cal C}_0$}, \\
		\theta_i \theta_j \cdot b, &\qquad \mbox{otherwise}.  \\ 
	\end{array}
	\right. 
\end{equation} 
In the classical clique detection problem (e.g.,  \cite{Ery2}), $a$ and $c$ are the baseline probability where two nodes have an edge when both of them are {\it in} the clique and {\it outside} the clique, respectively. By \eqref{identifiable},  $a\epsilon+b(1-\epsilon)=b\epsilon+c(1-\epsilon)$ if we write $\epsilon=N/n$. Therefore, 
\beq \label{2group-DCBM-b}
b =  (c(n-N)-aN) /(n-2N). 
\eeq
Note that this is the {\it direct result}  of Sinkhorn's theorem and the parameter calibration we choose, not a condition we choose for technical convenience.   Write $d = (d_0, d_1)'$ and $g = (g_0, g_1)'$.  
It is seen that $d_0 = 1 - d_1$,  $g_0 = 1 - g_0$,  $d_1= \|\theta\|_1^{-1} \sum_{i \in {\cal C}_1} \theta_i$,  and $g_1 = \|\theta\|^{-2} \sum_{i \in {\cal C}_1} \theta_i^2$.  If all $\theta_i$ are at the same order, then 
$d_1 \asymp g_1 \asymp (N/n)$ and $d_0 \sim g_0 \sim 1$. We also observe that $b = c +O(a\epsilon)$ which makes the problem seem very close to \cite{Ery1,Ery2}, although in fact the problems are quite different.

{\bf \color{black}{Extension} }. An extension of our alternative is that,  for the $K$ communities,  the sizes of $m$ of them are at the order of $N$, for an $N \ll n$ and an integer $m$,   $1 \leq m < K$,  and 
the sizes of remaining $(K-m)$ are at the order of $n$.  In this case, $m$ entries of $d$ are $O(N/n)$ and other entries are $O(1)$; same for $g$.

\vspace{-2mm}

\subsection{The SgnQ test:  limiting null, p-value, and power}  \label{subsec:SgnQnull}
\label{subsec:null}  
In the null case, $K = 1$ and we assume $\Omega = \alpha \theta \theta'$, where $\|\theta\|_1=n$. As $n \goto \infty$, both 
$(\alpha, \theta)$ may vary with $n$. Write $\theta_{\max}=\|\theta\|_\infty$. We assume 
\begin{equation} \label{nullconditions} 
	n\alpha\to\infty, \qquad \mbox{and}\qquad \alpha\theta^2_{\max} \log(n^2\alpha)\to 0.  
\end{equation} 
The following theorem is adapted from \cite{JKL2019} and the proof is omitted. 
\begin{thm}[Limiting null of the SgnQ statistic] \label{thm:null-SgnQ}   
	Suppose the null hypothesis is true and the regularity conditions (\ref{identifiable}) and (\ref{nullconditions}) hold. As $n \goto \infty$, $\psi_n \goto N(0,1)$ in law. 
\end{thm}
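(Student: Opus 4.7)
The plan is to carry out a martingale CLT in the spirit of \cite{JKL2019}, adapted to the degree-corrected null. Under the null, write $\eta = \sqrt{\alpha}\,\theta$, so that $\Omega = \eta\eta'$ and $\mathbb{E}[A_{ij}] = \eta_i\eta_j$ off the diagonal; the noise $W = A - \mathbb{E}[A]$ has independent above-diagonal entries with $\mathrm{Var}(W_{ij}) \sim \eta_i\eta_j$ under (\ref{nullconditions}). First I would quantify the estimation error $\hat{\eta} - \eta$ via Bernstein-type concentration on $A\mathbf{1}_n$ and $\mathbf{1}_n' A \mathbf{1}_n$, both of which concentrate sharply once $n\alpha \to \infty$. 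This step also pins down $\|\hat{\eta}\|^2 - 1 = (1+o_{\mathbb{P}}(1))\|\eta\|^2$, so the denominator of $\psi_n$ is driven by the deterministic quantity $\sqrt{8}\,\|\eta\|^4$.

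Next, writing $\widehat{A} = W - R$ with $R_{ij} = \hat{\eta}_i\hat{\eta}_j - \eta_i\eta_j$, expand
\begin{equation*}
Q_n \;=\; \sum_{i_1,i_2,i_3,i_4\;\text{(dist)}} \widehat{A}_{i_1i_2}\widehat{A}_{i_2i_3}\widehat{A}_{i_3i_4}\widehat{A}_{i_4i_1}
\end{equation*}
into $2^4 = 16$ pieces indexed by choosing $W$ or $R$ at each factor. The dominant piece is the pure 4-cycle $T := \sum_{(\mathrm{dist})} W_{i_1i_2}W_{i_2i_3}W_{i_3i_4}W_{i_4i_1}$, which has mean zero. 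Because $R$ has essentially rank two, each of the fifteen mixed pieces collapses, via the decomposition $R_{ij} = (\hat{\eta}_i-\eta_i)\eta_j + \eta_i(\hat{\eta}_j-\eta_j) + (\hat{\eta}_i-\eta_i)(\hat{\eta}_j-\eta_j)$, into lower-order quadratic or linear forms in $W$ multiplied by simple functions of $\hat{\eta}$; summing these contributions yields the deterministic offset $2(\|\hat{\eta}\|^2-1)^2$ plus a remainder that is $o_{\mathbb{P}}(\|\eta\|^4)$. Then I would apply a martingale CLT to $T$: order the above-diagonal pairs $\{i,j\}$ lexicographically, let $\cF_m$ be the filtration generated by the first $m$ entries of $W$, and write $T = \sum_m \Delta_m$ as a martingale difference sum. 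Verify (i) the conditional variance $\sum_m \mathbb{E}[\Delta_m^2\mid\cF_{m-1}]$ tends to $8\|\eta\|^8$ by counting 4-cycles (each vertex appears exactly twice along a cycle, so $\sum_{\mathrm{cycles}} \prod \mathrm{Var}(W) \sim \|\eta\|^8$, and the prefactor $8$ counts the four rotations and two reflections that give the same cycle), and (ii) a Lyapunov-type moment bound $\sum_m \mathbb{E}[\Delta_m^4] = o(\|\eta\|^{16})$, into which $\alpha\theta_{\max}^2\log(n^2\alpha)\to 0$ enters to rule out atypically influential summands.

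The main obstacle will be the bookkeeping of the sixteen cross terms and the proof that their aggregate equals the centering $2(\|\hat{\eta}\|^2-1)^2$ up to lower-order fluctuations. Because $R$ is not independent of $W$ (it is built from $A\mathbf{1}_n$), each mixed piece must be analyzed jointly with the cycle structure, and several terms containing one or two factors of $R$ superficially look the same order as $T$ but only cancel after Taylor-expanding $\hat{\eta}$ around $\eta$. The regularity condition $\alpha\theta_{\max}^2\log(n^2\alpha)\to 0$ is precisely what makes this cancellation go through uniformly and prevents any single vertex's degree heterogeneity from inflating the variance beyond the claimed normalization $8(\|\hat{\eta}\|^2-1)^4$.
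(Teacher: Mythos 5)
Your proposal is correct in outline but takes a genuinely different route from the paper. The paper's own argument is essentially a two-step reduction: it observes that under the null, $\Omega = \alpha\theta\theta' = \theta^*(\theta^*)'$ with $\theta^* = \sqrt{\alpha}\,\theta$, and then cites Theorem 2.1 of \cite{JinKeLuo21} (the full version of \cite{JKL2019}), which already establishes $\psi_n \goto N(0,1)$ under the conditions $\|\theta^*\|\to\infty$, $\theta^*_{\max}\to 0$, and $(\|\theta^*\|^2/\|\theta^*\|_1)\sqrt{\log\|\theta^*\|_1}\to 0$. The remaining work is a three-line verification that \eqref{nullconditions} implies those three conditions after the reparametrization, and that is the entire proof. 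Your proposal instead re-derives the cited result from scratch: a Bernstein-based control of $\hat\eta-\eta$, a decomposition of $\widehat A$ into $W-R$ and of $Q_n$ into $2^4$ cross terms (further Taylor-expanding $R$), and a martingale CLT for the pure $W$ four-cycle with a variance count of $8\|\eta\|^8$ and a Lyapunov moment bound. This is sound and your variance prefactor and cycle-counting arguments check out, but the bulk of it duplicates the proof already done in the cited reference (which in fact uses a finer decomposition $\widehat A = \widetilde\Omega + W + \delta + r$, leading to $4^4$ terms rather than $2^4$, precisely because the cross-term cancellation against $2(\|\hat\eta\|^2-1)^2$ is delicate). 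What the paper's route buys is a very short proof that cleanly isolates the only genuinely new content (the reparametrization and the verification that the paper's sparsity condition $\alpha\theta^2_{\max}\log(n^2\alpha)\to 0$ translates into the needed $(\|\theta^*\|^2/\|\theta^*\|_1)\sqrt{\log\|\theta^*\|_1}\to 0$); what your route would buy is self-containedness, at the cost of re-proving a known theorem. If you take the direct path, you should anticipate that the $16$-piece expansion will likely need to be refined further before the deterministic centering emerges, as \cite{JinKeLuo21} found.
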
  
We have two comments. First, since the DCBM has many parameters (even in the null case), it is not an easy task to find a test statistic with a limiting null that is completely parameter free. For example, if we use the largest eigenvalue of $A$ as the test statistic, it is unclear how to normalize it so to have such a limiting null. 
Second, since the limiting null is completely explicit,   we can approximate the (one-sided) $p$-value of $\psi_n$ by  $\mathbb{P}(N(0, 1) \geq \psi_n)$.  
The p-values are useful in practice,  as we show in our numerical experiments.. For example, using a recent data set on the statisticians' publication \citep{JBES2022},  for each author, we can construct 
an ego network and apply the SgnQ test. We can then use the $p$-value to measure the co-authorship diversity of the author. Also,  in many hierarchical community detection algorithms (which 
are presumably  recursive, aiming to estimate the tree structure of communities),  we can use the p-values to determine whether we should further divide a sub-community in each stage of the algorithm (e.g. \cite{JBES2022}).  

The power of the SgnQ test hinges on the matrix $\widetilde{\Omega} = \Omega - ({\bf 1}_n' \Omega {\bf 1}_n)^{-1} \Omega {\bf 1}_n {\bf 1}_n' \Omega$.   
By basic algebra,  
\beq \label{centeredOmega}
\widetilde{\Omega} = \Theta \Pi \widetilde{P} \Pi' \Theta, \qquad \mbox{where \;  $\widetilde{P} = P - (d' P d)^{-1} P d d' P$}.  
\eeq
Let $\tilde{\lambda}_1$ be the largest (in magnitude) eigenvalue of 
$\widetilde{\Omega}$. Lemma \ref{lemma:d} is proved in the supplement. 
{ \color{black}{	\begin{lemma}\label{lemma:d} 
			The rank and trace of the matrix $\widetilde{\Omega}$ are $(K-1)$  and $\|\theta\|^2 \diag(\ti P)' g$, respectively.  When $K = 2$, $\tilde{\lambda}_1 = \mathrm{trace}(\widetilde{\Omega}) = \| \theta \|^2(ac - b^2)(d_0^2 g_1 + d_1^2 g_0)/ (a d_1^2 + 2b d_0 d_1 + c d_0^2)$.    
\end{lemma}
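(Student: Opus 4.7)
}

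The plan is to handle the three claims (rank, trace, and the $K=2$ closed form) in sequence, all by exploiting the factorization $\widetilde{\Omega} = \Theta \Pi \widetilde{P}\Pi'\Theta$ and reducing everything to computations on the $K\times K$ matrix $\widetilde{P}$.

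First, for the rank, observe that $\Theta$ is diagonal with positive entries (hence invertible) and $\Pi\in\mathbb{R}^{n\times K}$ has full column rank $K$ (assuming each community is nonempty), so $\mathrm{rank}(\widetilde{\Omega})=\mathrm{rank}(\widetilde{P})$. Directly from the definition of $\widetilde{P}$ I would compute
\[
\widetilde{P}d = Pd - (d'Pd)^{-1}(Pd)(d'Pd) = 0,
\]
so $d\in\ker(\widetilde{P})$ and $\mathrm{rank}(\widetilde{P})\leq K-1$. For the reverse, if $\widetilde{P}v=0$ then $Pv$ is a scalar multiple of $Pd$; invertibility of $P$ (implicit in the irreducibility/non-singularity assumption on the DCBM parameter matrix) then forces $v\propto d$, so $\ker(\widetilde{P})=\mathrm{span}(d)$ and $\mathrm{rank}(\widetilde{P})=K-1$.

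Second, for the trace, by the cyclic property
\[
\mathrm{tr}(\widetilde{\Omega})=\mathrm{tr}(\widetilde{P}\,\Pi'\Theta^2\Pi).
\]
The $K\times K$ matrix $\Pi'\Theta^2\Pi$ is diagonal with $(k,k)$-entry $\sum_{i\in\mathcal{C}_k}\theta_i^2$, since each node lies in exactly one community. Using $\Pi\mathbf{1}_K=\mathbf{1}_n$ and the definition of $g$ in (\ref{Definedg}), this diagonal vector equals $\|\theta\|^2 g$. Thus $\Pi'\Theta^2\Pi=\|\theta\|^2\mathrm{diag}(g)$, and
\[
\mathrm{tr}(\widetilde{\Omega})=\|\theta\|^2\,\mathrm{tr}\bigl(\widetilde{P}\,\mathrm{diag}(g)\bigr)=\|\theta\|^2\,\mathrm{diag}(\widetilde{P})'g,
\]
which is the asserted formula.

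Third, for $K=2$: since $\mathrm{rank}(\widetilde{\Omega})=1$ and $\widetilde{\Omega}$ is symmetric, its unique nonzero eigenvalue coincides with its trace, so $\tilde{\lambda}_1=\mathrm{tr}(\widetilde{\Omega})$. It then remains to simplify $\mathrm{diag}(\widetilde{P})'g$ with $P=\begin{pmatrix}a&b\\b&c\end{pmatrix}$ and $d=(d_1,d_0)'$, $g=(g_1,g_0)'$ (ordered consistently with $P$). From $\widetilde{P}=P-(d'Pd)^{-1}(Pd)(Pd)'$ I get
\[
\mathrm{diag}(\widetilde{P})'g = (ag_1+cg_0) - \frac{g_1(ad_1+bd_0)^2 + g_0(bd_1+cd_0)^2}{a d_1^2+2b d_0 d_1 + c d_0^2}.
\]
Putting the right-hand side over a common denominator and expanding, the cross terms in $b d_0 d_1$ cancel and the remaining coefficients of $d_1^2$ and $d_0^2$ both simplify to $(ac-b^2)$, producing the claimed $(ac-b^2)(d_0^2 g_1+d_1^2 g_0)/(ad_1^2+2bd_0d_1+cd_0^2)$.

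The only nontrivial ingredient is the rank-exactness step (ruling out further degeneracies in $\ker(\widetilde{P})$), which needs invertibility of $P$; the rest is algebraic bookkeeping, with the $K=2$ simplification being tedious but straightforward once the cross terms are seen to cancel.
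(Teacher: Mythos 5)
Your proof is correct and follows essentially the same route as the paper's: show $\widetilde{P}d=0$ for the rank upper bound, use cyclicity of trace together with $\Pi'\Theta^2\Pi=\|\theta\|^2\diag(g)$, and for $K=2$ observe that a rank-one symmetric matrix's nonzero eigenvalue equals its trace. The only small difference is in the rank lower bound: you argue $\ker(\widetilde{P})=\mathrm{span}(d)$ by invertibility of $P$, while the paper writes $P=\widetilde{P}+(d'Pd)^{-1}Pdd'P$ and applies rank subadditivity, $\mathrm{rank}(P)\le\mathrm{rank}(\widetilde{P})+1$; both steps rely equally on $\mathrm{rank}(P)=K$, which as you note is not literally implied by irreducibility alone but is the operative standing assumption. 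Your $K=2$ simplification via a common denominator and cancellation of cross terms gives the same $(ac-b^2)(d_0^2g_1+d_1^2g_0)$ numerator the paper obtains by computing $(\widetilde{P})_{11}$ and $(\widetilde{P})_{22}$ directly; either way the algebra checks out.
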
 } }
\vspace{-0.3cm}
As a result of this lemma, we observe that in the SBM case, $d = h$ and thus $\widetilde \lambda_1 = \lambda_2 \asymp N(a-c)$. 
To see intuitively that the power of the SgnQ test hinges on 
$\tilde \lambda_1^4/\lambda_1^2$, if we heuristically replace the terms of SgnQ by population counterparts, we obtain
\begin{align*}
	Q_n &= \sum_{\substack{i_1, i_2, i_3, i_4 (distinct)}} \hat A_{i_1 i_2} \hat A_{i_2 i_3} \hat A_{i_3 i_4} \hat A_{i_4 i_1} 
	\approx \mathrm{trace}([\Omega - \eta \eta']^4) 
	= \mathrm{trace}(\widetilde \Omega^4)
	= \tilde \lambda_1^4.
\end{align*}	



%
%
\vspace{-0.4cm}
We now formally discuss the power of the SgnQ test.  
{\color{black}We focus on the alternative hypothesis in Section \ref{subsec:identifiability}.}  
Let $d=(d_1, d_0)'$ and $g=(g_1,g_0)'$ be as in \eqref{Definedg}, and let $\theta_{\max,0}=\max_{i\in {\cal C}_0}\theta_i$ and $\theta_{\max,1}=\max_{i\in {\cal C}_1}\theta_i$. 
Suppose 
\beq \label{altconditions}
d_1\asymp g_1\asymp N/n, \qquad a \theta^2_{\max,1} =O(1), \qquad cn\to\infty, \qquad c\theta^2_{\max,0} \log(n^2c)\to 0. 
\eeq
These conditions are mild. For example, when $\theta_i$'s are at the same order, the first inequality in \eqref{altconditions} automatically holds, and the other inequalities in \eqref{altconditions} hold if $a\leq C$ for an absolute constant $C>0$, $cn\to\infty$, and $c\log(n)\to 0$.   

Fixing $0 < \kappa < 1$, let $z_{\kappa} > 0$ be the value such that $\mathbb{P}(N(0, 1) \geq z_{\kappa}) = \kappa$. The level-$\kappa$ SgnQ test rejects the null if and only if  $\psi_n \geq z_{\kappa}$, where $\psi_n$ is as in \eqref{DefineSgnQ}. Theorem \ref{thm:alt-SgnQ} and Corollary~\ref{cor:SgnQtest} are proved in the supplement. Recall that our alternative hypothesis is defined in Section \ref{subsec:identifiability}. By \textit{power} we mean the probability that the alternative hypothesis is rejected, minimized over all possible alternative DCBMs satisfying our regularity conditions.

{\color{black}{
		\begin{thm}[Power of the SgnQ test] 
			\label{thm:alt-SgnQ} 
			Suppose that \eqref{altconditions} holds, and let $\kappa\in (0,1)$. Under the alternative hypothesis, if $| \ti \lambda_1 | / \sqrt{\lambda_1} \to \infty$, the power of the level-$\kappa$ SgnQ test tends to $1$. 
		\end{thm}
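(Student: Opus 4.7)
The plan is a second-moment argument: under the alternative I will show that $\psi_n$ has mean diverging to $+\infty$ while its standard deviation is of smaller order, so Chebyshev's inequality gives $\mathbb{P}(\psi_n \geq z_\kappa) \to 1$. The broad outline mirrors the balanced-case analysis in \cite{JKL2019}, but the key variance bounds must be re-derived for the severely unbalanced regime in which $d_1 \asymp N/n \to 0$.

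First I would replace $\hat\eta$ by its population counterpart $\eta = (\mathbf{1}_n'\Omega\mathbf{1}_n)^{-1/2}\Omega\mathbf{1}_n$, noting that $\eta \eta'$ is precisely the rank-one matrix that defines $\widetilde\Omega$ in \eqref{centeredOmega}. Bernstein-type concentration of $A\mathbf{1}_n$ around $\Omega\mathbf{1}_n$ under \eqref{altconditions} yields $\|\hat\eta - \eta\|$ and $|\|\hat\eta\|^2 - \|\eta\|^2|$ of negligible order, so I can write $\widehat{A} = \widetilde\Omega - \diag(\Omega) + W + E$ with a small residual $E$. Plugging this into $Q_n = \sum_{i_1, i_2, i_3, i_4 \,(dist)} \widehat{A}_{i_1 i_2}\widehat{A}_{i_2 i_3}\widehat{A}_{i_3 i_4}\widehat{A}_{i_4 i_1}$ and multiplying out produces the on-the-order-of-$256$ index sums the authors mention. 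The purely deterministic leading term is $\sum_{(dist)} \widetilde\Omega_{i_1 i_2}\widetilde\Omega_{i_2 i_3}\widetilde\Omega_{i_3 i_4}\widetilde\Omega_{i_4 i_1} = \mathrm{trace}(\widetilde\Omega^4) + (\mbox{diag.\ corrections}) = \tilde\lambda_1^4(1 + o(1))$, since $\widetilde\Omega$ has rank $K - 1 = 1$ by Lemma~\ref{lemma:d}. Symmetrically, $2(\|\hat\eta\|^2 - 1)^2$ cancels the leading ``null'' contribution built out of $\eta\eta'$ and second-moment-of-$W$ pieces (these are exactly the terms analyzed in Theorem~\ref{thm:null-SgnQ}), so $\mathbb{E}[Q_n] - 2(\|\hat\eta\|^2 - 1)^2 = \tilde\lambda_1^4(1 + o(1))$. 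Dividing by the normalization $\sqrt{8(\|\hat\eta\|^2 - 1)^4}$, which is of order $\lambda_1^2$ since $\|\hat\eta\|^2 \asymp \lambda_1$, gives a mean of order $\tilde\lambda_1^4/\lambda_1^2 \to \infty$ by hypothesis.

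The core technical step is bounding the fluctuations: each of the $256$ index sums is a multilinear form in the independent Bernoulli entries of $W$, and for each term I would bound the variance by a sum of $\Omega$-weighted closed-walk counts in the spirit of \cite{JKL2019}. The goal is to conclude $\mathrm{Var}(Q_n) = o(\tilde\lambda_1^8)$, equivalently $\mathrm{Var}(\psi_n) = o((\tilde\lambda_1^4/\lambda_1^2)^2)$. The main obstacle is carrying this bookkeeping through in the unbalanced regime: the balanced-case arguments lean on $d_1 \asymp 1$, which fails here, so cross terms involving both small-community and large-community indices must be re-estimated using $\theta_{\max,0}$ and $\theta_{\max,1}$ separately, and the identity $b = c + O(a\epsilon)$ from \eqref{2group-DCBM-b} must be exploited to extract cancellations between entries of $P$ whenever a mixed term would otherwise be too large. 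Once those term-by-term bounds are in place, Chebyshev's inequality completes the argument.
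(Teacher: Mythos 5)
Your overall plan --- a Chebyshev argument on $\psi_n$ via first and second moments of $Q_n$, with a decomposition into ideal/proxy/real SgnQ statistics and term-by-term bounds on the resulting index sums --- is exactly the paper's route (Theorem \ref{thm:SgnQ_lim_behavior}, proved through Theorems \ref{thm:ideal_SgnQ}--\ref{thm:real_SgnQ}). The moment targets you identify are the right ones: $\mathbb{E}Q_n \sim \tilde\lambda_1^4$, $\mathrm{Var}(Q_n) = o(\tilde\lambda_1^8)$, and $\|\hat\eta\|^2 - 1 \asymp \|\theta\|_2^2 \asymp \lambda_1$ for the normalization (Lemma \ref{lem:lambda1_bd}), after which $|\tilde\lambda_1|/\sqrt{\lambda_1}\to\infty$ gives mean $\gg$ standard deviation.

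Where your sketch has a genuine gap is in the hard step, the term-by-term variance bounds. You propose to handle the unbalanced regime by ``re-estimating cross terms using $\theta_{\max,0}$ and $\theta_{\max,1}$ separately'' and by ``exploiting $b = c + O(a\epsilon)$ to extract cancellations between entries of $P$.'' Neither is what closes the argument, and it is not clear either would. The paper instead (i) reparametrizes the alternative DCBM (Lemma \ref{lem:compatibility}) so that $P$ has unit diagonals and $\theta$ absorbs $\sqrt{a}$ and $\sqrt{c}$, making $\Omega_{ij}\lesssim\theta_i\theta_j$ uniform (condition \eqref{eqn:assn1}); and then (ii) replaces the balanced-case bound $|\widetilde\Omega_{ij}|\lesssim\alpha\theta_i\theta_j$ by the sharper entrywise bound $|\widetilde\Omega_{ij}| \lesssim \beta_i\theta_i\,\beta_j\theta_j$ (display \eqref{eqn:tiOm_bd}), for a vector $\beta$ with $|\beta_i|\lesssim 1$ and, crucially, $\|\beta\circ\theta\|_2^2 = |\tilde\lambda_1|$ (display \eqref{eqn:beta_bds}). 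That single device is what makes every estimate in Lemmas \ref{lem:ideal_SgnQ}--\ref{lem:Uc} emerge as a clean monomial in $|\tilde\lambda_1|$ and $\lambda_1$ that sums to $o(\tilde\lambda_1^8)$ under the hypothesis. Without it, or an equivalently tight entrywise handle on $\widetilde\Omega$, there is no guarantee the $256$-term bookkeeping you describe stays small. Also note that the $b = c + O(a\epsilon)$ cancellation you want to re-extract by hand is already built into $\widetilde\Omega$ by the rank-one centering $\Omega - ({\bf 1}'\Omega{\bf 1})^{-1}\Omega{\bf 1}{\bf 1}'\Omega$ (that is precisely the degree-matching subtraction), so re-deriving it term by term is redundant and a likely source of error.
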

}}	


{\color{black}{
		\begin{cor}
			\label{cor:SgnQtest}
			Suppose the same conditions of Theorem \ref{thm:alt-SgnQ} hold, and additionally $\theta_{\max} \leq C \theta_{\min}$ so all $\theta_i$ are at the same order.  In this case, $\lambda_1 \asymp cn$ and $|\tilde{\lambda}_1| \asymp N(a-c)$, and the power of the level-$\kappa$ SgnQ test tends to $1$ if $N(a-c)/\sqrt{cn}\to\infty$. 
		\end{cor}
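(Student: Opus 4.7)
The plan is to reduce the corollary to Theorem~\ref{thm:alt-SgnQ} by verifying the two claimed scaling relations $\lambda_1 \asymp cn$ and $|\tilde{\lambda}_1| \asymp N(a-c)$ under the extra assumption $\theta_{\max} \leq C\theta_{\min}$, and then checking that the hypothesis $|\tilde\lambda_1|/\sqrt{\lambda_1} \to \infty$ becomes exactly $N(a-c)/\sqrt{cn} \to \infty$.

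First I would exploit the balance assumption. Since $\theta_{\max} \leq C\theta_{\min}$ and the identifiability condition $\|\theta\|_1 = n$ forces $\theta_i \asymp 1$ uniformly in $i$. From the definition \eqref{Definedg}, this immediately gives $d_1 \asymp g_1 \asymp N/n$, $d_0 \sim g_0 \sim 1$, and $\|\theta\|^2 \asymp n$. Second, I would plug the formula \eqref{2group-DCBM-b} for $b$ into $ac - b^2$. Writing $\epsilon = N/n$, a direct Taylor expansion of $b = [c(1-\epsilon) - a\epsilon]/(1-2\epsilon)$ around $\epsilon=0$ yields $b = c - (a-c)\epsilon + O(a\epsilon^2)$, hence
\begin{equation*}
ac - b^2 = c(a-c) + O\bigl(c(a-c)\epsilon\bigr) + O(a^2 \epsilon^2) = c(a-c)(1+o(1)),
\end{equation*}
using that $a = O(1)$ from the regularity condition $a\theta_{\max,1}^2 = O(1)$.

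Next I would evaluate the formula from Lemma~\ref{lemma:d}. The numerator factor is $\|\theta\|^2(ac-b^2)(d_0^2 g_1 + d_1^2 g_0)\asymp n \cdot c(a-c)\cdot (N/n) = cN(a-c)$, since $d_0^2 g_1 \asymp N/n$ dominates $d_1^2 g_0 \asymp (N/n)^2$. For the denominator $ad_1^2 + 2bd_0 d_1 + c d_0^2$, the three terms are of order $a(N/n)^2$, $c(N/n)$, and $c$, respectively; since $a = O(1)$ and $\epsilon = o(1)$, the term $cd_0^2 \asymp c$ dominates. Combining gives
\begin{equation*}
|\tilde\lambda_1| \asymp \frac{cN(a-c)}{c} = N(a-c),
\end{equation*}
which is exactly the claim. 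For $\lambda_1$, I would use that $\Omega = \Theta\Pi P \Pi' \Theta$ and the Rayleigh quotient: with $v = \mathbf{1}_n/\sqrt{n}$, $v'\Omega v = n^{-1}\theta'\Pi P \Pi'\theta \asymp cn$ since $\Pi'\theta \asymp (n-N, N)'$ and the dominant entry of $P\Pi'\theta$ is of order $cn$; matching this with $\|\Omega\|_{\mathrm{op}} \lesssim cn$ (via the DCBM block structure and $a=O(1)$) gives $\lambda_1 \asymp cn$.

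Finally, with the two scalings verified, $|\tilde{\lambda}_1|/\sqrt{\lambda_1} \asymp N(a-c)/\sqrt{cn}$, so the hypothesis of Theorem~\ref{thm:alt-SgnQ} is met whenever $N(a-c)/\sqrt{cn} \to \infty$, and the power of the level-$\kappa$ SgnQ test tends to $1$. The main obstacle I anticipate is the careful order-of-magnitude bookkeeping in the denominator $ad_1^2 + 2bd_0d_1 + cd_0^2$ of Lemma~\ref{lemma:d}: one must rule out regimes where, say, $a(N/n)^2$ could compete with $c$, which is handled by invoking $a = O(1)$ together with $\epsilon \to 0$. Once this verification is made, the rest is immediate from Theorem~\ref{thm:alt-SgnQ}.
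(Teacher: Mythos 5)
Your proposal reaches the correct conclusion but takes a genuinely different route from the paper. The paper establishes the two scaling relations via an explicit eigenvalue computation for the $2\times 2$ reduced matrix $\bigl(\begin{smallmatrix} aN & \sqrt{N(n-N)}b \\ \sqrt{N(n-N)}b & (n-N)c\end{smallmatrix}\bigr)$ in the SBM case (Section F.6), noting $\lambda_2 = \tilde\lambda$ there, and then asserts the DCBM extension follows ``by a very similar calculation, which we omit.'' You instead work directly from the closed-form expression for $\tilde\lambda_1$ in Lemma~\ref{lemma:d}, performing order-of-magnitude bookkeeping on $\|\theta\|^2$, $ac-b^2$, and the ratio $(d_0^2 g_1 + d_1^2 g_0)/(ad_1^2 + 2bd_0d_1 + cd_0^2)$, and use a Rayleigh quotient together with a $\|\Omega\|_1$-type operator norm bound for $\lambda_1$. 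Your route handles the DCBM case directly rather than by reference to an omitted analogue of an SBM computation, so it is arguably cleaner and more self-contained. Your Taylor expansion $ac - b^2 \sim c(a-c)$ is correct; it can also be verified exactly by expanding $ac(n-2N)^2 - (c(n-N)-aN)^2 = (a-c)[cn(n-2N) - (a-c)N^2]$ and then using $N\ll n$ and $aN \lesssim cn$.

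There is one soft spot in the justification: you claim that $cd_0^2 \asymp c$ dominates $ad_1^2 \asymp a\epsilon^2$ in the denominator of Lemma~\ref{lemma:d}'s formula ``by invoking $a = O(1)$ together with $\epsilon \to 0$.'' That alone is not enough, because $c$ may itself tend to $0$: taking $a \asymp 1$, $\epsilon = n^{-0.1}$, $c = n^{-0.5}$ would give $a\epsilon^2/c \asymp n^{0.3} \to \infty$, even though $cn\to\infty$. What rules this regime out is the model constraint $b \geq 0$ built into \eqref{2group-DCBM-b}, which forces $aN \leq c(n-N)$, i.e. $a\epsilon < c$, hence $a\epsilon^2 < c\epsilon = o(c)$. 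This is the correct reason the $cd_0^2$ term dominates (all three terms being nonnegative, they cannot cancel, so the sum is $\asymp c$). Aside from citing the wrong mechanism for this one bound, the proposal is sound and the reduction to Theorem~\ref{thm:alt-SgnQ} is immediate.
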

	}
}

In Theorem \ref{thm:alt-SgnQ} and Corollary \ref{cor:SgnQtest}, if $\kappa = \kappa_n$ and $\kappa_n \goto 0$ slowly enough, then  the results continues to hold, and the sum of Type I and Type II errors of the SgnQ test at level-$\kappa_n$ $\goto 0$.


The power of the SgnQ test was only studied in the balanced case \citep{JKL2019}, but our setting is a severely unbalanced case, where the community sizes are at different orders as well as the entries of $d$ and $g$.
In the balanced case, the signal-to-noise ratio of SgnQ is governed by $|\lambda_2|/\sqrt{\lambda}_1$, but in our setting, the signal-to-noise ratio is governed by  $|\tilde{\lambda}_1|/\sqrt{\lambda_1}$. 
The proof is also subtly different. Since the entries of $P$ are at different orders, many terms deemed negligible in the power analysis of the balanced case may become non-negligible in the unbalanced case and require careful analysis. 



\vspace{-2mm}

\subsection{Comparison with  the naive degree-based $\chi^2$-test} \label{subsec:chi2}
Consider a setting where $\Omega = \alpha \Theta {\bf 1}_n {\bf 1}_n'  \Theta \equiv  \alpha \theta \theta'$ under the null and $\Omega = \Theta \Pi P 
\Pi' \Theta$ under the alternative, and (\ref{identifiable}) holds.  
When $\theta$ is unknown, it is unclear how to apply the $\chi^2$-test: the null case has $n$ unknown parameters $\theta_1, \ldots, \theta_n$, and we need to use the degrees to estimate $\theta_i$ first. As a result, the resultant $\chi^2$-statistic may be trivially $0$. Therefore,  we consider 
a simpler SBM case where $\theta = {\bf 1}_n$. In this case, $\Omega = \alpha  {\bf 1}_n {\bf 1}_n$, and 
$\Omega =  \Pi P \Pi'$ and the null case only has one unknown parameter $\alpha$.  Let $y_i$ be the degree of node $i$, and let $\hat{\alpha}=[n(n-1)]^{-1}{\bf 1}_n'A{\bf 1}_n$.  The  $\chi^2$-statistic is  
\beq \label{DefineChi2}
X_n = \sum_{i=1}^n  (y_i-n\hat{\alpha})^2/[(n-1)\hat{\alpha}(1-\hat{\alpha})]. 
\eeq
It is seen that as $n\alpha\to\infty$ and $\alpha\to 0$,   $(X_n-n)/\sqrt{2n}\goto N(0,1)$ in law. For a fixed level $\kappa\in (0,1)$, 
consider the $\chi^2$-test that rejects the null if and only if $(X_n-n)/\sqrt{2n} > z_{\kappa}$. 
Let $\alpha_0 =n^{-2}({\bf 1}_n' \Omega {\bf 1}_n)$. The  
power of the $\chi^2$-test hinges on 
the quantity $(n\alpha_0)^{-1}\|(\Omega {\bf 1}_n - n\alpha_0)\|^2=(n\alpha_0)^{-1} \|\Pi Ph - (h'Ph)^{-1}{\bf 1}_n\|^2=0$, if $Ph\propto {\bf 1}_K$. The next theorem is proved in the supplement. 

{\color{black}{\begin{thm} \label{thm:chi2} 
			Suppose $\theta = \mf{1}_n$ and \eqref{altconditions} holds. If $|\tilde{\lambda}_1|/\sqrt{\lambda_1}\to\infty$ under the alternative hypothesis, the power of the level-$\kappa$ SgnQ test goes to $1$, while the power of the level-$\kappa$  $\chi^2$-test goes to $\kappa$. 
\end{thm}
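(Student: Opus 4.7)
The SgnQ claim is immediate from Theorem~\ref{thm:alt-SgnQ} specialized to $\theta = \mathbf{1}_n$, since conditions \eqref{altconditions} imply all hypotheses of that theorem. My plan therefore focuses on the $\chi^2$ statement. The strategy is to show that under the identifiability condition $Ph \propto \mathbf{1}_K$ combined with $\theta = \mathbf{1}_n$, the $\chi^2$-statistic $X_n$ has the same asymptotic $N(0,1)$ limit (after centering and scaling) under the alternative as under the null, so that the rejection probability of the one-sided level-$\kappa$ test converges to $\kappa$ in both cases.

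First I would formalize the \textbf{degree-matching step}. Using $Ph = \beta \mathbf{1}_K$ for some scalar $\beta > 0$ and $\Pi' \mathbf{1}_n = nh$, one computes $\Omega \mathbf{1}_n = n \Pi P h = n \beta \mathbf{1}_n$, so $\mathbb{E}[y_i] = n\beta - \Omega(i,i)$ is essentially constant across $i$. Consequently $\alpha_0 = \beta$, $\hat\alpha \to \beta$ in probability, and $y_i - n\hat\alpha = \tilde y_i + O_p(1)$ uniformly in $i$, where $\tilde y_i = y_i - \mathbb{E}[y_i]$. In other words, the ``signal'' $\sum_i (\mathbb{E}[y_i] - n\alpha_0)^2$ noted in the paragraph preceding the theorem vanishes identically --- the formal statement of the degree-matching phenomenon for this model.

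Next I would \textbf{match the limit laws} of $X_n$ under the two hypotheses. Plugging the expansion into \eqref{DefineChi2} yields
\begin{equation*}
\frac{X_n - n}{\sqrt{2n}} = \frac{1}{\sqrt{2n}\,(n-1)\hat\alpha(1-\hat\alpha)} \sum_{i=1}^n \bigl[ \tilde y_i^2 - (n-1)\hat\alpha(1-\hat\alpha) \bigr] + o_p(1).
\end{equation*}
A Lindeberg/martingale CLT applied to the quadratic form $\sum_i \tilde y_i^2$ --- valid because the $\tilde y_i$ depend on one another only through single shared Bernoulli edges, and \eqref{altconditions} supplies the needed moment bounds --- gives $(X_n - n)/\sqrt{2n}\Rightarrow N(0,1)$ under the alternative, identically to the null. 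The conclusion follows immediately: $\mathbb{P}_{H_1}\{(X_n - n)/\sqrt{2n} > z_\kappa\} \to \mathbb{P}(N(0,1) > z_\kappa) = \kappa$.

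The main obstacle is controlling a \textbf{second-moment bias} that appears in the CLT step. Although degree matching eliminates the first-order signal, the Bernoulli variances $\Omega_{ij}(1 - \Omega_{ij})$ still differ across communities, so $\sum_i [\mathrm{Var}_{\mathrm{alt}}(y_i) - \mathrm{Var}_{\mathrm{null}}(y_i)]$ contributes a nonzero bias to $\mathbb{E}[X_n]$. Using the identity $\|\Omega - \beta \mathbf{1}_n \mathbf{1}_n'\|_F^2 = \sum_{k,\ell} n_k n_\ell (P_{k\ell} - \beta)^2$ together with the constraint $b = [c(n-N) - aN]/(n-2N)$ from \eqref{2group-DCBM-b}, one finds the entries of $P - \beta \mathbf{1}_K \mathbf{1}_K'$ are of order $(a-c)$, $(N/n)(a-c)$, and $(N/n)^2(a-c)$. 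The resulting Frobenius bound $O(N^2(a-c)^2)$, combined with $a = O(1)$, $cn \to \infty$, $c \to 0$, and $N \ll n$, keeps the bias $o(\sqrt n)$ and preserves the Gaussian limit, completing the proof.
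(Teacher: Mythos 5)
Your overall route is the same as the paper's: use Theorem \ref{thm:alt-SgnQ} for SgnQ, use $Ph\propto {\bf 1}_K$ to show $\Omega{\bf 1}_n = n\alpha_0{\bf 1}_n$ (so the first-order degree signal vanishes), and then prove a CLT for the centered degree quadratic form (the paper does this via the identity $[(n-1)\hat\alpha(1-\hat\alpha)](X_n-n)=\sum_{i,j,k\,(dist)}(A_{ik}-\hat\alpha)(A_{jk}-\hat\alpha)$, a decomposition into $T_{n1}+T_{n2}+T_{n3}$, and a martingale CLT for $T_{n1}$). You also correctly identify the real obstacle: the nonstochastic second-moment term, which in the paper's notation is $T_{n3}\approx -\sum_{i\neq j}(\Omega_{ij}-\alpha)^2$ entering the mean of $X_n$ after division by $(n-1)\alpha(1-\alpha)\asymp nc$.

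The gap is in your last step. You assert that the Frobenius bound $\sum_{i\neq j}(\Omega_{ij}-\alpha)^2 = O(N^2(a-c)^2)$ together with $a=O(1)$, $cn\to\infty$, $c\to 0$, $N\ll n$ forces the bias to be $o(\sqrt n)$, i.e.\ $N^2(a-c)^2 = o(c\,n^{3/2})$. This does not follow from the hypotheses of the theorem. The stated conditions (including $N(a-c)/\sqrt{cn}\to\infty$ and $b\geq 0$, which gives $aN\lesssim cn$) only pin $N^2(a-c)^2$ between $cn$ and $(cn)^2$, and the window $c\,n^{3/2}\ll N^2(a-c)^2\lesssim c^2n^2$ is nonempty whenever $c\gg n^{-1/2}$. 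Concretely, $c=n^{-0.3}$, $N=n^{0.85}$, $a-c=n^{-0.2}$ satisfies \eqref{altconditions}, $b\geq 0$ via \eqref{2group-DCBM-b}, and $N(a-c)/\sqrt{cn}=n^{0.3}\to\infty$, yet $N^2(a-c)^2/(c\,n^{3/2})=n^{0.1}\to\infty$, so the (negative) mean shift of $X_n$ dominates the $\sqrt{2n}$ noise scale and the rejection probability does not converge to $\kappa$. This is exactly why the paper's own proof (Theorem \ref{thm:chi2_sup} in the supplement) imposes the additional regularity condition $\sum_{ij}(\Omega_{ij}-\alpha)^2=o(\alpha n^{3/2})$ and proves powerlessness only for alternatives satisfying it; your argument needs either this extra hypothesis or a new idea, and as written the bias-control claim is false. (A smaller issue: the plug-in effect of $\hat\alpha$ is not just a uniform $O_p(1)$ perturbation of each $y_i-n\alpha$; it produces correlated cross terms, which the paper controls via the $U_nV_nZ_n$ factorization borrowed from the $\chi^2$ analysis---worth spelling out, though this part is repairable.)
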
 }}

\vspace{-2mm}

\subsection{The statistical lower bound and the optimality of  the scan test}  \label{subsec:statLB}
For lower bounds,  it is standard to consider a random-membership DCBM \citep{JKL2019}, where $\|\theta\|_1=n$, $ P$ is as in \eqref{2group-DCBM}-\eqref{2group-DCBM-b} and for a number $N \ll n$,  $\Pi = [\pi_1, \pi_2, \ldots, \pi_n]'$ 
satisfies 
\beq \label{random-Pi}
\pi_i = (X_i, 1-X_i), \qquad \mbox{where $X_i$ are iid Bernoulli($\eps)$ with $\eps = N/n$}.  
\eeq

{\color{black}{\begin{thm}[Statistical lower bound] \label{thm:statLB1}
			Consider the null and alternative hypotheses of Section \ref{subsec:identifiability}, and assume that \eqref{random-Pi} is satisfied, $\theta_{\max}\leq C\theta_{\min}$ and $Nc/\log n\to\infty$. If $\sqrt{N}(a-c)/\sqrt{c}\to 0$, then for any test, the sum of the type-I and type-II errors tends to $1$. 
		\end{thm}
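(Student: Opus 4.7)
The strategy is a second-moment ($\chi^2$-divergence) lower bound: by Le~Cam's inequality, $\text{typeI}+\text{typeII}\geq 1-\sqrt{\chi^2(P_1,P_0)/2}$, so it suffices to prove $\chi^2(P_1,P_0)\to 0$, where $P_1$ is the Bayes mixture over the random memberships $\pi$ from \eqref{random-Pi}. I would pick the null parameter to match the marginal edge density of the alternative, i.e.\ $\alpha=\beta:=\mathbb{E}_\pi[\pi_i'P\pi_j]$; with this calibration $\mathbb{E}_\pi[p_\pi(i,j)]=p_0(i,j)$ for every pair. Introducing an independent copy $\pi'$ and using the product form of the Bernoulli likelihood,
\[
\chi^2(P_1,P_0)+1 \;=\; \mathbb{E}_{\pi,\pi'}\!\prod_{i<j}\!\Big(1+\tfrac{(p_\pi-p_0)(p_{\pi'}-p_0)}{p_0(1-p_0)}\Big).
\]

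The key algebraic pivot is the identity forced by the Sinkhorn identifiability $Ph\propto \mathbf{1}_K$: writing $Y_i:=X_i-\epsilon$ with $\epsilon=N/n$ and substituting, the calibrated value $b=[c(1-\epsilon)-a\epsilon]/(1-2\epsilon)$ from \eqref{2group-DCBM-b} is exactly what makes the coefficient of $(Y_i+Y_j)$ vanish, leaving
\[
p_\pi(i,j)-p_0(i,j)\;=\;\theta_i\theta_j\,\tfrac{a-c}{1-2\epsilon}\,Y_iY_j.
\]
Combined with $p_0(1-p_0)=\alpha\theta_i\theta_j(1+o(1))$ and $\alpha\sim c$, each factor becomes $r_{ij}=t\,\theta_i\theta_j\,Y_iY_i'Y_jY_j'\,(1+o(1))$ with $t=(a-c)^2/c$, a purely bilinear form in the symmetrized variables $u_i:=Y_iY_i'$. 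Without the Sinkhorn-calibrated $b$, a linear-in-$Y$ piece would dominate and the chi-squared could never vanish.

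Applying $1+x\leq e^x$ and the telescoping identity $\sum_{i<j}\theta_i\theta_ju_iu_j=\tfrac12(T^2-\sum_i\theta_i^2u_i^2)$ with $T:=\sum_i\theta_iu_i$, I would reduce the task to $\mathbb{E}_{X,X'}[\exp(\tfrac{t}{2}T^2)]\to 1$. Under $\theta_{\max}\leq C\theta_{\min}$, $T$ is a sum of independent, mean-zero random variables bounded by $O(1)$ with variance $\sigma^2\asymp n\epsilon^2=N^2/n$. The hypothesis $\sqrt{N}(a-c)/\sqrt{c}\to 0$ yields $t\sigma^2\leq\tfrac{(a-c)^2N}{c}\cdot\tfrac{N}{n}=o(1)$, which places us in the natural ``sub-Gaussian regime''.

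The main obstacle is the tail control: $T$ has sub-exponential (not sub-Gaussian) tails when $(a-c)^2n/c$ is large, so a direct Gaussian moment bound for $T^2$ is not automatic. The robust route is the subgraph expansion $\prod_{i<j}(1+r_{ij})=\sum_{G}\prod_{(i,j)\in G}r_{ij}$; since $\mathbb{E}[Y_i]=0$, only graphs whose non-isolated vertices all have degree $\geq 2$ survive. For such $G$ with $v$ non-isolated vertices and $e\geq v$ edges, the expected contribution is at most $\binom{n}{v}2^{\binom{v}{2}}(t\theta_{\max}^2)^e\epsilon^{2v}$; the dominant terms come from cycles ($e=v$), contributing a geometric series in $x=(a-c)^2\theta_{\max}^2N^2/(cn)\to 0$. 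The condition $Nc/\log n\to\infty$ is used to absorb the $(1+o(1))$ factors in $p_0(1-p_0)$ uniformly and to justify the approximation $\alpha\sim c$, while careful enumeration of admissible multigraphs with minimum degree $\geq 2$ shows denser $G$ ($e>v$) contribute at most $o(1)$ as well. Combining, $\chi^2(P_1,P_0)\to 0$, hence type-I + type-II $\to 1$.
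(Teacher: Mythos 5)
Your setup and the crucial algebraic pivot are exactly right: the Sinkhorn calibration $Ph\propto\mathbf{1}_K$ is what makes the linear-in-$(X_i-\epsilon)$ term vanish, and the paper indeed arrives at the same identity $\Omega_{ij}=\theta_i\theta_j\bigl[\alpha_0+\tfrac{a-c}{1-2\epsilon}u_iu_j\bigr]$ and the same reduction to a bilinear form. Where your plan has a genuine gap is in the last step. You reduce the problem to showing $\chi^2(P_1,P_0)\to 0$, where $P_1$ is the untruncated Bayes mixture over $\Pi$, and you correctly note that the moment-generating-function route $\mathbb{E}[\exp(\tfrac{t}{2}T^2)]$ is delicate. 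But the subgraph expansion does not rescue you: it computes $\chi^2$ \emph{exactly}, every term $\mathbb{E}\prod_{(i,j)\in G} r_{ij}=t^{e}\prod_i\theta_i^{d_i}\bigl(\mathbb{E}Y_i^{d_i}\bigr)^2$ is nonnegative, there is no cancellation, and $\chi^2$ itself need \emph{not} tend to zero under the theorem's hypotheses. Concretely, take $\theta\equiv 1$, $N=\sqrt{n}$, $c$ constant, and $(a-c)^2/c=1/(\sqrt{n}\log n)$, so that $\sqrt{N}(a-c)/\sqrt{c}=1/\sqrt{\log n}\to 0$ and $Nc/\log n\to\infty$. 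The single configuration $X_i=X_i'=1$ for all $i$ has prior mass $\epsilon^{2n}=n^{-n}$ and contributes at least $\epsilon^{2n}\exp\bigl((1-o(1))\,t n^2/2\bigr)=\exp\bigl(-n\log n + (1-o(1))\,n^{3/2}/(2\log n)\bigr)\to\infty$ to $\chi^2+1$. So $\chi^2\to\infty$ while total variation still $\to 0$: the untruncated second moment is the wrong quantity. The denser-graph terms $e>v$ you claim ``contribute at most $o(1)$'' are exactly where this blowup appears in the expansion, and the $2^{\binom{v}{2}}$ factor in your stated bound already signals the problem.

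The paper's fix is not optional: it truncates $\Pi$ to the high-probability event $\mathcal{M}=\{\sum_i\pi_i(1)\le 2n\epsilon\}$ \emph{before} applying the second-moment argument, splits the $L_1$ distance into $\ell_0$ (truncated second moment) and $\ell_1=\mu(\mathcal{M}^c)=o(1)$, and then bounds the truncated quantity. The truncation is what makes the rare dense-$\Pi$ contribution harmless; on $\mathcal{M}\times\mathcal{M}$ the relevant sums $Y_s=\sum_i b_i^s u_i\tilde u_i$ satisfy $|Y_s|\le 3N$, after which a truncated-Bernstein lemma (their Lemma I.1) gives $\mathbb{E}\bigl[e^{\gamma Y_s^2}\mathbf{1}\{|Y_s|\le 3N\}\bigr]\le 1+O(\gamma N)=1+O\!\bigl(\theta_{\max}^2 N(a-c)^2/c\bigr)$, combined with a Jensen step over the geometric series from $(1-\alpha_0\theta_i\theta_j)^{-1}$. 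Your approach as written cannot produce this; you would need to add the truncation at the outset (replace $\chi^2$ by the $\mathcal{M}$-truncated second moment, control $\mu(\mathcal{M}^c)$, and restrict $|T|\lesssim N$ before taking the exponential moment), at which point you essentially recover the paper's argument.
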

}}



To show the tightness of this lower bound, we introduce the signed scan test, 
by adapting the idea in \cite{Ery1} from the SBM case to the DCBM case. Unlike the SgnQ test and the $\chi^2$-test,  
signed scan test is not a polynomial time test, but it provides sharper upper bounds. 
Let $\hat{\eta}$ be the same as in \eqref{DefineSgnQ}. For any subset $S\subset\{1,2,\ldots,n\}$, let ${\bf 1}_{S}\in\mathbb{R}^n$ be the vector whose $i$th coordinate is $1\{i\in S\}$. 
Define the signed scan statistic
\beq \label{DefineScan}
\phi_{sc} = \max_{S \subset \{1,2,\ldots,n\}: |S| = N} 
{\bf 1}'_S \big( A - \hat \eta \hat \eta' \big) {\bf 1}_S. 	
\eeq 

{\color{black}{	\begin{thm}[Tightness of the statistical lower bound] \label{thm:scan}
			Consider the signed scan test \eqref{DefineScan} that rejects the null hypothesis if $\phi_{sc}>t_n$. Under the assumptions of Theorem~\ref{thm:statLB1}, if $\sqrt{N}(a-c)/\sqrt{c\log(n)}\to\infty$, then there exists a sequence $t_n$ such that the sum of type I and type II errors of the signed scan test tends to $0$. 
		\end{thm}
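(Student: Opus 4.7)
The plan is to show that $\phi_{sc}$ is, to leading order, $\max_{|S|=N}\mathbf{1}_S' W \mathbf{1}_S$; bound this maximum via Bernstein and a union bound, yielding a null threshold $t_n^{-}\asymp N\sqrt{Nc\log(n/N)}$; and then verify that at $S^\star = \mathcal{C}_1$ the scan statistic attains a signal of order $N^2(a-c)$. The assumption $\sqrt{N}(a-c)/\sqrt{c\log n}\to\infty$ is precisely what makes this signal dominate $t_n^{-}$, so any $t_n$ in between gives both vanishing Type I and Type II error.

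For the null step, decompose $A - \hat\eta\hat\eta' = -\diag(\Omega) + W + (\eta\eta' - \hat\eta\hat\eta')$, using the identity $\Omega = \eta\eta'$ that holds under the null since $\eta = \sqrt{\alpha}\theta$ (by $\Omega = \alpha\theta\theta'$ and $\|\theta\|_1 = n$). The diagonal term contributes only $\sum_{i\in S}\Omega_{ii} = O(N\alpha)$, which is much smaller than $t_n^{-}$. For the rank-one residual, a first-order expansion $\hat\eta\hat\eta' - \eta\eta' = (\hat\eta-\eta)\eta' + \eta(\hat\eta-\eta)' + (\hat\eta-\eta)(\hat\eta-\eta)'$, combined with the concentration of $A \mathbf{1}_n$ around $\Omega \mathbf{1}_n$ (which is strong because $n\alpha\to\infty$), produces a uniform bound $\max_{|S|=N}|\mathbf{1}_S'(\hat\eta\hat\eta' - \eta\eta')\mathbf{1}_S| = o(t_n^{-})$. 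What remains, $\mathbf{1}_S' W \mathbf{1}_S = \sum_{i\neq j,\,i,j\in S} W_{ij}$, is a sum of $\binom{N}{2}$ independent centered Bernoullis with total variance bounded by $N^2 c$, and Bernstein plus a union bound over $\binom{n}{N}\leq \exp(N\log(en/N))$ subsets yields $\max_{|S|=N}|\mathbf{1}_S' W \mathbf{1}_S|\lesssim N\sqrt{Nc\log(n/N)}$ with high probability. Here the assumption $Nc/\log n \to \infty$ ensures the Bernstein variance term, not the sub-exponential term, is what governs the bound.

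For the alternative step, take $S^\star = \mathcal{C}_1$. Under the random-membership model \eqref{random-Pi}, $|S^\star|\sim\mathrm{Binomial}(n, N/n)$ concentrates at $N$ with fluctuation $O(\sqrt{N})$, so I pad or truncate $S^\star$ to a size-$N$ set $\tilde S$ at the cost of a $1+o(1)$ factor in the signal. The signal equals $\mathbf{1}_{\tilde S}' \widetilde\Omega \mathbf{1}_{\tilde S}$; using $\widetilde \Omega = \Theta\Pi \widetilde P \Pi'\Theta$ together with $Pd\propto \mathbf{1}_K$ from \eqref{identifiable}, I compute $\widetilde P(1,1) = (a-b)(1-\eps) = (a-c)(1-\eps)^2/(1-2\eps) \sim (a-c)$. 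The assumption $\theta_{\max}\leq C\theta_{\min}$, combined with $\|\theta\|_1 = n$, gives all $\theta_i \asymp 1$ and hence $\mathbf{1}_{\tilde S}' \Theta\Pi \approx (nd_1)\,\mathbf{e}_1'$ with $d_1\sim\eps = N/n$, producing a signal of order $N^2(a-c)$. The noise $\mathbf{1}_{\tilde S}' W \mathbf{1}_{\tilde S}$ has standard deviation $\lesssim N\sqrt{c}$, much smaller than the signal, and the $\hat\eta\hat\eta' - \eta\eta'$ residual at $\tilde S$ is again lower order by the same degree-concentration argument as above. Choosing, for example, $t_n = 2N\sqrt{Nc\log(n/N)}$ completes both directions.

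The main technical obstacle is the uniform control, across all $\binom{n}{N}$ size-$N$ subsets, of the rank-one correction $\hat\eta\hat\eta' - \eta\eta'$: since its fluctuations scale linearly rather than like the square root of $|S|$, showing that it remains negligible at the $N\sqrt{Nc\log n}$ scale requires carefully exploiting the sharp concentration of the degree vector at the level $\alpha n$ (null) or $cn$ (alternative). Once this is handled, Bernstein with union bound for $W$ and the signal identification through $\widetilde P$ are direct.
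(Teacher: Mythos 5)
Your proposal is correct and captures the essential structure of the paper's argument: decompose the statistic, control the plug-in error $\hat\eta\hat\eta' - \eta\eta'$ uniformly over size-$N$ subsets via degree concentration, bound the residual noise $\max_{|S|=N}|\mathbf{1}_S'W\mathbf{1}_S|$ with a union bound, and identify the signal at $\mathcal{C}_1$ via $\widetilde\Omega$. The main difference is the concentration instrument. You use plain Bernstein (with a deterministic threshold $t_n \asymp N\sqrt{Nc\log(n/N)}$), which is sufficient here because the inherited assumption $Nc/\log n\to\infty$ from Theorem~\ref{thm:statLB1} puts you squarely in the Gaussian (variance-dominated) regime of Bernstein. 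The paper instead uses Bennett's inequality through the function $h(u)=(1+u)\log(1+u)-u$, yielding a sharper, data-dependent threshold $\hat\tau$ built from an estimate $\hat\gamma$ of the edge density; this buys two things you don't get: (i) a threshold that the statistician can actually compute (yours depends on the unknown $c$), and (ii) a version of the result (Corollary~\ref{cor:scan_test} in the supplement) that also covers the Poisson-tail regime $Nc\lesssim\log n$, which matters for the paper's discussion of general degree heterogeneity. You correctly flag that the uniform plug-in bound across $\binom{n}{N}$ subsets is the delicate step, and your sketch of it (cross-term $\mathbf{1}_S'(\hat\eta-\eta)\cdot\eta'\mathbf{1}_S \lesssim \sqrt{(N^2/n)\log(n/N)}\cdot N\sqrt{\alpha}$, which is smaller than $t_n$ by a factor $\sqrt{N/n}$) is the right calculation; you would still need to write out the $V$-vs-$v$ correction and the quadratic term, as the paper does in Lemma~\ref{lem:scan_plugin}, but there is no conceptual obstruction. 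Your signal identity $\widetilde P(1,1) = (a-b)(1-\eps) \sim (a-c)$ is correct and matches the paper's Lemma~\ref{lemma:d} specialized to SBM. So: same skeleton, different (and more elementary) concentration machinery, at the cost of adaptivity and regime coverage.
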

}}	

By Theorems~\ref{thm:statLB1}-\ref{thm:scan} and 
Corollary \ref{cor:SgnQtest}, the two hypotheses are asymptotically indistinguishable if $\sqrt{N}(a-c)/\sqrt{c}\to 0$, and are asymptotically distinguishable by the SgnQ test if $N(a-c)/\sqrt{cn}\to \infty$.  Therefore, the lower bound is sharp, up to log-factors, and the signed scan test is nearly optimal. 
Unfortunately, the signed scan test is not polynomial-time computable. Does there exist a polynomial-time computable 
test that is optimal? We address this in the next section.

\vspace{-2mm}

\subsection{The computational lower bound} \label{subsec:compLB}
Consider the same hypothesis pair as in Section~\ref{subsec:statLB}, where  $K=2$, $ P$ is as in \eqref{2group-DCBM}-\eqref{2group-DCBM-b}, and $\Pi$ is as in \eqref{random-Pi}. 
For simplicity, we only consider SBM, i.e., $\theta_i\equiv 1$. 
The low-degree polynomials argument emerges recently as a major tool to predicting the average-case computational barriers in a wide range of high-dimensional problems \citep{hopkins2017bayesian, hopkins2017power}. Many powerful methods, such as spectral algorithms and approximate message passing, can be formulated as functions of the input data, where the functions are polynomials with degree at most logarithm of the problem dimension. In comparison to many other schemes of developing computational lower barriers, the low-degree polynomial method yields the same threshold for various average-case hardness problems, such as community detection in the SBM \citep{hopkins2017bayesian} and (hyper)-planted clique detection \citep{hopkins2018statistical,luo2022tensor}. 
The foundation of the low-degree polynomial argument is the following {\it low-degree polynomial conjecture} \citep{hopkins2017power} :
\begin{conjecture}[Adapted from \cite{kunisky2019notes}] \label{conjecture}
	Let $\mathbb{P}_n$ and $\mathbb{Q}_n$ denote a sequence of probability measures with sample space $\mathbb{R}^{n^k}$ where $k = O(1)$. Suppose that every polynomial $f$ of degree $O(\log n)$ with $\mathbb{E}_{\mathbb{Q}_n} f^2 = 1$ is bounded under $\mathbb{P}_n$ with high probability as $n\to \infty$ and that some further regularity conditions hold. Then there is no polynomial-time test distinguishing $\mathbb{P}_n$ from $\mathbb{Q}_n$ with type I and type II error tending to $0$ as $n \to \infty$.
\end{conjecture}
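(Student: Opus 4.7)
The plan for attacking this statement is subtle, because what the authors present is a well-known conjecture imported from the low-degree polynomial literature rather than a new theorem of their own. Consequently I do not expect to produce a complete proof; instead I will outline the heuristic route by which one supports it, identify the partial result that is actually provable, and flag precisely where the genuine mathematical barrier sits.

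First I would formalize the natural one-sided implication. Given the hypothesis that every polynomial $f$ of degree $D = O(\log n)$ with $\E_{\mathbb{Q}_n} f^2 = 1$ is bounded in probability under $\mathbb{P}_n$, the standard first step is to consider the truncated likelihood ratio $L_n^{\leq D} := \mathrm{Proj}_{\leq D}(d\mathbb{P}_n / d\mathbb{Q}_n)$, the $L^2(\mathbb{Q}_n)$-projection of the Radon--Nikodym derivative onto polynomials of degree at most $D$. Because $L_n^{\leq D}$ is by construction the optimal degree-$D$ distinguisher in the $L^2$ sense, the hypothesis controls its norm directly, and a Cauchy--Schwarz argument then rules out any degree-$O(\log n)$ polynomial thresholding test that drives the sum of Type I and Type II errors to $0$. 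This portion is fully rigorous and is the content of the low-degree criterion of \cite{kunisky2019notes}; the ``further regularity conditions'' mentioned in the statement are essentially smoothness and stability hypotheses ensuring that $L_n^{\leq D}$ is well-behaved and that $\mathbb{P}_n$ places vanishing mass on events lying outside the support of the polynomial approximation.

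The hard part, and precisely the reason this result remains a conjecture rather than a theorem, is the extrapolation from ``no low-degree polynomial test succeeds'' to ``no polynomial-time test succeeds.'' To make this upgrade rigorous one would need a structural theorem asserting that every polynomial-time decision algorithm is well approximated, on the pair $(\mathbb{P}_n, \mathbb{Q}_n)$, by a polynomial of degree $O(\log n)$; no such theorem is known. The supporting evidence is empirical and meta-theoretic: spectral methods, belief propagation, approximate message passing, and constant-degree sum-of-squares relaxations all collapse to low-degree polynomials in the problems where the prediction has been matched to a conditional hardness reduction (planted clique, SBM community detection, tensor PCA). A plausible strategy would be to isolate a restricted model of computation provably containing these families, prove an approximation theorem against that model, and then argue that every polynomial-time algorithm reduces to it---but this last step would amount to resolving a $\mathbf{P}$ versus $\mathbf{NP}$-style separation that is presently out of reach.

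In short, my ``proof plan'' is to rigorously establish the low-degree form of the conclusion and then to invoke the conjecture verbatim for the polynomial-time upgrade, flagging that the gap between the two is the main obstacle. This is exactly how the conjecture is used in all downstream lower-bound work, and it is how the authors will apply it in the computational lower bound that follows this statement for DCBM small-community detection.
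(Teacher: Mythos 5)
You are right that this statement is a conjecture imported from the low-degree polynomial literature, and the paper likewise offers no proof: it cites \cite{kunisky2019notes} and \cite{hopkins2018statistical} for the precise form and regularity conditions, and uses it only as a hypothesis when deducing the computational lower bound in Theorem \ref{thm:compLB} (where the rigorous content is exactly the bound on the degree-$D$ projected likelihood ratio that you describe). Your assessment and intended usage match the paper's treatment, so there is nothing missing relative to what the paper itself establishes.
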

We refer to \cite{hopkins2018statistical} for a precise statement of this conjecture's required regularity conditions. The low-degree polynomial computational lower bound for our testing problem is as follows. 


\begin{thm}[Computational lower bound]\label{thm:compLB}
	{\color{black}{Consider the null and alternative hypotheses in Section \ref{subsec:identifiability}, and assume $\theta_i\equiv 1$ and \eqref{random-Pi} holds.}} As $n\to\infty$,
	assume $c < a$, $c<1-\delta$ for constant $\delta>0$, $N < n/3$,  $D = O(\log n)$, and
	$\limsup_{n\to\infty} \left\{ \left(\log_n\frac{N}{\sqrt{n}} +  \log_n\frac{a-c}{\sqrt{c}}\right) \vee \left(\sqrt{D/2-1}\log_n\frac{a-c}{\sqrt{c}}\right)\right\} < 0.$
	For any series of degree-$D$ polynomials $\phi_n: A \to \mathbb{R}$, whenever $\mathbb{E}_{H_0}\phi_n(A)=0, \textrm{Var}_{H_0}(\phi_n(A))=1$, we must have $\mathbb{E}_{H_1} \phi_n(A) = o(1)$. This implies if Conjecture~\ref{conjecture} is true, there is no consistent polynomial-time test for this problem. 
\end{thm}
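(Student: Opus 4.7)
The plan is to apply the standard low-degree polynomial framework. Let $L_n = d\mathbb{P}_{H_1}/d\mathbb{P}_{H_0}$ denote the averaged likelihood ratio (with the random $\Pi$ marginalized out), and write $L_n^{\leq D}$ for its $L^2(\mathbb{P}_{H_0})$-projection onto polynomials of degree at most $D$ in the entries of $A$. For any degree-$D$ polynomial $\phi_n$ with $\mathbb{E}_{H_0}\phi_n = 0$ and $\mathrm{Var}_{H_0}(\phi_n) = 1$, one has $\mathbb{E}_{H_1}\phi_n = \langle \phi_n, L_n^{\leq D} - 1\rangle_{H_0}$; since $\|L_n^{\leq D} - 1\|_{H_0}^2 = \|L_n^{\leq D}\|^2 - 1$ (using $\mathbb{E}_{H_0}L_n^{\leq D} = 1$), Cauchy--Schwarz reduces the theorem to showing $\|L_n^{\leq D}\|^2 - 1 = o(1)$.

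Under $H_0$ the entries $A_{ij}$ are iid $\mathrm{Bernoulli}(q)$ with $q = b\epsilon + c(1-\epsilon)$, and the Fourier characters $\chi_S(A) = \prod_{(i,j)\in S}(A_{ij}-q)/\sqrt{q(1-q)}$ indexed by edge-sets $S$ are orthonormal, so $\|L_n^{\leq D}\|^2 - 1 = \sum_{0 < |S| \leq D}\hat L(S)^2$ with $\hat L(S) = \mathbb{E}_X\prod_{(i,j)\in S}\tau_{ij}(X)$ and $\tau_{ij}(X) = (p_{ij}(X) - q)/\sqrt{q(1-q)}$. The key algebraic observation is that the Sinkhorn identifiability constraint \eqref{2group-DCBM-b} forces the rank-one factorization
\[
p_{ij}(X) - q = \frac{a-c}{1-2\epsilon}(X_i - \epsilon)(X_j - \epsilon).
\]
Setting $\beta = (a-c)/[(1-2\epsilon)\sqrt{q(1-q)}]$ and $\mu_k = \mathbb{E}[(X-\epsilon)^k]$, this yields
\[
\hat L(S) = \beta^{|S|} \prod_{v \in V(S)}\mu_{\deg_S(v)},
\]
which vanishes unless the graph $S$ has minimum degree $\geq 2$ (since $\mu_1 = 0$). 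Thus only 2-cores contribute.

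What remains is a combinatorial enumeration over labeled min-degree-$2$ graphs on $[n]$ with at most $D$ edges. Using $|\mu_k| \lesssim \epsilon$ for $k \geq 2$ and $\binom{n}{v}$ for the vertex-labeling, the dominant contribution comes from simple cycles of length $\ell$, whose count is $\asymp n^\ell/\ell$ and whose per-copy contribution is $\beta^{2\ell}\epsilon^{2\ell}$. Their total is $\sum_{\ell \geq 3}[N^2(a-c)^2/(nc)]^\ell/(2\ell)$, which the first hypothesis $\log_n(N/\sqrt n) + \log_n((a-c)/\sqrt c) < -\eta$ forces to be at most $n^{-2\eta}$ per term, hence $o(1)$ overall.

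The main obstacle is controlling min-degree-$2$ graphs with excess $k = e - v > 0$ (theta-graphs and more complex 2-cores). Each extra edge contributes a factor of $\beta^2$ but also a combinatorial factor $\binom{v(v-1)/2}{k}$ which can be as large as $v^{2k}$. Here one invokes the second hypothesis $\sqrt{D/2 - 1}\log_n((a-c)/\sqrt c) < -\eta$, which for $D = O(\log n)$ forces $D^2\beta^2 \to 0$ (in fact super-polynomially in $\log n$). Consequently $v^2\beta^2 = o(1)$ uniformly for $v \leq D$, so the excess-edge factor $\leq \exp(v^2\beta^2/2) = 1 + o(1)$, and the excess graphs contribute no more than a constant multiple of the cycle sum. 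Combining both bounds gives $\|L_n^{\leq D}\|^2 - 1 = o(1)$ and completes the proof modulo the careful bookkeeping of automorphisms and the joint enumeration of vertex-count versus edge-count, which is where the real technical labor resides since the two hypotheses must be applied to different factors of the same sum.
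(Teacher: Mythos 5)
Your proposal is correct and follows essentially the same route as the paper's proof. Both arguments hinge on the same ingredients: the $L^2$-norm of the truncated likelihood ratio computed in the Fourier basis of $\mathrm{Bernoulli}(d)$ (your $q$ equals the paper's $d = \tfrac{c(1-\epsilon)^2 - a\epsilon^2}{1-2\epsilon}$), the rank-one factorization $p_{ij}(X)-q = \tfrac{a-c}{1-2\epsilon}(X_i-\epsilon)(X_j-\epsilon)$ forced by the Sinkhorn calibration (the paper writes it as $p_1 r^{\pi_i-1}r^{\pi_j-1}$ with $r=-N/(n-N)$, which is identical up to reparametrization), the vanishing of all graphs with a degree-1 vertex, and the per-vertex $O(\epsilon)$ bound for vertices of degree $\geq 2$. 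The only visible difference is cosmetic bookkeeping in the enumeration: you organize by cycle length plus excess $k=e-v$, while the paper organizes by $(m,g)=(\text{vertices},\text{edges})$ and bounds the max over $g$ in a given $m$-stratum using $M = \max_m (D\wedge m(m-1)/2)/m \le \sqrt{D/2-1}$. Both formulations apply hypothesis 1 to the leading ($e=v$) stratum and hypothesis 2 to show the edge-excess ratio $v\beta^2$ (equivalently $D p_1^2$) tends to $0$ so the excess sum is controlled. One small precision note: your phrase that the excess graphs contribute ``a constant multiple of the cycle sum'' is slightly loose --- after the Stirling cancellation the excess contribution is a geometric sum with the same base $N^2(a-c)^2/(nc)$ multiplied inside by a fixed constant, not a constant times the cycle sum itself --- but since hypothesis 1 is asymptotic this distinction is immaterial and the conclusion $\|L_n^{\leq D}\|^2 - 1 = o(1)$ follows as you state.
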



By Theorem~\ref{thm:compLB}, if both $(a-c)/\sqrt{c}\lesssim 1$ and $N(a-c)/\sqrt{cn}\to 0$, the testing problem is computationally infeasible. The region where the testing problem is statistically possible but the SgnQ test loses power corresponds to $N(a-c)/\sqrt{cn}\to0$. 
If $N\gtrsim \sqrt{n}$, Theorem~\ref{thm:compLB} already implies that this is the computationally infeasible region; in other words, SgnQ achieves the CLB and is optimal.  If $N=o(\sqrt{n})$, SgnQ solves the detection problem only when $(a-c)/\sqrt{c}\gg n^{1/2}$, i.e. when the node-wise SNR is strong. We discuss the case of moderate node-wise SNR in the next subsection.

%


\vspace{-2mm}

\subsection{The power of EST, and discussions of the tightness of CLB}  \label{subsec:EST}
When $N=o(\sqrt{n})$ and $(a-c)/\sqrt{c}\to\infty$ both hold, the upper bound by SgnQ does not match with the CLB. It is unclear whether the CLB is tight. 
To investigate the CLB in this regime, we consider other possible polynomial-time tests. The economic scan test (EST) is one candidate. 
{\color{black}{Given fixed positive integers $v$ and $e$, the EST statistic is defined to be $
		\phi_{EST}^{(v)}  \equiv \sup_{ |S| \leq v } \sum_{i,j \in S} A_{ij}$, and the EST is defined to reject if and only if $\phi_{EST}^{(v)} \geq e$.
		EST can be computed in time $O(n^{v})$, which is polynomial time.}} 
{\color{black}{For simplicity, we consider the SBM, i.e. where $\theta = {\bf 1}_n$, and a specific setting of parameters for the null and alternative hypotheses.
}}

{\color{black}{
		\begin{thm}[Power of EST]
			\label{thm:EST} 
			Suppose $\beta\in [1/2, 1)$ and  $0<\omega < \delta<1$ are fixed constants. Under the alternative, suppose $\theta = \mf{1}_n$, \eqref{random-Pi} holds, $N=n^{1-\beta}$, $a = n^{-\omega}$, and $c =n^{-\delta}$. Under the null, suppose $\theta = \mf{1}_n$ and $\alpha = a(N/n)+b(1-N/n)$. 
			If $\omega/(1-\beta) < \delta$, the sum of type I and type II errors of the  EST with $v$ and $e$ satisfying $\omega/(1 - \beta) < v/e < \delta$ tends to $0$.
		\end{thm}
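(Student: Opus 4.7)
The proof splits into (i) a Type I bound under the null via a union bound and (ii) a Type II bound under the alternative via a second-moment / subgraph-containment argument.

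\textbf{Type I.} First I would compute the null parameter. Substituting $b = [c(n-N) - aN]/(n - 2N)$ from \eqref{2group-DCBM-b} into $\alpha = a(N/n) + b(1 - N/n)$ and letting $\epsilon := N/n = n^{-\beta} \to 0$, a short calculation gives
\begin{equation*}
\alpha \;=\; \frac{c(1-\epsilon)^2 - a\epsilon^2}{1 - 2\epsilon} \;\asymp\; c.
\end{equation*}
Under the null, $A$ is Erd\H{o}s-R\'enyi with edge probability $\alpha$. A union bound over $v$-subsets of $[n]$ yields
\begin{equation*}
\p_{H_0}\!\bigl(\phi_{EST}^{(v)} \geq e\bigr) \;\leq\; \binom{n}{v}\,\p\!\left(\mr{Bin}(\tbinom{v}{2}, \alpha) \geq e\right) \;\lesssim\; n^v\, \alpha^e \;\asymp\; n^{v - \delta e},
\end{equation*}
which tends to $0$ because $v/e < \delta$.

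\textbf{Type II.} Under the alternative, $|\mc{C}_1| = \sum_i X_i \sim \mr{Bin}(n, n^{-\beta})$ has mean $N \to \infty$, and Chebyshev gives $|\mc{C}_1|/N \to 1$ in probability. Conditional on $\mc{C}_1$, the subgraph of $A$ on $\mc{C}_1$ is Erd\H{o}s-R\'enyi $G(|\mc{C}_1|, a)$, and $\phi_{EST}^{(v)} \geq e$ is implied by the existence of a $v$-vertex subgraph with $\geq e$ edges in $\mc{C}_1$. I would fix a balanced graph $H$ on $v$ vertices with $e$ edges (such an $H$ exists for each $1 \leq e \leq \binom{v}{2}$, e.g., a near-regular graph of the prescribed size and density) and let $Y$ be the number of copies of $H$ in $G(|\mc{C}_1|, a)$. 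Then
\begin{equation*}
\ee[Y \mid \mc{C}_1] \;\asymp\; |\mc{C}_1|^v\, a^e \;\asymp\; N^v\, a^e \;=\; n^{v(1-\beta) - \omega e} \;\to\; \infty
\end{equation*}
since $v/e > \omega/(1 - \beta)$. A standard second-moment computation for $H$-containment in $G(N, p)$---partitioning pairs of copies of $H$ by their vertex overlap $k \in \{0, 1, \ldots, v-1\}$ and invoking the balancedness of $H$ to bound shared edges---gives $\mr{Var}(Y \mid \mc{C}_1) = o\bigl((\ee[Y \mid \mc{C}_1])^2\bigr)$. Chebyshev then yields $\p(Y \geq 1 \mid \mc{C}_1) \to 1$, and $\{Y \geq 1\} \subseteq \{\phi_{EST}^{(v)} \geq e\}$ closes the Type II bound.

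\textbf{Main obstacle.} The non-trivial step is the second moment in the alternative analysis. For each overlap size $k \geq 1$ there are $\asymp N^{2v - k}$ pairs of $H$-copies; balancedness of $H$ (every subgraph has density at most $e/v$) is essential to ensure that the joint edge-inclusion probability is small enough that each overlap class contributes $o\bigl((\ee Y)^2\bigr)$. Without balancedness, some overlap class could dominate $(\ee Y)^2$ and shrink the admissible range of $(v, e)$. Since $v$ and $e$ are fixed constants, the enumeration over $k$ is finite and the bookkeeping tractable; the key combinatorial input is simply the existence of a balanced $(v, e)$-graph.
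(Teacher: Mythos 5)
Your proposal is correct and takes essentially the same route as the paper: a first-moment union bound for Type I, and the existence of a balanced $(v,e)$-graph plus a second-moment argument for Type II. The paper simply quotes the relevant probabilistic-combinatorics result (Theorem 4.4.2 of Alon--Spencer) rather than re-deriving it, and cites Ruci\'nski--Vince/Catlin for the existence of a balanced graph. One small imprecision: a balanced graph on $v$ vertices with $e$ edges exists iff $v-1 \leq e \leq \binom{v}{2}$, not for all $1 \leq e \leq \binom{v}{2}$ (a single edge plus isolated vertices is not balanced, since an isolated vertex raises the density of the complement); fortunately the constraint $v/e < \delta < 1$ forces $e > v > v-1$, so the needed graph does exist in the regime of the theorem, as the paper notes explicitly.
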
		
}}


\begin{figure}[t]
	\centering
	\includegraphics[width=0.325\textwidth]{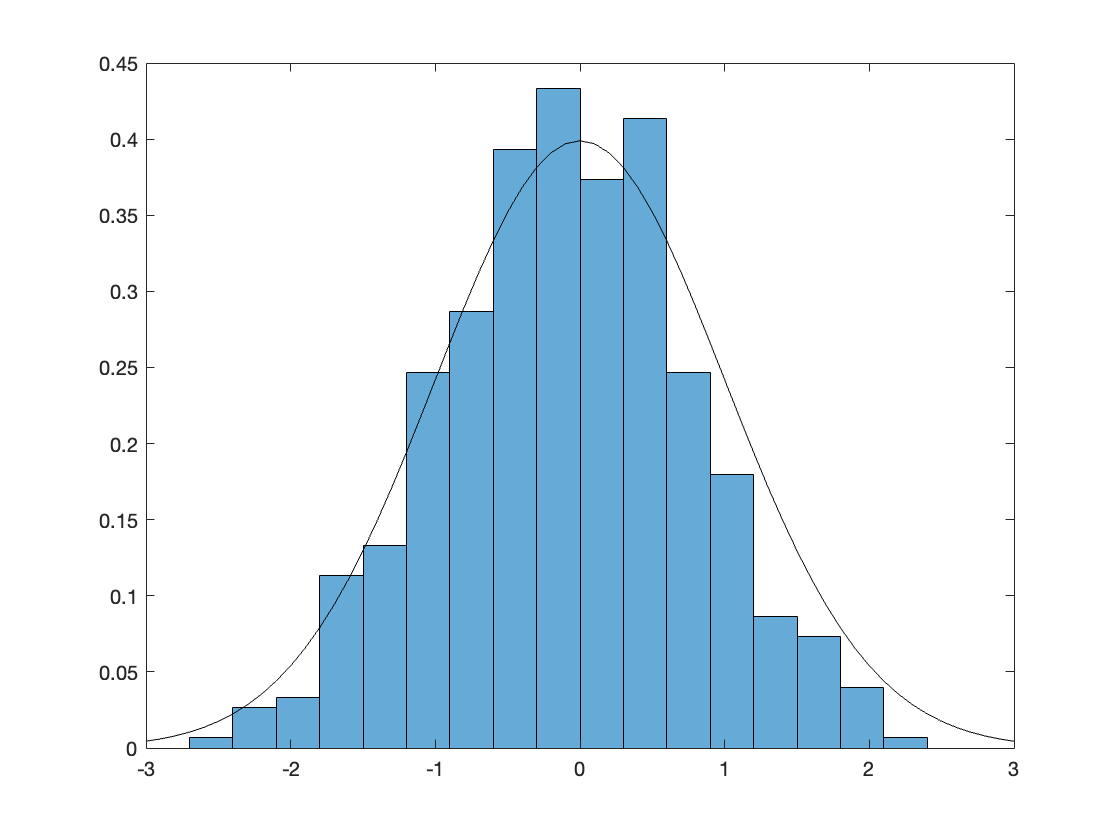}
	\includegraphics[width=.33\textwidth]{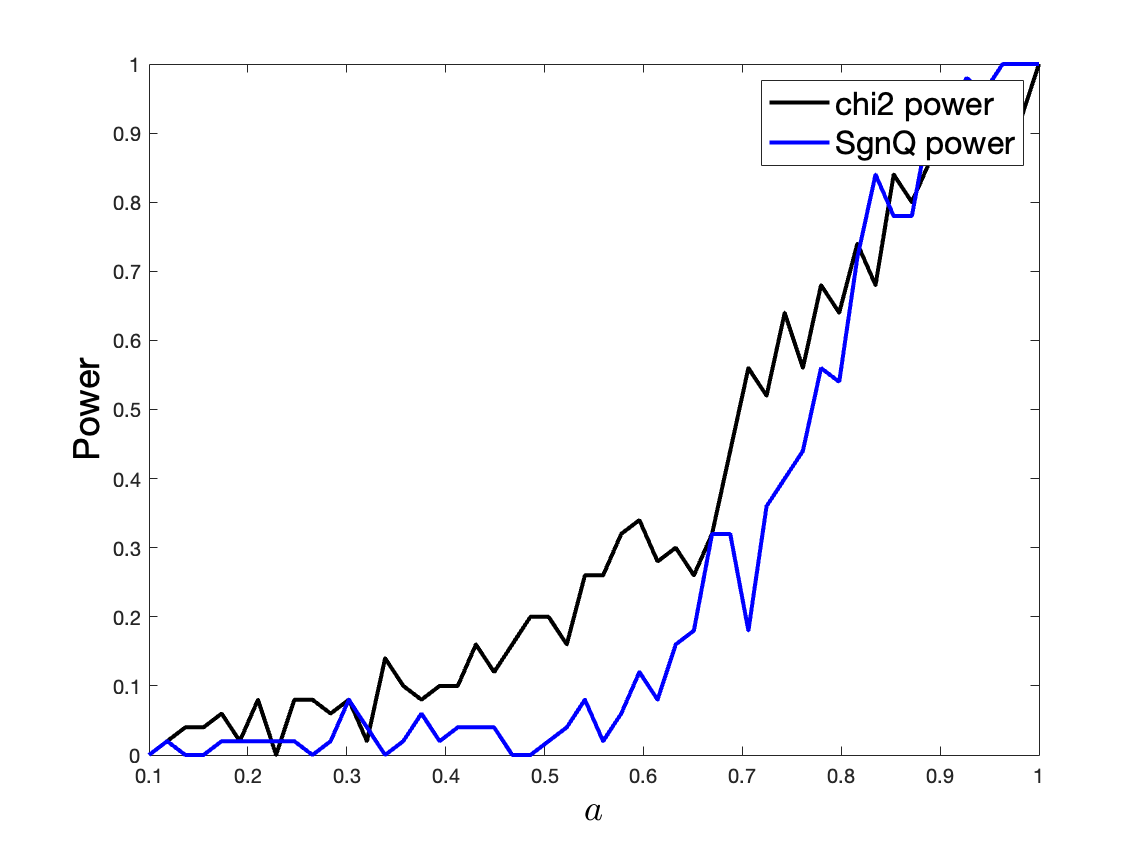}
	\includegraphics[width=.33\textwidth]{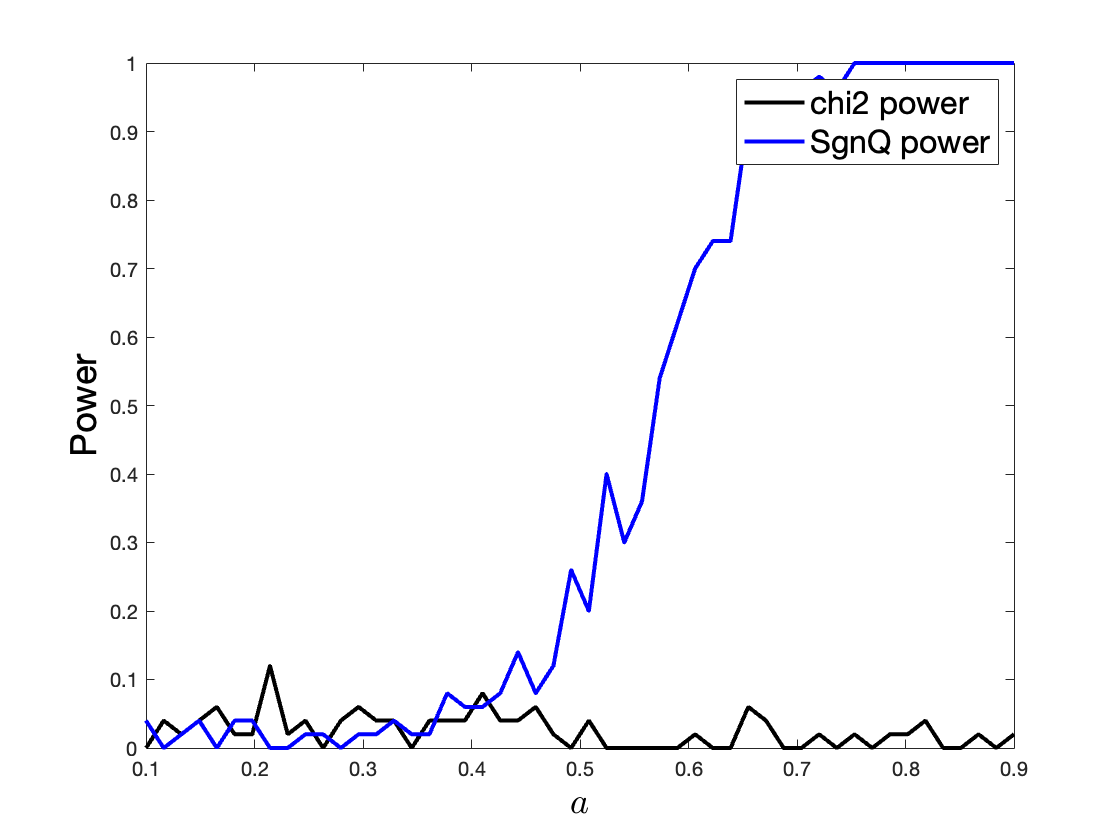}
	\caption{Left: Null distribution of SgnQ ($n = 500$). Middle and right: Power comparison of SgnQ and $\chi^2$ ($n=100$, $N=10$, 50 repetitions). We consider a 2-community SBM with $P_{11} = a$, $P_{22}=0.1$, $P_{12} = 0.1$ (middle plot) and $P_{12} = \frac{an - (a + 0.1)N}{n}$ (right plot, the case of degree matching).}
	\label{fig:chi2_vs_SgnQ}
	\vspace{-.5cm}
\end{figure}

Theorem \ref{thm:EST} follows from standard results in probabilistic combinatorics \citep{alon2016probabilistic}. 
It is conjectured in \citet{bhaskara2010detecting} that EST attains the CLB in the Erd\"{o}s-Renyi setting considered by \cite{Ery1,Ery0}. This suggests that the CLB in Theorem~\ref{thm:compLB} is likely not tight when $N=o(\sqrt{n})$ and $(a-c)/\sqrt{c}\to\infty$. 
However, this is not because our inequalities in proving the CLB are loose. A possible reason is that the prediction from the low-degree polynomial conjecture does not provide a tight bound. It remains an open question whether other computational infeasibility frameworks provide a tight CLB in our problem. 

\vspace{-2mm}

\subsection{The phase transition} 
\label{subsec:phase} 

We describe more precisely our results in terms of the phase transitions shown in Figure \ref{fig:Phase}. Consider the null and alternative hypotheses from Section \ref{subsec:identifiability}. For illustration purposes, we fix constants $\beta\in (0,1)$ and $\gamma\in\mathbb{R}$ and assume that $N= n^{1-\beta}$ and $(a-c)/\sqrt{c}=n^{-\gamma}$. In the two-dimensional space of $(\gamma, \beta)$, the region of $\beta>1/2$ and $\beta<1/2$ corresponds to that the size of the small community is $\gg \sqrt{n}$ and $o(\sqrt{n})$, respectively, and the regions of $\gamma>0$, $-1/2<\gamma<0$ and $\gamma<-1/2$ correspond to `weak node-wise signal', `moderate node-wise signal,' and the `strong node-wise signal', respectively. See Figure~\ref{fig:Phase}. By our results in Section~\ref{subsec:statLB}, the testing problem is statistically impossible if $\beta+2\gamma>1$ (orange region). By our results in Section~\ref{subsec:SgnQnull}, SgnQ has a full power if $\beta+\gamma<1/2$ (blue region). Our results in Section~\ref{subsec:compLB} state that the testing problem is computationally infeasible if both $\gamma>0$ and $\beta+\gamma>1/2$ (green and orange regions). Combining these results, when $\beta<1/2$, we have a complete understanding of the LB and CLB. 

\vspace{-.2cm}

\section{Numerical results}\label{sec:Simu}

\vspace{-.2cm}

{\bf Simulations}. First in Figure \ref{fig:chi2_vs_SgnQ} (left panel) we demonstrate the asymptotic normality of SgnQ under a null of the form $\Omega = \theta \theta'$, where $\theta_i$ are i.i.d. generated from $\mathrm{Pareto}(4, 0.375)$. 
Though the degree heterogeneity is severe, SgnQ  properly standardized is approximately standard normal under the null. Next in Figure \ref{fig:chi2_vs_SgnQ} we compare the power of SgnQ in an asymmetric and symmetric SBM model. As our theory predicts, both tests are powerful when degrees are not calibrated in each model, but only SgnQ is powerful in the symmetric case. 
We also compare the power of SgnQ with the scan test to show evidence of a statistical-computational gap. We relegate these experiments to the supplement.




{\bf Real data}: 
Next we demonstrate the effectiveness of SgnQ in detecting small communities in coauthorship networks studied in \cite{JBES2022}. In Example 1, we consider the personalized network of Raymond Carroll, whose nodes consist of his coauthors for papers in a set of 36 statistics journals from the time period 1975 -- 2015. An edge is placed between two coauthors if they wrote a paper in this set of journals during the specified time period. The SgnQ p-value for Carroll's personalized network $G_{\mathsf{Carroll}}$ is $0.02$, which suggests the presence of more than one community. In \cite{JBES2022}, the authors identify a small cluster of coauthors from a collaboration with the National Cancer Institute. We applied the SCORE community detection module with $K = 2$ (e.g. \cite{KeJin22}) and obtained a larger community $ G_{\mathsf{Carroll}}^{\, 0}$ of size $218$ and a smaller community $G_{\mathsf{Carroll}}^{\, 1}$ of size $17$. Precisely, we removed Carroll from his network, applied SCORE on the remaining giant component, and defined $G_{\mathsf{Carroll}}^{\, 0}$ to be the complement of the smaller community. The SgnQ p-values in the table below suggest that both $G_{\mathsf{Carroll}}^{\, 0}$ and $G_{\mathsf{Carroll}}^{\, 1}$ are tightly clustered. Refer to the supplement for a visualization of Carroll's network and its smaller community labeled by author names. In Example 2, we consider three different coauthorship networks $G_{\mathsf{old}}$, $G_{\mathsf{recent}}$, and $G_{\mathsf{new}}$ corresponding to time periods (i) 1975-1997, (ii) 1995-2007, and (iii) 2005-2015 for the journals AoS, Bka, JASA, and JRSSB. Nodes are given by authors, and an edge is placed between two authors if they coauthored at least one paper in one of these journals during the corresponding time period. 
For each network, we perform a similar procedure as in the first example. First we compute the SgnQ p-value, which turns out to be $\approx 0$ (up to 16 digits of precision) for all networks. For each $i \in \{ \mathsf{old}, \mathsf{recent}, \mathsf{new} \}$, we apply SCORE with $K = 2$ to $G_i$ and compute the SgnQ p-value on both resulting communities, let us call them $G_{i}^0$ and $G_{i}^1$. We refer to the table below for the results.
For $G_{\mathsf{old}}$ and $G_{\mathsf{recent}}$, SCORE with $K = 2$ extracts a small community. The SgnQ p-value further supports the hypothesis that this small community is well-connected. In the last network, SCORE splits $G_{\mathsf{new}}$ into two similarly sized pieces whose p-values suggests they can be split into smaller subcommunities. 
\begin{center}
\scalebox{.8}{
	\renewcommand{\arraystretch}{1.2}
	\begin{tabular}{|c c c c c c c|} 
		\hline Example 
		& Network & Size & SgnQ p-value & Communities  & Sizes &SgnQ p-values \\ 
		\hline
		1 & $G_{\mathsf{Carroll}}$  &235 & 0.02 & $(G_{\mathsf{Carroll}}^{\, 0}, G_{\mathsf{Carroll}}^1)$ & (218, 17) & (0.134, 0.682) \\ 
		\hline 
		2 &$G_{\mathsf{old}}$ &2647 & 0 & $(G_{\mathsf{old}}^0, G_{\mathsf{old}}^1)$ & (2586, 61) & (0, 0.700) \\ 
		& $G_{\mathsf{recent}}$ &2554 & 0 & $(G_{\mathsf{recent}}^0,G_{\mathsf{recent}}^1)$ & (2540,14) & (0, 0.759)\\ 
	 & $G_{\mathsf{new}}$ &2920 & 0 & $(G_{\mathsf{new}}^0, G_{\mathsf{new}}^1)$ & (1685,1235) & (0, 0)  \\ 
		\hline \hline 
\end{tabular}}
\end{center}



{\bf Discussions}: Global testing is a fundamental problem 
and often the starting point of a long line of research. For example, in the literature of 
Gaussian models, certain methods started as a global testing tool, but later 
grew into tools for variable selection,  classification,  and clustering and motivated many researches  (e.g., \cite{DJ2004, DJ2015}). The SgnQ test may also motivate tools for many other problems, such as estimating the locations of the clique and clustering more generally. For example, in \cite{JKLW2022}, the SgnQ test motivated a tool for estimating the number of communities (see also \cite{ma2021determining}). SgnQ is also extendable to clique detection in a tensor \citep{FengTensor, JinKeLiang21} and for network change point detection.  
The LB and CLB we obtain in this paper are also useful for studying other problems, such as 
clique estimation. If you cannot tell whether there is a clique in the network, then it is impossible 
to estimate the clique. Therefore, the LB and CLB are also valid for the clique estimation problem \citep{alon1998finding, ron2010finding}. 

The limiting distribution of SgnQ is $N(0,1)$. This is not easy to achieve 
if we use other testing ideas, such as the leading eigenvalues of the adjacency matrix:
the limiting distribution depends on many unknown parameters and it is hard to normalize \citep{liu2019community}. 
The p-value of the SgnQ test is easy to approximate and also useful in applications. 
For example, we can use it to measure the research diversity of a given author. 
Consider the ego sub-network of an author in a large co-authorship or citation network. 
A smaller p-value suggests that the ego network has more than $1$ communitiy and  has more diverse interests. The p-values can also be useful as a stopping criterion in hierarchical community detection modules.

\textbf{Acknowledgments.} We thank the anonymous referees for their helpful comments. We thank Louis Cammarata for assistance with the simulations in Section A.3. J. Jin was partially supported by NSF grant DMS-2015469. Z.T. Ke was supported in part by NSF CAREER Grant DMS-1943902. A.R. Zhang acknowledges the grant NSF  CAREER-2203741. 


\appendix
\addcontentsline{toc}{section}{Appendix} 
\part{Appendix} 
\parttoc 

\section{Additional experiments}

\subsection{Visualization of Carroll's network} 

In Figure \ref{fig:Carroll}, we display a subgraph of high-degree nodes of Raymond Carroll's personalized coauthorship network (figure borrowed with permission from \citet{JBES2022}). On the right of Figure \ref{fig:Carroll} is shown the small community extracted by SCORE, and this cluster of size $17$ is labeled by author names.  

\begin{figure}[h]
	\centering
	\begin{subfigure}[b]{0.45\textwidth}
		\centering
		\includegraphics[trim={0.25cm 3.5cm 0 5cm},clip,width=\textwidth]{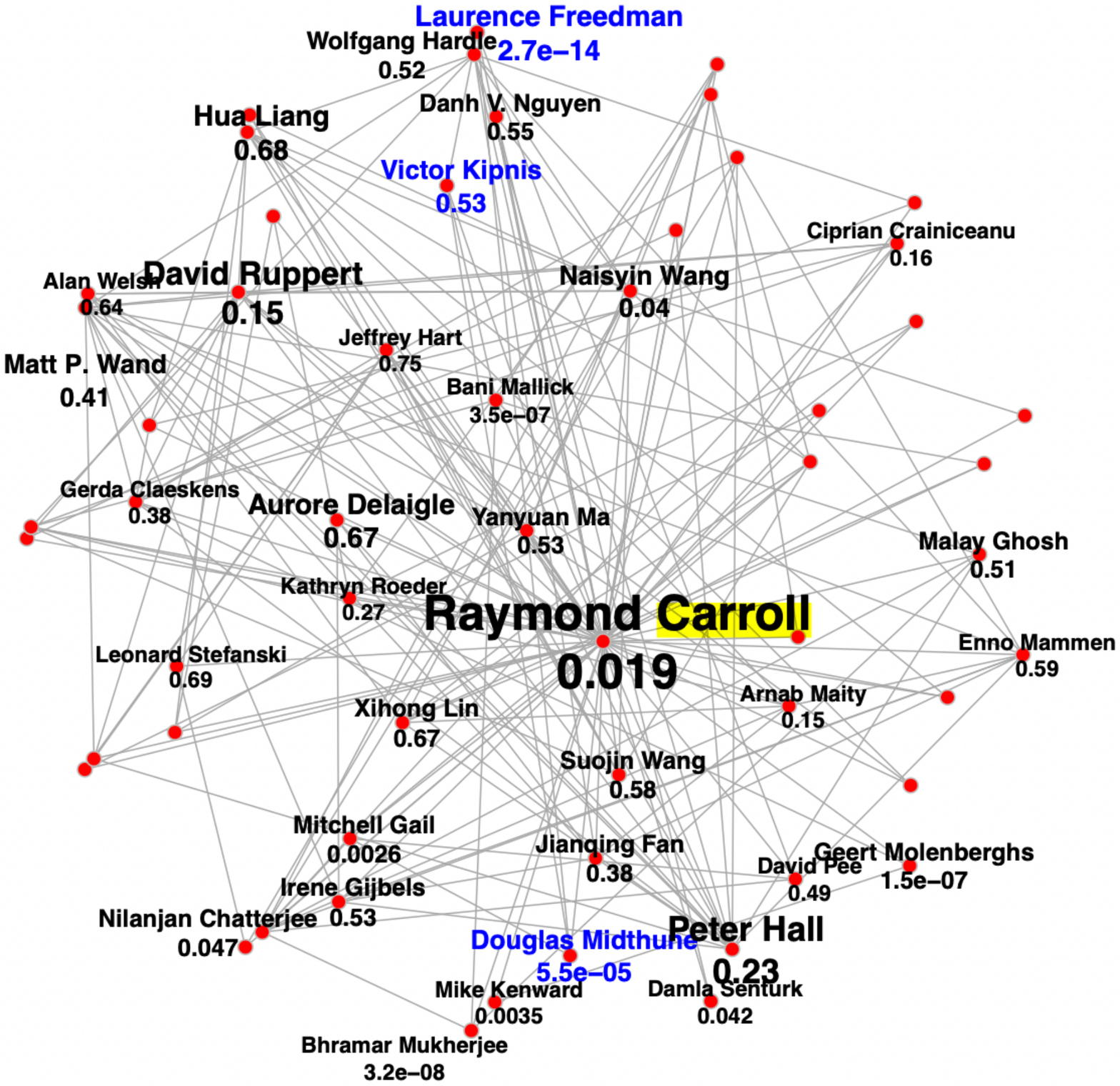}
	\end{subfigure} \hfill 
	\begin{subfigure}[t]{0.5\textwidth}
		\centering
		\includegraphics[trim={0 0 0 0cm},clip,width=\textwidth, angle = 0]{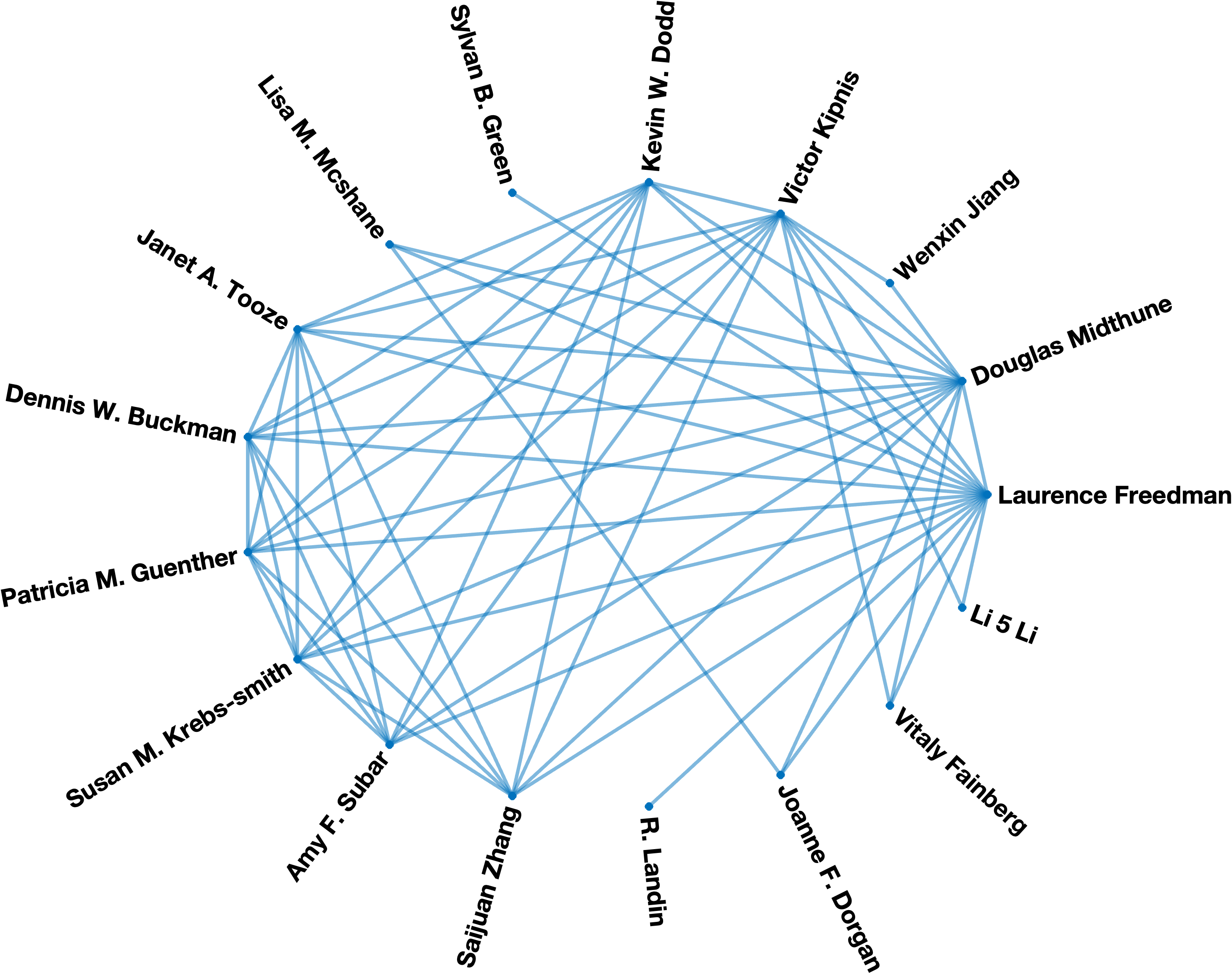}
	\end{subfigure}
	\caption{\textbf{Left:} Carroll's personalized network, figure taken from \cite{JBES2022}. \textbf{Right:} A small community of $17$ authors extracted by SCORE and whose SgnQ p-value is $0.6818$. }
	\label{fig:Carroll}
\end{figure}	

\subsection{SgnQ vs. Scan}

In this section we demonstrate evidence of a statistical-computational gap by means of numerical experiments. 

We consider a SBM null and alternative model (as in Example 2 with $\theta \equiv 1$) with
\[
P_0 = \begin{pmatrix}
	\alpha & \alpha \\
	\alpha & \alpha 
\end{pmatrix}, \qquad 
P_1 = \begin{pmatrix}
	a & b \\
	b & c 
\end{pmatrix}
\]
where $aN + b(n - N) =\alpha$. For this simple testing problem, we compare the power of SgnQ and the scan test. In our experiments, we set $\alpha = 0.2$ and allow the parameter $a$ to vary from $a = \alpha$ to $a = a_{\max} \equiv a n/N$. Once $a$ and $\alpha$ are fixed, the parameters $b$ and $c$ are determined by 
\begin{align*}
	c &= \frac{a N^2+\alpha n^2-2 \alpha n N}{(n-N)^2},
	\\ b &= \frac{nc - (a + c)N}{n - 2N}.
\end{align*}
In particular, $a_{\max}$ is the largest value of $a$ such that $b \geq 0$.

Since the scan test $\phi_{sc}$ we defined is extremely computationally expensive, we study the power of an `oracle' scan test $\ti \phi_{sc}$ which knows the location of the true planted subset $\mathcal{C}_1$. The power of the oracle scan test is computed as follows. Let $\kappa$ denote the desired level. 
\begin{enumerate}
	\item Using $M_{cal}$ repetitions under the null, we calculate  the (non-oracle) scan statistic $\phi_{sc}\rp{1}, \ldots, \phi_{sc}\rp{M_{cal}}$ for each repetition. We set the threshold $\hat \tau$ to be the empirical $1 - \kappa$ quantile of $\phi_{sc}\rp{1}, \ldots, \phi_{sc}\rp{M_{cal}}$. 
	\item Given a sample from the alternative model, we compute the power using $M_{pow}$ repetitions, where we reject if
	\[
	\ti \phi_{sc} \equiv \mf{1}_{\mc{C}_1} (A - \hat \eta \hat \eta') \mf{1}_{\mc{C}_1} > \hat \tau. 
	\]
\end{enumerate}

In our experiments, we set $M_{cal} = 75$ and $M_{pow} = 200$. 

Note that since $\ti \phi_{sc} \leq \phi_{sc}$, the procedure above gives an underestimate of the power of the scan test (provide the threshold is correctly calibrated), which is helpful since this can be used to show evidence of a statistical-computational gap.

In our plots we also indicate the statistical (information-theoretic) and computational thresholds in addition to the power. Inspired by the sharp characterization of the statistical threshold in  \cite[Equation (10)]{Ery1} for planted dense subgraph, in all plots we draw a black vertical dashed line at the first value of $a$ such that 
\[
(1/2) \sqrt{N} (a - c)/\sqrt{c (1 - c)} > 1. 
\]
We draw a blue vertical dashed line at the first value of $a$ such that
\[
N(a - c)/\sqrt{nc} > 1. 
\]

\begin{figure}[tb]
	\centering
	\includegraphics[width=0.45\textwidth]{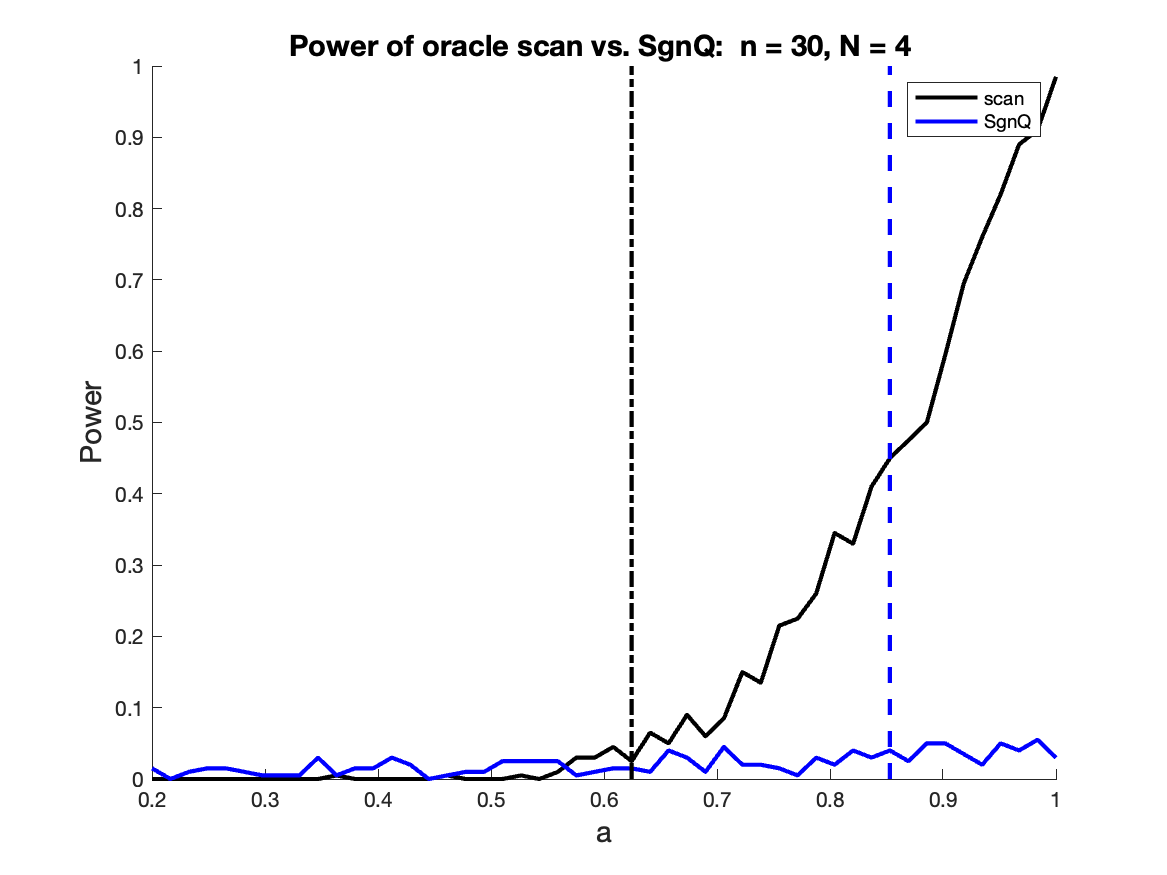}
	\includegraphics[width=.45\textwidth]{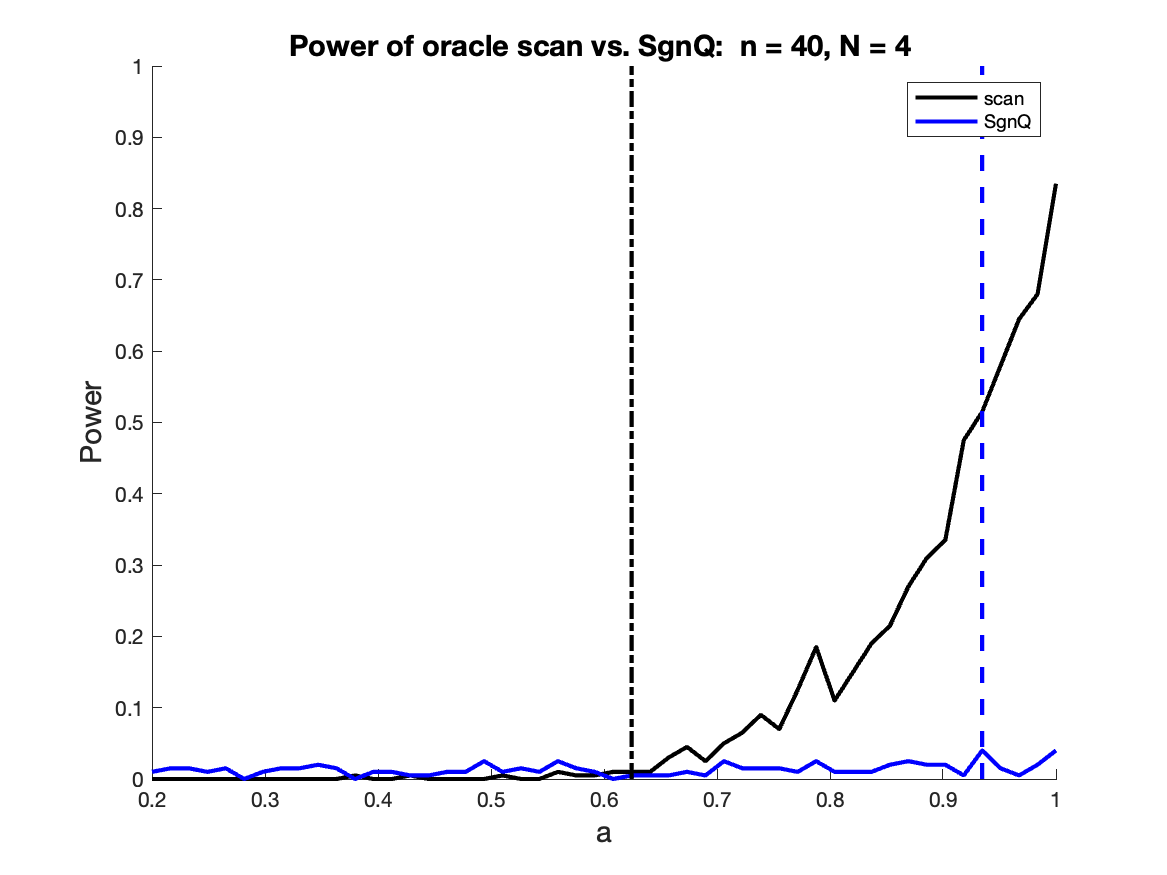}
	\includegraphics[width=.45\textwidth]{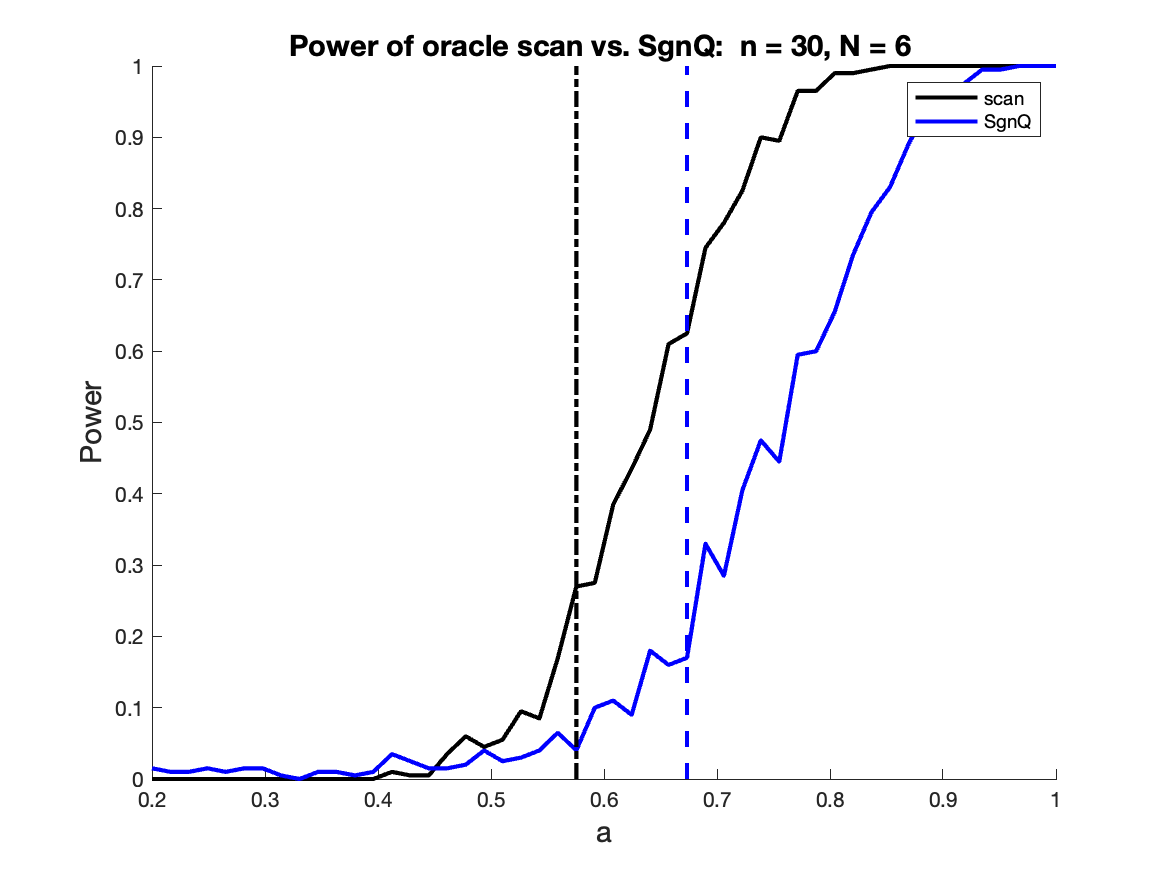}
	\includegraphics[width=.45\textwidth]{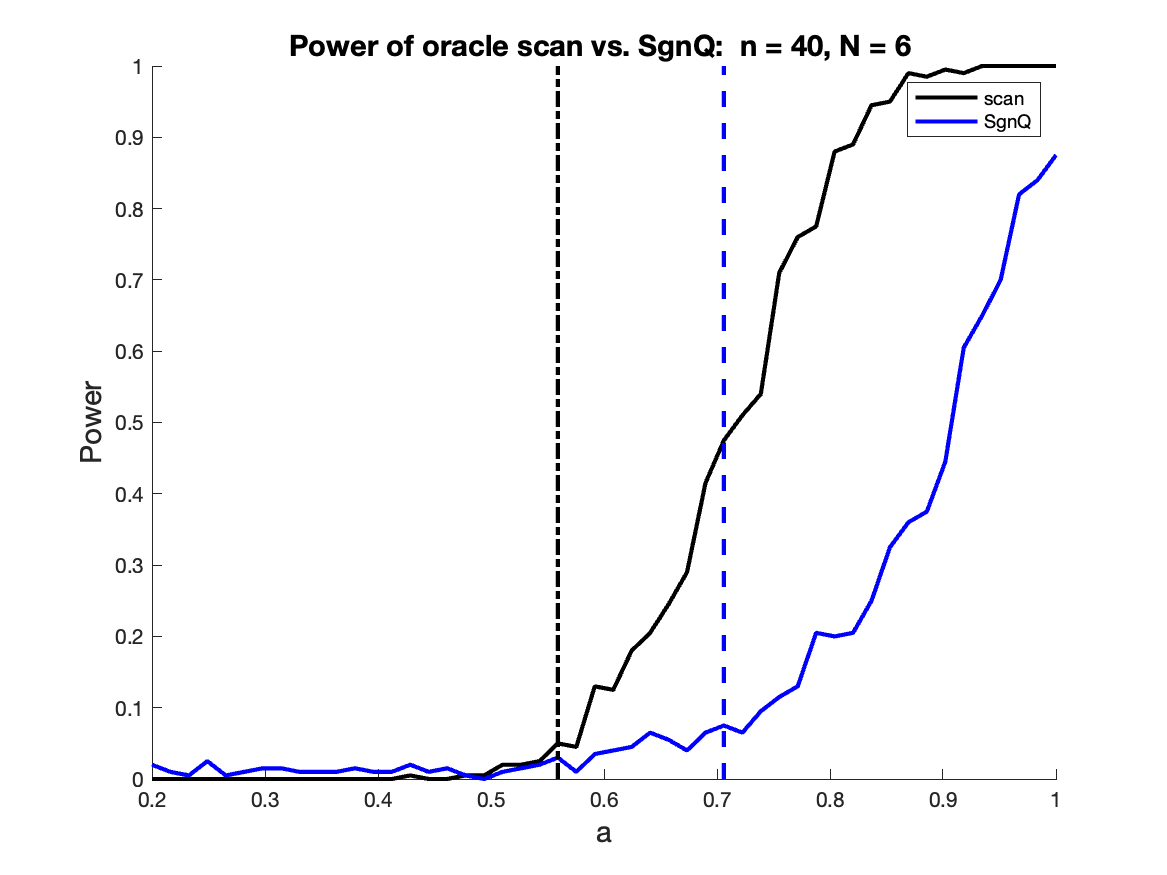}
	\includegraphics[width=.45\textwidth]{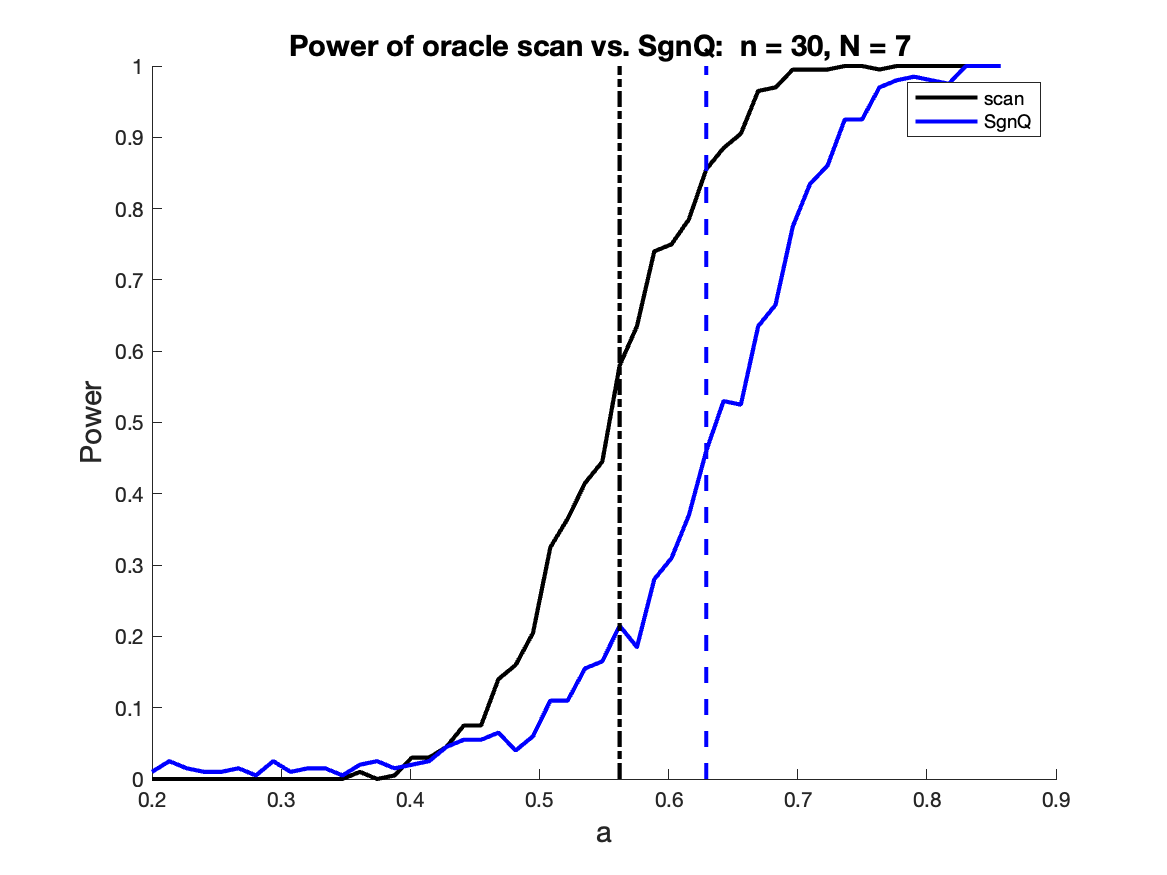}
	\includegraphics[width=.45\textwidth]{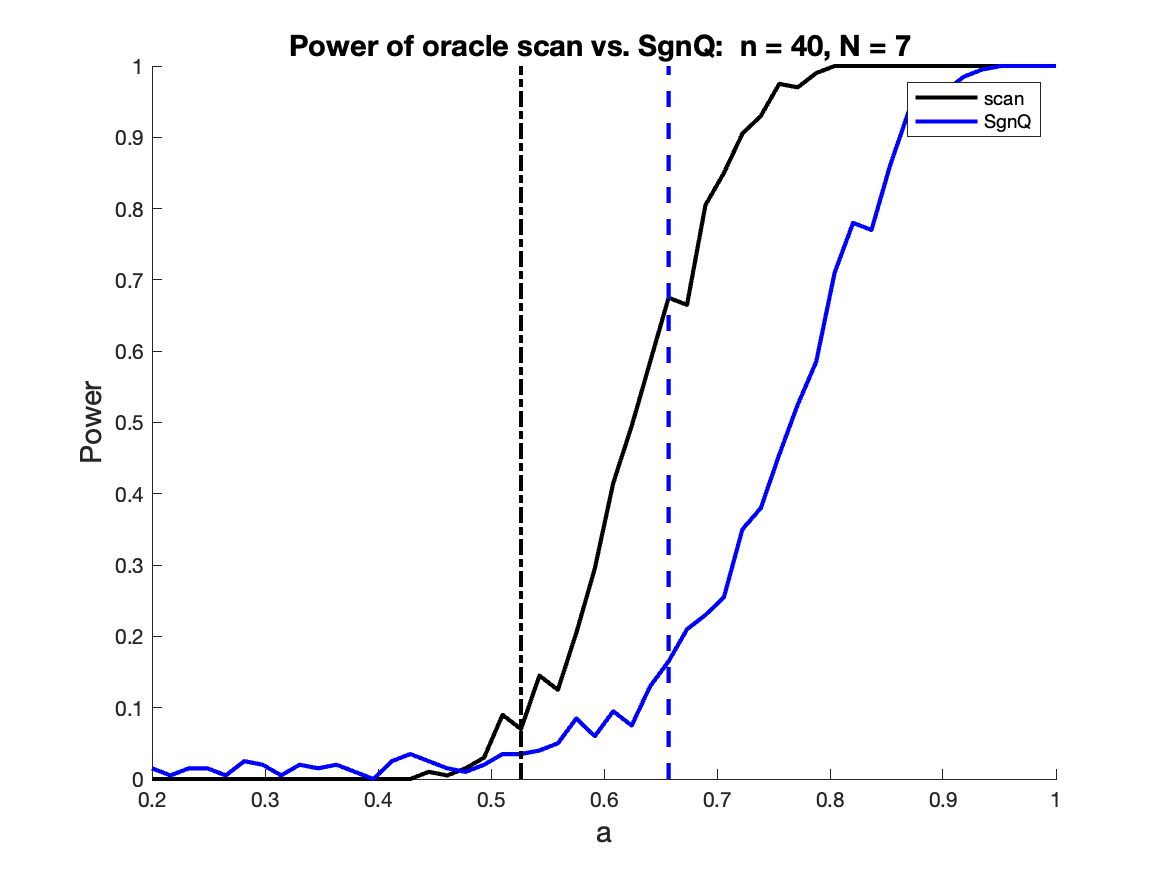}
	\caption{The power of SgnQ (blue curve) and oracle scan (black curve) for $n = 30,N \in \{4, 6, 7 \}$ (left) and $n = 40, N \in \{4, 6, 7\}$ (right). The black dashed line indicates the theoretical statistical threshold, and the blue dashed line indicates the theoretical computational threshold. 
	}
	\label{fig:scan_vs_SgnQ}
\end{figure}

\subsection{$\chi_2$ vs. SgnQ}
We also show additional experiments demonstrating the effect of degree-matching on the power of the $\chi^2$ test. We compute the power with respect to the following alternative models (as in Example 2 with $\theta \equiv 1$) with
\[
P\rp{1} = \begin{pmatrix}
	a & b \\
	b & c 
\end{pmatrix}, \qquad P\rp{2} = \begin{pmatrix}
	a & c \\
	c & c 
\end{pmatrix}
\]
where $b = \frac{cn - (a + c)N}{n  - 2N}$, $c$ is fixed, and $a$ ranges from $c$ to $a_{\max}' = c(n - N)/N$ for the experiments with $P\rp{1}$. Similar to before, $a_{\max}'$ is the largest value of $a$ such that $b \geq 0$. See Figure \ref{fig:more_scan_vs_SgnQ} for further details.

\begin{figure}[tb]
	\centering
	\includegraphics[width=.48\textwidth]{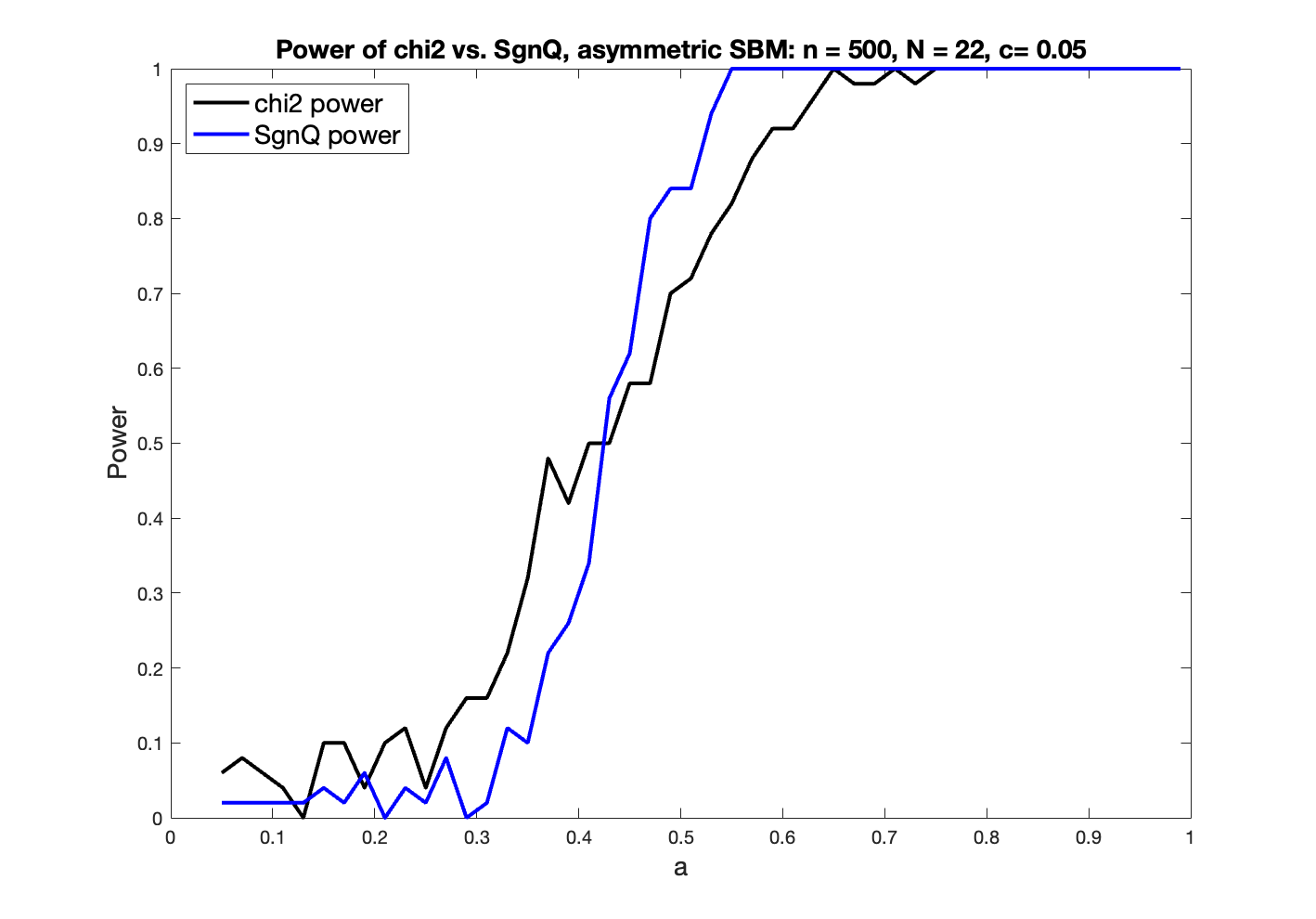}
	\includegraphics[width=.48\textwidth]{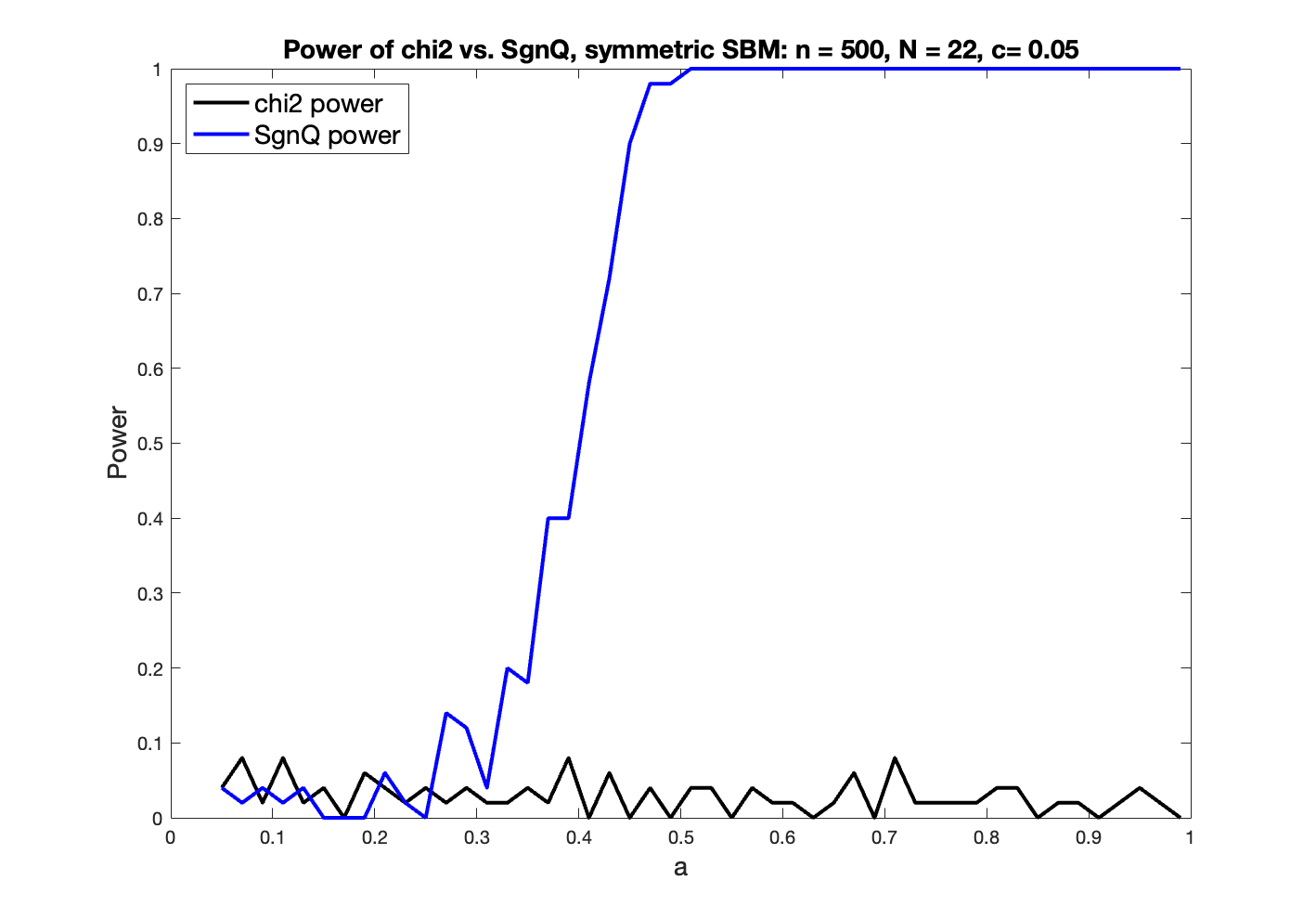}
	\includegraphics[width=.48\textwidth]{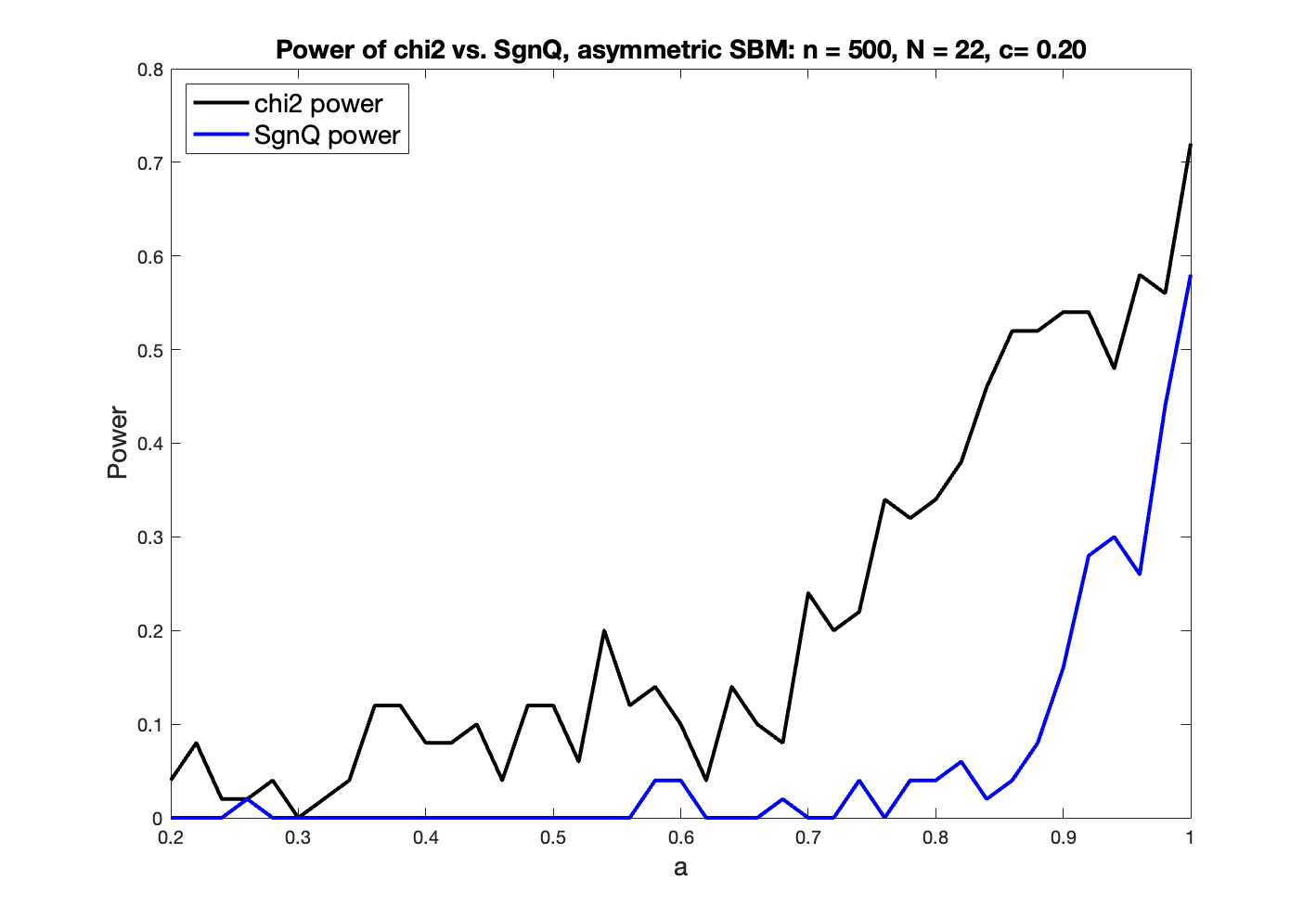}
	\includegraphics[width=.48\textwidth]{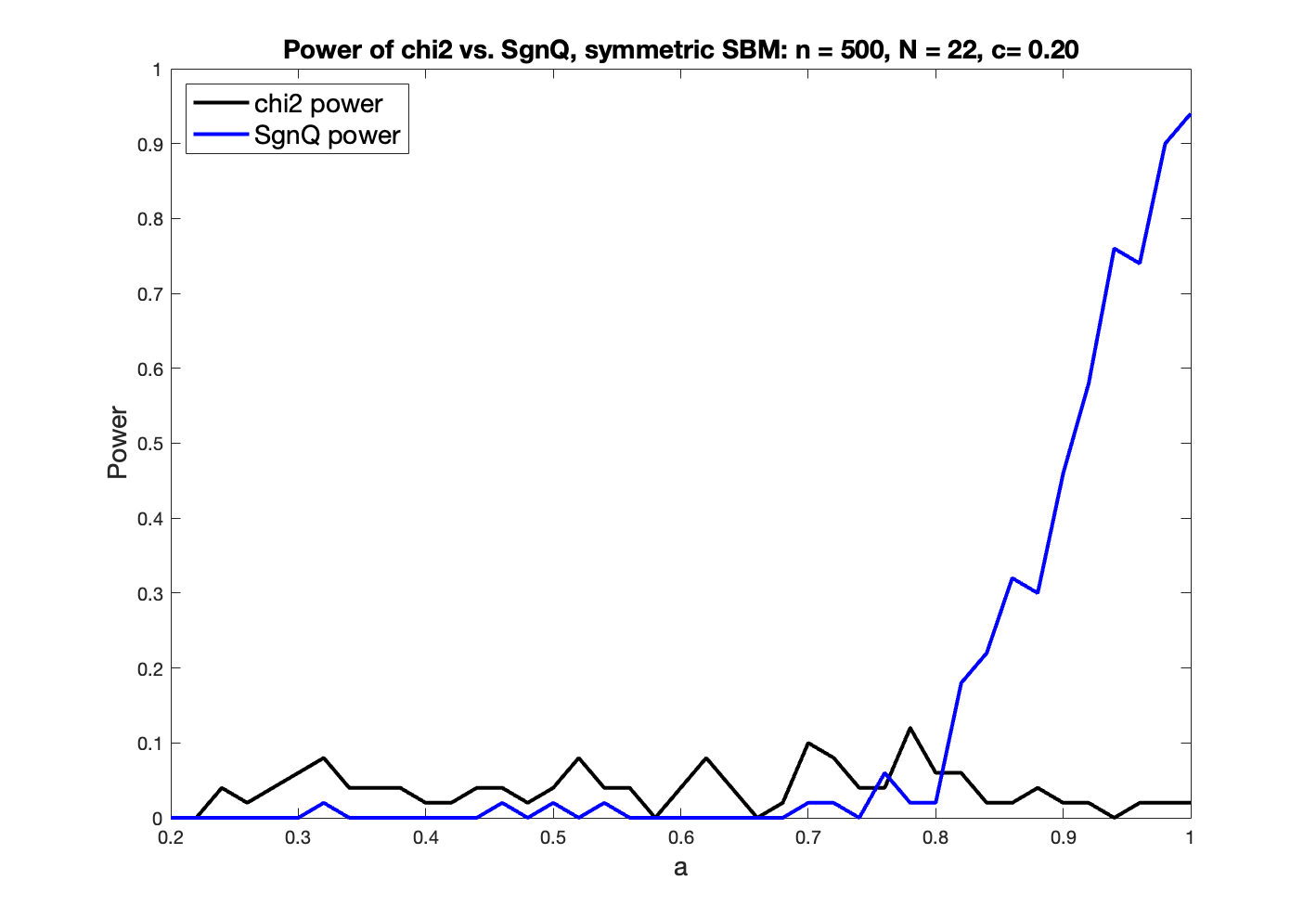}
	\caption{Power comparison of SgnQ and $\chi^2$ ($n=500$, $N=22$, 50 repetitions). We consider a 2-community SBM with $P_{11} = a$, $P_{22}=c$, $P_{12} = c$ (left) and $P_{12} = \frac{an - (a + c)N}{n}$ (right plot, the case of degree matching) where $c = 0.05$ (top row) and $c = 0.20$ (bottom row). 
	}
	\label{fig:more_scan_vs_SgnQ}
\end{figure}

\section{Proof of Lemma \ref{lem:identifiability} (Identifiability) }



To prove identifiability, we make use of the following result from  \cite[Lemma 3.1]{JinKeLuo21}, which is in line with Sinkhorn's work \cite{Sinkhorn} on matrix scaling.

\begin{lemma}[\cite{JinKeLuo21}] \label{lem:existence}  
	Given a matrix $A \in \mathbb{R}^{K,K}$ with strictly positive diagonal entries 
	and non-negative off-diagonal entries, 
	and a  strictly  positive vector $h \in \mathbb{R}^K$, 
	there exists a unique diagonal matrix $D = \diag(d_1, d_2, \ldots, d_K)$ such that 
	$D A D h    = 1_K$ and $d_k > 0$, $1 \leq k \leq K$. 
\end{lemma}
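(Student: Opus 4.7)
The plan is to encode the equation $DADh = 1_K$ as the first-order condition of a single strictly convex, coercive potential $\Psi$ on $\mathbb{R}^K$; existence and uniqueness of a minimizer then deliver both conclusions of the lemma. I begin by reducing to a symmetric scaling problem via the substitution $\tilde d_i := d_i\sqrt{h_i}$ and $\tilde A_{ij} := \sqrt{h_i}\, A_{ij}\, \sqrt{h_j}$. A short calculation shows that $DADh = 1_K$ is equivalent to $\tilde d_i\,(\tilde A \tilde d)_i = h_i$ for every $i$, and $\tilde A$ inherits positive diagonal entries ($\tilde A_{ii} = h_i A_{ii} > 0$) and non-negative off-diagonal entries. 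I focus on the case where $A$ is symmetric, which is the setting needed for the DCBM application in Lemma~\ref{lem:identifiability}, so that $\tilde A$ is symmetric as well.

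Parametrizing $\tilde d_i = e^{-x_i}$ for $x \in \mathbb{R}^K$, I would work with the potential
\[
\Psi(x) \;:=\; \tfrac{1}{2} \sum_{i,j=1}^K \tilde A_{ij}\, e^{-x_i - x_j} \;+\; \sum_{i=1}^K h_i x_i.
\]
Using $\tilde A_{ij} = \tilde A_{ji}$, differentiation yields $\partial \Psi/\partial x_k = h_k - e^{-x_k}(\tilde A e^{-x})_k$, so the critical points of $\Psi$ are exactly the positive solutions of the rescaled equation $\tilde d \odot (\tilde A \tilde d) = h$.

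Two analytic properties of $\Psi$ complete the argument. A direct computation of the Hessian gives
\[
v^\top (\nabla^2 \Psi(x))\, v \;=\; \tfrac{1}{2} \sum_{i,j=1}^K \tilde A_{ij}\, e^{-x_i - x_j}\, (v_i + v_j)^2 \;\geq\; 0 \qquad \text{for all } v \in \mathbb{R}^K,
\]
and this expression vanishes only if $v_i + v_j = 0$ whenever $\tilde A_{ij} > 0$; specializing to $i = j$ and using $\tilde A_{ii} > 0$ forces $v = 0$, so $\nabla^2 \Psi \succ 0$ everywhere and $\Psi$ is strictly convex. For coercivity, dropping all off-diagonal terms gives the pointwise bound $\Psi(x) \geq \sum_{i=1}^K \bigl(\tfrac{1}{2} \tilde A_{ii} e^{-2 x_i} + h_i x_i\bigr)$, a sum of one-variable functions each of which tends to $+\infty$ as its argument tends to $\pm\infty$ (since $\tilde A_{ii}, h_i > 0$); hence $\Psi(x) \to \infty$ as $\|x\| \to \infty$. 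Strict convexity and coercivity together give $\Psi$ a unique global minimizer $x^*$, and the first-order condition at $x^*$ produces the unique positive diagonal matrix $D^* = \diag\bigl(e^{-x_i^*}/\sqrt{h_i}\bigr)$ solving $D^* A D^* h = 1_K$.

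The main obstacle I anticipate is removing the symmetry hypothesis, which is not explicitly imposed in the lemma. The identity $\partial \Psi/\partial x_k = h_k - e^{-x_k}(\tilde A e^{-x})_k$ crucially uses $\tilde A_{ij} = \tilde A_{ji}$, so the clean convex-optimization route is special to symmetric $A$. For non-symmetric $A$---which does not arise in the DCBM application, where $A = P$ is symmetric---one would instead establish existence by a Brouwer fixed-point argument applied to the continuous self-map $T(d)_i := 1/(A\,\diag(h)\,d)_i$ on a compact set of positive diagonals made invariant by exploiting $A_{ii} > 0$, together with a Perron-style maximum-ratio argument for uniqueness (comparing candidate solutions $d, d'$ via the index that maximizes $d_i/d_i'$, and using $A_{ii} > 0$ to force that maximal ratio to equal $1$).
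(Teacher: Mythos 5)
Your argument is correct for symmetric $A$, and since the paper does not reproduce a proof of this lemma (it is quoted from \cite{JinKeLuo21}, Lemma 3.1, as a Sinkhorn-type scaling result), your variational proof is a genuinely self-contained alternative to the citation. The reduction $\tilde d_i = d_i\sqrt{h_i}$, $\tilde A = \diag(\sqrt{h})\,A\,\diag(\sqrt{h})$ correctly converts $DADh=1_K$ into $\tilde d_i(\tilde A\tilde d)_i=h_i$; the gradient identity, the Hessian identity $v^\top\nabla^2\Psi(x)v=\tfrac12\sum_{i,j}\tilde A_{ij}e^{-x_i-x_j}(v_i+v_j)^2$, the use of the diagonal terms $\tilde A_{ii}>0$ to force $v=0$ (hence strict convexity), and the coercivity bound obtained by discarding the non-negative off-diagonal terms all check out, so the unique minimizer of $\Psi$ simultaneously delivers existence and uniqueness of the positive scaling. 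Compared with the fixed-point/iterative-scaling arguments in the Sinkhorn lineage that the cited source follows, the log-barrier potential gives a cleaner one-shot uniqueness proof. Two caveats. First, the lemma as stated does not impose symmetry on $A$, so your proof formally covers only a special case; this is harmless here, since every invocation in the paper takes $A=P$ or $A=\Omega$, both symmetric, but you should state the symmetry hypothesis explicitly rather than leave it implicit. Second, your fallback sketch for non-symmetric $A$ is thinner than you suggest: the box $[m,M]^K$ is not invariant under $T(d)_i=1/(A\,\diag(h)\,d)_i$ for the natural choices of $m,M$ (the required inequalities $mM\le 1/\max_i\sum_j A_{ij}h_j$ and $mM\ge 1/\min_i A_{ii}h_i$ are incompatible in general), so if the general statement were actually needed, the construction of an invariant compact set would require real work rather than a one-line appeal to $A_{ii}>0$.
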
  

%

We apply Lemma \ref{lem:existence}  with $h = (h_1, \ldots, h_K)'$ and $A = P$ to construct a diagonal matrix $D = \diag(d_1, \ldots, d_K)$ satisfying $DADh = 1_K$. Note that $P$ has positive diagonal entries since $\Omega$ does.

Define $P^* = DPD$ and  $D^* = \diag(d_1^*, \ldots, d_n^*) \in \mathbb{R}^n$ where
\[
d_i^* \equiv d_k \qquad \text{if } i \in \mc{C}_k
\]
Observe that
\[
\Pi D^{-1} = (D^*)^{-1} \Pi.
\]
Define $\Theta^* = \Theta(D^*)^{-1}$, and let $\theta^* = \diag(\Theta^*)$.  Next, let $\overline{\Theta} = \frac{n}{\| \theta^*\|_1} \cdot \Theta^*$, let $\overline{\theta} = \diag(\overline{\Theta})$, and let $\overline{P} = \frac{\| \theta^*\|_1^2}{n^2} \cdot P^*$. Note that $\| \overline{\theta}\|_1 = n$ and $\overline{P}h \propto \mf{1}_K$. 

Using the previous definitions and observations, we have
\[
\Omega = \Theta \Pi D^{-1} D P D D^{-1} \Pi' \Theta = 
\Theta^* \Pi P^* \Pi' \Theta^*
= \overline{\Theta} \Pi \overline{ P} \Pi' \overline{ \Theta}
\]
which justifies existence.


To justify uniqueness, suppose that
\[
\Omega = 	\Theta\rp{1} \Pi P\rp{1} \Pi' \Theta\rp{1} = 
\Theta\rp{2} \Pi P\rp{2} \Pi' \Theta\rp{2},
\]
where $\theta\rp{i} = \diag(\Theta\rp{i})$ satisfy $\|  \theta\rp{i} \|_1 = n$ for $i = 1, 2$ and 
\[P\rp{1} h \propto \mf{1}_K , \qquad P\rp{2} h \propto \mf{1}_K. \]


Observe that
\[
\Pi P\rp{1} \Pi' \mf{1}_n = \alpha\rp{1} n \cdot \mf{1}_n, \qquad 
\Pi P\rp{2} \Pi' \mf{1}_n = \alpha\rp{2} n \cdot \mf{1}_n. 
\]
for positive constants $\alpha\rp{i}, i \in \{1,2\}$. Since $\Omega$ has nonnegative entries and positive diagonal elements, by Lemma \ref{lem:existence}, there exists a unique diagonal matrix $D$ such that 
\[
D \Omega D \mf{1}_n= \mf{1}_n.
\]
We see that taking $D =\frac{1}{\sqrt{\alpha\rp{i} n } } (\Theta\rp{i})^{-1}$ satisfies this equation for $i = 1, 2$, and therefore by uniqueness,
\[
\frac{1}{\sqrt{\alpha\rp{1} n }} (\Theta\rp{1})^{-1} = 
\frac{1}{\sqrt{\alpha\rp{2} n }} (\Theta\rp{2})^{-1}.  
\]
Since $\| \theta\rp{1} \|_1 = \| \theta \rp{2} \|_1 = n$, further we have $\alpha\rp{1} = \alpha\rp{2}$, and hence
\[
\Theta\rp{1} = \Theta \rp{2}. 
\]

It follows that
\[
\Pi P\rp{1} \Pi' = \Pi P\rp{2} \Pi',
\]
which, since we assume $h_i > 0$ for $i = 1, \ldots, K$, further implies that $P\rp{1} = P\rp{2}$.  \qed 

\section{Proof of Theorem \ref{thm:null-SgnQ}  (Limiting null of the SgnQ statistic)}

Consider a null DCBM with $\Omega = \theta^* (\theta^*)'$. Note that this is a different choice of parameterization than the one we study in the main paper. In \cite[Theorem 2.1]{JinKeLuo21} it is shown that the asymptotic distribution of $\psi_n$, the standardized version of SgnQ, is standard normal provided that
\begin{equation} \label{cond-theta}
	\|\theta^*\| \goto \infty, \;\;\;  \theta_{max}^* \goto 0,  \;\;\;  \mbox{and} \;\;\; (\|\theta^*\|^2/\|\theta^*\|_1) \sqrt{\log(\|\theta\|_1^*)} \goto 0. 
\end{equation} 

We verify that, in a DCBM with $\Omega = \alpha \theta \theta'$ and $\| \theta \|_1 = n$, these conditions are implied by the assumptions in \eqref{nullconditions}, restated below:
\begin{equation} 
	\label{eqn:our_assns_sup}
	n\alpha\to\infty, \qquad \mbox{and}\qquad \alpha\theta^2_{\max} \log(n^2\alpha)\to 0
\end{equation} 

In the parameterization of \cite{JinKeLuo21}, we have $\theta^* = \sqrt{\alpha} \theta$.  First, $\| \theta^* \|^2 \to \infty$ because by \eqref{eqn:our_assns_sup}, 
\[
\| \theta^* \|^2 \geq \frac{1}{n} \cdot \| \theta^* \|_1^2
= \alpha n \to \infty. 
\]

Next, $\theta_{\max}^* \to 0$ because by \eqref{eqn:our_assns_sup}, 
\[
\theta_{\max} = \sqrt{\alpha} \theta_{\max} \to 0. 
\]

To show the last part of \eqref{cond-theta}, note that
\begin{align*}
	(\|\theta^*\|^2/\|\theta^*\|_1) \sqrt{\log(\|\theta\|_1^*)}
	\leq \sqrt{\alpha} \theta_{\max} \sqrt{\log( \sqrt{\alpha} n )}
	= \frac{1}{\sqrt{2}} \sqrt{\alpha} \theta_{\max} \sqrt{\log( \alpha n^2 )}
	\to 0
\end{align*}
by \eqref{eqn:our_assns_sup}. Thus \eqref{cond-theta} holds, and $\psi_n$ is asymptotically standard normal under the null. \qed 

\section{ Proof of Lemma \ref{lemma:d} (Properties of $\ti \Omega$) }


\begin{lemma*} 
	The rank and trace of the matrix $\widetilde{\Omega}$ are $(K-1)$  and $\|\theta\|^2 \diag(\ti P)' g$, respectively.  When $K = 2$, $\tilde{\lambda}_1 = \mathrm{trace}(\widetilde{\Omega}) = \| \theta \|^2(ac - b^2)(d_0^2 g_1 + d_1^2 g_0)/ (a d_1^2 + 2b d_0 d_1 + c d_0^2)$.    
\end{lemma*}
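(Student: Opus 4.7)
\textbf{Proof plan for Lemma \ref{lemma:d}.}

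The starting point is the factorization $\widetilde\Omega = \Theta\Pi\widetilde P\Pi'\Theta$ with $\widetilde P = P - (d'Pd)^{-1}Pdd'P$, already recorded in \eqref{centeredOmega}. Since $\Theta$ is a positive diagonal matrix and $\Pi$ has full column rank $K$ (its Gram matrix $\Pi'\Pi$ is diagonal with the community sizes on the diagonal), the rank of $\widetilde\Omega$ equals the rank of $\widetilde P$. My first step is therefore to pin down the rank of $\widetilde P$.

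For the rank, I would verify by direct computation that $\widetilde P d = Pd - (d'Pd)^{-1}Pd\,(d'Pd) = 0$, so $d$ lies in the null space of $\widetilde P$. To show this is the entire null space, I would factor $\widetilde P = P\bigl(I - (d'Pd)^{-1}dd'P\bigr)$. The bracketed rank-one perturbation of the identity has a unique (up to scaling) null direction $d$, hence rank $K-1$; combined with the invertibility of $P$ (which follows from $P$ being symmetric irreducible non-negative in the nondegenerate regime of interest, and in the $K=2$ case boils down to $ac\ne b^2$), this yields $\mathrm{rank}(\widetilde P)=K-1$, and hence $\mathrm{rank}(\widetilde\Omega)=K-1$.

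For the trace, the plan is to apply the cyclic property of the trace and the structure of $\Pi'\Theta^2\Pi$:
\begin{equation*}
    \mathrm{tr}(\widetilde\Omega) = \mathrm{tr}\bigl(\widetilde P\,\Pi'\Theta^2\Pi\bigr).
\end{equation*}
The matrix $G := \Pi'\Theta^2\Pi$ is $K\times K$ diagonal with $G_{kk}=\sum_{i\in\mathcal C_k}\theta_i^2$. From the definition $g=\|\theta\|^{-2}\Pi'\Theta^2\Pi\mathbf 1_K$ one reads off $G_{kk}=\|\theta\|^2 g_k$, so $\mathrm{tr}(\widetilde P G)=\sum_k \widetilde P_{kk}\,\|\theta\|^2 g_k = \|\theta\|^2\,\mathrm{diag}(\widetilde P)'g$, which is the claimed formula.

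For the explicit $K=2$ expression, since $\widetilde\Omega$ has rank $1$, its unique nonzero eigenvalue equals its trace: $\widetilde\lambda_1 = \mathrm{tr}(\widetilde\Omega)$. The remaining task is algebra: with $d'Pd = a d_1^2+2b d_0 d_1+c d_0^2$ one computes
\begin{equation*}
    \widetilde P_{11} = a - \frac{(a d_1+b d_0)^2}{d'Pd}, \qquad \widetilde P_{22} = c - \frac{(b d_1+c d_0)^2}{d'Pd}.
\end{equation*}
The key identity is $a\,(d'Pd) - (a d_1+b d_0)^2 = (ac-b^2)d_0^2$, and symmetrically $c\,(d'Pd) - (b d_1+c d_0)^2 = (ac-b^2)d_1^2$; both are quick expansions in which the cross terms cancel and the determinant $ac-b^2 = \det P$ emerges. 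Substituting into $\|\theta\|^2(\widetilde P_{11} g_1 + \widetilde P_{22} g_0)$ gives the stated closed form. The only mildly delicate step is recognizing the determinant factorization that makes $ac-b^2$ appear cleanly; everything else is routine linear algebra.
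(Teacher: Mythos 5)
Your proof is correct and follows the same overall strategy as the paper: show $\widetilde P d = 0$ to bound the rank from above, argue the rank is at least $K-1$, then use the cyclic property of trace together with the diagonality of $G=\Pi'\Theta^2\Pi$, and finally carry out the $K=2$ algebra (the determinant identities you state check out and reproduce the paper's computation exactly).

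The one place you diverge is the rank lower bound. The paper writes $P = \widetilde P + (d'Pd)^{-1}Pdd'P$ and invokes rank subadditivity, $\mathrm{rank}(P)\le \mathrm{rank}(\widetilde P)+\mathrm{rank}(Pdd'P)$, to get $\mathrm{rank}(\widetilde P)\ge K-1$. You instead factor $\widetilde P = P\bigl(I-(d'Pd)^{-1}dd'P\bigr)$ and identify the null space of the second factor as $\mathrm{span}(d)$, then use invertibility of $P$ to transfer the rank. Both arguments hinge on $\mathrm{rank}(P)=K$ (i.e.\ $\det P\ne 0$, which for $K=2$ is $ac\ne b^2$), and both are elementary; your factorization has the small advantage of exhibiting the null space of $\widetilde P$ directly rather than bounding the rank indirectly. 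You are also slightly more careful than the paper in explicitly noting that $\mathrm{rank}(\widetilde\Omega)=\mathrm{rank}(\widetilde P)$ requires $\Theta$ positive and $\Pi$ full column rank; the paper leaves this implicit.
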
 

{\bf Proof of Lemma \ref{lemma:d}}.  
By basic algebra, 
\[
\widetilde{\Omega} = \Theta \Pi \widetilde{P} \Pi' \Theta, \qquad \mbox{where $\widetilde{P} = (P - (d' P d)^{-1} P d d' P)$}. 
\] 
It is seen $\widetilde{P} d = P d - (d' P d)^{-1} Pd d' P d = 0$,  so $\mathrm{rank}(\widetilde{P}) \leq K-1$.  
At the same time, since for any matrix $A$ and $B$ of the same size,  $\mathrm{rank}(A + B) \leq \mathrm{rank}(A) +  \mathrm{rank}(B)$, it follows $\widetilde{P} \geq (K-1)$, as $\mathrm{rank}(P) = K$ 
and $\mathrm{rank}(P dd'P) \leq 1$. This proves that $\mathrm{rank}(\widetilde{P}) =  K-1$. 

At the same time, since for any matrices $A$ and $B$, $\mathrm{trace}(AB) = \mathrm{trace}(BA)$, 
\[
\mathrm{trace}(\widetilde{\Omega}) =  \mathrm{trace}(\widetilde{P} \Pi' \Theta^2 \Pi)  = \|\theta\|^2  \mathrm{trace}(\widetilde{P} G) = \| \theta \|^2 \diag(\ti P)' g. 
\]
This proves the second item of the lemma. 

Last, when $K = 2$,  $\widetilde{\Omega}$ is rank $1$, and its eigenvalue is the same as its trace. First
\begin{align*}
	(\ti P)_{11} &= a - \frac{  (a d_1 + b d_0)^2   }{a d_1^2 + 2b d_0 d_1 + c d_0^2}
	=(ac - b^2) \frac{ d_0^2}{a d_1^2 + 2b d_0 d_1 + c d_0^2}
	\\  (\ti P)_{22} &= c - \frac{  (b d_1 + c d_0)^2   }{a d_1^2 + 2b d_0 d_1 + c d_0^2}
	=(ac - b^2) \frac{   d_1^2   }{a d_1^2 + 2b d_0 d_1 + c d_0^2}.
\end{align*}
Thus
\[
\ti \lambda_1 = \| \theta \|^2 \diag(\ti P)' g
=   \| \theta \|^2(ac - b^2) \cdot \frac{   d_0^2 g_1 + d_1^2 g_0    }{ a d_1^2 + 2b d_0 d_1 + c d_0^2 }
\]
This proves the last item and completes the proof of the lemma. 

\qed 

\section{ Proof of Theorem \ref{thm:alt-SgnQ} (Power of the SgnQ test) and Corollary \ref{cor:SgnQtest}  }
\label{sec:SgnQ}

\subsection{Setup and results }

\textit{Notation:} Given sequences of real numbers $A = A_{n}$ and $B = B_n$, we write $A \lesssim B$ to signify that $A = O(B)$, $A \asymp B$ to signify that $A \lesssim B$ and $B \lesssim A$, and $A \sim B$ to signify that $A/B = 1 + o(1)$. 

Throughout this section, we consider a DCBM with parameters $(\Theta, P)$ where $P \in \mb{R}^{2 \times 2}$ has unit diagonals, and we analyze the behavior of SgnQ under the alternative. At the end of this subsection we explain how Theorem \ref{thm:alt-SgnQ}  and Corollary \ref{cor:SgnQtest} follow from the results described next. Our results hinge on 
\[
\ti \lambda \equiv \ti \lambda_1 = \tr(\ti \Omega). 
\]

Given a subset $U \subset [n]$, let $\theta_U \in \R^{|U|}$ denote the restriction of $\theta$ to the coordinates of $U$. For notational convenience, we let $S = \{ i: \pi_i(1) = 1 \}$, which was previously written as $\mathcal{C}_1$ in the main paper.

In a DCBM where $P$ has unit diagonals, our main results hold under the following conditions.
\begin{align*}
	\Omega_{ij} &\les \theta_i \theta_j \num \label{eqn:assn1}
	\\ \| \theta \|_\infty &= O(1), \text{ and }  \num \label{eqn:assn2}
	\\ \| \theta \|_2^2 &\to \infty. 
	\num \label{eqn:assn3}
	\\ ( \| \theta \|_2^2/ \| \theta \|_1 ) \sqrt{ \log( \| \theta \|_1 )} &\to 0. 
	\num \label{eqn:assn4}
\end{align*}

First we justify that these assumptions are satisfied by an equivalent DCBM with the same $\Omega$ represented with the parameterization \eqref{identifiable} and satisfying \eqref{altconditions}. Thus all results proved in this section transfer immediately to the main paper.

\begin{lemma} \label{lem:compatibility}
	Consider a DCBM with parameters $(\Theta^*, P^*)$ satisfying \eqref{identifiable} and satisfying \eqref{altconditions}. Define $\Theta = \diag(\theta)$ where
	\[
	\theta_i = \begin{cases}
		\sqrt{a} \theta_i^* &\quad \text{if } i \in S \\
		\sqrt{c} \theta_i^* &\quad \text{if } i \in S^c, 
	\end{cases}
	\]
	and 
	\[
	P = \begin{pmatrix}
		1 & \frac{b}{\sqrt{ac}} \\
		\frac{b}{\sqrt{ac}} & 1 
	\end{pmatrix}.
	\]
	Then
	\[
	\Omega =	\Theta \Pi P \Pi \Theta = \Theta^* \Pi P^* \Pi' \Theta^*
	\]
	and  \eqref{eqn:assn1}--\eqref{eqn:assn4} are satisfied. 
\end{lemma}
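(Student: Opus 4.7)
The plan is to first verify that the two parameterizations produce the same Bernoulli probability matrix $\Omega$, and then to check each of the regularity conditions \eqref{eqn:assn1}--\eqref{eqn:assn4} in turn, using the hypotheses \eqref{identifiable} and \eqref{altconditions} on $(\Theta^*, P^*)$. The equivalence of $\Omega$ is a direct block-wise computation: for $i,j \in S$, the unit-diagonal parameterization yields $P_{11} \theta_i \theta_j = (\sqrt{a}\theta_i^*)(\sqrt{a}\theta_j^*) = a\theta_i^*\theta_j^*$; for $i,j \in S^c$ one similarly gets $c\theta_i^*\theta_j^*$; and for the cross block, the prefactor $b/\sqrt{ac}$ combines with $\sqrt{a}\sqrt{c}$ to recover $b\theta_i^*\theta_j^*$, matching the $(\Theta^*, P^*)$ parameterization.

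For \eqref{eqn:assn1}, since $P_{11}=P_{22}=1$, it suffices to show $P_{12}=b/\sqrt{ac}\in[0,1]$. Nonnegativity of $b$ is built into the DCBM; for the upper bound I would plug the formula \eqref{2group-DCBM-b} and use the convention $a\geq c$ to derive $b\leq c$, whence $b/\sqrt{ac}\leq\sqrt{c/a}\leq 1$. For \eqref{eqn:assn2}, $\|\theta\|_\infty=\max(\sqrt{a}\,\theta^*_{\max,1},\sqrt{c}\,\theta^*_{\max,0})$; the first is $O(1)$ by $a(\theta^*_{\max,1})^2=O(1)$, and the second is $o(1)$ since $cn\to\infty$ forces $\log(n^2c)\to\infty$, so $c(\theta^*_{\max,0})^2\log(n^2c)\to 0$ gives $c(\theta^*_{\max,0})^2=o(1)$. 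For \eqref{eqn:assn3}, Cauchy--Schwarz on the $S^c$ block gives $\|\theta\|_2^2\geq c\|\theta^*_{S^c}\|_2^2\geq c(\|\theta^*_{S^c}\|_1)^2/(n-N)$, and since $\|\theta^*_{S^c}\|_1=d_0 n\sim n$, this is $\gtrsim cn\to\infty$.

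The main obstacle is \eqref{eqn:assn4}. A key preliminary observation is that $\|\theta\|_1\geq\sqrt{c}\|\theta^*_{S^c}\|_1\asymp\sqrt{c}\,n$, so $\log\|\theta\|_1\asymp\log(n^2c)$; this is exactly the quantity appearing in \eqref{altconditions}. I would then split
\[
\frac{\|\theta\|_2^2}{\|\theta\|_1}\;\leq\;\frac{a\|\theta^*_S\|_2^2}{\sqrt{c}\|\theta^*_{S^c}\|_1}+\frac{c\|\theta^*_{S^c}\|_2^2}{\sqrt{c}\|\theta^*_{S^c}\|_1}.
\]
The second summand is at most $\sqrt{c}\,\theta^*_{\max,0}$, and multiplying by $\sqrt{\log\|\theta\|_1}$ yields $\sqrt{c(\theta^*_{\max,0})^2\log(n^2c)}=o(1)$. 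For the first summand I would bound $a\|\theta^*_S\|_2^2\leq(\sqrt{a}\,\theta^*_{\max,1})(\sqrt{a}\|\theta^*_S\|_1)\les\sqrt{a}\,\|\theta^*_S\|_1\les\sqrt{a}\,N$, which produces a bound of order $\sqrt{a}\,N/(\sqrt{c}\,n)$. The delicate step is showing that this $S$-contribution times $\sqrt{\log\|\theta\|_1}$ is $o(1)$: the required smallness is $\sqrt{a}\,N/(\sqrt{c}\,n)\cdot\sqrt{\log n}\to 0$, which encodes the interplay between $N$, $a$, and $c$. I expect this to follow either from an additional bound tying $N$, $a$, and $c$ available in the phase-transition regime of interest, or from a refined estimate that exploits the weighted-average structure $\|\theta\|_2^2/\|\theta\|_1=\sum_i(\theta_i/\|\theta\|_1)\theta_i$ and the fact that the mass $\sqrt{a}\|\theta^*_S\|_1/\|\theta\|_1$ placed on $S$ is asymptotically negligible.
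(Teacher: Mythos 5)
Your verification of the $\Omega$ identity and of \eqref{eqn:assn1}--\eqref{eqn:assn3} matches the paper's argument in both structure and detail, and is correct. The genuine gap is in your treatment of \eqref{eqn:assn4}, specifically the $S$-contribution, and you correctly flag it as the delicate step --- but the bound you reach does not close under the stated hypotheses, and the missing idea is concrete.

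Your chain $a\|\theta^*_S\|_2^2\leq(\sqrt{a}\theta^*_{\max,1})(\sqrt{a}\|\theta^*_S\|_1)\lesssim\sqrt{a}N$ produces an $S$-contribution of order $\sqrt{a}N/(\sqrt{c}n)$. Combining with $a\eps\lesssim c$ (from $b\geq 0$) only gets you $\sqrt{a}N/(\sqrt{c}n)\lesssim\sqrt{N/n}$, and you would then need $(N/n)\log(n^2c)\to 0$. This is \emph{not} implied by \eqref{altconditions}: for instance $N=n/\sqrt{\log n}$ with $c=n^{-1/2}$ satisfies $N\ll n$, $cn\to\infty$, and (for bounded $\theta^*_{\max,0}$) $c\theta_{\max,0}^{*2}\log(n^2c)\to 0$, yet $(N/n)\log(n^2c)\asymp\sqrt{\log n}\to\infty$. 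So the crude $\ell^1/\ell^\infty$ bound on $\|\theta^*_S\|_2^2$ loses too much.

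What the paper uses instead is the assumption $g_1\asymp N/n$ from \eqref{altconditions}, which you invoked for $d_1$ but not for $g_1$. Since $g_1=\|\theta^*\|_2^{-2}\|\theta^*_S\|_2^2\asymp N/n$ and $N/n\to 0$, one has $\|\theta^*_S\|_2^2\asymp(N/n)\|\theta^*_{S^c}\|_2^2$, and then $a\|\theta^*_S\|_2^2\asymp a(N/n)\|\theta^*_{S^c}\|_2^2\lesssim c\|\theta^*_{S^c}\|_2^2$ by $a\eps\lesssim c$. This makes the $S$-contribution comparable to the $S^c$-contribution, and both are $\lesssim\sqrt{c}\,\theta^*_{\max,0}=o(1/\sqrt{\log(n^2c)})$, closing \eqref{eqn:assn4} with no extra constraint on $(N,a,c)$. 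The refinement you hypothesized does exist, but it is not the weighted-average structure of $\|\theta\|_2^2/\|\theta\|_1$ --- it is the $\ell^2$-balance condition $g_1\asymp N/n$, which pins $\|\theta^*_S\|_2^2$ to a vanishing fraction of $\|\theta^*_{S^c}\|_2^2$ rather than merely to a vanishing fraction of the total $\ell^1$ mass.
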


\begin{proof}
	The statement regarding $\Omega$ follows by basic algebra. \eqref{eqn:assn1} follows if we can show that 
	\begin{align}
		\label{eqn:pf_assn1}
		\frac{b}{\sqrt{ac}} \les 1. 
	\end{align}
	Since
	\[
	b = \frac{cn - (a + c)N}{n - 2N}
	= c \cdot \frac{n - N}{n - 2N}
	- a \cdot \frac{N}{n - 2N},
	\]
	we have $a \geq c \gtrsim b$, so \eqref{eqn:pf_assn1} follows.
	
	Next, \eqref{eqn:assn2} follows directly from $a \theta_{\max,1}^2 \les 1$ since $c \theta_{\max, 0}^2 = o(1)$ by \eqref{altconditions}. 
	
	For \eqref{eqn:assn3}, 
	\[
	\| \theta \|_2^2
	\geq \frac{1}{n} \cdot \| \theta \|_1^2
	\geq c n \to \infty
	\]
	by \eqref{altconditions}. 
	
	For the last part, note that
	\[
	b = c \cdot \frac{n - N}{n - 2N}
	- a \cdot \frac{N}{n - 2N} \geq 0
	\Rightarrow a \eps \les c. 
	\]
	%
	Thus,
	\begin{align*}
		\frac{ \| \theta \|_2^2 }{ \| \theta \|_1}
		&= \frac{ a \| \theta_S^* \|_2^2 + c \| \theta_{S^c}^* \|_2^2}{\sqrt{a} \| \theta_S^* \|_1 + \sqrt{c} \| \theta_{S^c}^* \|_1}
		\les \frac{   a (N/n) \| \theta^*_{S^c} \|_2^2 + c \| \theta_{S^c}^* \|_2^2    }{\sqrt{c} \| \theta^*_{S^c} \|_1}
		\\&\les \frac{ c \| \theta_{S^c}^* \|_2^2 }{ \sqrt{c} \| \theta_{S^c}^* \|_1 }
		\les \sqrt{c } \theta_{\max,0} = o\big( \frac{1}{\sqrt{ \log c n^2 } } \big) = o\big( \frac{1}{\sqrt{ \log ( \| \theta \|_1 })} \big) 
		,
	\end{align*}
	which implies \eqref{eqn:assn4}. Above we use that $a \geq c$ and $g_1 \asymp d_1 \asymp N/n$, by assumption. Precisely, in the first line, we used
	\begin{align*}
		a \| \theta^*_{S} \|_2^2 
		\asymp a \cdot (1 - N/n)^{-1} \frac{N}{n} \| \theta^*_{S^c} \|_2^2 
		\lesssim c \| \theta^*_{S^c} \|_2^2, 
	\end{align*}
	and  in the second line we used
	\begin{align*}
		\| \theta \|_1
		&\geq  \sqrt{c} \| \theta_{S^c}^* \|_1
		\asymp  \sqrt{c} (1 - N/n)^{-1} \| \theta^* \|_1 
		\asymp \sqrt{c} n. 
	\end{align*}
	
\end{proof}

With Lemma \ref{lem:compatibility} in hand, we restrict in the remainder of this section to the setting where $P$ has unit diagonals and \eqref{eqn:assn1}--\eqref{eqn:assn4} are satisfied.


Define $v_0 = \mf{1}' \Omega \mf{1}$, and let $\eta^* =( 1/\sqrt{v_0} ) \Omega \mf{1}$. Recall $\tilde \Omega = \Omega - \eta^* \eta^{*\T}$, and $\ti \lambda = \tr(\ti \Omega)$. Our main result concerning the alternative is the following. 

\begin{thm}[Limiting behavior of SgnQ test statistic]
	\label{thm:SgnQ_lim_behavior}
	Suppose that the previous assumptions hold and that $|\ti \lambda | / \sqrt{ \lambda_1 } \to \infty$. Then under the null hypothesis, as $n \to \infty$, $\E[Q ] \sim 2 \| \theta \|_2^4$, $\var(Q) \sim 8 \| \theta \|_2^8$, and $(Q - \E Q)/\sqrt{\var(Q)} \to N(0, 1)$ in law. Under the alternative hypothesis, as $n \to \infty$, $\E Q \sim \ti\lambda^4 $ and
	$\var(Q) \les  | \ti \lambda|^6 + | \ti \lambda|^2 \lambda_1^3 = o( \ti \lambda^8)$. 
\end{thm}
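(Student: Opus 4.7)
The plan is to adapt the SgnQ analysis of \cite{JKL2019} to the severely unbalanced DCBM, starting from the decomposition $\widehat{A}_{ij} = \tilde{\Omega}_{ij} - \Omega_{ii}\mf{1}\{i=j\} + W_{ij} + r_{ij}$, where $r_{ij} := \eta^*_i\eta^*_j - \hat\eta_i\hat\eta_j$ quantifies the error of $\hat\eta$ relative to the population centering $\eta^*$. Substituting into the definition of $Q_n$ and expanding the four-way product gives $4^4 = 256$ terms, which I would organize by the type of factor ($\tilde\Omega$, diagonal, $W$, or $r$) appearing in each of the four positions, then bound each group separately.

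Under the null, $\Omega = \alpha\theta\theta'$ and a direct calculation gives $\eta^* = \sqrt{\alpha}\,\theta$, hence $\tilde\Omega = 0$. All signal and signal-noise cross terms vanish, and the leading contribution to both mean and variance comes from the pure noise term $N_4 := \sum_{(dist)} W_{i_1 i_2}W_{i_2 i_3}W_{i_3 i_4}W_{i_4 i_1}$. Pair-matching the eight indices yields $\E N_4 \sim 2\|\theta\|_2^4$ and $\var N_4 \sim 8\|\theta\|_2^8$, and asymptotic normality of $N_4$ follows from a martingale CLT on the filtration generated by the upper-triangular entries of $W$, exactly as in \cite{JKL2019}. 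The error from replacing $\eta^*$ by $\hat\eta$ in $Q_n$ (the $r_{ij}$ contributions) is of strictly smaller order, controlled by writing $\hat\eta = \eta^* + \Delta$ and bounding $\|\Delta\|$ via concentration of $\mf{1}'A\mf{1}$ around $v_0$ under assumption \eqref{eqn:assn4}.

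Under the alternative, the dominant term in $\E Q$ is the pure signal term $S_4 := \sum_{(dist)} \tilde\Omega_{i_1 i_2}\tilde\Omega_{i_2 i_3}\tilde\Omega_{i_3 i_4}\tilde\Omega_{i_4 i_1}$, which equals $\tr(\tilde\Omega^4)(1+o(1)) = \tilde\lambda^4(1+o(1))$ since $\tilde\Omega$ has rank one by Lemma \ref{lemma:d}. All other $255$ terms contribute only to the variance. The two binding contributions are the ``three signal, one noise'' cycles, whose variance is $O(|\tilde\lambda|^6)$ by independence of the $W_{ij}$'s and $\tr(\tilde\Omega^6) \lesssim |\tilde\lambda|^6$, and the ``one signal, three noise'' cycles, whose variance is $O(|\tilde\lambda|^2 \lambda_1^3)$ using $\|\Omega\|_F^2 \lesssim \lambda_1^2$ together with a fourth-moment trace bound. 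All remaining mixed terms are shown to be of smaller order. The final bound $\var Q = O(|\tilde\lambda|^6 + |\tilde\lambda|^2\lambda_1^3)$ combined with the hypothesis $|\tilde\lambda|/\sqrt{\lambda_1} \to \infty$ then yields $\var Q = o(\tilde\lambda^8)$.

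The main obstacle is controlling the cross terms in the severely unbalanced regime. Because the entries of $P$ are at very different orders and $\tilde\Omega$ concentrates its mass on coordinates in $S$ (of size $N \ll n$), several terms that were uniformly negligible in the balanced analysis of \cite{JKL2019} can become non-negligible; the bounds used there must be sharpened by partitioning each index $i_k$ according to whether $i_k \in S$ or $i_k \in S^c$, summing separately over each partition of the four indices, and applying tailored inequalities that exploit $\|\theta_S\|_2^2 \asymp (N/n)\|\theta\|_2^2$ and the analogous relation for $\|\theta_S\|_1$. Extending these bounds through the 256-term expansion, while handling the $r_{ij}$ contributions via the same $\hat\eta$-concentration argument as in the null case, yields the claimed variance estimate and completes the proof.
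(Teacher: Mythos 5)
Your overall architecture (expand $\hat A_{ij}=\ti\Omega_{ij}+W_{ij}+(\eta_i^*\eta_j^*-\hat\eta_i\hat\eta_j)$, let the pure-signal cycle deliver $\E Q\sim\ti\lambda^4$ via the rank-one structure of $\ti\Omega$, import the null CLT from the balanced analysis, and sharpen the unbalanced bounds by tracking whether indices lie in $S$ or $S^c$ -- which is essentially the paper's canonical bound $|\ti\Omega_{ij}|\les\beta_i\theta_i\beta_j\theta_j$ with $\|\beta\circ\theta\|_2^2=|\ti\lambda|$) matches the paper's strategy. The genuine gap is in your treatment of the centering (plug-in) error. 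You lump it into a single block $r_{ij}=\eta_i^*\eta_j^*-\hat\eta_i\hat\eta_j$ and propose to control it ``by writing $\hat\eta=\eta^*+\Delta$ and bounding $\|\Delta\|$ via concentration of $\mf{1}'A\mf{1}$ around $v_0$.'' This fails for two reasons. First, concentration of the scalar $\mf{1}'A\mf{1}$ only controls the normalization; the dominant fluctuation of $\hat\eta=A\mf{1}/\sqrt{\mf{1}'A\mf{1}}$ comes from the vector $W\mf{1}$, and indeed $\E\|W\mf{1}\|^2/v\asymp 1$, so $\|\Delta\|$ is of constant order, not vanishing. Second, because $\|\Delta\|\not\to 0$, norm-type bounds on the $256$ cross terms are far too lossy: under the null you need the plug-in contributions to be $o(\|\theta\|_2^4)$ in mean and $o(\|\theta\|_2^8)$ in variance to preserve the exact constants $2\|\theta\|_2^4$ and $8\|\theta\|_2^8$ and the CLT, and under the alternative you need $o(\ti\lambda^4)$ and $o(\ti\lambda^8)$, where the admissible bounds (e.g.\ $|\ti\lambda|^2\lambda_1^3$) are only barely $o(\ti\lambda^8)$ given $|\ti\lambda|/\sqrt{\lambda_1}\to\infty$; there is no slack for a crude operator-norm argument. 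The paper instead linearizes the plug-in error as $\delta_{ij}=\eta_i(\eta_j-\teta_j)+\eta_j(\eta_i-\teta_i)$ (linear in $W\mf{1}$, mean-zero structure) plus a genuinely higher-order remainder $r_{ij}$, and then computes means and variances of every resulting cross term (the $Y$, $Z$, $T$, $F$ families and the $R$-table comparing real and proxy statistics), exploiting that linear structure; your proposal has no substitute for this step.

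A secondary inaccuracy: under the alternative it is not true that ``all other 255 terms contribute only to the variance.'' Several signal--plug-in cross terms have nonzero expectations of order up to $|\ti\lambda|^2\lambda_1$ and $\lambda_1^2$ (e.g.\ the analogues of $Y_4$, $Z_4$--$Z_6$, $T_2$, $F$ in the paper), and these must be explicitly shown to be $o(\ti\lambda^4)$; they are, but only after the same term-by-term computation, so your mean analysis cannot stop at the pure-signal cycle. (Your inclusion of a $-\Omega_{ii}\mf{1}\{i=j\}$ factor is harmless but vacuous, since the quadrilateral sum runs over distinct indices.)
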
 

Following \cite{JinKeLuo21}, we introduce some notation: 
\begin{align*}
	& \widetilde{\Omega}=\Omega-(\eta^*)(\eta^*)', \qquad \mbox{where}\quad \eta^* = \frac{1}{\sqrt{v_0}}\Omega {\bf 1}_n, \;\; v_0= {\bf 1}_n'\Omega {\bf 1}_n;\cr
	& \delta_{ij} = \eta_i(\eta_j-\teta_j)+\eta_j(\eta_i-\teta_i), \qquad \mbox{where}\quad \eta=\frac{1}{\sqrt{v}}(\mathbb{E}A){\bf 1}_n,\;\; \teta = \frac{1}{\sqrt{v}}A{\bf 1}_n,\;\; v= {\bf 1}_n'(\mathbb{E}A){\bf 1}_n;\cr
	&r_{ij} = (\eta_i^*\eta_j^*-\eta_i\eta_j) - (\eta_i-\teta_i)(\eta_j-\teta_j)+(1-\frac{v}{V})\teta_i\teta_j, \qquad \mbox{where}\;\; V = {\bf 1}_n'A{\bf 1}_n. 
\end{align*}

The \textit{ideal} and \textit{proxy} SgnQ  statistics, respectively, are defined as follows:
\begin{align}
	\label{eqn:idealSgnQ}	\widetilde{Q}_n &= \sum_{i,j,k,\ell (dist)} (\widetilde{\Omega}_{ij}  + W_{ij}) 
	(\widetilde{\Omega}_{jk}  + W_{jk}) (\widetilde{\Omega}_{k\ell}  + W_{k\ell}) (\widetilde{\Omega}_{\ell i} + W_{\ell i}) \\
	\label{eqn:proxySgnQ}	Q_n^*&= \sum_{i,j,k,\ell (dist)} (\widetilde{\Omega}_{ij}  + W_{ij} + \delta_{ij})(\widetilde{\Omega}_{jk}  + W_{jk} + \delta_{jk}) (\widetilde{\Omega}_{k\ell}  + W_{k\ell} + \delta_{k\ell}) (\widetilde{\Omega}_{\ell i} + W_{\ell i} + \delta_{\ell i}).
\end{align}
Moreover, we can express the original or \textit{real} SgnQ as
\begin{align*}
	Q_n = \sum_{i,j,k,\ell (dist)} \bigg[(\widetilde{\Omega}_{ij}  + W_{ij}+\delta_{ij}+r_{ij})  
	&(\widetilde{\Omega}_{jk}  + W_{jk}+\delta_{jk}+r_{jk})
	\nonumber \\ &(\widetilde{\Omega}_{k\ell}  + W_{k\ell }+\delta_{k\ell}+r_{k\ell}) (\widetilde{\Omega}_{\ell i} + W_{\ell i}+\delta_{\ell i}+r_{\ell i})\bigg]. 
\end{align*}

The next theorems handle the behavior of these statistics. Together the results imply Theorem \ref{thm:SgnQ_lim_behavior}. Again, the analysis of the null carries over directly from \cite{JinKeLuo21}, so we only need to study the alternative. The claims regarding the alternative follow from Lemmas \ref{lem:ideal_SgnQ}--\ref{lem:real_SgnQ_starQ} below.

\begin{thm}[Ideal SgnQ test statistic]
	\label{thm:ideal_SgnQ}
	Suppose that the previous assumptions hold and that $|\ti \lambda | / \sqrt{ \lambda_1 } \to \infty$. Then under the null hypothesis, as $n \to \infty$, $\E[ \ti Q ] =0$ and $\var(\ti Q) = 8 \| \theta \|_2^8 \cdot [1 + o(1)]$. Furthermore, under the alternative hypothesis, as $n \to \infty$, $\E[ \ti Q] \sim \ti \lambda^4$ and $\var(\ti Q) \les \lambda_1^4 + |\ti \lambda|^6 = o(\ti \lambda^8)$. 
\end{thm}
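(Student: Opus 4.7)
The plan is to expand $\ti Q_n$ as a sum over the $2^4 = 16$ choices of placing $\ti \Omega$ or $W$ on each of the four edges of the cycle $(i,j,k,\ell)$, and then use the independence and zero-mean of the $W_{ab}$'s to discard any term whose $W$-edges do not pair up. Since $i,j,k,\ell$ are summed over distinct indices, the four unordered pairs $\{i,j\},\{j,k\},\{k,\ell\},\{\ell,i\}$ are pairwise distinct, so the four $W$-variables appearing in any one cycle are mutually independent. Consequently every term in the 16-term expansion that contains at least one $W$-factor has mean zero, which reduces $\E[\ti Q_n]$ to the pure $\ti\Omega^4$-sum. The claim for the null case ($\E[\ti Q] = 0$ and $\var(\ti Q) \sim 8\|\theta\|_2^8$) is then inherited directly from the analysis in \cite{JinKeLuo21} after translating notation; my effort concentrates on the alternative.

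For the mean, by inclusion–exclusion,
\[
\sum_{i,j,k,\ell\,(dist)} \ti\Omega_{ij}\ti\Omega_{jk}\ti\Omega_{k\ell}\ti\Omega_{\ell i} \;=\; \tr(\ti\Omega^4) - R,
\]
where $R$ collects the partial sums in which at least two of $i,j,k,\ell$ coincide. By Lemma \ref{lemma:d}, $\ti\Omega$ has rank one when $K=2$, whence $\tr(\ti\Omega^m) = \ti\lambda^m$ for every $m\geq 1$; in particular $\tr(\ti\Omega^4) = \ti\lambda^4$. The remainder $R$ splits into a bounded number of traces of products of $\ti\Omega$ and $\diag(\ti\Omega)$; using the entrywise bound $|\ti\Omega_{ij}|\les\theta_i\theta_j$ together with $\|\theta\|_\infty = O(1)$ and $\|\ti\Omega\|_F^2 = \ti\lambda^2$, each such trace is $O(\ti\lambda^2)$ or $O(\ti\lambda^3)$, hence $o(\ti\lambda^4)$ since $|\ti\lambda|\to\infty$ (itself a consequence of $|\ti\lambda|/\sqrt{\lambda_1}\to\infty$ and $\lambda_1\to\infty$ under \eqref{altconditions}). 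Thus $\E[\ti Q_n]\sim \ti\lambda^4$.

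For the variance, write $\ti Q_n = \E[\ti Q_n] + Y$, where $Y$ gathers the 15 terms of the expansion that contain at least one $W$-edge, so that $\var(\ti Q_n) = \E[Y^2]$. Expanding $\E[Y^2]$ produces a sum over two cycles, $(i_1,j_1,k_1,\ell_1)$ and $(i_2,j_2,k_2,\ell_2)$, with $16\cdot 16$ sign patterns assigning $\ti\Omega$ or $W$ to each of the eight cycle-edges. Independence and zero-mean of the $W_{ab}$'s force every surviving contribution to match each $W$-edge on cycle $1$ to a $W$-edge on cycle $2$ with the same unordered index pair; this collapses the eight-index sum into a trace-type expression in $\ti\Omega$, $\Omega$, and $\Theta$. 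Using $\E W_{ab}^{2p}\les\Omega_{ab}\les\theta_a\theta_b$, the Frobenius identities $\|\ti\Omega\|_F^2 = \ti\lambda^2$ and $\|\Omega\|_F^2\les\lambda_1^2$, and the rank-one traces $\tr(\ti\Omega^m)=\ti\lambda^m$, each of the 256 surviving pieces is bounded by a product of the form $\ti\lambda^{2a}\lambda_1^{b}$ under a combinatorial exponent constraint. The two dominant regimes are: four $W$-edges on each cycle paired diagonally across the cycles (contribution $O(\lambda_1^4)$), and two $W$-edges on each cycle with the remaining $\ti\Omega$-edges forming a long chain (contribution $O(|\ti\lambda|^6)$). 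Summing yields $\var(\ti Q_n)\les \lambda_1^4 + |\ti\lambda|^6$, which equals $o(\ti\lambda^8)$ by the hypothesis $|\ti\lambda|/\sqrt{\lambda_1}\to\infty$.

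The main obstacle is the sharp bookkeeping in this 256-term variance expansion. Unlike the balanced case of \cite{JKL2019}, naive degree bounds are too loose here, because the small community $\mathcal{C}_1$ and the large community $\mathcal{C}_0$ enter at very different orders, and many terms that look dominant actually cancel to lower order only after exploiting the rank-one structure of $\ti\Omega$ given by Lemma \ref{lemma:d}. The rank-one collapse is what allows the $\ti\Omega$-factors in each surviving piece to be absorbed into powers of $\ti\lambda$ rather than into powers of $\lambda_1$, and it is precisely this reduction that produces the bound $|\ti\lambda|^6$ (rather than $\lambda_1^3\,|\ti\lambda|^2$ or worse) on the signal-carrying contribution.
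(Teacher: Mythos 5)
Your proposal follows essentially the same route as the paper: your 16-way expansion according to which cycle edges carry $W$ versus $\ti\Omega$ is exactly the paper's decomposition $\ti Q = X_1+4X_2+4X_3+2X_4+4X_5+X_6$, the mean reduces to the pure $\ti\Omega$ term with $X_6 = \ti\lambda^4 + O(|\ti\lambda|^3)$, and the variance is bounded term by term by pairing $W$-edges and absorbing the $\ti\Omega$-factors into powers of $|\ti\lambda|$ via the rank-one structure, which the paper implements through the sharp entrywise bound $|\ti\Omega_{ij}|\lesssim \beta_i\theta_i\beta_j\theta_j$ with $\|\beta\circ\theta\|_2^2=|\ti\lambda|$ (the null case is likewise quoted from the earlier work). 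One small correction: the $|\ti\lambda|^6$ contribution arises from the one-$W$/three-$\ti\Omega$ term ($X_5$), not from the two-$W$ configurations, which the paper bounds by $|\ti\lambda|^4\lambda_1$ and $|\ti\lambda|^4$ and which are dominated by $\lambda_1^4+|\ti\lambda|^6$.
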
 

\begin{thm}[Proxy SgnQ test statistic]
	\label{thm:proxy_SgnQ}
	Suppose that the previous assumptions hold and that $|\ti \lambda | / \sqrt{ \lambda_1 } \to \infty$. Then under the null hypothesis, as $n \to \infty$, $|\E[ \ti Q - Q^* ]| = o(\| \theta \|_2^4)$ and $\var(\ti Q - Q^*) = o(\| \theta \|_2^8)$. Furthermore, under the alternative hypothesis, as $n \to \infty$, 
	$|\E[ \ti Q - Q^*] | \les | \ti \lambda |^2 \lambda_1 = o( \ti \lambda^4) $ and 
	$\var(\ti Q - Q^*) \les | \ti \lambda |^2 \lambda_1^3 + |\ti \lambda|^6 = o(\ti \lambda^8) $. 
\end{thm}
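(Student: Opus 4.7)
The plan is to expand $Q_n^* - \tilde Q_n$ by the multinomial theorem and bound each cross term that contains at least one $\delta$-factor. Each summand in $Q_n^*$ is a product $\prod_{e} (\tilde\Omega_e + W_e + \delta_e)$ over the cyclic edge set $e \in \{(ij),(jk),(k\ell),(\ell i)\}$, while each summand in $\tilde Q_n$ is the same product without the $\delta_e$'s. Their difference decomposes as a sum over non-empty subsets $T$ of the four edges, with the factors on $T$ being $\delta$'s and the rest drawn from $\{\tilde\Omega, W\}$. Exploiting the cyclic symmetry of the distinct-index sum, this leaves $O(1)$ equivalence classes indexed by the multiset of $(\tilde\Omega, W, \delta)$ assignments and their relative cyclic positions, and I would treat each class separately.

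The core identity I would exploit is $\tilde\Omega\, \eta = 0$, equivalently $\tilde P d = 0$ (a restatement of Lemma~\ref{lemma:d}), which encodes that $\eta^*(\eta^*)'$ is the rank-one projection of $\Omega$ along $\mathbf{1}_n$. Writing $\delta_{ij} = \eta_i X_j + \eta_j X_i$ with $X_j := \eta_j - \tilde\eta_j = -v^{-1/2}\sum_m W_{mj}$, one has $\mathbb{E}[X_j] = 0$ and $\mathbb{E}[X_j^2] \asymp v_0^{-1}(\Omega \mathbf{1}_n)_j$, while $X_j$ and $X_\ell$ share only the noise entry $W_{j\ell}$. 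Substituting this expansion into a typical cross-term and contracting over the free index of a $\delta$-factor attached to a $\tilde\Omega$-edge kills the leading-order contribution via $\tilde\Omega\,\eta = 0$; the subleading contributions come from the quadratic-in-$X$ part of $\delta\delta$ pairs and from paired $W$-factors, and are controlled by centered moment calculations.

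The main obstacle, and the novelty over \cite{JKL2019}, is that in the severely unbalanced regime the entries of $\tilde\Omega$ are not of uniform order across $S \times S$, $S \times S^c$, and $S^c \times S^c$: the uniform bound $|\tilde\Omega_{ij}| \les \theta_i\theta_j$ is too crude and erases the $(ac-b^2)$-cancellation responsible for the $\tilde\lambda$ scale identified in Lemma~\ref{lemma:d}. Instead, for each equivalence class I would substitute $\tilde\Omega = \Theta \Pi \tilde P \Pi' \Theta$ and evaluate the contracted sums as bilinear forms in $\tilde P$, using $\tilde P d = 0$ and the rank-one identity $\tr(\tilde\Omega^k) = \tilde\lambda^k$ when $K=2$ to obtain sharp bounds in terms of $\tilde\lambda$ and $\lambda_1 \asymp v_0$. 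The most delicate estimate is the covariance $\mathbb{E}[\delta_{ij}\delta_{k\ell}]$ on edge pairs sharing a vertex, where the common $W_{j\ell}$ produces a correction of order $v_0^{-2}\eta_i\eta_k \Omega_{j\ell}$ that must be propagated against the remaining $\tilde\Omega$ factors; it is here that using $\tilde P$-bilinear forms in place of entrywise bounds is essential.

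Collecting the per-class bounds, I expect the mean of $Q_n^* - \tilde Q_n$ to be dominated by the class with two $\delta$'s (in opposite cyclic positions) and two $\tilde\Omega$'s, giving $O(|\tilde\lambda|^2 \lambda_1)$ under the alternative; the variance is dominated by the class with one $\delta$ and three $W$'s, contributing $O(|\tilde\lambda|^2 \lambda_1^3)$, and by the class with three $\delta$'s and one $\tilde\Omega$, contributing $O(|\tilde\lambda|^6)$, with all other classes smaller by the same moment estimates. Under the null $\tilde\lambda = 0$, so only $\delta$--$W$ cross terms survive and their analysis reduces to the argument of \cite{JKL2019}, yielding $o(\|\theta\|_2^4)$ and $o(\|\theta\|_2^8)$. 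Under the alternative, the assumption $|\tilde\lambda|/\sqrt{\lambda_1} \to \infty$ converts the above bounds into $o(\tilde\lambda^4)$ and $o(\tilde\lambda^8)$, which is exactly the content of the theorem.
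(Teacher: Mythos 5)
Your high-level skeleton matches the paper's: expand $\ti Q - Q^*$ by multinomial theorem into $O(1)$ classes indexed by how many of the four cyclic factors are $\delta$ versus $W$ versus $\ti\Omega$, bound each class separately, and recognize that the crude entrywise bound $|\ti\Omega_{ij}|\lesssim\theta_i\theta_j$ loses the $\ti\lambda$ scale and is insufficient in the severely unbalanced regime. Those observations are correct and are indeed the main departures from \cite{JKL2019}. However, the mechanics you propose for actually carrying out the per-class bounds have a genuine gap.

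Your claimed ``core identity'' $\ti\Omega\,\eta = 0$ is false in the general DCBM. What is true, and what Lemma~\ref{lemma:d} actually records, is $\ti P d = 0$, i.e.\ $\ti\Omega\,{\bf 1}_n = 0$. To get $\ti\Omega\,\eta^* = 0$ (let alone $\ti\Omega\,\eta = 0$, which differs further by diagonal corrections) one would need ${\bf 1}_n$ to be an eigenvector of $\Omega$, equivalently $\Omega^2 {\bf 1} \propto \Omega{\bf 1}$. That holds for the SBM under the calibration $Ph\propto{\bf 1}_K$, but fails for DCBM with non-constant $\theta$: writing it in block form forces $\ti P\, G P d=0$ rather than the available $\ti P d=0$, and these are different unless $G P d \propto d$. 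Because the identity fails, the leading-order contributions you plan to ``kill'' by contracting a $\delta$-factor against a $\ti\Omega$-edge do not in fact vanish, and your plan to evaluate each class as an exact $\ti P$-bilinear form exploiting $\ti P d = 0$ would not close. The paper instead sidesteps all exact cancellations: it replaces $|\ti\Omega_{ij}|\lesssim\theta_i\theta_j$ by the sharper nonnegative upper bound $|\ti\Omega_{ij}|\lesssim\beta_i\theta_i\beta_j\theta_j$ with $\beta$ defined in \eqref{eqn:beta_def}, so that $\|\beta\circ\theta\|_2^2 = |\ti\lambda|$, and then simply sums these positive bounds term by term. No linear-in-$\ti\Omega$ cancellation is used or needed.

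A secondary issue is that your accounting of the dominant classes is wrong in several places. The $O(|\ti\lambda|^2\lambda_1^3)$ variance contribution comes from two-$\delta$ terms such as $Z_3 = \sum\delta_{ij}\delta_{jk}\ti\Omega_{k\ell}W_{\ell i}$, not from the one-$\delta$ three-$W$ class, which contributes $\var(Y_1)\lesssim\lambda_1^4$. The $O(|\ti\lambda|^6)$ contribution comes from $Y_6 = \sum\delta_{ij}\ti\Omega_{jk}\ti\Omega_{k\ell}\ti\Omega_{\ell i}$ (one $\delta$, three $\ti\Omega$), not three $\delta$'s and one $\ti\Omega$, which contributes only $\var(T_2)\lesssim|\ti\lambda|^2\lambda_1^2$. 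For the mean, the $O(|\ti\lambda|^2\lambda_1)$ terms are $Y_4$ (one $\delta$, two $\ti\Omega$, one $W$) and $Z_5$ (two adjacent $\delta$'s, two adjacent $\ti\Omega$'s), not the two-$\delta$-in-opposite-positions class $Z_6$, which has the smaller bound $|\ti\lambda|^2$. These misidentifications would have surfaced as you tried to carry out the plan, but they also suggest the calculation is being done by the wrong method (exact contraction) rather than the paper's (entrywise $\beta\circ\theta$ bound followed by summation).
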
 

\begin{thm}[Real SgnQ test statistic]
	\label{thm:real_SgnQ}
	Suppose that the previous assumptions hold and that $|\ti \lambda | / \sqrt{ \lambda_1 } \to \infty$. Then under the null hypothesis, as $n \to \infty$, $|\E[ Q - \ti Q ]| = o(\| \theta \|_2^4)$ and $\var(Q -\ti Q) = o(\| \theta \|_2^8)$. Furthermore, under the alternative hypothesis, as $n \to \infty$, 
	$|\E[ Q - Q^*] | \les |\ti \lambda|^2 \lambda_1 = o( \ti \lambda^4)$ and 
	$\var(Q - Q^*) \les  |\ti \lambda|^2 \lambda_1^3 = o(\ti \lambda^8)$. 
\end{thm}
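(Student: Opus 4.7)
The plan is to expand both differences as finite multi-index sums. By \eqref{eqn:idealSgnQ}--\eqref{eqn:proxySgnQ}, the difference $Q - Q^*$ (resp.\ $Q - \ti Q$) equals the sum of $4^4 - 3^4 = 175$ (resp.\ $4^4 - 2^4 = 240$) quadruple sums of the form $\sum_{(i,j,k,\ell)\,\mathrm{dist}} F^{(1)}_{ij} F^{(2)}_{jk} F^{(3)}_{k\ell} F^{(4)}_{\ell i}$, where each $F^{(s)} \in \{\ti\Omega, W, \delta, r\}$ and at least one position is $r$ (resp.\ $\delta$ or $r$). It suffices to bound the expectation and variance of each type individually and then combine by the triangle and variance-sum inequalities, since the number of types is a finite fixed constant.

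For each type, the first step is to establish high-probability magnitude bounds on the correction terms $\delta_{ij}$ and $r_{ij}$. Writing $\ti\eta_j = v^{-1/2}\sum_i A_{ij}$, Bernstein's inequality yields $|\ti\eta_j - \eta_j|$ of order $\sqrt{\eta_j/v}$ up to logarithmic factors, which controls $\delta_{ij} = \eta_i(\eta_j - \ti\eta_j) + \eta_j(\eta_i - \ti\eta_i)$. For $r_{ij}$ the three pieces are handled separately: the deterministic piece $\eta^*_i\eta^*_j - \eta_i\eta_j$ via the identity $v_0 - v = \tr(\Omega) = o(v)$, the quadratic remainder $(\eta_i-\ti\eta_i)(\eta_j-\ti\eta_j)$ by applying Bernstein twice, and the global rescaling residual $(1 - v/V)\ti\eta_i\ti\eta_j$ via Bernstein for $V - v$. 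The resulting estimates show that $|\delta_{ij}|$ and $|r_{ij}|$ are much smaller than a typical $|W_{ij}|$ in the moments that appear in the SgnQ expansion.

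Given these bounds, I would apply the multi-graph counting strategy already used in the proof of Theorem \ref{thm:ideal_SgnQ} to each of the $175$ (or $240$) types: partition the sum by the graph pattern on the four distinct indices and by the placement of $(i,j,k,\ell)$ in $\{S, S^c\}^4$, then substitute the appropriate scaling (one of $a\theta_i\theta_j$, $b\theta_i\theta_j$, $c\theta_i\theta_j$) for each $\ti\Omega$-factor and the concentration bounds above for each random factor. Under the alternative, summing the per-type bounds yields $|\E[Q - Q^*]| \les |\ti\lambda|^2 \lambda_1$ and $\var(Q - Q^*) \les |\ti\lambda|^2 \lambda_1^3$; under the null, since $\ti\Omega = 0$, most types vanish and the rest contribute $o(\|\theta\|_2^4)$ and $o(\|\theta\|_2^8)$ respectively. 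The final $o(\ti\lambda^4)$ and $o(\ti\lambda^8)$ conclusion then follows from the hypothesis $|\ti\lambda|/\sqrt{\lambda_1} \to \infty$.

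The main obstacle is the severely unbalanced regime. Because the entries of $P$ differ in order, a single scaling $\theta_i\theta_j$ does not suffice: for every type one must track all $2^4 = 16$ placements of $(i,j,k,\ell)$ in $\{S, S^c\}^4$ and verify the claimed bound case by case. Many contributions that were trivially negligible in the balanced proof of \cite{JKL2019}, where every $\Omega_{ij}$ has the same order, now need to be re-established using the identifiability constraint $Ph \propto \mf{1}_K$ together with the relation $d_1 \asymp g_1 \asymp N/n$ from \eqref{altconditions}. This $175 \times 16$ (or $240 \times 16$) case analysis is the technical heart of the proof and is precisely the ``careful study of the $256$ terms'' alluded to in the introduction.
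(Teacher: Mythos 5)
Your overall scaffolding (expand the differences into the $175$ and $240$ post-expansion sums, bound each term's mean and variance using the graph structure of the indices, control $V-v$ and $\teta-\eta$ by Bernstein) is the same strategy the paper uses, up to a minor structural difference: the paper splits $Q-Q^*$ into $(Q-\ti Q^*)+(\ti Q^*-Q^*)$ (Lemmas \ref{lem:real_SgnQ_tistarQ} and \ref{lem:real_SgnQ_starQ}), handles the stochastic prefactor $(v/V)^{N_{\ti r}}$ through the moment-comparison inequality \eqref{eqn:XvsY_moments} rather than pointwise bounds, and recycles the balanced-case term-by-term computations through the transfer device of Lemma \ref{lem:transfer} instead of redoing a $175\times 16$ case analysis.

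However, there is a genuine gap at the heart of your plan: you propose to bound each $\ti\Omega$-factor by the block magnitude of $\Omega$, i.e.\ by one of $a\theta_i\theta_j$, $b\theta_i\theta_j$, $c\theta_i\theta_j$. But $\ti\Omega=\Omega-\eta^*\eta^{*\T}$ is the \emph{centered} matrix, and because of the degree matching built into the calibration $Ph\propto \mf{1}_K$, its entries are dramatically smaller than those of $\Omega$ on the dominant block: by Lemma \ref{lem:tilde_Omega}, $\ti\Omega$ is rank one and, e.g., for $i,j\in \mathcal{C}_0$ one has $|\ti\Omega_{ij}|\asymp \theta_i\theta_j|1-b^2|\,\|\theta_S\|_1^2/v_0$, which is smaller than the $\Omega$-entry by a factor on the order of $aN^2/(cn^2)$. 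If you replace $|\ti\Omega_{ij}|$ by the $\Omega$ block values, every bound involving $\ti\Omega$ collapses to powers of $\lambda_1$ rather than $|\ti\lambda|$; for instance a term like $\sum\delta_{ij}\ti\Omega_{jk}\ti\Omega_{k\ell}\ti\Omega_{\ell i}$ would then be bounded by roughly $\lambda_1^6$ instead of $|\ti\lambda|^6$, and since in the regime of interest $|\ti\lambda|$ can be as small as $\sqrt{\lambda_1}$ times a slowly diverging factor, $\lambda_1^6$ can exceed $\ti\lambda^8$, so the required conclusions $|\E[Q-Q^*]|=o(\ti\lambda^4)$ and $\var(Q-Q^*)=o(\ti\lambda^8)$ would not follow. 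The missing ingredient is exactly the paper's sharp entrywise bound $|\ti\Omega_{ij}|\les (\beta_i\theta_i)(\beta_j\theta_j)$ from \eqref{eqn:tiOm_bd}, with $\|\beta\circ\theta\|_2^2=|\ti\lambda|$ and $|\beta_i|\les 1$ as in \eqref{eqn:beta_bds}, which is what lets every $\ti\Omega$-factor contribute a factor of $|\ti\lambda|$ (not $\lambda_1$) after summation. Relatedly, bounding $|\delta_{ij}|$ and $|r_{ij}|$ by uniform high-probability magnitudes and summing absolute values is too lossy for the mean-zero terms, where the stated variance orders come from exact covariance computations exploiting independence of the $W_{ij}$; the high-probability event argument is only used in the paper for a handful of high-$N_W^*$ terms ($J_9$, $J_{10}$, $K_5'$), and even there under an extra condition on $\|\theta\|_2$.
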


The previous work \cite{JinKeLuo21} establishes that under the assumptions above, if $\| \theta_S \|_1 / \| \theta \|_1 \asymp 1$, then SgnQ distinguishes the null and alternative provided that $|\lambda_2|/\sqrt{\lambda_1} \to \infty$. To compare with the results above, note that $\lambda_2 \asymp \ti \lambda$ if $ \| \theta_S \|_1 / \| \theta \|_1 \asymp 1$ (c.f. Lemma E.5 of \cite{JinKeLuo21}). Thus when $K = 2$, our main result extends the upper bound of \cite{JinKeLuo21} to the case when $\| \theta_S \|_1 / \| \theta \|_1 = o(1)$. We note that $| \ti \lambda | \gtrsim | \lambda_2 |$ in general (see Lemma \ref{lem:ti_lambda_lambda_12} and Corollary \ref{cor:ti_lambda_vs lam2}). 

The theorems above apply to the symmetric SBM. Recall that in this model, 
\begin{align*}
	\Omega_{ij} = \begin{cases}
		a &\quad \text{ if } i,j \in S  \\
		c   &\quad \text{ if } i,j \notin S  \\
		\ti b = \frac{n c - (a+c) N}{n - 2N}  &\quad \text{ otherwise. } 
	\end{cases}
\end{align*}
where $N = |S|$ and $a, b, c \in (0,1)$. To obtain this model from our DCBM, set 
\begin{align*}
	\num \label{eqn:SBM_P}
	P = \begin{pmatrix}
		1 & \ti  b/\sqrt{ac} \\
		\ti b/\sqrt{ac} & 1 \\
	\end{pmatrix},
\end{align*}
and 
\begin{align*}
	\num \label{eqn:SBM_theta}
	\theta = \sqrt{a} \mf{1}_S + \sqrt{c} \mf{1}_{S^c}. 
\end{align*}
The assumption \eqref{eqn:assn1} implies that $ \ti b \les \sqrt{ a c}$, which is automatically satisfied since we assume $a \geq c$. 

In SBM, it holds that $\lambda_2 = \ti \lambda$ (see Lemma \ref{lem:ti_lambda_lambda_12}). Furthermore, explicit calculations in Section \ref{sec:SBM_calc} reveal that
\begin{align*}
	\num \label{eqn:SBM_eigen}
	\lambda_1 &\sim nc  , \text{ and }
	\\ \lambda_2 &= \ti \lambda \sim N(a-c). 
\end{align*}

In addition, with $P, a, \ti b, c$ as above, if we have 
\[
\theta_i
= \begin{cases}
	\rho_i \sqrt{a} &\qquad \text{if } i \in S 
	\\ \rho_i \sqrt{c} &\qquad \text{if } i \notin S
\end{cases}
\]
for $\rho > 0$ with $\rho_{\min} \gtrsim \rho_{\max}$ in the DCBM setting, a very similar calculation, which we omit, reveals that
\begin{align*}
	\num \label{eqn:DCBM_eigen}
	\lambda_1 &\asymp  nc  , \text{ and }
	\\ \ti \lambda &\asymp   N(a-c). 
\end{align*}

With the previous results of this subsection in hand (which are proved in the remaining subsections) we justify Theorem \ref{thm:alt-SgnQ} and Corollary \ref{cor:SgnQtest}. 

\begin{proof}[Proof of Theorem \ref{thm:alt-SgnQ}]
	The SgnQ test has level $\kappa$ by Theorem \ref{thm:null-SgnQ}, so it remains to study the type II error. Using Theorem \ref{thm:SgnQ_lim_behavior} and Lemma \ref{lem:compatibility}, the fact that the type II error tends to $0$ directly follows from Chebyshev's inequality and the fact that $ \| \hat \eta \|_2^2 - 1 \approx \| \theta \|_2^2$ with high probability. In particular, note that since $|\ti \lambda| \gg \sqrt{\lambda_1}$, the expectation of SgnQ under the alternative is much larger than its standard deviation, under the null or alternative. We omit the details as they are very similar to the proof of Theorem 2.6 in \cite[Supplement,pgs. 5--6]{JinKeLuo21}. 
\end{proof}

\begin{proof}[Proof of Corollary \ref{cor:SgnQtest}]
	This result follows immediately from \eqref{eqn:DCBM_eigen} and Theorem \ref{thm:alt-SgnQ}. 
\end{proof}

%
%

\subsection{Preliminary bounds}

Define $v_0 = \mf{1}^\T \Omega \mf{1}$, and let $\eta^* = 1/\sqrt{v_0} \cdot \Omega \mf{1}$. For the analysis of SgnQ, it is important is to understand $\tilde \Omega = \Omega - \eta^* \eta^{*\T}$. The next lemma establishes that $\ti \Omega$ is rank one and has a simple expression when $K = 2$. 

\begin{lemma}
	\label{lem:tilde_Omega}
	Let $f = ( \|\theta_{S^c}\|_1  , - \|\theta_S\|_1 )^\T$ It holds that
	\begin{align*}
		\tilde \Omega = \frac{(1 - b^2)}{ v_0 } \cdot \Theta \Pi f f^\T \Pi^\T \Theta. 
	\end{align*}
\end{lemma}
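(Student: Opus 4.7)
The plan is to reduce the identity to a purely $K \times K$ computation. Factor $\Omega = \Theta \Pi P \Pi^\T \Theta$. Then
\[
\Omega \mf{1} = \Theta \Pi P u, \qquad v_0 = \mf{1}^\T \Omega \mf{1} = u^\T P u,
\]
where $u = \Pi^\T \Theta \mf{1} \in \R^K$. When $K = 2$, $u = (\|\theta_S\|_1, \|\theta_{S^c}\|_1)^\T$. Substituting these into the definition of $\tilde \Omega$ gives
\[
\tilde \Omega = \Theta \Pi \tilde P \Pi^\T \Theta, \qquad \tilde P := P - \frac{1}{u^\T P u}\, P u u^\T P,
\]
so everything boils down to showing $\tilde P = \frac{1-b^2}{v_0}\, f f^\T$ with $f = (u_2, -u_1)^\T$.

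For the $2 \times 2$ matrix computation, I would plug in $P = \bigl(\begin{smallmatrix} 1 & b \\ b & 1 \end{smallmatrix}\bigr)$ and expand entrywise. A direct calculation yields
\[
(u^\T P u)\, \tilde P_{11} = u_1^2 + 2b u_1 u_2 + u_2^2 - (u_1 + b u_2)^2 = (1-b^2) u_2^2,
\]
and symmetrically $(u^\T P u)\, \tilde P_{22} = (1-b^2) u_1^2$, while for the off-diagonal entry,
\[
(u^\T P u)\, \tilde P_{12} = b(u^\T P u) - (u_1 + b u_2)(b u_1 + u_2) = -(1-b^2) u_1 u_2.
\]
Hence $\tilde P = \frac{1-b^2}{v_0} f f^\T$, which, combined with the first display, gives the claimed formula.

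There is no real obstacle here; the only point requiring care is that $P$ is the rescaled matrix with unit diagonals (as set up in Lemma~\ref{lem:compatibility} and the paragraph preceding this lemma), so that $b = \tilde b/\sqrt{ac}$ in terms of the original DCBM parameters. This unit-diagonal normalization is exactly what produces the clean factor $(1-b^2)$, and it is what lets us conclude a posteriori that $\tilde \Omega$ is a rank-one matrix with the explicit outer-product structure above, a fact that will be used repeatedly in the subsequent SgnQ analysis.
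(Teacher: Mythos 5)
Your proof is correct and follows essentially the same direct computation as the paper: the paper's supplement verifies the identity entrywise in the three cases $i,j\in S$, $i\in S,\,j\notin S$, $i,j\in S^c$, while you equivalently factor through $\widetilde P = P - (u^\T P u)^{-1}Puu^\T P$ (the same reduction as display \eqref{centeredOmega} in the main text) and do the $2\times 2$ algebra; the two are the same calculation up to bookkeeping. Your closing remark about the unit-diagonal normalization of $P$ is also consistent with the paper's setup for this section.
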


\begin{proof}
	Let $\rho_0 = \|\theta_S\|_1$ and $\rho_1 = \|\theta_{S^c}\|_1$. Note that
	\begin{align*}
		(\Omega \mf{1})_i = \theta_i \sum_j \theta_j \pi_i^\T P \pi_j 
		= \begin{cases}
			\theta_i (\rho_0 + b \rho_1) &\quad \text{ if } i \in S \\
			\theta_i (b \rho_0 +  \rho_1) &\quad \text{ if } i \notin S. 
		\end{cases}  
	\end{align*}
	Hence
	\begin{align*}
		v_0 = \mf{1}^\T \Omega \mf{1} =  \rho_0^2 + 2 b \rho_0 \rho_1 +  \rho_1^2. 
	\end{align*}
	If $i,j \in S$, then 
	\begin{align*}
		\ti \Omega_{ij} = \theta_i \theta_j \big( 1 - \frac{( \rho_0 + b \rho_1)^2}{v_0}\big)
		= \theta_i \theta_j \cdot \frac{ (1 - b^2) \rho_1^2}{v_0} 
	\end{align*}
	Similarly if $i \in S$ and $j \notin S$,
	\begin{align*}
		\ti \Omega_{ij} = \theta_i \theta_j \big( b - \frac{( \rho_0 + b \rho_1)(b \rho_0 +  \rho_1)}{v_0}\big)
		= - \theta_i \theta_j \cdot \frac{ (1 - b^2) \rho_0 \rho_1}{v_0} 
	\end{align*}
	and
	\begin{align*}
		\ti \Omega_{ij} = \theta_i \theta_j \big( 1 - \frac{(b \rho_0 +  \rho_1)^2}{v_0}\big)
		=  \theta_i \theta_j \cdot \frac{ (1 - b^2) \rho_0^2}{v_0} 
	\end{align*}
	if $i, j \in S^c$. The claim follows.
\end{proof}

Let 
\begin{align*}
	w = \Theta \Pi f = \theta_S \| \theta_{S^c}\|_1 - \theta_{S^c}  \| \theta_{S}\|_1 =  
	\rho_1 \theta_S - \rho_0 \theta_{S^c}
\end{align*}
Using the previous lemma, we have the rank one eigendecomposition
\begin{align*}
	\ti \Omega = \ti \lambda \ti \xi \ti \xi^\T,
	\num \label{eqn:tilde_omega_eigendecomp}
\end{align*}
where we define 
\begin{align*}
	\ti \xi &= \frac{\rho_1 \theta_S - \rho_0 \theta_{S^c}}{\| \rho_1\theta_S - \rho_0 \theta_{S^c} \|_2} = \frac{\rho_1 \theta_S - \rho_0 \theta_{S^c}}{ \sqrt{ \rho_1^2 \| \theta_S \|_2^2 + \rho_0^2 \| \theta_{S^c} \|_2^2 } }, \text{ and } 
	\num \label{eqn:tilde_xi_def}
	\\ \ti \lambda &= \frac{(1 - b^2)}{v_0} \cdot \big( \rho_1^2 \| \theta_S \|_2^2 + \rho_0^2 \| \theta_{S^c} \|_2^2 \big). 
	\num \label{eqn:tilde_lambda_def}
\end{align*}

Lemma E.5 of \cite{JinKeLuo21} implies that if $ \| \theta_S \|_1 / \| \theta \|_1 \asymp 1$, then $\lambda_2 \asymp \ti \lambda_1$ . If $ \| \theta_S \|_1 / \| \theta \|_1 = o(1)$, then this guarantee may not hold. Below, in the case $K = 2$, we express $\ti \lambda$ in terms of the eigenvalues and eigenvectors of $\Omega$. This allows us to compare $\lambda_2$ with $\ti \lambda$ more generally, as in Corollary \ref{cor:ti_lambda_vs lam2}.

\begin{lemma}
	\label{lem:ti_lambda_lambda_12}
	Let $\Omega$ have eigenvalues $\lambda_1, \lambda_2$ and eigenvectors $\xi_1, \xi_2$. Let $\ti \lambda$ denote the eigenvalue of $\ti \Omega$. Then
	\begin{align*}
		\num \label{eqn:ti_lambda_lambda_12}
		\ti \lambda = \frac{\lambda_1 \lambda_2 \big( \langle \xi_1, \mf{1} \rangle^2 + \langle \xi_2, \mf{1} \rangle^2 \big)}{ \lambda_1 \langle \xi_1, \mf{1} \rangle^2 + \lambda_2 \langle \xi_2, \mf{1} \rangle^2  } . 
	\end{align*}
\end{lemma}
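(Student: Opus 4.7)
The plan is to use the rank-one structure of $\ti \Omega$ established in Lemma \ref{lemma:d} together with the spectral decomposition of $\Omega$. Since $K = 2$, Lemma \ref{lemma:d} tells us that $\ti \Omega$ has rank $1$, so its unique nonzero eigenvalue equals its trace. Thus
\begin{equation*}
\ti \lambda = \tr(\ti \Omega) = \tr(\Omega) - \| \eta^* \|^2 = \lambda_1 + \lambda_2 - \frac{\mf{1}^\T \Omega^2 \mf{1}}{\mf{1}^\T \Omega \mf{1}},
\end{equation*}
where in the last step we used $\eta^* = v_0^{-1/2} \Omega \mf{1}$ with $v_0 = \mf{1}^\T \Omega \mf{1}$.

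Next, I would substitute the eigendecomposition $\Omega = \lambda_1 \xi_1 \xi_1^\T + \lambda_2 \xi_2 \xi_2^\T$ into the expressions for $\mf{1}^\T \Omega \mf{1}$ and $\mf{1}^\T \Omega^2 \mf{1}$, using orthonormality of $\xi_1, \xi_2$. This gives
\begin{equation*}
\mf{1}^\T \Omega \mf{1} = \lambda_1 \langle \xi_1, \mf{1} \rangle^2 + \lambda_2 \langle \xi_2, \mf{1} \rangle^2, \qquad \mf{1}^\T \Omega^2 \mf{1} = \lambda_1^2 \langle \xi_1, \mf{1} \rangle^2 + \lambda_2^2 \langle \xi_2, \mf{1} \rangle^2.
\end{equation*}

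Finally, I would plug these into the formula for $\ti \lambda$ and simplify by putting everything over the common denominator $\lambda_1 \langle \xi_1, \mf{1} \rangle^2 + \lambda_2 \langle \xi_2, \mf{1} \rangle^2$. The numerator becomes $(\lambda_1 + \lambda_2)(\lambda_1 \langle \xi_1, \mf{1} \rangle^2 + \lambda_2 \langle \xi_2, \mf{1} \rangle^2) - \lambda_1^2 \langle \xi_1, \mf{1} \rangle^2 - \lambda_2^2 \langle \xi_2, \mf{1} \rangle^2$, which telescopes to $\lambda_1 \lambda_2 (\langle \xi_1, \mf{1} \rangle^2 + \langle \xi_2, \mf{1} \rangle^2)$, yielding exactly \eqref{eqn:ti_lambda_lambda_12}.

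There is no real obstacle here; the proof is a direct bookkeeping computation once one observes the rank-one reduction $\ti \lambda = \tr(\Omega) - \|\eta^*\|^2$ and uses the spectral decomposition of $\Omega$ (which is rank at most $2$ since $K = 2$ in the underlying DCBM). The only subtlety worth flagging is the implicit assumption that $\lambda_1 \langle \xi_1, \mf{1} \rangle^2 + \lambda_2 \langle \xi_2, \mf{1} \rangle^2 = v_0 > 0$, which holds whenever the network is nontrivial so that $\eta^*$ is well-defined.
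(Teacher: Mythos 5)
Your proof is correct but uses a different route from the paper. The paper expands $\ti \Omega = \Omega - \eta^* \eta^{*\T}$ directly in the eigenbasis $\{\xi_1, \xi_2\}$, observes that the cross terms organize into a perfect square, and obtains the explicit rank-one factorization
\begin{align*}
	\ti \Omega = \frac{\lambda_1 \lambda_2}{v_0}\,\bigl( \langle \xi_2, \mf{1} \rangle \xi_1 + \langle \xi_1, \mf{1} \rangle \xi_2 \bigr)\bigl( \langle \xi_2, \mf{1} \rangle \xi_1 + \langle \xi_1, \mf{1} \rangle \xi_2 \bigr)^\T,
\end{align*}
from which both the eigenvector $\ti \xi$ and the eigenvalue $\ti \lambda$ can be read off. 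You instead bypass the eigenvector entirely: since $\ti \Omega$ is rank one (Lemma \ref{lemma:d}), its eigenvalue is its trace, and $\tr(\ti \Omega) = \tr(\Omega) - \|\eta^*\|^2 = (\lambda_1 + \lambda_2) - \mf{1}^\T \Omega^2 \mf{1}/\mf{1}^\T \Omega \mf{1}$, after which the spectral decomposition of $\Omega$ and a telescoping simplification give the formula. Your approach is shorter and more targeted for the stated claim (it avoids verifying that the cross terms assemble into a rank-one factor), while the paper's approach is slightly heavier but also produces the explicit form of $\ti \xi$, which the authors record immediately after the lemma. One small thing worth making explicit: your use of $\tr(\Omega) = \lambda_1 + \lambda_2$ tacitly relies on $\Omega$ having rank at most two, which holds because $\Omega = \Theta \Pi P \Pi^\T \Theta$ with $P \in \mathbb{R}^{2\times 2}$; you flag this only in passing, and it deserves a sentence.
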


\begin{proof}
	By explicit computation,
	\begin{align*}
		\ti \Omega &= \Omega - \eta^* \eta^{*\T} 
		\\ &= \lambda_1 \big( 1- \frac{\lambda_1 \langle \xi_1, \mf{1} \rangle^2}{v_0} \big) \xi_1 \xi_1^\T 
		+ \lambda_2\big(1 - \frac{\lambda_2 \langle \xi_2, \mf{1} \rangle^2}{v_0} \big) \xi_2 \xi_2^\T - \frac{ \lambda_1 \lambda_2 \langle \xi_1, \mf{1} \rangle  \langle \xi_2, \mf{1} \rangle   }{v_0} \big( \xi_1 \xi_2^\T \xi_2 + \xi_1^\T \big)
		\\&= \frac{ \lambda_1 \lambda_2 }{v_0} \, \big(  \langle \xi_2, \mf{1} \rangle 
		\xi_1 + \langle \xi_1, \mf{1} \rangle 
		\xi_2  \big) \cdot  \big(  \langle \xi_2, \mf{1} \rangle 
		\xi_1 + \langle \xi_1, \mf{1} \rangle 
		\xi_2  \big)^\T.
	\end{align*}
	From \eqref{eqn:tilde_xi_def} and \eqref{eqn:tilde_lambda_def}, it follows that
	\begin{align*}
		\ti \xi &= \frac{ \langle \xi_2, \mf{1} \rangle 
			\xi_1 + \langle \xi_1, \mf{1} \rangle 
			\xi_2}{ \sqrt{ \langle \xi_1, \mf{1} \rangle^2 + \langle \xi_2, \mf{1} \rangle^2 }}
		\\ \ti \lambda &= \frac{\lambda_1 \lambda_2}{v_0} \big( \langle \xi_1, \mf{1} \rangle^2 + \langle \xi_2, \mf{1} \rangle^2 \big). 
	\end{align*}
\end{proof}

\begin{cor}
	\label{cor:ti_lambda_vs lam2}
	It holds that
	\begin{align*}
		\num \label{eqn:ti_lambda_vs lam2_abs}
		|\lambda_2| \les | \ti \lambda | \les \lambda_1.
	\end{align*}
	If $\lambda_2 \geq 0$, then
	\begin{align*}
		\num \label{eqn:ti_lambda_vs lam2_pos}
		\lambda_2 \leq \ti \lambda \leq \lambda_1 
	\end{align*}
\end{cor}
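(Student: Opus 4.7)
The plan is to read off the result from Lemma~\ref{lem:ti_lambda_lambda_12}, which gives the explicit formula
\[
\tilde\lambda \;=\; \frac{\lambda_1\lambda_2(\alpha+\beta)}{\lambda_1\alpha + \lambda_2\beta}, \qquad \alpha := \langle \xi_1,\mathbf 1\rangle^2,\;\; \beta := \langle \xi_2,\mathbf 1\rangle^2,
\]
together with the crucial observation that the denominator equals $v_0 = \mathbf 1^\T \Omega \mathbf 1$, which is strictly positive because $\Omega$ has non-negative entries and is non-trivial. Note $\alpha,\beta\ge 0$ trivially, and Perron–Frobenius applied to the non-negative matrix $\Omega$ gives $\lambda_1 \ge |\lambda_2|$.

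For the positive case \eqref{eqn:ti_lambda_vs lam2_pos}, I would assume $\lambda_2\ge 0$ and sandwich the denominator using $0 \le \lambda_2 \le \lambda_1$ and $\alpha,\beta \ge 0$:
\[
\lambda_2(\alpha+\beta) \;\le\; \lambda_1\alpha + \lambda_2\beta \;\le\; \lambda_1(\alpha+\beta).
\]
Substituting each inequality into the formula for $\tilde\lambda$ immediately yields $\lambda_2 \le \tilde\lambda \le \lambda_1$.

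For the general claim \eqref{eqn:ti_lambda_vs lam2_abs}, the lower bound $|\lambda_2| \le |\tilde\lambda|$ follows from essentially the same computation without any sign hypothesis: since $|\lambda_2|\le \lambda_1$ gives $v_0 \le \lambda_1\alpha + \lambda_1 \beta = \lambda_1(\alpha+\beta)$, we get
\[
|\tilde\lambda| \;=\; \frac{\lambda_1|\lambda_2|(\alpha+\beta)}{v_0} \;\ge\; |\lambda_2|.
\]
For the upper bound $|\tilde\lambda| \lesssim \lambda_1$, I would reduce to the positive case by observing that, under the assumptions of this section, $P = \bigl(\begin{smallmatrix} 1 & b \\ b & 1 \end{smallmatrix}\bigr)$ with $b \in [0,1]$ (the bound $b \le 1$ is forced by \eqref{eqn:assn1}, as seen in the proof of Lemma~\ref{lem:compatibility}). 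Hence $P$ has non-negative eigenvalues $1\pm b$ and is PSD, so $\Omega = (\Theta\Pi P^{1/2})(\Theta\Pi P^{1/2})^\T$ is PSD, giving $\lambda_2\ge 0$. This reduces the upper bound to the case just treated, yielding $\tilde\lambda \le \lambda_1$.

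The proof is essentially algebraic, so no step is genuinely hard. The only point worth flagging is the reduction of the general upper bound to the positive case: without the PSD structure inherited from $b \le 1$, the ratio $|\tilde\lambda|/\lambda_1$ need not be bounded (when $\lambda_2<0$ the denominator $v_0$ can be arbitrarily close to $0$), so the bound genuinely uses the DCBM structure of this section rather than just the general formula from Lemma~\ref{lem:ti_lambda_lambda_12}.
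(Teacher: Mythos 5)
Your treatment of the positive case and of the lower bound $|\lambda_2|\lesssim|\ti\lambda|$ is exactly the paper's argument (sandwich $v_0$ between $\lambda_2(\alpha+\beta)$ and $\lambda_1(\alpha+\beta)$, sign-split as needed). For the upper bound $|\ti\lambda|\lesssim\lambda_1$, however, you take a different route (reduce to PSD) and the paper does not, and your route has a genuine gap.

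First, a small inaccuracy: you cite \eqref{eqn:pf_assn1}, which only gives $b/\sqrt{ac}\lesssim 1$, not $b/\sqrt{ac}\le 1$. To actually get PSD of the unit-diagonal $P$ you need the off-diagonal to be at most $1$ exactly. That does hold under the paper's alternative (one can check directly that degree-matching plus $a\ge c$ forces $b\le\sqrt{ac}$), but it is not stated in Lemma~\ref{lem:compatibility} and does not follow from \eqref{eqn:assn1} alone, so this step needs to be argued rather than cited.

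Second, and more to the point, your closing remark — that without PSD the ratio $|\ti\lambda|/\lambda_1$ ``need not be bounded'' — is wrong, and in fact the paper's own proof refutes it. Corollary~\ref{cor:ti_lambda_vs lam2} is stated and proved under \eqref{eqn:assn1}--\eqref{eqn:assn4} only, which permit the off-diagonal of the unit-diagonal $P$ to exceed $1$ (hence $\lambda_2<0$). The paper handles this directly: because $\lambda_2$ is the smallest eigenvalue of the rank-$2$ matrix $\Omega$ when $\lambda_2<0$, the Rayleigh-quotient bound gives $\lambda_2\le \ti\xi^\T\Omega\ti\xi=\ti\lambda+\langle\ti\xi,\eta^*\rangle^2$. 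Combined with $\mathrm{sign}(\ti\lambda)=\mathrm{sign}(\lambda_2)$ this yields $|\ti\lambda|\le|\lambda_2|+\langle\ti\xi,\eta^*\rangle^2\le\lambda_1+\|\eta^*\|_2^2$, and the structural bounds $\eta^*_i\lesssim\theta_i$ (Lemma~\ref{lem:eta_bd}) and $\|\theta\|_2^2\lesssim\lambda_1$ (Lemma~\ref{lem:lambda1_bd}) close the argument. So what controls $|\ti\lambda|$ in the non-PSD regime is not positivity of $P$ but the degree-structure bound on $\eta^*$; your reduction therefore proves the corollary only in a strictly narrower regime than the paper does, while your diagnosis of why the general case should be hard is mistaken.
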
 

\begin{proof}
	Suppose that $\lambda_2 \geq 0$. Then 
	\begin{align*}
		\lambda_2 \big( \langle \xi_1, \mf{1} \rangle^2 + \langle \xi_2, \mf{1} \rangle^2 \big) 
		\leq  \lambda_1 \langle \xi_1, \mf{1} \rangle^2  +  \lambda_2 \langle \xi_2, \mf{1} \rangle^2 = v_0 
		\leq  \lambda_1 \big( \langle \xi_1, \mf{1} \rangle^2 + \langle \xi_2, \mf{1} \rangle^2 \big),
	\end{align*}
	implies \eqref{eqn:ti_lambda_vs lam2_pos}. 
	
	Suppose that $\lambda_2 < 0$. Note that 
	\begin{align*}
		\lambda_1 \big( \langle \xi_1, \mf{1} \rangle^2 + \langle \xi_2, \mf{1} \rangle^2 \big) \geq  \lambda_1 \langle \xi_1, \mf{1} \rangle^2 + \lambda_2 \langle \xi_2, \mf{1} \rangle^2 = v_0 \geq 0,
	\end{align*}
	which combined with \eqref{eqn:ti_lambda_lambda_12} implies that $| \ti \lambda | \geq |\lambda_2|$. 
	
	Next,
	\begin{align*}
		\lambda_2 &\leq \ti \xi^\T \Omega \ti \xi 
		= \ti \lambda + \langle \ti \xi, \eta^* \rangle^2,
	\end{align*}
	which implies that
	\begin{align*}
		|\ti \lambda | \leq | \lambda_2 | + \langle \ti \xi, \eta^* \rangle^2
		\leq \lambda_1 + \| \eta^* \|_2^2 
		\les \lambda_1 + \| \theta \|_1^2 \les \lambda_1,
	\end{align*}
	where the last inequality follows from Lemma \ref{lem:lambda1_bd}. 
\end{proof}

The next results are frequently used in our analyis of SgnQ.

\begin{lemma}
	\label{lem:v0_v1_bd}
	Let $v = \mf{1}^\T (\Omega - \diag(\Omega)) \mf{1}$ and $v_0 = \mf{1}^\T \Omega \mf{1}$.  Then
	\begin{align*}
		v_0 \sim v \sim \| \theta \|_1^2. 
		\num \label{eqn:v0_v1_bd}
	\end{align*}
\end{lemma}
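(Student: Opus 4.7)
The plan is to establish $v_0 \sim \|\theta\|_1^2$ and $v \sim v_0$ separately; the conclusion then follows immediately from chaining the two.

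The control $v \sim v_0$ reduces to bounding $\mathrm{tr}(\Omega)$. Since $v = v_0 - \mathrm{tr}(\Omega)$ and $\Omega_{ii} \lesssim \theta_i^2$ by \eqref{eqn:assn1}, we have $\mathrm{tr}(\Omega) \lesssim \|\theta\|_2^2$. Conditions \eqref{eqn:assn2}--\eqref{eqn:assn3} imply $\|\theta\|_1 \geq \|\theta\|_2^2/\|\theta\|_\infty \to \infty$, and \eqref{eqn:assn4} then gives $\|\theta\|_2^2 = o(\|\theta\|_1) = o(\|\theta\|_1^2)$. Once $v_0 \sim \|\theta\|_1^2$ is shown, this yields $v = v_0 - o(\|\theta\|_1^2) \sim v_0 \sim \|\theta\|_1^2$.

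For $v_0$, I would write $v_0 = \mf{1}^\T \Theta \Pi P \Pi^\T \Theta \mf{1} = u^\T P u$ where $u = \Pi^\T \Theta \mf{1} = (\rho_0, \rho_1)^\T$ with $\rho_0 = \|\theta_S\|_1$ and $\rho_1 = \|\theta_{S^c}\|_1$. Since $P$ has unit diagonal with off-diagonal $p = O(1)$ (a consequence of \eqref{eqn:assn1} applied to a cross-community pair), we obtain $v_0 = \rho_0^2 + 2p\rho_0\rho_1 + \rho_1^2 \lesssim \|\theta\|_1^2$. The main obstacle, and the key asymptotic claim, is to show $\rho_0 = o(\rho_1)$; once this holds, $\|\theta\|_1 \sim \rho_1$ and $v_0/\rho_1^2 = 1 + 2p(\rho_0/\rho_1) + (\rho_0/\rho_1)^2 \to 1$, so $v_0 \sim \rho_1^2 \sim \|\theta\|_1^2$. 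To establish $\rho_0 = o(\rho_1)$ I would invoke Lemma \ref{lem:compatibility} to revert to the original parameterization satisfying \eqref{identifiable}: $\rho_0 = \sqrt{a}\,\|\theta^*_S\|_1 \asymp \sqrt{a}\,N$ and $\rho_1 = \sqrt{c}\,\|\theta^*_{S^c}\|_1 \asymp \sqrt{c}\,n$, using the balance hypothesis $d_1 \asymp N/n$ in \eqref{altconditions} together with $\|\theta^*\|_1 = n$. Thus $\rho_0/\rho_1 \asymp \sqrt{a/c}\cdot(N/n)$, and the non-negativity constraint $b \geq 0$ in \eqref{2group-DCBM-b} forces $aN/n \lesssim c$, so $(\rho_0/\rho_1)^2 \lesssim N/n \to 0$ under $N \ll n$. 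The delicate point is that the abstract conditions \eqref{eqn:assn1}--\eqref{eqn:assn4} alone do not constrain the relative scale of $\rho_0$ and $\rho_1$; the model-level hypothesis $N \ll n$ and the degree-matching calibration $b \geq 0$ are the essential inputs.
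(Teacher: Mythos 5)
Your proof is correct, and it actually repairs a step that the paper's own argument glosses over. The paper's proof is one line: it writes $v = \mathbf{1}^{\mathsf T}(\Omega - \mathrm{diag}(\Omega))\mathbf{1} = \|\theta\|_1^2 - \|\theta\|_2^2$ as an exact identity, then concludes since $\|\theta\|_2^2 = o(\|\theta\|_1^2)$. But that identity holds only if $v_0 = \mathbf{1}^{\mathsf T}\Omega\mathbf{1} = \|\theta\|_1^2$, which is true exactly when $\Omega = \theta\theta^{\mathsf T}$, i.e.\ under the null. Under the alternative (with $P$ normalized to unit diagonals and off-diagonal $p = b/\sqrt{ac} \in [0,1]$) one has $v_0 = \rho_0^2 + 2p\rho_0\rho_1 + \rho_1^2$, whereas $\|\theta\|_1^2 = (\rho_0+\rho_1)^2$; the discrepancy is the cross-term $2(1-p)\rho_0\rho_1$. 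The abstract conditions \eqref{eqn:assn1}--\eqref{eqn:assn4} give only $v_0 \asymp \|\theta\|_1^2$; the paper's claimed $v_0 \sim \|\theta\|_1^2$ genuinely needs the severe imbalance, exactly as you observed. Your reduction to $\rho_0/\rho_1 \to 0$, using the calibration $b\geq 0$ together with $N\ll n$ to bound $(\rho_0/\rho_1)^2 \lesssim (a/c)(N/n)^2 \lesssim N/n$, is the right and necessary supplement; the rest of your argument (bounding $\mathrm{tr}(\Omega)$ by $\|\theta\|_2^2$ and chaining through $v = v_0 - \mathrm{tr}(\Omega)$) matches the paper's route. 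So the overall strategy is the same, but you have filled in the model-dependent step that the paper treats as an identity.
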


\begin{proof}
	By \eqref{eqn:assn4}, $\| \theta \|_2^2 = o( \| \theta \|_1)$. By \eqref{eqn:assn3}, $\|\theta \|_1 \to \infty$. Hence
	\begin{align*}
		v = \mf{1}^{\T} (\Omega - \diag( \Omega) ) \mf{1}
		= \| \theta \|_1^2 - \| \theta \|_2^2
		\sim \| \theta \|_1^2 \sim v_0 = \mf{1}^{\T} \Omega \mf{1}. 
	\end{align*}
	
\end{proof}

The next result is a direct corollary of Lemmas \ref{lem:tilde_Omega} and \ref{lem:v0_v1_bd}. 

\begin{cor}
	Define $\beta \in \mb{R}^n$ by
	\begin{align}
		\label{eqn:beta_def}
		\beta = \sqrt{\frac{|1 - b^2|}{v_0}} \cdot  \big( \| \theta_{S^c} \|_1 \mf{1}_S
		+ \| \theta_{S} \|_1 \mf{1}_{S^c}  \big)
	\end{align}
	%
	Then
	\begin{align*}
		|\ti \Omega_{ij}| \les \beta_i \theta_i \beta_j \theta_j  
		\num \label{eqn:tiOm_bd}. 
	\end{align*}
\end{cor}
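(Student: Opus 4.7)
The plan is to derive the bound by directly unpacking the explicit formula for $\ti \Omega_{ij}$ given in the proof of Lemma~\ref{lem:tilde_Omega} and comparing it term-by-term with the definition of $\beta$ in \eqref{eqn:beta_def}. In fact I expect to obtain equality (up to taking absolute values), not merely the $\les$ asserted, because the vector $\beta$ was tailored precisely for this purpose.

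Concretely, I would split into three cases based on whether the indices $i$ and $j$ lie in $S$ or $S^c$. Writing $\rho_0 = \|\theta_S\|_1$ and $\rho_1 = \|\theta_{S^c}\|_1$, the proof of Lemma~\ref{lem:tilde_Omega} gives $\ti \Omega_{ij} = \theta_i \theta_j (1-b^2) \rho_1^2 / v_0$ when $i,j \in S$, $\ti \Omega_{ij} = -\theta_i \theta_j (1-b^2) \rho_0 \rho_1 / v_0$ when exactly one of $i,j$ lies in $S$, and $\ti \Omega_{ij} = \theta_i \theta_j (1-b^2) \rho_0^2 / v_0$ when $i,j \in S^c$. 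From \eqref{eqn:beta_def} one immediately reads off $\beta_i = \sqrt{|1-b^2|/v_0}\cdot \rho_1$ for $i \in S$ and $\beta_i = \sqrt{|1-b^2|/v_0}\cdot \rho_0$ for $i \in S^c$. Multiplying to form $\beta_i \theta_i \beta_j \theta_j$ and taking absolute values in the formula for $\ti \Omega_{ij}$ yields equality in each of the three cases, which is strictly stronger than the claimed $\les$.

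There is no genuine obstacle; the calculation is pure bookkeeping. The one point worth flagging is that \eqref{eqn:beta_def} wraps $1 - b^2$ in an absolute value, which is needed so the square root makes sense in the unlikely event that $b^2 > 1$ (nothing in the standing assumptions precludes this). Once absolute values are taken on both sides, both the sign of $1 - b^2$ and the minus sign on the cross term drop out cleanly, so the three cases collapse into the single uniform bound $|\ti \Omega_{ij}| \leq \beta_i \theta_i \beta_j \theta_j$ asserted by the corollary.
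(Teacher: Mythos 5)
Your proof is correct and matches the paper's implicit argument, which simply declares the corollary a direct consequence of Lemma~\ref{lem:tilde_Omega}; your case-by-case unpacking confirms the bound holds with equality, as you observe. Note that the paper also cites Lemma~\ref{lem:v0_v1_bd}, but since $\beta$ is defined directly in terms of $v_0$ rather than its asymptotic surrogate $\|\theta\|_1^2$, that lemma is not actually needed for the statement as written.
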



\begin{lemma}
	\label{lem:lambda1_bd}
	Let $\lambda_1$ denote the largest eigenvalue of $\Omega$. Then
	\begin{align*}
		\num \label{eqn:lambda1_bd}
		\lambda_1 \gtrsim \|  \theta \|_2^2.
	\end{align*}
\end{lemma}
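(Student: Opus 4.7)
The plan is to bound $\lambda_1$ from below via a Rayleigh quotient with a well-chosen test vector, and the natural choice is $x = \theta$ itself. Since $\Omega$ is symmetric, $\lambda_1 = \max_{x \neq 0} x' \Omega x / (x'x)$, so it suffices to estimate $\theta' \Omega \theta$ from below and divide by $\|\theta\|_2^2$.

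Recall we are in the setting of Section~\ref{sec:SgnQ}, where $K=2$ and $P \in \mathbb{R}^{2\times 2}$ has unit diagonals with off-diagonal entry $b$; by Lemma~\ref{lem:compatibility} and the construction \eqref{eqn:SBM_P}, assumption \eqref{eqn:assn1} forces $|b| \lesssim 1$, and since $\Omega_{ij} \geq 0$ is a probability while $\theta_i \geq 0$, we must also have $b \geq 0$. Writing $u = \|\theta_S\|_2^2$ and $v = \|\theta_{S^c}\|_2^2$, a direct block computation (using $\Omega_{ij} = \theta_i \theta_j \pi_i' P \pi_j$) gives
\begin{equation*}
\theta' \Omega \theta \;=\; u^2 + v^2 + 2 b\, u v \;\geq\; u^2 + v^2 \;\geq\; \tfrac{1}{2}(u+v)^2 \;=\; \tfrac{1}{2} \|\theta\|_2^4,
\end{equation*}
where the second step uses $b \geq 0$ and the third is the elementary inequality $u^2 + v^2 \geq (u+v)^2/2$.

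Combining with the Rayleigh quotient bound,
\begin{equation*}
\lambda_1 \;\geq\; \frac{\theta' \Omega \theta}{\|\theta\|_2^2} \;\geq\; \tfrac{1}{2} \|\theta\|_2^2,
\end{equation*}
which is exactly $\lambda_1 \gtrsim \|\theta\|_2^2$ as claimed. There is no real obstacle here; the only subtlety worth flagging is the nonnegativity $b \geq 0$, which is automatic from $\Omega \geq 0$ entrywise together with $\theta \geq 0$ and therefore does not require any additional hypothesis beyond those already assumed at the start of Section~\ref{sec:SgnQ}.
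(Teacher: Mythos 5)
Your proof is correct and follows essentially the same route as the paper: a Rayleigh quotient with the test vector $\theta$, discarding the nonnegative cross-community contribution, and concluding via the elementary inequality $u^2+v^2 \geq \tfrac{1}{2}(u+v)^2$ applied to $u=\|\theta_S\|_2^2$, $v=\|\theta_{S^c}\|_2^2$. The only cosmetic difference is that you make the cross term $2buv$ and the sign of $b$ explicit, whereas the paper simply drops the off-block terms using $\Omega_{ij}\geq 0$; the substance is identical.
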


\begin{proof}
	Using the universal inequality $a^2 + b^2 \geq \frac{1}{2}( a + b)^2$, we have
	\begin{align*}
		\lambda_1 &\geq 	\frac{\theta^{\T}  \Omega \theta}{\| \theta \|_2^2} 	
		\geq \frac{1}{\| \theta \|_2^2} \cdot  \sum_{i,j} \theta_i \theta_j \Omega_{ij}
		\geq \frac{1}{\| \theta \|_2^2} \cdot \big( \sum_{i,j \in S} \theta_i^2 \theta_j^2
		+ \sum_{i,j \notin S} \theta_i^2 \theta_j^2 \big)
		\\ &\geq \frac{ \| \theta_S \|_2^4 + \| \theta_{S^c} \|_2^4 }{ \| \theta \|_2^2 } 
		\gtrsim \| \theta \|_2^2. 
	\end{align*}
\end{proof}

\begin{lemma}
	\label{lem:eta_bd}
	Define $\eta = \frac{1}{\sqrt{v}} (\Omega - \diag(\Omega))\mf{1}$. Then
	\begin{align*}
		\num \label{eqn:etai_bd}
		\eta_i 	 \les		\eta_i^* \les \theta_i 
	\end{align*}
\end{lemma}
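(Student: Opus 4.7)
The plan is to derive both inequalities by directly bounding $(\Omega \mathbf{1})_i$ and then invoking Lemma~\ref{lem:v0_v1_bd} to replace $v$ and $v_0$ with $\|\theta\|_1^2$.

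First, for the bound $\eta_i^* \lesssim \theta_i$: by assumption \eqref{eqn:assn1}, we have $\Omega_{ij} \lesssim \theta_i \theta_j$ for all $i,j$. Summing over $j$ yields
\[
(\Omega \mathbf{1})_i = \sum_{j=1}^n \Omega_{ij} \lesssim \theta_i \sum_{j=1}^n \theta_j = \theta_i \|\theta\|_1.
\]
Combining this with Lemma~\ref{lem:v0_v1_bd}, which gives $v_0 \sim \|\theta\|_1^2$, we obtain
\[
\eta_i^* = \frac{(\Omega \mathbf{1})_i}{\sqrt{v_0}} \lesssim \frac{\theta_i \|\theta\|_1}{\|\theta\|_1} = \theta_i.
\]

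Second, for the bound $\eta_i \lesssim \eta_i^*$: since $\Omega$ has nonnegative entries (in particular $\Omega_{ii} \geq 0$),
\[
\big((\Omega - \diag(\Omega))\mathbf{1}\big)_i = (\Omega \mathbf{1})_i - \Omega_{ii} \leq (\Omega \mathbf{1})_i.
\]
Dividing by $\sqrt{v}$ and applying $v \sim v_0$ from Lemma~\ref{lem:v0_v1_bd} gives
\[
\eta_i = \frac{1}{\sqrt{v}} \big((\Omega - \diag(\Omega))\mathbf{1}\big)_i \leq \frac{(\Omega \mathbf{1})_i}{\sqrt{v}} \sim \frac{(\Omega \mathbf{1})_i}{\sqrt{v_0}} = \eta_i^*,
\]
which completes the chain $\eta_i \lesssim \eta_i^* \lesssim \theta_i$.

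There is no real obstacle here: the statement is essentially a bookkeeping consequence of the entrywise bound on $\Omega$ combined with the already-established comparison $v \sim v_0 \sim \|\theta\|_1^2$. The only subtlety is ensuring we use $\Omega_{ii} \geq 0$ to drop the diagonal correction when passing from $\eta$ to $\eta^*$.
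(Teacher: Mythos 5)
Your proof is correct and follows essentially the same route as the paper: bound $(\Omega\mathbf{1})_i \lesssim \theta_i\|\theta\|_1$ and divide by $\sqrt{v_0}\asymp\|\theta\|_1$ via Lemma~\ref{lem:v0_v1_bd}. The only difference is cosmetic — the paper writes out $(\Omega\mathbf{1})_i$ explicitly in terms of $\|\theta_S\|_1$, $\|\theta_{S^c}\|_1$, and $b$, whereas you invoke the entrywise bound $\Omega_{ij}\lesssim\theta_i\theta_j$ from \eqref{eqn:assn1} directly, which is a cleaner way to reach the same intermediate bound.
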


\begin{proof}
	The left-hand side is immediate, so we prove that $\eta_i^* \les \theta_i$. We have
	\begin{align*}
		(\Omega \mf{1})_i = \begin{cases}
			\theta_i (  \| \theta_S \|_1 + b \| \theta_{S^c} \|_1)
			\quad \text{if } i \in S \\
			\theta_i ( b \| \theta_S \|_1 +  \| \theta_{S^c} \|_1)
			\quad \text{if } i \notin S
		\end{cases} 
	\end{align*}
	Since $\Omega_{ii} = \theta_i^2$, 
	\begin{align*}
		\sqrt{v_0} \cdot \eta_i = \begin{cases}
			\theta_i (  \| \theta_S \|_1 + b \| \theta_{S^c} \|_1 )
			- \theta_i^2
			\quad \text{if } i \in S \\
			\theta_i ( b \| \theta_S \|_1 +  \| \theta_{S^c} \|_1 ) -\theta_i^2
			\quad \text{if } i \notin S. 
		\end{cases}
	\end{align*}
	Since $b = O(1)$, $\theta_i = O(1)$, and $v_0 \gtrsim \| \theta \|_1^2$ (c.f. Lemma \ref{lem:v0_v1_bd}),
	\begin{align*}
		\eta_i^* \les \frac{ \theta_i \| \theta \|_1 }{ \sqrt{ \| \theta \|_1^2 } } = \theta_i,
	\end{align*}
	as desired. 
\end{proof}

We use the bounds \eqref{eqn:v0_v1_bd} -- \eqref{eqn:etai_bd} throughout. We also use repeatedly that
\begin{align*}
	\| \theta \|_p^p &\les \| \theta \|_q^q, \text{ if } p \geq q,
	\num \label{eqn:theta_bd_ellp}
\end{align*}
which holds by \eqref{eqn:assn2}, and
\begin{align*}
	\| \beta \circ \theta \|_2^2 &= |\ti \lambda| \\
	|\beta_i | &\les 1  \\
	\| \beta \circ \theta^{\circ 2 } \|_1 &\leq \| \beta \circ \theta \|_2 \| \theta \|_2 \les \| \theta \|_2^2
	\num \label{eqn:beta_bds},  
\end{align*}
where the second line holds by Cauchy--Schwarz.

\subsection{Mean and variance of SgnQ}

The previous work \cite{JinKeLuo21} decomposes $\ti Q$ and $\ti Q - Q^*$ into a finite number of terms. For each term an exact expression for its mean and variance is derived in \cite{JinKeLuo21} that depends on $\theta$, $\eta$, $v$, and $\ti \Omega$. These expression are then bounded using the inequalities \eqref{eqn:assn2}, \eqref{eqn:assn3}, \eqref{eqn:v0_v1_bd}, \eqref{eqn:lambda1_bd}--\eqref{eqn:theta_bd_ellp}, as well as an inequality of the form
\begin{align*}
	|\ti \Omega_{ij}| \les \alpha \theta_i \theta_j.
\end{align*}

In our case, an inequality of this form is still valid, but it does not attain sharp results because it does not properly capture the signal $|\ti \lambda|$ from the smaller community. Instead, we use the inequality \eqref{eqn:tiOm_bd}, followed by the bounds in \eqref{eqn:beta_bds} to handle terms involving $\ti \Omega$.

Therefore, for terms of $\ti Q$ and $\ti Q - Q^*$ that do not depend on $\ti \Omega$, the bounds in \cite{JinKeLuo21} carry over immediately. In particular, their analysis of the null hypothesis carries over directly. Hence we can focus solely on the alternative hypothesis. 

Furthermore, any terms with zero mean in \cite{JinKeLuo21} also have zero mean in our setting : for every term that is mean zero, it is simply the sum of mean zero subterms, and each mean zero subterm is a product of independent, centered random variables (eg, $X_1$ below). 

\subsubsection{Ideal SgnQ}

The previous work \cite{JinKeLuo21} shows that $\ti Q = X_1 + 4 X_2 + 4X_3 + 2 X_4 + 4 X_5 + X_6$, where $X_1, \ldots, X_6$ are defined in their Section G.1. For convenience, we state explicitly the definitions of these terms.
\begin{align*}
	& X_1 = \sum_{i, j, k, \ell (dist)} W_{ij} W_{jk} W_{k \ell} W_{\ell i}, \qquad X_2 =  \sum_{i, j, k, \ell (dist)} \widetilde{\Omega}_{ij} W_{jk} W_{k \ell} W_{\ell i},\cr
	& X_3 = \sum_{i, j, k, \ell (dist)} \widetilde{\Omega}_{ij}  \widetilde{\Omega}_{jk} W_{k \ell} W_{\ell i}, \qquad\; X_4 = \sum_{i, j, k, \ell (dist)} \widetilde{\Omega}_{ij} W_{jk} \widetilde{\Omega}_{k \ell} W_{\ell i},\cr
	& X_5 = \sum_{i, j, k, \ell (dist)} \widetilde{\Omega}_{ij} \widetilde{\Omega}_{jk} \widetilde{\Omega}_{k \ell} W_{\ell i}, \qquad\;\, X_6 = \sum_{i, j, k, \ell (dist)} \widetilde{\Omega}_{ij} \widetilde{\Omega}_{jk} \widetilde{\Omega}_{k \ell} \widetilde{\Omega}_{\ell i}.  
\end{align*}

Since $X_1$ does not depend on $\ti \Omega$, the bounds for $X_1$ below are directly quoted from Lemma G.3 of \cite{JinKeLuo21}. Also note that $X_6$ is a non-stochastic term. 

\begin{lemma}
	\label{lem:ideal_SgnQ}
	Under the alternative hypothesis, we have
	\begin{alignat*}{1}
		\E[X_k] &= 0 \text{ for } 1 \leq k \leq 5, 
		\\ \var(X_1) &\les \| \theta \|_2^8 \les \lambda_1^4
		\\ \var(X_2) &\les \| \gam \|_2^4 \, \| \theta \|_2^4 \les | \ti \lambda |^2 \lambda_1^2 
		\\ \var(X_3) &\les \| \gam \|_2^8 \, \| \theta \|_2^2 
		\les | \ti \lambda |^4 \lambda_1
		\\ \var(X_4) &\les \| \gam \|_2^8 \leq |\ti \lambda |^4
		\\ \var(X_5) & \les \| \gam \|_2^{12}
		\les | \ti \lambda |^6,
		\text{ and } 
		\\ \E[ X_6]
		&= X_6 \sim  | \ti \lambda^4 | 
	\end{alignat*}
\end{lemma}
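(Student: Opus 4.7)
\textbf{Proof proposal for Lemma \ref{lem:ideal_SgnQ}.} The $X_1$ bounds are quoted directly from \cite{JinKeLuo21} (they do not involve $\ti\Omega$), so I focus on $X_2$--$X_6$. For the mean claims, observe that for $k=1,\dots,5$ every monomial in the 4-cycle sum defining $X_k$ contains at least one factor $W_{ab}$ whose underlying unordered pair $\{a,b\}$ is disjoint from the remaining $W$-edges in the monomial: a 4-cycle on distinct vertices has four pairwise-distinct edges, so deleting any subset leaves the rest pairwise distinct. By independence of $W$ across distinct edges and $\E W_{ab}=0$, each such monomial has zero expectation, yielding $\E X_k=0$ for $k\le 5$. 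For the deterministic $X_6$, the rank-one structure $\ti\Omega = \ti\lambda\,\ti\xi\,\ti\xi^{\T}$ (Lemma \ref{lemma:d} with $K=2$) gives $\tr(\ti\Omega^4) = \ti\lambda^4$, and I bound the index-collision corrections implied by the $(\mathrm{dist})$ constraint by $O(\ti\lambda^4\|\ti\xi\|_4^4)$; since $|\ti\xi_i|=|\gam_i|/\sqrt{|\ti\lambda|}$ and $\|\gam\|_\infty\les 1$, these corrections are $O(\ti\lambda^3)=o(\ti\lambda^4)$ using $|\ti\lambda|\to\infty$ (itself a consequence of $|\ti\lambda|/\sqrt{\lambda_1}\to\infty$ and $\lambda_1\to\infty$).

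For the variance bounds on $X_2,\dots,X_5$, I will expand $\E X_k^2$ as a sum over ordered pairs of 4-cycles (eight indices) of products of $\ti\Omega$ entries and a joint $W$-moment. Independence of the $W_{ab}$ across distinct unordered edges, together with $\E W_{ab}=0$, forces every $W$-edge present in the expansion to appear with even multiplicity, so the surviving configurations are exactly those in which the $W$-edges from the two copies pair off. On each surviving configuration, I bound the paired $W^2$ factor by $\var(W_{ab})\le \Omega_{ab}\les\theta_a\theta_b$ and each $\ti\Omega_{ab}$ factor by $\gam_a\gam_b$ via \eqref{eqn:tiOm_bd}. This reduces the variance to a purely combinatorial sum of products of $\theta$'s and $\gam$'s indexed by the quotient multigraph obtained by identifying paired $W$-edges.

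Each such sum is evaluated by peeling off vertices one at a time using $\|\gam\|_2^2 = |\ti\lambda|$, $|\beta_i|\les 1$, $\|\beta\circ\theta^{\circ 2}\|_1 \les \|\theta\|_2^2$ (from \eqref{eqn:beta_bds}), $\|\theta\|_p^p\les\|\theta\|_2^2$ for $p\ge 2$ (from \eqref{eqn:theta_bd_ellp}), and $\|\theta\|_2^2\les\lambda_1$ (Lemma \ref{lem:lambda1_bd}). In the dominant configuration of $X_2$, three paired $W$-edges contract vertex pairs while two $\ti\Omega$ edges survive, telescoping to $\|\gam\|_2^4\|\theta\|_2^4$; for $X_3$, two paired $W$-edges survive with four $\ti\Omega$-factors, giving $\|\gam\|_2^8\|\theta\|_2^2$; for $X_4$, the paired-$W$ configuration yields $\|\gam\|_2^8$; and for $X_5$, the single $W$ per copy pairs off and leaves six $\ti\Omega$-factors, giving $\|\gam\|_2^{12}$. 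Translating via $\|\gam\|_2^2=|\ti\lambda|$ and $\|\theta\|_2^2\les\lambda_1$ yields the stated bounds $|\ti\lambda|^2\lambda_1^2$, $|\ti\lambda|^4\lambda_1$, $|\ti\lambda|^4$, $|\ti\lambda|^6$, each of which is $o(\ti\lambda^8)$ under $|\ti\lambda|/\sqrt{\lambda_1}\to\infty$.

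The main obstacle is the exhaustive case analysis: for each $X_k$ one must enumerate every legal $W$-pairing pattern together with every compatible index-collision pattern in the 8-tuple, and verify that no subdominant configuration produces a larger bound than the leading one identified above. This is precisely where the severely unbalanced regime departs from \cite{JKL2019}: their analysis can replace $|\ti\Omega_{ab}|$ by $\alpha\,\theta_a\theta_b$ for a scalar $\alpha$, whereas here the $\gam$-mass concentrates asymmetrically on $S$ versus $S^c$, so the bound $|\ti\Omega_{ab}|\les\gam_a\gam_b$ must be used together with the three separate $\gam$-identities in \eqref{eqn:beta_bds}. This per-configuration bookkeeping across all surviving pairing patterns (the 256 terms alluded to in Section \ref{sec:intro}) is the main technical content of the proof and will be carried out term by term in the supplement.
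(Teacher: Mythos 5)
Your proposal is correct and follows essentially the same route as the paper: zero means from the independence of the pairwise-distinct cycle edges, variance bounds obtained by pairing the $W$-edges in the second-moment expansion and then substituting $|\ti\Omega_{ij}|\les \beta_i\theta_i\beta_j\theta_j$ from \eqref{eqn:tiOm_bd} together with $\|\gam\|_2^2=|\ti\lambda|$, $\|\theta\|_2^2\les\lambda_1$, and, for $X_6$, the rank-one identity $\mathrm{trace}(\ti\Omega^4)=\ti\lambda^4$ with an $O(|\ti\lambda|^3)$ correction for index collisions. The only practical difference is that the exhaustive pairing bookkeeping you defer to a supplement is not needed: the exact variance expressions for $X_2$--$X_5$ (e.g.\ $\var(X_2)=2\sum_{i,j,k,\ell(dist)}\ti\Omega_{ij}^2\,\var(W_{jk}W_{k\ell}W_{\ell i})$) are already derived in \cite{JinKeLuo21} and carry over verbatim, so only the final bounding step with the $\beta\circ\theta$ inequality and \eqref{eqn:beta_bds} has to be redone, which is exactly what the paper does.
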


Since we assume $| \ti \lambda |/\sqrt{\lambda_1} \to \infty$ under the alternative hypothesis, it holds that
\begin{align*}
	\var( \ti Q) &\les \lambda_1^4 + | \ti \lambda|^6.
\end{align*}
Theorem \ref{thm:ideal_SgnQ} follows directly from this bound and that $\E X_6 = \E \ti Q \sim \ti \lambda^4$.

\subsubsection{Proxy SgnQ}

The previous work \cite{JinKeLuo21} shows that 
\begin{align*}
	\ti Q - Q^* = U_a + U_b + U_c,
\end{align*}
where
\begin{align*}
	U_a &= 4 Y_1 + 8Y_2 + 4Y_3 + 8 Y_4 + 4 Y_5 +4 Y_6
	\\ U_b &= 4Z_1 + 2 Z_2 + 8 Z_3 + 4 Z_4 + 4 Z_5 + 2 Z_6
	\\ U_c &= 4T_1 + 4 T_2 + F.
\end{align*}

These terms are defined in Section G.2 of  \cite{JinKeLuo21}, and for convenience, we define them explicitly below. The previous equations are obtained by expanding carefully $\tilde Q$ and $Q^*$ as defined in \eqref{eqn:idealSgnQ} and \eqref{eqn:proxySgnQ}. Thus, the terms on the right-hand-side above are referred as \textit{post-expansion} terms, and we can analyze each one individually. Now we proceed to their definitions. 

First $Y_1, \ldots, Y_6$ are defined as follows.

\begin{align*}
	& Y_1 = \sum_{i, j, k, \ell (dist)} \delta_{ij} W_{jk} W_{k \ell} W_{\ell i}, \qquad\;\;Y_2 =  \sum_{i, j, k, \ell (dist)} \delta_{ij} \widetilde{\Omega}_{jk}  W_{k \ell}  W_{\ell i},\cr
	& Y_3 = \sum_{i, j, k, \ell (dist)} \delta_{ij} W_{jk}   \widetilde{\Omega}_{k \ell}  W_{\ell i}, \qquad\;\;\; Y_4 = \sum_{i, j, k, \ell (dist)} \delta_{ij}  \widetilde{\Omega}_{jk} \widetilde{\Omega}_{k \ell}W_{\ell i},\cr
	& Y_5 = \sum_{i, j, k, \ell (dist)} \delta_{ij}  \widetilde{\Omega}_{jk} W_{k \ell}  \widetilde{\Omega}_{\ell i}, \qquad\quad Y_6 = \sum_{i, j, k, \ell (dist)} \delta_{ij}  \widetilde{\Omega}_{jk} \widetilde{\Omega}_{k \ell}  \widetilde{\Omega}_{\ell i}.  
\end{align*}

Next, $Z_1, \ldots, Z_6$ are defined as follows.
\begin{align*}
	& Z_1 = \sum_{i, j, k, \ell (dist)} \delta_{ij} \delta_{jk} W_{k \ell} W_{\ell i}, \qquad\;\;Z_2 =  \sum_{i, j, k, \ell (dist)} \delta_{ij} W_{jk}  \delta_{k \ell}  W_{\ell i},\cr
	& Z_3 = \sum_{i, j, k, \ell (dist)} \delta_{ij} \delta_{jk}   \widetilde{\Omega}_{k \ell}  W_{\ell i}, \qquad\;\;\; Z_4 = \sum_{i, j, k, \ell (dist)} \delta_{ij}  \widetilde{\Omega}_{jk} \delta_{k \ell}W_{\ell i},\cr
	& Z_5 = \sum_{i, j, k, \ell (dist)} \delta_{ij} \delta_{jk} \widetilde{\Omega}_{k \ell}  \widetilde{\Omega}_{\ell i}, \qquad\quad Z_6 = \sum_{i, j, k, \ell (dist)} \delta_{ij}  \widetilde{\Omega}_{jk} \delta_{k \ell}  \widetilde{\Omega}_{\ell i}.  
\end{align*}

Last, we have the definitions of $T_1, T_2$, and $F$. 

\begin{align*}
	&T_1= \sum_{i,j,k,\ell (dist)} \delta_{ij}\delta_{jk}\delta_{k\ell}W_{\ell i}, \qquad\;\; T_2= \sum_{i,j,k,\ell (dist)} \delta_{ij}\delta_{jk}\delta_{k\ell}\widetilde{\Omega}_{\ell i},\cr
	& F = \sum_{i,j,k,\ell (dist)} \delta_{ij}\delta_{jk}\delta_{k\ell}\delta_{\ell i}. 
\end{align*}

The following post-expansion terms below appear in Lemma G.5 of \cite{JinKeLuo21}. The term $Y_1$ does not depend on $\ti \Omega$, so we may directly quote the result. 

\begin{lemma}
	\label{lem:Ua}
	Under the alternative hypothesis, it holds that 
	\begin{alignat*}{2}
		|\E Y_1| &= 0, \qquad &&\var(Y_1) \les \| \theta \|_2^2 \| \,  \|\theta\|_3^6 \les \lambda_1^4
		\\ |\E Y_2| &= 0, \quad &&\var(Y_2) \les 
		\| \beta \circ \theta \|_2^2 \, \| \theta \|_2^6 \les |\ti \lambda| \lambda_1^3 
		\\ |\E Y_3| &= 0, \quad &&\var(Y_3) \les \| \beta \circ \theta \|_2^4 \, \|\theta\|_2^4 \les |\ti \lambda|^2 \lambda_1^2
		\\ |\E Y_4| &\les   \| \beta \circ \theta \|_2^4 \| \theta \|_2^2 \les | \ti \lambda |^2 \lambda_1 , \quad &&\var(Y_4) \les \frac{\| \beta \circ \theta \|_2^6 \, \| \theta \|_2^6}{\| \theta \|_1} \les |\ti \lambda|^3 \lambda_1^2
		\\ |\E Y_5| &= 0 , \quad &&\var(Y_5) \les \frac{\| \beta \circ \theta \|_2^6 \, \| \theta \|_2^4 }{ \| \theta \|_1 } \les |\ti \lambda|^3 \lambda_1 
		\\ |\E Y_6| & = 0, \quad &&\var(Y_6) \les \frac{ \| \gam \|_2^{12} \| \theta \|_2^2  }{\| \theta \|_1} \les | \ti \lambda |^6 .
	\end{alignat*}
\end{lemma}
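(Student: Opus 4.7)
The plan is to piggy-back on the exact formulas for the mean and variance of each $Y_k$ that were derived in \cite{JinKeLuo21}, and then re-bound each resulting deterministic expression using the sharper inequality \eqref{eqn:tiOm_bd} together with the summary bounds \eqref{eqn:beta_bds}. Since $Y_1$ does not involve $\widetilde{\Omega}$, I would simply quote its bound from \cite{JinKeLuo21}. For $Y_2,\ldots,Y_6$, I would begin by recalling that $\delta_{ij} = -(1/\sqrt{v})\bigl[\eta_i\sum_{k\neq j}W_{jk}+\eta_j\sum_{k\neq i}W_{ik}\bigr]$ is a linear combination of centered, independent Bernoulli noises, so each $Y_k$ is a polynomial in the $W_{uv}$'s with deterministic coefficients built from $\eta$, $v$, and $\widetilde{\Omega}$. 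Using $\|\eta\|_\infty \les \|\theta\|_\infty = O(1)$ from Lemma \ref{lem:eta_bd} and \eqref{eqn:assn2}, the factors coming from $\delta$ behave like $\theta$-weights up to a $1/\sqrt{v}$ prefactor, and $v\asymp \|\theta\|_1^2$ by \eqref{eqn:v0_v1_bd}.

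Step one is to check which $Y_k$ are mean-zero. For each post-expansion term in $Y_k$, the expectation is zero unless every $W_{uv}$ that appears has its unordered index pair matched by another factor. A straightforward index-collision analysis (identical to the one in \cite{JinKeLuo21}) shows $\E Y_k = 0$ for $k\in\{2,3,5,6\}$, while $Y_4$ has a nonzero mean arising from the collision $W_{\ell i}^2$ when $\ell$ is forced to match the free index of $\delta_{ij}$. For the $Y_4$ mean, each collision produces a factor $\mathrm{Var}(W_{\ell i})\leq \theta_\ell\theta_i$, and after applying \eqref{eqn:tiOm_bd} to the two $\widetilde{\Omega}$ factors, the sum collapses to $\big(\sum_j \beta_j^2\theta_j^2\big)\big(\sum_k\beta_k^2\theta_k^2\big)\cdot O(\|\theta\|_2^2)$, which yields $|\E Y_4|\les \|\beta\circ\theta\|_2^4\|\theta\|_2^2 = |\widetilde{\lambda}|^2\|\theta\|_2^2\les |\widetilde{\lambda}|^2\lambda_1$ via Lemma \ref{lem:lambda1_bd}.

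Step two is the variance bounds. For each $Y_k$, the variance expansion sums over two quadruples of distinct indices, and each surviving pairing produces an explicit product of $\widetilde{\Omega}$ entries, $\eta$ entries, and $W$ variances. Using \eqref{eqn:tiOm_bd}, every $\widetilde{\Omega}$-factor contributes a $\beta_i\beta_j\theta_i\theta_j$ weight; summing these against the $\theta$-weights coming from $W$-variances and $\delta$-factors, and repeatedly collapsing sums via $\sum_i\beta_i^2\theta_i^2 = \|\beta\circ\theta\|_2^2 = |\widetilde{\lambda}|$ and $\sum_i\theta_i^p\les \sum_i\theta_i^2$ from \eqref{eqn:theta_bd_ellp}, gives a bound of the form $\|\beta\circ\theta\|_2^{2p}\|\theta\|_2^{2q}/\|\theta\|_1^{r}$ for each $Y_k$, with $(p,q,r)$ determined by how many $\widetilde{\Omega}$-factors versus $W$-factors appear. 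Converting $\|\theta\|_2^{2q}$ into $\lambda_1^q$ by Lemma \ref{lem:lambda1_bd} produces the right-hand bounds claimed in the lemma.

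The main obstacle I expect is the bookkeeping for $Y_4$ and $Y_5$: these have three non-$W$ factors and only one $W$, so the mean-zero structure relies on a delicate pairing argument, and the variance requires distinguishing pairings that cause extra collisions (which give the $1/\|\theta\|_1$ savings in the final bound) from those that do not. Once the combinatorial structure is set up in parallel with \cite{JinKeLuo21}, the novelty is purely the substitution of \eqref{eqn:tiOm_bd} for the balanced-case bound $|\widetilde{\Omega}_{ij}|\les \alpha\theta_i\theta_j$; this substitution is what correctly captures the severely unbalanced signal $|\widetilde{\lambda}|$ concentrated on the small community, and it is the reason our bounds scale with $\|\beta\circ\theta\|_2^2 = |\widetilde{\lambda}|$ rather than with an overly loose $\alpha\|\theta\|_2^2$.
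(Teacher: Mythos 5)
Your proposal is correct and tracks the paper's own proof closely: $Y_1$ is quoted from \cite{JinKeLuo21}; the mean-zero identification $\E Y_k = 0$ for $k\in\{2,3,5,6\}$ via index-collision matching while $Y_4$ picks up a mean through the $s=\ell$ coincidence $W_{\ell i}^2$ is exactly what the paper does (via the sub-decompositions $Y_{4a}+Y_{4b}$ etc.); and the key substitution of $|\widetilde{\Omega}_{ij}|\les\beta_i\theta_i\beta_j\theta_j$ for the balanced-case bound $\alpha\theta_i\theta_j$, followed by collapsing sums via $\sum_i\beta_i^2\theta_i^2=|\ti\lambda|$ and $\|\theta\|_p^p\les\|\theta\|_2^2$, is precisely the new ingredient. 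The only minor imprecision is in your sketch of the $\E Y_4$ collapse: the coefficient $\alpha_{i\ell}=\sum_{j,k}\eta_j\ti\Omega_{jk}\ti\Omega_{k\ell}$ actually produces a factor $\bigl(\sum_j\beta_j\theta_j^2\bigr)\bigl(\sum_k\beta_k^2\theta_k^2\bigr)\beta_\ell\theta_\ell$ rather than $\bigl(\sum_j\beta_j^2\theta_j^2\bigr)\bigl(\sum_k\beta_k^2\theta_k^2\bigr)$, but after applying Cauchy--Schwarz to $\sum_j\beta_j\theta_j^2\leq\|\beta\circ\theta\|_2\|\theta\|_2$ and summing over $i,\ell$ against $\theta_i\theta_\ell$ with the $1/\|\theta\|_1$ prefactor, one arrives at the same $\|\beta\circ\theta\|_2^4\|\theta\|_2^2$ as you claim.
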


As a result,
\begin{align*}
	\num \label{eqn:Ua_expectation}
	|\E U_a | \les | \ti \lambda |^2 \lambda_1= o( \ti \lambda^4).
\end{align*}
Also using Corollary \ref{cor:ti_lambda_vs lam2} and that $| \ti \lambda|/\sqrt{\lambda_1} \to \infty$, we have
\begin{align*}
	\num \label{eqn:Ua_variance}
	\var(U_a) \les \lambda_1^4 + |\ti \lambda|^3 \lambda_1^2 + |\ti \lambda |^6. 
\end{align*}

The terms below appear in Lemma G.7 of \cite{JinKeLuo21}. The bounds on $Z_1$ and $Z_2$ are quoted directly from \cite{JinKeLuo21}.

\begin{lemma}
	\label{lem:Ub}
	Under the alternative hypothesis, it holds that 
	\begin{alignat*}{2}
		| \E Z_1 | &\les \| \theta \|_2^4 \les \lambda_1^2, \qquad &&\var(Z_1) \les \| \theta\|_2^2 \,\| \theta \|_3^6 \les \lambda_1^4
		\\ | \E Z_2 | &\les \| \theta \|_2^4 \les \lambda_1^2, \quad &&\var(Z_2) \les \frac{ \| \theta \|_2^6 \, \| \theta \|_3^3 }{ \| \theta \|_1 } \les \lambda_1^3
		\\ | \E Z_3| &= 0 , \quad &&\var(Z_3) \les \| \gam \|_2^4 \, \| \theta \|_2^6
		\les | \ti \lambda|^2 \lambda_1^3  
		\\ | \E Z_4 | &\les \| \gam \|_2^2 \, \| \theta \|_2^2 \les |\ti \lambda| \lambda_1, 
		\quad &&\var(Z_4) \les \frac{ \| \gam \|_2^4 \, \| \theta \|_2^6}{ \| \theta \|_1 } 		\les | \ti \lambda|^2 \lambda_1^2  
		\\ | \E Z_5 | &\les \| \gam \|_2^4 \, \| \theta \|_2^2 \les |\ti \lambda|^2 \lambda_1 ,
		\quad &&\var(Z_5) \les \frac{ \| \gam \|_2^8 \, \| \theta \|_2^6}{ \| \theta \|_1^2} \les | \ti \lambda |^4 \lambda_1 
		\\ | \E Z_6 | &\les \frac{ \| \gam \|_2^4 \, \| \theta \|_2^4}{ \| \theta \|_1^2} \les | \ti \lambda|^2  , 
		\quad &&\var(Z_6) \les \frac{ \| \gam \|_2^8 \, \| \theta \|_2^4}{ \| \theta \|_1^2 } \les | \ti \lambda|^4 . 
	\end{alignat*}
\end{lemma}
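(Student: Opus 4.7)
The plan is to mimic the post-expansion analysis of $Y_3, \ldots, Y_6$ in Lemma \ref{lem:Ua}, now with two $\delta$ factors to handle instead of one. Since the bounds for $Z_1$ and $Z_2$ contain no $\ti\Omega$ and are quoted directly from \cite{JinKeLuo21}, the task reduces to $Z_3, Z_4, Z_5, Z_6$. The starting point is the identity $\teta_i - \eta_i = v^{-1/2}\sum_{m\neq i} W_{im}$, which lets me write
\[
\delta_{ij} = -v^{-1/2}\Bigl(\eta_i \sum_{m\neq j} W_{jm} + \eta_j \sum_{m\neq i} W_{im}\Bigr).
\]
Substituting this into each $Z_k$ expresses it as a finite sum of multi-index sums whose summands are products of $\eta$'s, $W$'s, and $\ti\Omega$'s with an overall factor $v^{-1}$. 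Each resulting sum has the same structural form as the post-expansion terms in \cite{JinKeLuo21}, but now carries an extra $\ti\Omega$ factor that we treat via \eqref{eqn:tiOm_bd}.

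For the expectation bounds, I classify the summands by the pairing pattern of their $W$ factors. Any pattern that leaves a $W$ unpaired contributes zero, so only configurations in which all $W$-indices coincide (in pairs, or in triples through the auxiliary summation indices of the $\delta$'s) survive. For $Z_3$ an elementary index-coincidence check shows that no such surviving configuration exists consistent with the distinct-index constraint, giving $|\E Z_3| = 0$; for $Z_4, Z_5, Z_6$ the surviving configurations are a small number of explicit matchings whose contribution is bounded by combining $|\eta_i| \les \theta_i$ (Lemma \ref{lem:eta_bd}), $\E W_{ij}^2 \les \theta_i \theta_j$, the entrywise inequality $|\ti\Omega_{ij}| \les \beta_i\theta_i\beta_j\theta_j$ from \eqref{eqn:tiOm_bd}, $v \asymp \|\theta\|_1^2$ from \eqref{eqn:v0_v1_bd}, and the collapses $\|\gam\|_2^2 = |\ti\lambda|$ and $\|\gam\circ\theta^{\circ 2}\|_1 \les \|\theta\|_2^2$ from \eqref{eqn:beta_bds}. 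Collecting factors yields each of the stated expectation bounds.

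For the variance, the strategy is to expand $\var(Z_k) = \E Z_k^2 - (\E Z_k)^2$, writing $Z_k^2$ as a double sum over two independent copies of the original index tuple. The mean-square term cancels exactly the contribution of matchings in which each copy's $W$-factors pair only within that copy, so only cross-matchings survive. I enumerate these cross-matchings combinatorially and dominate each by the same moment bounds as above, using Cauchy--Schwarz where needed to handle free indices. Each internal contraction through a $\ti\Omega$ factor produces one $\|\gam\|_2^2 = |\ti\lambda|$, each $\eta^2$ or $\E W^2$ pairing produces one $\|\theta\|_2^2 \les \lambda_1$ via Lemma \ref{lem:lambda1_bd}, and the $v^{-1}$ prefactors are absorbed via $v \asymp \|\theta\|_1^2$. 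Corollary \ref{cor:ti_lambda_vs lam2} ($|\ti\lambda| \les \lambda_1$) is applied at the end to match the forms in the statement.

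The main obstacle will be $Z_5$ and $Z_6$, which carry two $\ti\Omega$ factors in addition to two $\delta$'s. The $\beta$-weights from \eqref{eqn:tiOm_bd} must be propagated through chains of contractions without degrading to the crude $|\ti\Omega_{ij}| \les \theta_i\theta_j$ bound that would suffice in the balanced case. Concretely, a $\beta$-index that closes back to another $\beta$-index yields a clean $\|\gam\|_2^2 = |\ti\lambda|$, but a $\beta$-index left at a free outer position only gives the weaker $\|\gam\circ\theta^{\circ 2}\|_1 \les \|\theta\|_2^2$ bound. Organizing the casework by drawing, for each summand, a multigraph on the four outer indices whose edges record the factor types (a $\delta$, a $W$, or a $\ti\Omega$) keeps the bookkeeping manageable and ensures that the worst-case contraction matches the bound in the statement.
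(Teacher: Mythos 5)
Your proposal is correct and takes essentially the same approach as the paper: both expand the $\delta$ factors via $\teta_i - \eta_i = -v^{-1/2}\sum_m W_{im}$, classify summands by $W$-pairing (for the mean) and by cross-pairing between two independent index copies (for the variance), and replace the crude $|\ti\Omega_{ij}| \lesssim \theta_i\theta_j$ with the $\beta$-weighted bound \eqref{eqn:tiOm_bd} so that closed contractions through $\ti\Omega$ yield $\|\gam\|_2^2 = |\ti\lambda|$ rather than the weaker $\|\theta\|_2^2$. The paper organizes this casework through the named sub-terms $Z_{3a},\dots,Z_{6b}$ and tilde/star splittings inherited from \cite{JinKeLuo21} rather than your multigraph bookkeeping, but the surviving matchings and moment bounds applied are the same (one small slip: you write $\|\gam\circ\theta^{\circ 2}\|_1\lesssim\|\theta\|_2^2$ where the intended inequality from \eqref{eqn:beta_bds} is $\|\beta\circ\theta^{\circ 2}\|_1\lesssim\|\theta\|_2^2$, since $\gam$ already denotes $\beta\circ\theta$).
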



Using Corollary \ref{cor:ti_lambda_vs lam2} and the fact that $|\ti \lambda|/\sqrt{\lambda_1} \to \infty$ under the alternative hypothesis, we have
\begin{align*}
	\num \label{eqn:Ub_expectation}
	| \E U_b | &\les |\ti \lambda|^2 \lambda_1,
\end{align*}
and 
\begin{align*}
	\num \label{eqn:Ub_variance}
	\var(U_b) &\les |\ti \lambda|^2 \lambda_1^3. 
\end{align*}

The terms below appear in Lemma G.9 of \cite{JinKeLuo21}. The bounds on $T_1$ and $F$ are quoted directly from \cite{JinKeLuo21} since they do not depend on $\tilde Omega$. 

\begin{lemma}
	\label{lem:Uc}
	Under the alternative hypothesis, it holds that 
	\begin{alignat*}{2}
		|\E T_1| &\leq \frac{ \| \theta \|_2^6 }{ \| \theta \|_1^2} \les \lambda_1,
		\quad &&\var(T_1) \les \frac{ \| \theta \|_2^6 \, \| \theta \|_3^3}{ \| \theta \|_1 } \les \lambda_1^3
		\\ | \E T_2 | &\leq \frac{ \| \gam \|_2^2 \, \| \theta \|_2^4 }{ \| \theta \|_1^2}
		\les | \ti \lambda| , 
		\quad &&\var(T_2) \les \frac{ \| \gam \|_2^4 \, \| \theta \|_2^8}{\| \theta \|_1^2} \les |\ti \lambda|^2 \lambda_1^2 
		\\ | \E F | &\les \| \theta \|_2^4 \les \lambda_1^2, 
		\quad &&\var(F) \les \frac{ \| \theta \|_2^{10} }{ \| \theta \|_1^2 } 
		\les \lambda_1^3 
	\end{alignat*}
\end{lemma}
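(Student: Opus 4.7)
\textbf{Proof proposal for Lemma \ref{lem:Uc}.}

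The bounds on $T_1$ and $F$ involve no factor of $\widetilde\Omega$, so the arguments in \cite{JinKeLuo21} (which only use $\theta$, $\eta$, $v$, and Bernoulli moments of $W$) transfer verbatim to our setting. All the real work is for $T_2 = \sum_{(i,j,k,\ell)\text{ dist}} \delta_{ij}\delta_{jk}\delta_{k\ell}\widetilde\Omega_{\ell i}$, which is the only term carrying a factor of $\widetilde\Omega$. The plan is to mirror the treatment of $Z_4$ in Lemma~\ref{lem:Ub} (same structure: three $\delta$'s and one $\widetilde\Omega$), but replace the coarse $|\widetilde\Omega_{ab}|\les\alpha\theta_a\theta_b$ bound of the balanced case by the refined bound $|\widetilde\Omega_{ab}|\les \beta_a\theta_a\beta_b\theta_b$ from \eqref{eqn:tiOm_bd}, and then close up using the $\gam = \beta\circ\theta$ identities in \eqref{eqn:beta_bds}.

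Concretely, expand each $\delta_{ab}=\eta_a(\eta_b-\widetilde\eta_b)+\eta_b(\eta_a-\widetilde\eta_a)$ with $\eta_a-\widetilde\eta_a = -v^{-1/2}\sum_m W_{am}$ (up to the diagonal correction, which is negligible by the usual argument). This writes $T_2$ as a degree-$3$ polynomial in $W$ whose coefficients are built from $\eta$, $v^{-1/2}$, and a single $\widetilde\Omega_{\ell i}$. Then $\E T_2$ and $\E T_2^2$ decompose, in the standard way, into a finite list of post-expansion sums indexed by the patterns of index identifications among $\{i,j,k,\ell\}$ and among the auxiliary $W$-indices, together with the pairings of $W$'s (using $\E W_{ab}=0$, $\E W_{ab}^2 \les \Omega_{ab}\les\theta_a\theta_b$, and higher central moments $\les\theta_a\theta_b$). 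For each resulting sum I would apply $|\widetilde\Omega_{\ell i}|\les\beta_\ell\theta_\ell\beta_i\theta_i$, then bound $|\eta_a|\les\theta_a$ (Lemma~\ref{lem:eta_bd}), $v\asymp\|\theta\|_1^2$ (Lemma~\ref{lem:v0_v1_bd}), and collapse index sums via H\"older and \eqref{eqn:theta_bd_ellp}. Finally, \eqref{eqn:beta_bds} replaces $\|\gam\|_2^2$ by $|\widetilde\lambda|$ and $\|\gam\circ\theta\|_1$ by $\|\theta\|_2^2$, giving $|\E T_2|\les \|\gam\|_2^2\|\theta\|_2^4/\|\theta\|_1^2 \les |\widetilde\lambda|$ and $\var(T_2)\les \|\gam\|_2^4\|\theta\|_2^8/\|\theta\|_1^2 \les |\widetilde\lambda|^2\lambda_1^2$, as claimed. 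The bound $\|\theta\|_2^2\les\lambda_1$ comes from Lemma~\ref{lem:lambda1_bd}.

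The main obstacle is the combinatorial bookkeeping. Each $\delta$ is a sum of two products, and each $(\eta-\widetilde\eta)$ hides an extra summation, so $\E T_2^2$ breaks into a long list of post-expansion patterns. In the severely unbalanced regime one cannot simply treat all patterns uniformly: a pattern where the paired or repeated indices concentrate in the small community $S$ carries a different order than one where they land in $S^c$. The crucial check is that, among all such patterns, the worst case is still controlled by $|\widetilde\lambda|^2\lambda_1^2$ for the variance and by $|\widetilde\lambda|$ for the mean; this is exactly where the $\beta$-weighting in \eqref{eqn:tiOm_bd} is indispensable, since a uniform $|\widetilde\Omega_{ab}|\les c\,\theta_a\theta_b$ would convert the signal $|\widetilde\lambda|=\|\gam\|_2^2$ into the larger quantity $c\|\theta\|_2^2\asymp\lambda_1$ and lose the whole point of the refinement. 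With the refined bound in hand, each pattern reduces to a product of standard quantities $\|\gam\|_2^2$, $\|\theta\|_2^2$, $\|\theta\|_3^3$, $\|\theta\|_1$, $v$, whose orders are already catalogued by the preliminary bounds of this subsection.
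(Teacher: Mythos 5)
Your proposal is correct and follows essentially the same route as the paper: the bounds for $T_1$ and $F$ are quoted directly from \cite{JinKeLuo21} since they do not involve $\widetilde\Omega$, and $T_2$ is handled by the post-expansion decomposition of \cite{JinKeLuo21} (the paper uses $T_2 = 2(T_{2a}+T_{2b}+T_{2c}+T_{2d})$ and their further splittings) with the coarse bound $|\widetilde\Omega_{ab}|\les\alpha\theta_a\theta_b$ replaced by the refined $|\widetilde\Omega_{ab}|\les\beta_a\theta_a\beta_b\theta_b$ of \eqref{eqn:tiOm_bd}, closing via \eqref{eqn:beta_bds}, Lemma \ref{lem:eta_bd}, Lemma \ref{lem:v0_v1_bd}, and Lemma \ref{lem:lambda1_bd}. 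The only cosmetic difference is that you frame the casework by analogy with $Z_4$ rather than with the explicit $T_{2a}$--$T_{2d}$ splitting, which is the same strategy in substance.
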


Using Corollary \ref{cor:ti_lambda_vs lam2} and the fact that $|\ti \lambda|/\sqrt{\lambda_1} \to \infty$ under the alternative hypothesis, we have
\begin{align*}
	\num \label{eqn:Uc_expectation}
	|	\E U_c | &\les \lambda_1^2, 
\end{align*}
and
\begin{align*}
	\num \label{eqn:Uc_variance}
	\var(U_c) &\les |\ti \lambda|^2 \lambda_1^2. 
\end{align*}

Using Corollary \ref{cor:ti_lambda_vs lam2} and that $| \ti \lambda |/\sqrt{\lambda_1} \to \infty$, the inequalities \eqref{eqn:Ua_expectation}--\eqref{eqn:Uc_variance} imply Theorem \ref{thm:proxy_SgnQ}. 


\subsubsection{Real SgnQ}

Our first lemma regarding real SgnQ plays the part of Lemma G.11 from \cite{JinKeLuo21}. 

\begin{lemma}
	\label{lem:real_SgnQ_tistarQ}
	Under the previous assumptions, as $n \to \infty$,
	\begin{itemize}
		\item Under the null hypothesis, $| \E[Q^* - \tilde Q^*]| = o(\| \theta \|_2^4)$
		and $\var(Q^* - \tilde Q^*) = o(\| \theta \|_2^8)$. 
		\item Under the alternative hypothesis, if $| \ti \lambda|/\sqrt{\lambda_1} \to \infty$, then $| \E[Q^* - \tilde Q^*]| \les | \ti \lambda |^2 \lambda_1$ 
		and $\var(Q^* - \tilde Q^*) \les | \ti \lambda |^2 \lambda_1^3 $. 
	\end{itemize}
\end{lemma}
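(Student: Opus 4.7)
The plan is to mirror the strategy used for Lemma G.11 in \cite{JinKeLuo21}, adapted to the severely unbalanced regime, by decomposing $Q - Q^*$ (reading $\widetilde{Q}^*$ as $Q$, the real SgnQ) into finitely many post-expansion terms. Writing
\[
Q - Q^* = \sum_{i,j,k,\ell\,(dist)} \Big[\textstyle\prod_{t=1}^{4}(\widetilde{\Omega}_t + W_t + \delta_t + r_t) - \prod_{t=1}^{4}(\widetilde{\Omega}_t + W_t + \delta_t)\Big],
\]
I expand along the four edge positions and group terms by the multiset of factor types (each position being one of $\widetilde{\Omega}, W, \delta, r$) with at least one $r$-factor. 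This yields a finite catalog of sums, and for each I would derive separate mean and variance estimates whose worst case is claimed in the lemma.

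The first technical step is to establish high-probability and moment bounds for the components of $r_{ij}$. Writing $r_{ij}$ as the sum of three explicit pieces and using Bernstein's inequality on $V - v$ together with coordinate-wise concentration of $\widetilde{\eta} - \eta$, I would show that $|V/v - 1| \les \sqrt{\log n / v}$ and $|\widetilde{\eta}_i - \eta_i| \les \sqrt{\log n}\,\theta_i/\|\theta\|_1$ with high probability, while $\eta^*_i - \eta_i$ admits a deterministic bound of order $\theta_i^2/\|\theta\|_1$. This lets me replace $r$-factors by upper bounds of the form $|r_{ij}| \les \theta_i \theta_j/\|\theta\|_1 + (\mathrm{mean\text{-}zero\ noise})$ with high probability, so that each post-expansion term reduces to a quantity expressible via $\theta$, $W$, $\delta$, and $\widetilde{\Omega}$ alone.

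Next, for each post-expansion term I would bound expectations and variances by absolute summation, using $|\widetilde{\Omega}_{ij}| \les \beta_i \theta_i \beta_j \theta_j$ from \eqref{eqn:tiOm_bd} whenever a $\widetilde{\Omega}$-factor appears, and using the moment calculations from Lemmas \ref{lem:Ua}--\ref{lem:Uc} for the $W$ and $\delta$ factors. The key quantitative input is \eqref{eqn:beta_bds}, which supplies the identities $\|\beta\circ\theta\|_2^2 = |\widetilde{\lambda}|$ and $|\beta_i| \les 1$, plus the Cauchy--Schwarz consequence $\|\beta\circ\theta^{\circ 2}\|_1 \les \|\theta\|_2^2$. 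After summing the contributions of the dominating terms (typically those involving the maximum allowable number of $\widetilde{\Omega}$-factors), the bounds $|\E[Q - Q^*]| \les |\widetilde{\lambda}|^2 \lambda_1$ and $\var(Q - Q^*) \les |\widetilde{\lambda}|^2 \lambda_1^3$ follow under the alternative, and the corresponding $o(\|\theta\|_2^4)$ / $o(\|\theta\|_2^8)$ bounds follow under the null where $|\widetilde{\lambda}| = 0$.

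The main obstacle is the bookkeeping: roughly $4^4 - 3^4 = 175$ term types arise before accounting for symmetries, and unlike the balanced setting where uniform bounds $|\widetilde{\Omega}_{ij}| \les \alpha\theta_i\theta_j$ suffice, here terms supported primarily on the small community would be grossly overestimated without the $\beta$-weighting. The delicate part is verifying that once the refined bound \eqref{eqn:tiOm_bd} is applied, no post-expansion term exceeds the target order $|\widetilde{\lambda}|^2 \lambda_1^3$ for the variance and $|\widetilde{\lambda}|^2 \lambda_1$ for the mean; I would expect the worst terms to be those containing two $\widetilde{\Omega}$-factors on adjacent edges and two stochastic factors, whose contribution mirrors that of $Y_4$ and $Z_5$ in Lemmas \ref{lem:Ua} and \ref{lem:Ub} above, and a careful comparison shows that the $r$-factors introduce an extra $\|\theta\|_1^{-1}$ suppression that keeps them within budget.
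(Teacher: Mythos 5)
Your proposal decomposes $Q - Q^*$ and tries to bound everything at once. But the lemma in question is about $Q^* - \tilde Q^*$, where $\tilde Q^*$ is an intermediate quantity defined by replacing the residual $r_{ij}$ with the single piece $\tilde r_{ij} = -(\teta_i - \eta_i)(\teta_j - \eta_j)$ after pulling out a $(v/V)^{N_{\tilde r}}$ prefactor. The paper deliberately splits $Q - Q^*$ into two lemmas: this one for the $\tilde r$-contributions and Lemma \ref{lem:real_SgnQ_starQ} for the deterministic $(\eta^*\eta^{*\prime} - \eta\eta')$ piece and the $(1-v/V)$-weighted pieces. Conflating these is not merely a bookkeeping choice -- the two kinds of factors require different techniques, and identifying the ``worst terms'' as $Y_4$ and $Z_5$-analogues is off target, since $Y_4, Z_5$ belong to the expansion of $\tilde Q - Q^*$ (Lemmas \ref{lem:Ua}--\ref{lem:Ub}), not to any expansion containing $\tilde r$ or $r$.

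The more substantial gap is the treatment of the global random factor $V = {\bf 1}'A{\bf 1}$. Each post-expansion sum with $N_{\tilde r} \geq 1$ carries a prefactor $(v/V)^{N_{\tilde r}}$ that multiplies the \emph{entire} quadruple sum, and the pieces of $r$ other than $\tilde r$ carry $(1 - v/V)$ factors. You propose to ``replace $r$-factors by upper bounds with high probability,'' but a pointwise bound holding on a high-probability event does not directly yield the claimed bounds on $\E$ and $\var$: one must also control the contribution of the complement event and show that the $V$-dependence does not spoil the moment estimate. The paper resolves this with inequality \eqref{eqn:XvsY_moments}, which compares $Y = (v/V)^{N_{\tilde r}} X$ to the bare sum $X$ and reduces the problem to bounding $\E[X]$ and $\E[X^2]$ (which are genuine moment computations on sums of $\tilde \Omega$, $W$, $\delta$, $\tilde r$ factors), plus a controlled $o(1)$ remainder from the tail of $V$. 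Your proposal has no analogue of this step. Moreover, for the terms with $N_W^* \leq 4$, the paper does not re-derive moment bounds by absolute summation at all; it uses the transfer device (Lemma \ref{lem:transfer}) together with Table~\ref{tb:Order-4} to match each $X$ to a previously bounded term in Lemmas \ref{lem:ideal_SgnQ}--\ref{lem:Uc} whose canonical coefficient bound dominates. Your $\beta$-weighted estimates are the right ingredients for that comparison, but you would still need to articulate the reduction from $Y$ to $X$ and the moment transfer to make the argument close; the high-probability-truncation route you sketch is only what the paper uses for the terms with $N_W^* > 4$, where it is accompanied by an explicit tail estimate $\E[\,|J|^2 \mf{1}_{E^c}\,] = o(1)$.
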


The following lemma plays the part of Lemma G.12 from \cite{JinKeLuo21}. 

\begin{lemma}
	\label{lem:real_SgnQ_starQ}
	Under the previous assumptions, as $n \to \infty$,
	\begin{itemize}
		\item Under the null hypothesis, $| \E[Q - \tilde Q^*]| = o(\| \theta \|_2^4)$
		and $\var(Q - \ti Q^*) = o(\| \theta \|_2^8)$.
		\item Under the alternative hypothesis, if $| \ti \lambda|/\sqrt{\lambda_1} \to \infty$, then $| \E[Q - \tilde Q^*]| \les \lambda_1^2 + | \ti \lambda|^3$ 
		and $\var(Q - \ti Q^*) \les \lambda_1^4 $. 
	\end{itemize}
\end{lemma}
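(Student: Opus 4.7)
The plan is to adapt the post-expansion framework of \cite{JinKeLuo21} to the severely unbalanced DCBM. We expand $Q - \tilde Q^*$ into a finite collection of post-expansion terms, each of the form
\[
\sum_{i,j,k,\ell\,(\mathrm{dist})} F^{(1)}_{ij}\, F^{(2)}_{jk}\, F^{(3)}_{k\ell}\, F^{(4)}_{\ell i},
\]
where each $F^{(s)}$ is drawn from $\{\tilde\Omega,\, W,\, \delta,\, r\}$ and at least one $F^{(s)}$ equals $r$. The expectation is evaluated by identifying which index pairings survive once the mean-zero factors $W$ and $\delta$ are integrated out, and the variance is controlled by Cauchy--Schwarz applied to the sum of per-tuple centered contributions. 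Summing the resulting bounds over all post-expansion terms yields the two claims.

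The first technical ingredient is a high-probability bound on $|r_{ij}|$. Decomposing
\[
r_{ij} = (\eta_i^*\eta_j^* - \eta_i\eta_j) - (\eta_i-\tilde\eta_i)(\eta_j-\tilde\eta_j) + (1 - v/V)\tilde\eta_i\tilde\eta_j,
\]
the first piece is a deterministic correction controlled via $v_0 - v = \|\theta\|_2^2$ together with $\eta_i \les \theta_i$ from Lemma~\ref{lem:eta_bd}; the second is a product of two degree-concentration errors, each of order $\sqrt{\theta_i}$ up to logarithmic factors by Bernstein's inequality; the third uses concentration of the scalar $V = \mf{1}^\T A \mf{1}$ at scale $\sqrt{v}$, again by Bernstein. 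On a high-probability event $\mathcal{E}_n$, this yields a bound of the form $|r_{ij}| \les \theta_i\theta_j / \|\theta\|_1$ plus smaller stochastic fluctuations that we keep track of separately when they matter.

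The crucial substitution throughout the term-by-term analysis is \eqref{eqn:tiOm_bd} combined with \eqref{eqn:beta_bds}, which replaces the coarse balanced-case bound $|\tilde\Omega_{ij}| \les \alpha \theta_i\theta_j$ by $|\tilde\Omega_{ij}| \les \beta_i\theta_i\beta_j\theta_j$ with $\|\beta\circ\theta\|_2^2 = |\tilde\lambda|$. Applying Cauchy--Schwarz between $\beta\circ\theta$ and $\theta$ at the right moment produces the sharp dependence $\lambda_1^2 + |\tilde\lambda|^3$ for $|\E[Q - \tilde Q^*]|$ and $\lambda_1^4$ for the variance, rather than expressions in $\lambda_1$ alone. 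Outside $\mathcal{E}_n$ we absorb the residual contribution into an $o(1)$ error via a crude fourth-moment bound on $Q$. The null case follows from the same calculations with $|\tilde\lambda|$ effectively set to zero; the additional $1/\|\theta\|_1$-type factors coming from $r$ then automatically produce the required $o(\|\theta\|_2^4)$ and $o(\|\theta\|_2^8)$ improvements over $\lambda_1^2$ and $\lambda_1^4$.

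The main obstacle is the sheer combinatorial volume of post-expansion terms and the need to ensure the factor $|\tilde\lambda|$ accumulates with the correct exponent in each one: a uniform bound $|\tilde\Omega_{ij}| \les \theta_i\theta_j$ would inflate the expectation to $\lambda_1^4$, losing the optimal $|\tilde\lambda|^3$ term, while over-using the $\beta$-factor bound would fail to match the $\lambda_1^2$ contribution from pure $(W,r)$ interactions. Each term must therefore be analyzed by hand, carefully tracking which factors are \emph{signal} ($\tilde\Omega$, carrying $\beta$) and which are \emph{noise} or \emph{correction} ($W,\delta,r$, carrying only $\theta$). A secondary difficulty is that the random denominator $V$ inside $r_{ij}$ couples the four $r$-factors across $(i,j,k,\ell)$, which is handled by restricting to $\mathcal{E}_n$ as above.
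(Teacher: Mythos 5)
Your proposal aligns with the paper's high-level strategy—post-expansion, separating signal factors ($\widetilde{\Omega}$, carrying $\beta$) from noise/correction factors, and event-based control of the random normalization $V$—but it glosses over the central technical device that makes the proof work. The paper does not treat $r_{ij}$ as a single factor with a pointwise bound. It decomposes $r$ into $\epsilon^{(1)}_{ij}=\eta_i^*\eta_j^*-\eta_i\eta_j$ (deterministic), $\epsilon^{(2)}_{ij}=(1-v/V)\eta_i\eta_j$, $\epsilon^{(3)}_{ij}=-(1-v/V)\delta_{ij}$, and a $(v/V)$-prefactored $\tilde r_{ij}=-(\tilde\eta_i-\eta_i)(\tilde\eta_j-\eta_j)$, and the $\tilde r$-only pieces are already disposed of in the companion lemma. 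This decomposition matters because (i) the factors $(1-v/V)$ and $(v/V)$ are scalars common to all indices, so they pull out of the quadruple sum and are controlled by a single concentration estimate on $V$, leaving a post-expansion term whose index structure is clean and to which the moment bounds of the earlier lemmas apply via the transfer device (Lemma \ref{lem:transfer}); and (ii) the result is extracted precisely from the observation that every surviving term has $N_{\epsilon^{(1)}}+N_{\epsilon^{(2)}}+N_{\epsilon^{(3)}}>0$, each $\epsilon^{(1)}$ contributing a $1/\|\theta\|_2^2$ factor and each $\epsilon^{(2)}$, $\epsilon^{(3)}$ a $1/\|\theta\|_2^4$ factor, against a crude $\|\theta\|_2^{12}$ baseline. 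Your plan of ``bound $|r_{ij}|\lesssim\theta_i\theta_j/\|\theta\|_1$ on $\mathcal{E}_n$'' collapses this structure and does not reconcile with the mean-zero pairing argument you also invoke: since $r$ is itself built from the $W$-matrix, you cannot simultaneously factor out a pointwise bound on $|r|$ and integrate out the $W$-pairings—the two manipulations are incompatible except when all random $r$-factors have first been reduced to scalars times a clean tensor, which is exactly what the $\epsilon$-decomposition achieves.

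The event-and-crude-tail approach you describe is, in fact, used in the companion lemma but only for a handful of high-multiplicity terms ($J_9$, $J_{10}$, $K_5'$), where the product of many $G_i$-factors makes direct moment expansion painful; it is not the backbone of the argument. Two smaller issues: your proposed source set $\{\widetilde{\Omega},W,\delta,r\}$ with ``at least one $r$'' actually describes the expansion of $Q-Q^*$, not $Q-\tilde Q^*$, and partitioning those 175 post-expansion terms against the companion lemma requires tracking $N_{\tilde r}$ versus $N_{\epsilon}$—this accounting is what the paper's $\tilde Q^*$ encodes and your sketch skips. And the bound $\lambda_1^2+|\tilde\lambda|^3$ in the conclusion is obtained by splitting into nonstochastic terms (bounded by $|\tilde\lambda|^3$ via \eqref{eqn:beta_bds}) and stochastic terms ($|\mathbb{E}Y|\leq\sqrt{\mathbb{E}Y^2}\lesssim\|\theta\|_2^4\lesssim\lambda_1^2$); your Cauchy--Schwarz remark gestures at this but the two-case split should be explicit, since the nonstochastic case is where the $\beta$-bound contributes $|\tilde\lambda|$-powers and the stochastic case is where the $\epsilon$-savings enter.
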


\subsection{Proofs of Lemmas \ref{lem:ideal_SgnQ}--\ref{lem:real_SgnQ_starQ}}

\subsubsection{Proof strategy}
\label{sec:proof_strategy}
First we describe our method of proof for Lemmas \ref{lem:ideal_SgnQ}--\ref{lem:Uc}. We borrow the following strategy from \cite{JinKeLuo21}. Let $T$ denote a term appearing in one of the Lemmas \ref{lem:ideal_SgnQ}--\ref{lem:Uc}, which takes the general form
\begin{align*}
	T = \sum_{i_1, \ldots, i_m \in \mathcal{R}} c_{i_1, \ldots, i_m} 
	G_{i_1, \ldots, i_m}
\end{align*}
where 
\begin{itemize}
	\item 	$m = O(1)$,
	\item $ \mathcal{R}$ is a subset of $[n]^m$,
	\item $c_{i_1, \ldots, i_m} = \prod_{(s, s') \in A} \Gamma_{i_s, i_{s'}}\rp{s,s'} $ is a nonstochastic coefficient where $A \subset [m] \times [m]$ and 
	$\Gamma\rp{s, s'} \in \{ \ti \Omega, \eta^* \mf{1}^\T, \eta \mf{1}^\T , \mf{1} \mf{1}^\T \}$, and 
	\item $G_{i_1, \ldots, i_m} = \prod_{ (s, s') \in B } W_{i_s, i_{s'}}$ where $B \subset [m] \times [m]$. 
\end{itemize}
Since we are studying signed quadrilateral, one can simply take $m = 4$ above, though we wish to state the lemma in a general way. 

Define a \textit{canonical upper bound} $\overline{\Gamma_{i_s, i_{s'}}\rp{s, s'}} $ (up to constant factor) on $\Gamma_{i_s, i_{s'}}\rp{s,s'}$ as follows:
\begin{align*}
	\num \label{eqn:canon_ubd_Gam}
	\overline{\Gamma_{i_s, i_{s'}}\rp{s, s'}}
	= \begin{cases}
		\beta_{i_s} \theta_{i_s} \beta_{i_{s'}} \theta_{i_{s'}} \quad &\text{if } \Gamma\rp{s,s'} = \ti \Omega, 
		\\ 	\theta_{i_s}  \quad &\text{if } \Gamma\rp{s,s'}  \in \{ \eta^* \mf{1}^\T, \eta \mf{1}^\T \}
		\\ 1 \quad &\text{otherwise}. 
	\end{cases}
\end{align*}
Define 
\begin{align*}
	\num 	\label{eqn:cbar_def}
	\overline{c_{i_1, \ldots, i_m}}
	= \prod_{(s, s') \in A} \overline{ \Gamma_{i_s, i_{s'}}\rp{s,s'} }.
\end{align*}
By Corollary \ref{cor:ti_lambda_vs lam2} and Lemma \ref{lem:eta_bd},
\begin{align*}
	|c_{i_1, \ldots i_m} | \les  \overline{c_{i_1, \ldots, i_m}}. 
\end{align*}

In \cite{JinKeLuo21}, each term $T$ is decomposed into a sum of $L = O(1)$ terms:
\begin{align*}
	\num \label{eqn:T_decomp}
	T = \sum_{\ell = 1}^L T\rp{L}
	= \sum_{\ell = 1}^L \,  \sum_{i_1, \ldots, i_m \in \mathcal{R}\rp{\ell}} c_{i_1, \ldots, i_m} 
	G_{i_1, \ldots, i_m}.
\end{align*}

In our analysis below and that of \cite{JinKeLuo21}, an upper bound $\overline{ \E T }$ on $|\E T|$ is obtained by
\begin{align*}
	|\E T | &\leq \sum_{\ell = 1}^L | \E T\rp{\ell} |
	\leq \sum_{\ell = 1}^L \, \, \sum_{i_1, \ldots, i_m \in \mathcal{R}\rp{\ell}}
	| c_{i_1, \ldots, i_m} | \cdot | \E G_{i_1, \ldots, i_m} |
	\\&\leq \sum_{\ell = 1}^L \, \, \sum_{i_1, \ldots, i_m \in \mathcal{R}\rp{\ell}}
	\overline{c_{i_1, \ldots, i_m}}  \cdot | \E G_{i_1, \ldots, i_m} |
	\\&=: \overline{ \E T }.
	\num \label{eqn:expT_ubd}
\end{align*}

Also an upper bound $\overline{\var T}$ on $\var T$ is obtained by
\begin{align*}
	\var T
	&\leq L \sum_{\ell = 1}^L \var(T \rp{\ell}) 
	\\ &	\leq L \sum_{\ell = 1}^L \, \, 
	\sum_{ \substack{ i_1, \ldots, i_m \in \mathcal{R}\rp{\ell} \\
			i_1', \ldots, i_m' \in \mathcal{R}\rp{\ell} }	} 
	| c_{i_1, \ldots, i_m} c_{i_1', \ldots, i_m'} |
	\cdot \big | \cov\big( G_{i_1, \ldots, i_m}, G_{i_1', \ldots, i_m'}   \big) \big |
	\\ &	\leq L \sum_{\ell = 1}^L \, \, 
	\sum_{ \substack{ i_1, \ldots, i_m \in \mathcal{R}\rp{\ell} \\
			i_1', \ldots, i_m' \in \mathcal{R}\rp{\ell} }	} 
	\overline{c_{i_1, \ldots, i_m}} \cdot \overline{ c_{i_1', \ldots, i_m'}} 
	\cdot \big | \cov\big( G_{i_1, \ldots, i_m}, G_{i_1', \ldots, i_m'}   \big) \big |
	\\& =: \overline{\var T}. 
	\num \label{eqn:varT_ubd}
\end{align*}

In Lemmas \ref{lem:ideal_SgnQ}--\ref{lem:Uc}, all stated upper bounds are obtained in this manner and are therefore upper bounds on $\overline{\E T}$ and $\overline{\var T}$. 

Note that the definition of $\overline{\E T}$ and $\overline{\var T}$ depends on the specific decomposition \eqref{eqn:T_decomp} of $T$ given in \cite{JinKeLuo21}. Refer to the proofs below for details including the explicit decomposition. Again we remark that the difference between our setting and \cite{JinKeLuo21} is that the canonical upper bound on $|\ti \Omega_{ij}|$ used in \cite{JinKeLuo21} is of the form $\alpha \theta_i \theta_j$ rather than the inequality $\beta_i \theta_i \beta_j \theta_j$ which is required for our purposes.

The formalism above immediately yields the following useful fact that allows us to transfer bounds between terms that have similar structures. 

\begin{lemma}
	\label{lem:transfer}
	Suppose that 
	\begin{align*}
		T &= \sum_{i_1, \ldots, i_m \in \mathcal{R}} c_{i_1, \ldots, i_m} 
		G_{i_1, \ldots, i_m}, 
		\\  T^* &=  \sum_{i_1, \ldots, i_m \in \mathcal{R}} c^*_{i_1, \ldots, i_m} 
		G_{i_1, \ldots, i_m} , 
	\end{align*}
	where
	\begin{align*}
		|c_{i_1, \ldots, i_m}|	&\les  \overline{c^*_{i_1, \ldots, i_m}} 
	\end{align*}
	Then
	\begin{align*}
		| \E T| \les \overline{\E[T^*]}
	\end{align*}
	and
	\begin{align*}
		\var \, T \les \overline{\var \, T^*}.  
	\end{align*}
\end{lemma}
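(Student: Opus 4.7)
The plan is to observe that Lemma \ref{lem:transfer} follows almost immediately from unpacking the definitions of $\overline{\E T^*}$ and $\overline{\var T^*}$ in \eqref{eqn:expT_ubd} and \eqref{eqn:varT_ubd}. These quantities are defined relative to a fixed decomposition $T^* = \sum_{\ell=1}^L T^{*(\ell)}$ with $T^{*(\ell)} = \sum_{i_1,\ldots,i_m \in \mathcal{R}^{(\ell)}} c^*_{i_1,\ldots,i_m} G_{i_1,\ldots,i_m}$. Since $T$ shares both the index set $\mathcal{R}$ and the random kernels $G_{i_1,\ldots,i_m}$ with $T^*$, the \emph{same} decomposition applies to $T$, producing $T = \sum_{\ell=1}^L T^{(\ell)}$ with $T^{(\ell)} = \sum_{i_1,\ldots,i_m \in \mathcal{R}^{(\ell)}} c_{i_1,\ldots,i_m} G_{i_1,\ldots,i_m}$.

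First I would handle $\E T$. By the triangle inequality over the $L = O(1)$ pieces and the hypothesis $|c_{i_1,\ldots,i_m}| \les \overline{c^*_{i_1,\ldots,i_m}}$,
\[
|\E T| \;\leq\; \sum_{\ell=1}^L \sum_{i_1,\ldots,i_m \in \mathcal{R}^{(\ell)}} |c_{i_1,\ldots,i_m}|\,|\E G_{i_1,\ldots,i_m}|
\;\les\; \sum_{\ell=1}^L \sum_{i_1,\ldots,i_m \in \mathcal{R}^{(\ell)}} \overline{c^*_{i_1,\ldots,i_m}}\,|\E G_{i_1,\ldots,i_m}|
\;=\; \overline{\E T^*},
\]
which is exactly the right-hand side appearing in \eqref{eqn:expT_ubd}.

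For the variance bound, I would use $\var(\sum_{\ell} T^{(\ell)}) \leq L \sum_{\ell} \var(T^{(\ell)})$, expand each $\var(T^{(\ell)})$ as a double sum of covariances, and again invoke $|c_{i_1,\ldots,i_m}| \les \overline{c^*_{i_1,\ldots,i_m}}$ termwise:
\[
\var T \;\leq\; L\sum_{\ell=1}^L \sum_{\substack{i_1,\ldots,i_m \in \mathcal{R}^{(\ell)} \\ i_1',\ldots,i_m' \in \mathcal{R}^{(\ell)}}} |c_{i_1,\ldots,i_m}\, c_{i_1',\ldots,i_m'}| \cdot |\cov(G_{i_1,\ldots,i_m}, G_{i_1',\ldots,i_m'})|
\]
\[
\les\; L\sum_{\ell=1}^L \sum_{\substack{i_1,\ldots,i_m \in \mathcal{R}^{(\ell)} \\ i_1',\ldots,i_m' \in \mathcal{R}^{(\ell)}}} \overline{c^*_{i_1,\ldots,i_m}}\,\overline{c^*_{i_1',\ldots,i_m'}} \cdot |\cov(G_{i_1,\ldots,i_m}, G_{i_1',\ldots,i_m'})| \;=\; \overline{\var T^*}.
\]

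There is no real obstacle here; the only point requiring care is ensuring that the bounds $\overline{\E T^*}$ and $\overline{\var T^*}$ quoted in Lemmas \ref{lem:ideal_SgnQ}--\ref{lem:Uc} are indeed derived in the manner described in Section \ref{sec:proof_strategy}, so that the dominance hypothesis on coefficients propagates to an identical dominance of the bounds. This is the case by construction in \cite{JinKeLuo21}, and since the proofs in our setting adopt the same decompositions (merely substituting the canonical bound $\beta_i\theta_i\beta_j\theta_j$ for $|\ti\Omega_{ij}|$ in place of $\alpha\theta_i\theta_j$), Lemma \ref{lem:transfer} is a direct and purely formal consequence.
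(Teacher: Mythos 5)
Your proof is correct and matches the argument the paper intends: the paper states Lemma \ref{lem:transfer} as an ``immediate'' consequence of the formalism in Section \ref{sec:proof_strategy}, and your proof spells out exactly why---the shared index set $\mathcal{R}$ and shared kernels $G_{i_1,\ldots,i_m}$ let the same decomposition apply to $T$, after which the coefficient dominance $|c| \les \overline{c^*}$ propagates termwise through \eqref{eqn:expT_ubd} and \eqref{eqn:varT_ubd}. No gaps.
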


In the second part of our analysis, we show that Lemmas \ref{lem:real_SgnQ_tistarQ} and \ref{lem:real_SgnQ_starQ} follow from Lemmas \ref{lem:ideal_SgnQ}--\ref{lem:Uc} and repeated applications of Lemma \ref{lem:transfer}. 

\subsubsection{Proof of Lemma \ref{lem:ideal_SgnQ}}

The bounds for $X_1$ follow immediately from \cite{JinKeLuo21}.

In \cite[Supplement, pg.37]{JinKeLuo21} it is shown that
$\E X_2 = 0$, and
\begin{align*}
	\var(X_2) = 2 \sum_{i,j,k,\ell(dist.)} \ti \Omega_{ij}^2 \cdot \var(W_{jk} W_{k\ell} W_{\ell i}).
\end{align*}
Thus by \eqref{eqn:assn1} and  \eqref{eqn:assn2},
\begin{align*}
	\var(X_2) &\les 	\sum_{i,j,k,\ell(dist.)} \ti \Omega_{ij}^2 \cdot \var(W_{jk} W_{k\ell} W_{\ell i})
	\les \sum_{i,j,k,\ell} \beta_i^2 \theta_i^2 \beta_j^2 \theta_j^2 
	\cdot \Omega_{jk} \Omega_{k\ell} \Omega_{\ell i} 
	\\ &\les \sum_{i,j,k,\ell} \beta_i^2 \theta_i^2 \beta_j^2 \theta_j^2 
	\cdot \theta_{j} \theta_{k}^2 \theta_\ell^2 \theta_i = \| \beta \circ \theta \|_2^4 \, \| \theta \|_2^4
\end{align*}

In \cite[Supplement, pg. 38]{JinKeLuo21} it is shown that $\E X_3 = 0$ and
\begin{align*}
	\var(X_3) &\les \sum_{i, k, \ell (dist)} \big( \sum_{j \notin \{i, k,\ell\}} \ti \Omega_{ij} \ti \Omega_{jk} \big)^2 \cdot \var(W_{k\ell} W_{\ell i}).
\end{align*}
By \eqref{eqn:tiOm_bd} and \eqref{eqn:beta_bds},
\begin{align*}
	\big( \sum_{j \notin \{i, k,\ell\}} \ti \Omega_{ij} \ti \Omega_{jk} \big)^2
	&\leq \beta_i^2 \theta_i^2 \, \beta_k^2 \theta_k^2 \,
	\, \| \gam \|_2^4
\end{align*}
Thus by \eqref{eqn:assn1} and \eqref{eqn:assn2}, 
\begin{align*}
	\var(X_3) &\les \sum_{i,k,\ell} \beta_i^2 \theta_i^2 \, \beta_k^2 \theta_k^2 \,
	\, \| \gam \|_2^4 \cdot \Omega_{k\ell} \Omega_{\ell i} 
	\les \sum_{i,k,\ell} \beta_i^2 \theta_i^3 \, \beta_k^2 \theta_k^3 \,
	\, \| \gam \|_2^4 \cdot \theta_\ell^2 
	&\les \| \gam \|_2^8 \, \| \theta \|_2^2. 
\end{align*}

In \cite[Supplement, pg. 38]{JinKeLuo21} it is shown that $\E X_4 = 0$ and
\begin{align*}
	\var(X_4) \les \sum_{i,j,k,\ell(dist.)} \ti \Omega_{ij}^2 
	\ti \Omega_{k \ell}^2 \cdot \var(W_{jk} W_{\ell i}). 
\end{align*}
By \eqref{eqn:assn1} and \eqref{eqn:tiOm_bd},
\begin{align*}
	\var(X_4) &\les \sum_{i, j, k, \ell}
	\beta_i^2 \theta_i^2 
	\beta_j^2 \theta_j^2
	\beta_k^2 \theta_k^2
	\beta_\ell^2 \theta_\ell^2 \cdot \theta_j \theta_k \theta_\ell \theta_i 
	\les \| \gam \|_2^8. 
\end{align*}

In \cite[Supplement, pg. 39]{JinKeLuo21} it is shown that $\E X_5 = 0$ and 
\begin{align*}
	\var(X_5) &= 2 \sum_{i < \ell} \big( \sum_{ \substack{j, k \notin \{ i, \ell \} \\ j \neq k}} \ti \Omega_{ij} \ti \Omega_{jk} \ti \Omega_{k\ell} \big)^2 \cdot \var(W_{\ell i}).
\end{align*}
We have
\begin{align*}
	\big|	\sum_{ \substack{j, k \notin \{ i, \ell \} \\ j \neq k}} \ti \Omega_{ij} \ti \Omega_{jk} \ti \Omega_{k\ell} \big|
	\les \beta_i \theta_i \| \gam \|_2^4 \beta_\ell \theta_\ell. 
\end{align*}
Thus by \eqref{eqn:assn1} and \eqref{eqn:assn2}, 
\begin{align*}
	\var(X_5) 
	\les \sum_{i, \ell} \big( \beta_i \theta_i \| \gam \|_2^4 \beta_\ell \theta_\ell \big)^2 \cdot \theta_\ell \theta_i 
	\les \| \gam \|_2^{12}. 
\end{align*}

Note that $X_6$ is a nonstochastic term. Mimicking \cite[Supplement, pg. 39]{JinKeLuo21}, we have by \eqref{eqn:beta_bds}, 
\begin{align*}
	|X_6 - \ti \lambda^4| 
	&\les \sum_{i,j,k,\ell(not \, dist.)} \beta_i^2 \theta_i^2 
	\beta_j^2 \theta_j^2 \beta_k^2 \theta_k^2
	\beta_\ell^2 \theta_\ell^2
	\les \sum_{i, j, k}   \beta_i^2 \theta_i^2 
	\beta_j^2 \theta_j^2 \beta_k^4 \theta_k^4
	\les \| \gam \|_2^6 \les |\ti \lambda|^3. 
\end{align*}
This completes the proof. \qed

\subsubsection{Proof of Lemma \ref{lem:Ua}}

The bounds on $Y_1$ carry over directly from \cite[Lemma G.5]{JinKeLuo21}. 

In \cite[Supplement, pg. 43]{JinKeLuo21} it is shown that $\E Y_2 = 0$. To study $\var(Y_2)$, we write $Y = Y_{2a} + Y_{2b} + Y_{2c}$ where as in \cite[Supplement, pg. 43]{JinKeLuo21}, we define  
\begin{eqnarray}  \label{proof-Y2-decompose}
	Y_2 &=&  -\frac{1}{\sqrt{v}} \sum_{\substack{i, j, k, \ell (dist)\\s\neq j}}\eta_i \widetilde{\Omega}_{jk}  W_{js} W_{k \ell}  W_{\ell i}\cr
	&& - \frac{1}{\sqrt{v}} \sum_{i, k, \ell (dist)} \Bigl(\sum_{j\notin\{i,k,\ell\}}\eta_j \widetilde{\Omega}_{jk}\Bigr) W^2_{i\ell} W_{k \ell}\cr
	&& -   \frac{1}{\sqrt{v}} \sum_{\substack{i, k, \ell (dist)\\s\notin\{ i,\ell\}}} \Bigl(\sum_{j\notin\{i,k,\ell\}}\eta_j \widetilde{\Omega}_{jk}\Bigr)  W_{is}W_{k \ell}  W_{\ell i}\cr
	&\equiv& Y_{2a} + Y_{2b} + Y_{2c}. 
\end{eqnarray}
There it is shown that 
\begin{align*}
	\var(Y_{2a}) \les \frac{1}{v} \sum_{ijk\ell s} 
	\big | \eta_i \ti \Omega_{jk} +
	\eta_i \ti \Omega_{sk} +
	\eta_k \ti \Omega_{ji} +
	\eta_k \ti \Omega_{si} \big |^2
	\cdot \var(W_{js} W_{k\ell} W_{\ell i}).
\end{align*}		
We have by \eqref{eqn:etai_bd}
\begin{align*}
	\big | \eta_i \ti \Omega_{jk} +
	\eta_i \ti \Omega_{sk} +
	\eta_k \ti \Omega_{ji} +
	\eta_k \ti \Omega_{si} \big |
	\les \theta_i \beta_j \theta_j \beta_k \theta_k 
	+ \theta_i \beta_s \theta_s \beta_k \theta_k 
	+ \theta_k \beta_j \theta_j \beta_i \theta_i 
	+ \theta_k \beta_s \theta_s \beta_i \theta_i.
\end{align*}
Hence by \eqref{eqn:assn1}, \eqref{eqn:assn2}, and \eqref{eqn:v0_v1_bd}, 
\begin{align*}
	\var(Y_{2a}) 
	&\les 
	\frac{1}{v} \sum_{ijk\ell s} \big(  \theta_i \beta_j \theta_j \beta_k \theta_k 
	+ \theta_i \beta_s \theta_s \beta_k \theta_k 
	+ \theta_k \beta_j \theta_j \beta_i \theta_i 
	+ \theta_k \beta_s \theta_s \beta_i \theta_i  \big)^2 \cdot \theta_j \theta_s \theta_k \theta_\ell^2 \theta_i 
	\\ &\les \frac{ \| \gam \|_2^4 \| \theta \|_2^4 }{ \| \theta \|_1 }
\end{align*}

Next, in \cite[Supplement, pg. 43]{JinKeLuo21}, it is shown that
\begin{align*}
	\var(Y_{2b}) &\les
	\frac{1}{v} \sum_{ \substack{ ik\ell (dist) \\ i' k' \ell' (dist) }}
	|\alpha_{ik\ell} \alpha_{i'k'\ell'}| \cdot \E[  W_{i\ell}^2 W_{k\ell} , W_{i'\ell'}^2 W_{k' \ell'} ]
\end{align*}
where $\alpha_{ik\ell} = \sum_{j \notin \{ i, k, \ell \} } \eta_j \ti \Omega_{jk}$. By \eqref{eqn:beta_bds}, 
\begin{align*}
	| \alpha_{ik\ell} | \les  \| \gam \|_2 \| \theta \|_2 \, \theta_k.
\end{align*}
By \eqref{eqn:assn1}, \eqref{eqn:v0_v1_bd}, the inequalities above, and the casework in \cite[Supplement, pg.44]{JinKeLuo21} on $E[W_{i\ell}^2 W_{k\ell} , W_{i'\ell'}^2 W_{k' \ell'}]$, 
\begin{align*}
	\var(Y_{2b})
	&\les \frac{1}{v} \sum_{ \substack{ ik\ell (dist) \\ i' k' \ell' (dist) }} 
	\| \gam \|_2^2 \| \theta \|_2^2  \theta_k  \theta_{k'}
	\E[ W_{i\ell}^2 W_{k\ell} , W_{i'\ell'}^2 W_{k' \ell'} ] 
	\\&\les \frac{\| \gam \|_2^2 \| \theta \|_2^2 }{v} \big(  \sum_{ik\ell } \theta_i \theta_k^3 \theta_\ell^2 + \sum_{i k \ell i'} \theta_i \theta_k^3 \theta_\ell^3 \theta_{i'} 
	+ \sum_{ik\ell} \theta_i^2 \theta_k^2 \theta_\ell^2 \big)
	\\&\les \| \gam \|_2^2 \| \theta \|_2^6.
\end{align*}

Next, in \cite[Supplement, pg.44]{JinKeLuo21} it is shown that
\begin{align*}
	\var(Y_{2c}) \les \frac{1}{v} \sum_{\substack{ik\ell (dist) \\ s \notin \{ i, \ell \}} } \beta^2_{ik\ell} \var(W_{is} W_{k\ell} W_{\ell i} )
\end{align*}
where $\alpha_{ik\ell}$ is defined the same as with $Y_{2b}$. Thus
\begin{align*}
	\var(Y_{2c}) \les \frac{1}{v} \sum_{\substack{ik\ell (dist) \\ s \notin \{ i, \ell \}} } \|  \gam \|_2^2 \| \theta \|_2^2 \theta_k^2 \cdot \theta_k \theta_\ell^2 \theta_s
	\les  \frac{\| \gam \|_2^2 \| \theta \|_2^8}{\| \theta \|_1}.  
\end{align*}

Combining the results for $Y_{2a}, Y_{2b}, Y_{2c}$ gives the claim for $\var(Y_2)$. 

In \cite[Supplement, pg.45]{JinKeLuo21} it is shown that $\E Y_3 = 0$ and the decomposition 
\begin{align}\label{proof-Y3-decompose}
	Y_3 &= - \frac{2}{\sqrt{v}} \sum_{i, j, k, \ell (dist)}\eta_i \widetilde{\Omega}_{k\ell}  W^2_{jk}  W_{\ell i} - \frac{2}{\sqrt{v}} \sum_{\substack{i, j, k, \ell (dist)\\s\notin\{j,k\}}}\eta_i \widetilde{\Omega}_{k\ell}  W_{js} W_{jk}  W_{\ell i}\cr
	& \equiv Y_{3a} + Y_{3b},
\end{align}
is introduced. There it is shown that
\[
\mathrm{Var}(Y_{3a}) = \frac{4}{v} \sum_{\substack{i,j,k,\ell (dist)\\i',j',k',\ell' (dist)}} (\eta_i \widetilde{\Omega}_{k\ell} \eta_{i'} \widetilde{\Omega}_{k'\ell'})\cdot\mathbb{E}[W^2_{jk}  W_{\ell i}W^2_{j'k'}W_{\ell' i'}]. 
\]
Using \eqref{eqn:assn1}, \eqref{eqn:assn2} \eqref{eqn:beta_bds} and the casework in \cite[Supplement, pg.45]{JinKeLuo21}, 
\begin{align*}
	\var(Y_{3a})
	&\les \frac{1}{\| \theta \|_1^2} \bigg( \sum_{ijk\ell} 
	[ \beta_k^2 \beta_\ell^2 + \beta_{i} \beta_j \beta_k \beta_\ell  ]\theta_i^2 \theta_j^2 \theta_k^2 \theta_\ell^2 
	+ \sum_{ijk\ell j' k'}  \beta_k \beta_\ell^2 \beta_{k'} \theta_i^3 \theta_j \theta_k^2 \theta_\ell^3 \theta_{j'} \theta_{k'}^2  \bigg) 
	\\&\les \frac{  \| \gam \|_2^4 \| \theta \|_2^4   }{\| \theta \|_1^2}
	+  \| \gam \|_2^4 \| \theta \|_2^4 \les \| \gam \|_2^4 \| \theta \|_2^4
\end{align*}

Similar to the study of $Y_{2a}$ we have
\begin{align*}
	\var(Y_{3b})
	&\les \frac{1}{v} \sum_{ijk\ell s}
	\big( \theta_i \beta_k \theta_k \beta_\ell \theta_\ell
	+ \theta_\ell \beta_k \theta_k \beta_i \theta_i
	+ \theta_i \beta_s \theta_s \beta_\ell \theta_\ell
	+ \theta_\ell \beta_s \theta_s \beta_i \theta_i
	\big)^2
	\cdot \var(W_{sj} W_{jk} W_{\ell i})
	\\&\les \frac{1}{v} \sum_{ijk\ell s}
	\big( \theta_i \beta_k \theta_k \beta_\ell \theta_\ell
	+ \theta_\ell \beta_k \theta_k \beta_i \theta_i
	+ \theta_i \beta_s \theta_s \beta_\ell \theta_\ell
	+ \theta_\ell \beta_s \theta_s \beta_i \theta_i
	\big)^2
	\cdot \theta_s \theta_j^2 \theta_k \theta_\ell \theta_i 
	\\&\les \frac{ \| \gam \|_2^4 \|\theta\|_2^4}{ \| \theta \|_1}.
\end{align*}

Combining the bounds on $\var(Y_{3a})$ and $\var(Y_{3b})$ yields the desired bound on $\var(Y_{3})$. 

Following \cite[Supplement, pg.46]{JinKeLuo21} we obtain the decomposition
\begin{align*}
	Y_4 
	&= -\frac{1}{\sqrt{v}}\sum_{\substack{i,j,\ell (dist)\\s\neq j}}\Bigl(\sum_{k\notin\{i,j,\ell\}} \eta_i\widetilde{\Omega}_{jk}\widetilde{\Omega}_{k\ell}\Bigr) W_{js} W_{\ell i} - \frac{1}{\sqrt{v}} \sum_{\substack{i,\ell (dist)\\s\neq i}} \Bigl(\sum_{j,k\notin\{ i,\ell\}}\eta_j \widetilde{\Omega}_{jk}\widetilde{\Omega}_{k\ell}\Bigr) W_{is}W_{\ell i}\cr
	&\equiv Y_{4a} + Y_{4b}. 
\end{align*}
First we study $Y_{4a}$, which is shown in \cite{JinKeLuo21} to have zero mean and satisfy the following:
\begin{align*}
	\var(Y_{4a}) &\les \frac{1}{v} \sum_{\substack{ij\ell (dist) \\ s \neq j} } 
	\alpha_{ij\ell}^2 \var(W_{js} W_{\ell i})
\end{align*}
where $\alpha_{ij\ell} = \sum_{k \notin \{i, j, \ell \}} \eta_i \ti \Omega_{jk} 
\ti \Omega_{k\ell}$. 
Simlar to previous arguments, we have
\begin{align*}
	\var(Y_{4a}) &	\les \frac{1}{\| \theta \|_1^2} 
	\sum_{ij\ell s} \theta_i^2 (\beta_j \theta_j)^2 (\beta_\ell \theta_\ell)^2 \| \gam \|_2^4 \cdot \theta_i \theta_j \theta_\ell \theta_s 
	\\&\les \frac{ \| \gam \|_2^4 \|\theta \|_2^2}{ \| \theta \|_1 }. 
\end{align*}

Next we study $Y_{4b}$ using the decomposition 
\[
Y_{4b} = -\frac{1}{\sqrt{v}} \sum_{i,\ell (dist)} \beta_{i\ell} W^2_{\ell i}- \frac{1}{\sqrt{v}} \sum_{\substack{i,\ell (dist)\\s\notin\{ i,\ell\}}} \beta_{i\ell} W_{is}W_{\ell i}\equiv \widetilde{Y}_{4b} + Y_{4b}^*. 
\]
from \cite[Supplement,pg.47]{JinKeLuo21}. There it is shown that only $\E \ti Y_{4b}$ is nonzero and
\begin{align*}
	| \E \ti Y_{4b} | \les \frac{1}{\| \theta \|_1} \sum_{i, \ell} |\alpha_{i\ell}| \theta_i \theta_\ell.
\end{align*}
where $\alpha_{i,\ell} = \sum_{j,k\notin\{ i,\ell\}}\eta_j \widetilde{\Omega}_{jk}\widetilde{\Omega}_{k\ell}$. In our case, we derive from \eqref{eqn:beta_bds},
\begin{align*}
	|\alpha_{i\ell}| \les \beta_\ell \theta_\ell \| \gam \|_2^3 \| \theta \|_2. 
\end{align*}
Using similar arguments from before,
\begin{align*}
	| \E \ti Y_{4b} | \les \frac{1}{\| \theta \|_1}  \sum_{i\ell} 
	\beta_\ell \theta_\ell \| \gam \|_2^3 \| \theta \|_2 \cdot \theta_i \theta_\ell
	\les \| \gam \|_2^4 \| \theta \|_2^2. 
\end{align*}

Now we study $\var(Y_{4b})$. Using the bound above on $|\alpha_{i\ell}|$ and direct calculations, 
\begin{align*}
	\mathrm{Var}(\widetilde{Y}_{4b}) &= \frac{2}{v} \sum_{i,\ell (dist)} \alpha^2_{i\ell}\cdot\mathrm{Var}( W^2_{i\ell}) 
	\les \frac{1}{\|\theta\|^2_1}\sum_{i,\ell} \beta_\ell^2 \theta_\ell^2 \| \gam \|_2^6 \| \theta \|_2^2 \cdot \theta_i\theta_\ell 
	\les \frac{\| \gam \|_2^8 \| \theta \|_2^2}{\|\theta\|_1},\cr
	\mathrm{Var}(Y^*_{4b}) &\leq \frac{1}{v} \sum_{\substack{i,\ell (dist)\\s\notin\{ i,\ell\}}} \alpha^2_{i\ell} \cdot \mathrm{Var}( W_{is}W_{\ell i}) \leq \frac{1}{\|\theta\|_1^2}\sum_{i,\ell,s} \beta_\ell^2 \theta_\ell^2 \| \gam \|_2^6 \| \theta \|_2^2 \cdot\theta_i^2\theta_\ell\theta_s \leq \frac{\| \gam \|_2^8 \| \theta \|_2^4 }{\|\theta\|_1}. 
\end{align*}
Combining the results above yields the required bounds on $\E Y_{4b}$ and $\var(Y_{4b})$. 

In \cite[Supplement, pg.48]{JinKeLuo21} it is shown that $\E Y_5 = 0$ and 
\begin{align*}
	\mathrm{Var}(Y_5)&\les \frac{1}{v}\sum_{\substack{j,k,\ell (dist)\\s\neq j}}\alpha^2_{jk\ell}\cdot\mathrm{Var}( W_{js}W_{k\ell})
\end{align*}
where
\begin{align*}
	\alpha_{jk\ell} \equiv \sum_{i\notin\{j,k,\ell\}} \eta_i\widetilde{\Omega}_{jk}\widetilde{\Omega}_{\ell i}.
\end{align*}
We have using \eqref{eqn:tiOm_bd}, \eqref{eqn:beta_bds} and the triangle inequality,
\begin{align*}
	|\alpha_{jk\ell}| \les \| \theta \|_2^2 (\beta_j \theta_j) (\beta_k \theta_k) (\beta_\ell \theta_\ell ).  
\end{align*}
Thus, by similar arguments to before,
\begin{align*}
	\var(Y_5) \les \frac{1}{\| \theta \|_1^2 } 
	\sum_{jk\ell} \big( \| \theta \|_2^4 (\beta_j \theta_j)^2 (\beta_k \theta_k)^2 (\beta_\ell \theta_\ell )^2 \big) \theta_j \theta_s \theta_k \theta_\ell 
	\les \frac{ \| \theta \|_2^4 \| \gam \|_2^6 }{\| \theta \|_1 }. 
\end{align*}

Next, in \cite[Supplement, pg.49]{JinKeLuo21} it is shown that $\E Y_6 = 0$ and 
\begin{align*}
	\mathrm{Var}(Y_6)& = \frac{8}{v}\sum_{j,s (dist)}\Bigl(\sum_{i,k,\ell (dist) \notin\{j\}} \eta_i\widetilde{\Omega}_{jk}\widetilde{\Omega}_{k\ell}\widetilde{\Omega}_{\ell i}\Bigr)^2\cdot \mathrm{Var}(W_{js}). 
\end{align*}
We have using \eqref{eqn:tiOm_bd}, \eqref{eqn:beta_bds} and the triangle inequality,
\begin{align*}
	\big| \sum_{i,k,\ell (dist) \notin\{j\}} \eta_i\widetilde{\Omega}_{jk}\widetilde{\Omega}_{k\ell}\widetilde{\Omega}_{\ell i} \big|
	\les \beta_j \theta_j \| \gam \|_2^5 \| \theta \|_2. 
\end{align*} 
Thus
\begin{align*}
	\var(Y_6) \les \frac{1}{\| \theta \|_1^2 } \sum_{j,s} 
	\big( \beta_j^2 \theta_j^2 \| \gam \|_2^{10} \| \theta \|_2^2  \big) \theta_j \theta_s 
	\les \frac{ \| \gam \|_2^{12} \| \theta \|_2^2  }{\| \theta \|_1}. 
\end{align*}

This completes the proof. \qed

\subsubsection{Proof of Lemma \ref{lem:Ub}}

The bounds on $Z_1$ and $Z_2$ carry over directly from \cite[Lemma G.7]{JinKeLuo21} since neither term depends on $\ti \Omega$. 

We consider $Z_3$. In \cite[Supplement, pg.61]{JinKeLuo21}, the decomposition 
\begin{align} \label{proof-Z3-decompose}
	Z_3 =& \sum_{\substack{i, j, k, \ell\\ (dist)}}\eta_i(\eta_j-\teta_j)\eta_j(\eta_k-\teta_k)   \widetilde{\Omega}_{k \ell}  W_{\ell i} +
	\sum_{\substack{i, j, k, \ell\\ (dist)}}\eta_i(\eta_j-\teta_j)^2\eta_k   \widetilde{\Omega}_{k \ell}  W_{\ell i} \cr
	&\hspace{-30pt}+\sum_{\substack{i, j, k, \ell\\ (dist)}}(\eta_i-\teta_i)\eta^2_j(\eta_k-\teta_k)   \widetilde{\Omega}_{k \ell}  W_{\ell i} +
	\sum_{\substack{i, j, k, \ell\\ (dist)}}(\eta_i-\teta_i)\eta_j (\eta_j-\teta_j)\eta_k   \widetilde{\Omega}_{k \ell}  W_{\ell i}\cr
	&\equiv Z_{3a}+Z_{3b} + Z_{3c}+Z_{3d}. 
\end{align}
is introduced. We study each term separately.

In \cite[Supplement, pg.61]{JinKeLuo21} it is shown that $\E Z_{3a} = 0$ and
the decomposition 
\begin{align*}
	Z_{3a} &=\frac{1}{v}\sum_{i, j, k, \ell (dist)} \alpha_{ijk\ell}  W^2_{jk}W_{\ell i} +   \frac{1}{v}\sum_{\substack{ i, j, k, \ell (dist)\\s\neq j,t\neq k, (s,t)\neq (k,j)}} \alpha_{ijk\ell} W_{js}W_{kt}W_{\ell i}\cr
	&\equiv \widetilde{Z}_{3a} + Z^*_{3a}. 
\end{align*}
is introduced, where $\alpha_{ijk\ell} \equiv \eta_i \eta_j \ti \Omega_{k\ell}$. Then
\begin{align*}
	\var(\ti Z_{3a})
	\les \sum_{ \substack{ ijk\ell (dist) \\ i'j'k'\ell' (dist)} }
	|\alpha_{ijk\ell}| |\alpha_{i'k'j'\ell'} | \cdot
	|\cov( W^2_{jk} W_{\ell i}, W_{j'k'}^2 W_{\ell'i'} )|. 
\end{align*}
Using the casework in \cite[Supplement, pg.62]{JinKeLuo21}, \eqref{eqn:assn1}, \eqref{eqn:assn2}, and \eqref{eqn:beta_bds}, we obtain
\begin{align*}
	\var(\ti Z_{3a}) &\les \frac{1}{v^2}
	\big( \sum_{ijk\ell} [\beta_k^2 \beta_\ell^2 + \beta_k \beta_\ell \beta_i \beta_j] \theta_i^3 \theta_j^3 \theta_k^3 \theta_\ell^3 
	+ \sum_{ijk\ell j'k'}  \beta_k \beta_\ell^2 \beta_{k'}   \theta_i^3 \theta_j^2 \theta_k^2 \theta_\ell^3 \theta_{j'}^2 \theta_{k'}^2 
	\big)
	\\ &\les \frac{1}{\| \theta \|_1^4} \big( \| \gam \|_2^4 \| \theta \|_2^2 +
	\| \gam \|_2^4 \| \theta \|_2^4 + \| \gam \|_2^4\| \theta \|_2^8  \big)
	\les \frac{   \| \gam \|_2^4\| \theta \|_2^8    }{ \| \theta \|_1^4} . 
\end{align*}
Similarly,
\begin{align*}
	\var(Z_{3a}^*) 
	&\les \frac{1}{v^2} \bigg( \sum_{ijk\ell s t}  \beta_k^2 \beta_\ell^2 \theta_i^3 \theta_j^3 \theta_k^3 \theta_\ell^3 \theta_s \theta_t
	+ \sum_{ijk\ell s t} [ \beta_k^2   \beta_\ell \beta_j + \beta_k \beta_\ell^2 \beta_j  ]\theta_i^2 \theta_j^3 \theta_k^3 \theta_\ell^3 \theta_s^2 \theta_t \bigg) 
	\\& \les \frac{1}{ \| \theta \|_1^4} \big( \|\gam \|_2^4 \| \theta \|_2^4 \| \theta \|_1^2 + \| \gam \|_2^4 \| \theta \|_2^6 \| \theta \|_1\big) 
	\les \frac{  \| \gam \|_2^4 \| \theta \|_2^4  }{\| \theta \|_1^2}. 
\end{align*}
It follows that
\begin{align*}
	\var(Z_{3a}) \les \| \gam \|_2^4. 
\end{align*}

Next, in \cite[Supplement, pg.63]{JinKeLuo21}, it is shown that $\E Z_{3b}] = 0$ and the decomposition 
\[
Z_{3b} = \frac{1}{v}\sum_{\substack{i,j,\ell (dist)\\s\neq j}} \beta_{ij\ell}   W^2_{js}W_{\ell i} + \frac{1}{v}\sum_{\substack{i,j,\ell (dist)\\s, t (dist) \notin \{j\}}} \beta_{ij\ell}   W_{js}W_{jt}W_{\ell i} \equiv \widetilde{Z}_{3b} + Z^*_{3b}. 
\]
is given. Using  \cite[Supplement, pg.63]{JinKeLuo21} we have
\begin{align*}
	\var( \ti Z_{3b} ) &\les 
	\sum_{ \substack{i, j, \ell, s, t \\ i', j', \ell', s', t' }} 
	|\alpha_{ij\ell}| 	|\alpha_{i'j'\ell'}| 
	| \cov( W_{js}^2 W_{\ell_i}, W_{j's}'^2 W_{\ell' i'} )|.
\end{align*}
where
\begin{align*}
	\alpha_{ij\ell} =  \sum_{k\notin\{i,j,\ell\}} \eta_i\eta_k\widetilde{\Omega}_{k\ell} .
\end{align*}
Using \eqref{eqn:beta_bds}, \eqref{eqn:v0_v1_bd}, and similar arguments to before,
\begin{align*}
	| 	\alpha_{ij\ell}  | \les 
	\theta_i (\beta_\ell \theta_\ell) \| \theta \|_2^2. 
\end{align*}
By the casework in \cite[Supplement, pg.63]{JinKeLuo21}, \eqref{eqn:assn1}, and \eqref{eqn:assn2}, 
\begin{align*}
	\var( \ti Z_{3b} ) &\les 
	\frac{1}{v^2} \bigg( \sum_{ij\ell s} \beta_\ell^2 \|\theta\|_2^4 \theta_i^3 \theta_j \theta_\ell^3 \theta_s 
	+ \sum_{ij\ell s j' s'} \beta_\ell^2 \|\theta\|_2^4 \theta_i^3 \theta_j \theta_\ell^3 \theta_s \theta_{j'} \theta_{s'} 
	+ \sum_{ij\ell s} \beta_\ell \beta_j \|\theta\|_2^4 \theta_i^2 \theta_j^2 \theta_\ell^2 \theta_s^2 
	\bigg) 
	\\ &\les \frac{1}{\| \theta \|_1^4 } 
	\big( \| \gam \|_2^2 \| \theta \|_2^6 \| \theta \|_1 
	+ \| \gam \|_2^2 \| \theta \|_2^6 \| \theta \|_1^3
	+ \| \gam \|_2^2 \| \theta \|_2^8 \big) \les \frac{  \| \gam \|_2^2 \| \theta \|_2^6  }{\| \theta \|_1}. 
\end{align*}
By a similar argument,
\begin{align*}
	\var(Z_{3b}^*) \les \frac{ \| \gam \|_2^2 \| \theta \|_2^8}{\| \theta \|_1}. 
\end{align*}
Hence by \eqref{eqn:assn2},
\begin{align*}
	\var(Z_{3b}) \les \frac{ \| \gam \|_2^2 \| \theta \|_2^8}{\| \theta \|_1}
	\les \| \gam \|_2^2 \| \theta \|_2^6. 
\end{align*}

For $Z_{3c}$, in \cite[Supplement, pg.64]{JinKeLuo21}, it is shown that $\E Z_{3c} = 0$ and the decomposition 
\[
Z_{3c} = \frac{1}{v}\sum_{\substack{i,k,\ell (dist)\\t\neq k}}\alpha_{ik\ell}W^2_{i\ell}W_{kt} + \frac{1}{v}\sum_{\substack{i,k,\ell (dist)\\s\notin\{ i,\ell\}, t\neq k}}\alpha_{ik\ell}W_{is}W_{kt}W_{\ell i}\equiv \widetilde{Z}_{3c} + Z^*_{3c}. 
\]
is given. We have
\begin{align*}
	|\alpha_{ik\ell}| = |\sum_{j\notin\{i,k,\ell\}} \eta_j^2\widetilde{\Omega}_{k\ell}|  \les (\beta_k \theta_k) (\beta_\ell \theta_\ell) \| \theta \|_2^2. 
\end{align*}
By the casework in \cite[Supplement, pg.65]{JinKeLuo21}
\begin{align*}
	\var(\tilde Z_{3c})
	&\les \sum_{ \substack{ik\ell (dist) \\ s \notin \{i, \ell \}, t \neq k}  }
	\sum_{ \substack{i'k'\ell' (dist) \\ s' \notin \{i', \ell' \}, t' \neq k'}  } |\alpha_{ik\ell} \alpha_{i'k'\ell'}|
	| \E W_{i\ell}^2 W_{kt} W_{i'\ell'}^2 W_{k' t'} | 
	\\&\les \frac{\| \theta \|_2^4}{\| \theta \|_1^4}
	\sum_{ik\ell t} 
	\bigg[ \beta_k^2 \beta_\ell^2 \theta_i \theta_k^3 \theta_\ell^3 \theta_t 
	+ \beta_k^2 \beta_\ell \beta_i \theta_i^2 \theta_k^3 \theta_\ell^2 \theta_t 
	+ \beta_k \beta_\ell^2 \beta_t \theta_i^1 \theta_k^2 \theta_\ell^3 \theta_t^2 
	\\&\quad  + \beta_k \beta_\ell \beta_t \beta_i \theta_i^2 \theta_k^2 \theta_\ell^2 \theta_t^2
	+ \beta_k^2 \beta_\ell \beta_i \theta_i^2 \theta_k^3 \theta_\ell^2 \theta_t^1
	+ \beta_k \beta_\ell^2  \beta_t \theta_i \theta_k^2 \theta_\ell^3 \theta_t^2
	+ \beta_k^2 \beta_\ell^2  \theta_i \theta_k^3 \theta_\ell^3 \theta_t \bigg]
	\\ &\quad +	\sum_{ik\ell t i' \ell'} \bigg[\beta_k^2 \beta_\ell \beta_{\ell'} 
	\theta_i \theta_k^3 \theta_\ell^2 \theta_t \theta_{i'} \theta_{\ell'}^2 
	+ \beta_k \beta_\ell \beta_{\ell'} \beta_t 
	\theta_i \theta_k^2 \theta_\ell^2 \theta_t^2 \theta_{i'} \theta_{\ell'}^2 
	\bigg]
	\\ 
\end{align*}
We have by \eqref{eqn:assn2} and \eqref{eqn:beta_bds} that
\begin{align*}
	\sum_{ik\ell t} 
	\bigg[ \beta_k^2 \beta_\ell^2 \theta_i \theta_k^3 \theta_\ell^3 \theta_t 
	+ &\beta_k^2 \beta_\ell \beta_i \theta_i^2 \theta_k^3 \theta_\ell^2 \theta_t 
	+ \beta_k \beta_\ell^2 \beta_t \theta_i^1 \theta_k^2 \theta_\ell^3 \theta_t^2 + \beta_k \beta_\ell \beta_t \beta_i \theta_i^2  \theta_k^2 \theta_\ell^2 \theta_t^2
	\\&\quad  
	+ \beta_k^2 \beta_\ell \beta_i \theta_i^2 \theta_k^3 \theta_\ell^2 \theta_t^1
	+ \beta_k \beta_\ell^2  \beta_t \theta_i \theta_k^2 \theta_\ell^3 \theta_t^2
	+ \beta_k^2 \beta_\ell^2  \theta_i \theta_k^3 \theta_\ell^3 \theta_t \bigg]
	\\& \les \| \gam \|_2^4 \| \theta \|_1^2 +  \| \gam \|_2^2 \| \theta \|_2^4 \| \theta \|_1
	+  \| \gam \|_2^4 \| \theta \|_2^4  +  \| \gam \|_2^4 \| \theta \|_1^2 
\end{align*}
and
\begin{align*}
	\sum_{ik\ell t i' \ell'} \bigg[\beta_k^2 \beta_\ell \beta_{\ell'} 
	\theta_i \theta_k^3 \theta_\ell^2 \theta_t \theta_{i'} \theta_{\ell'}^2 
	+ \beta_k \beta_\ell \beta_{\ell'} \beta_t 
	\theta_i \theta_k^2 \theta_\ell^2 \theta_t^2 \theta_{i'} \theta_{\ell'}^2 
	\bigg] \les \| \gam \|_2^2 \| \theta \|_2^4 \| \theta \|_1^3 +
	\| \gam \|_2^4 \| \theta \|_2^4 \| \theta \|_1^2
\end{align*}
Thus
\begin{align*}
	\var(\ti Z_{3c} )	&\les  \frac{\| \theta \|_2^4}{\| \theta \|_1^4} \big( 
	\| \gam \|_2^4 \| \theta \|_1^2 + \| \gam \|_2^2 \| \theta \|_2^4 \| \theta \|_1^3 
	+\| \gam \|_2^4 \| \theta \|_2^4 \| \theta \|_1^2 \big)  \les \frac{ \| \gam \|_2^4 \| \theta \|_2^8 }{ \| \theta \|_1 }
\end{align*}

To study $Z_{3c}^*$, in \cite[Supplement, pg.65]{JinKeLuo21} the decomposition
\[
Z_{3c}^*=  \frac{1}{v}\sum_{i,k,\ell (dist)}\alpha_{ik\ell}W_{ik}^2W_{\ell i}+ \frac{1}{v}\sum_{\substack{i,k,\ell (dist)\\s\notin\{ i,\ell\}, t\neq k, (s,t)\neq (k,i)}}\alpha_{ik\ell}W_{is}W_{kt}W_{\ell i}
\equiv Z_{3c,1}^* + Z_{3c,2}^*
\]
is used, where recall $	\alpha_{ik\ell} =  \sum_{j\notin\{i,k,\ell\}} \eta_j^2\widetilde{\Omega}_{k\ell}$.
Using a similar argument as before, we have
\begin{align*}
	\var( Z_{3c,1}^*)
	&\les \frac{\| \theta \|_2^4}{ \| \theta \|_1^4}
	\bigg( 
	\sum_{ik\ell} \beta_k^2 \beta_\ell^2 \theta_i^2 \theta_k^3 \theta_\ell^3 
	+ \sum_{ik\ell k'} [\beta_k \beta_{k'} \beta_\ell^2 +
	\beta_k \beta_{k'} \beta_i \beta_\ell ] \theta_i^3 \theta_k^2 \theta_\ell^3  \theta_{k'}^2 \bigg)
	\\ &\les  \frac{\| \theta \|_2^4}{ \| \theta \|_1^4} \big( \| \gam \|_2^4 \| \theta \|_2^2 + \| \theta \|_2^4 \| \gam \|_2^2 \| \theta \|_3^3 
	+ \| \gam \|_2^4 \| \theta \|_2^4 \big) 
	\les \frac{\| \gam \|_2^4 \| \theta \|_2^{10} }{\| \theta \|_1^4}.
\end{align*}
We omit the argument for $Z_{3c,2}^*$ as it is similar and simply state the bound:
\begin{align*}
	\var(Z_{3c,2}^*) \les \frac{ \| \gam \|_2^4 \| \theta \|_2^6}{ \| \theta \|_1^2 }. 
\end{align*}
Combining the results for $\ti Z_{3c}$ and $Z_{3c}^*$, we have
\begin{align*}
	\var(Z_{3c}) \les \frac{ \| \gam \|_2^4 \| \theta \|_2^8}{\| \theta \|_1} \les \| \gam \|_2^4 \| \theta \|_2^6.
\end{align*}

%

Next we study $Z_{3d}$, which is defined as
\[
Z_{3d} =
\sum_{\substack{i,j,k,\ell\\ (dist)}}(\eta_k \eta_j \ti \Omega_{j\ell}) (\eta_i-\teta_i)(\eta_k-\teta_k)W_{\ell i} = 
\sum_{\substack{i,k,\ell (dist) \\ s \neq i, t \neq k}} \alpha_{ik\ell} W_{is} W_{kt} W_{\ell i}
\]
where $\alpha_{ik\ell} = \sum_{j \notin \{i, k, \ell\} } \eta_k \eta_j \ti \Omega_{j\ell}$. We see that $\E Z_{3d} = 0$. To study the variance, we use a similar decomposition to that of $Z_{3c}$. Write
\begin{align*}
	Z_{3d} = \frac{1}{v}\sum_{\substack{i,k,\ell (dist)\\t\neq k}}\alpha_{ik\ell}W^2_{i\ell}W_{kt} + \frac{1}{v}\sum_{\substack{i,k,\ell (dist)\\s\notin\{ i,\ell\}, t\neq k}}\alpha_{ik\ell}W_{is}W_{kt}W_{\ell i}\equiv \widetilde{Z}_{3d} + Z^*_{3d}. 
\end{align*}
Mimicking the arguments for $\widetilde{Z}_{3c}$ and $Z^*_{3c}$ we obtain
\begin{align*}
	\var(\ti Z_{3d}) &\les  \frac{ \| \gam \|_2^4 \| \theta \|_2^6}{\| \theta \|_1},
\end{align*}
and
\begin{align*}
	\var(Z^*_{3d}) \les \frac{ \| \gam \|_2^4 \| \theta \|_2^{10}}{\| \theta \|_1^4}.
\end{align*}
Hence
\begin{align*}
	\var(Z_{3d}) \les \frac{ \| \gam \|_2^4 \| \theta \|_2^6}{\| \theta \|_1}.
\end{align*}
Combining the results for $Z_{3a}, \ldots, Z_{3d}$, we have
\begin{align*}
	\E Z_3 = 0, \quad 
	\var(Z_{3})&\les \| \gam \|_2^4 \| \theta \|_2^6. 
\end{align*}

We proceed to study $Z_{4}$. In \cite[Supplement,pg.67]{JinKeLuo21}  the following decomposition is given: 
\begin{align} \label{proof-Z4-decompose}
	Z_4 &= 2\sum_{i, j, k, \ell(dist)}  \eta_i(\eta_j-\teta_j)  \widetilde{\Omega}_{jk} \eta_k (\eta_\ell - \teta_\ell )W_{\ell i}\cr
	&+ \sum_{i, j, k, \ell(dist)} \eta_i(\eta_j-\teta_j)  \widetilde{\Omega}_{jk}(\eta_k - \teta_k )\eta_\ell W_{\ell i}\cr
	& +  \sum_{i, j, k, \ell(dist)} (\eta_i-\teta_i)\eta_j  \widetilde{\Omega}_{jk} \eta_k (\eta_\ell - \teta_\ell )W_{\ell i}\cr
	&\equiv Z_{4a}+Z_{4b}+Z_{4c}. 
\end{align}
There it is shown that $\E Z_{4a} = 0$. To study $\var(Z_{4a})$, we note that $Z_{4a}$ and $Z_{3c}$ have similar structure. In particular we have the decomposition
\begin{align*}
	Z_{4a} = \frac{1}{v}\sum_{\substack{i,k,\ell (dist)\\t\neq k}}\alpha_{ik\ell}W^2_{i\ell}W_{kt} + \frac{1}{v}\sum_{\substack{i,k,\ell (dist)\\s\notin\{ i,\ell\}, t\neq k}}\alpha_{ik\ell}W_{is}W_{kt}W_{\ell i}\equiv \widetilde{Z}_{4a} + Z^*_{4a}. 
\end{align*}
where $\alpha_{ik\ell} = \sum_{j \notin \{i, k, \ell\}} \eta_j \eta_\ell \ti \Omega_{k\ell} $. 
Mimicking the argument for $\ti Z_{3c}$ we have
\begin{align*}
	\var(\ti Z_{4a}) &\les \frac{\| \gam \|_2^2 \| \theta \|_2^2}{\| \theta \|_1^4} \bigg(
	\sum_{ik\ell t} \big[ \beta_k^2 ( \theta_i \theta_k^2 \theta_\ell^2 \theta_t + 
	\theta_i^2 \theta_k^2 \theta_\ell^2 \theta_t) 
	+ \beta_k \beta_t \theta_i \theta_k^2 \theta_\ell^2 \theta_t^2 + \beta_k \beta_t \theta_i^2 \theta_k^2 \theta_\ell^2 \theta_t^2
	\\&\quad + \beta_k^2 \theta_i^2 \theta_k^2 \theta_\ell^2 \theta_t  
	+ \beta_k \beta_t \theta_i \theta_k^2 \theta_\ell^2 \theta_t^2 \big] +
	\sum_{ik\ell t i'\ell'} \big[ \beta_k^2 \theta_i \theta_k^2 \theta_\ell^2 \theta_t \theta_{i'} \theta_{\ell'}^2
	+ \beta_k \beta_t \theta_i \theta_k^2 \theta_\ell^2 \theta_t^2 \theta_{i'} \theta_{\ell'}^2  \big]
	\bigg)
	\\ &\les 
	\frac{\| \gam \|_2^2 \| \theta \|_2^2}{\| \theta \|_1^4} 
	\big(   \| \gam \|_2^2 \| \theta \|_2^2 \| \theta \|_1 + 
	\| \gam \|_2^2 \| \theta \|_2^4 \| \theta \|_1
	+ 	\| \gam \|_2^2 \| \theta \|_2^4 \| \theta \|_1^3
	+ 
	\\&\quad 	\| \gam \|_2^2 \| \theta \|_2^4 \| \theta \|_1^2
	\big) \les \frac{ \| \gam \|_2^4 \| \theta \|_2^6 }{\| \theta \|_1}. 
\end{align*}
For $\ti Z_{4a}^*$ we adapt the decomposition used for $\ti Z_{4c}^*$:
\[
Z_{4a}^*=  \frac{1}{v}\sum_{i,k,\ell (dist)}\alpha_{ik\ell}W_{ik}^2W_{\ell i}+ \frac{1}{v}\sum_{\substack{i,k,\ell (dist)\\s\notin\{ i,\ell\}, t\neq k, (s,t)\neq (k,i)}}\alpha_{ik\ell}W_{is}W_{kt}W_{\ell i}
=: Z_{4a,1}^* + Z_{4a,2}^*
\]
Mimicking the argument for $Z_{3c,1}^*$ and $Z_{3c,2}^*$, we have
\begin{align*}
	\var(Z_{4a,1}^*) &\les \frac{ \| \gam \|_2^2 \| \theta \|_2^2}{\| \theta \|_1^4} \big( \sum_{ik\ell} \beta_k^2 \theta_i^2 \theta_k^2
	\theta_\ell^2 + \sum_{ik\ell k'} \beta_k \beta_{k'}   \theta_i^2 \theta_k^2
	\theta_\ell^2 \theta_{k'}^2 \big) \les
	\frac{ \| \gam \|_2^4 \| \theta \|_2^8}{\| \theta \|_1^4},
\end{align*}
and
\begin{align*}
	\var(Z_{4a,2}^*) &\les \frac{ \| \gam \|_2^2 \| \theta \|_2^2}{\| \theta \|_1^4} \sum_{ik\ell s t} \big[
	\beta_k^2 \theta_i^2 \theta_k^2
	\theta_\ell^2 \theta_s \theta_t 
	+ \beta_k \beta_t \theta_i^2 \theta_k^2
	\theta_\ell^2 \theta_s \theta_t^2
	+ \beta_k \beta_s \theta_i^2 \theta_k^2
	\theta_\ell^2 \theta_s^2 \theta_t^2  
	\big]
	\\&\les \frac{ \| \theta \|_2^4 \| \theta \|_2^6}{\| \theta \|_1^2}. 
\end{align*}
It follows that
\begin{align*}
	\var(Z_{4a}) \les \frac{ \| \gam \|_2^4 \| \theta \|_2^6}{\| \theta \|_1}. 
\end{align*}

Next we study 
\begin{align*}
	Z_{4b} &= \sum_{\substack{i, j, k, \ell\\ (dist)}} \eta_i(\eta_j-\teta_j)  \widetilde{\Omega}_{jk}(\eta_k - \teta_k )\eta_\ell W_{\ell i}
	= \sum_{\substack{i, j, k, \ell\\ (dist)}}\alpha_{ijk\ell} (\eta_j-\teta_j)(\eta_k-\teta_k)W_{\ell i}
	\\ &= \frac{1}{v} \sum_{\substack{i, j, k, \ell (dist)\\ s \neq j, t \neq k }}
	\alpha_{i j k \ell} W_{js} W_{kt} W_{\ell i} 
\end{align*}
where $\alpha_{ijk\ell} = \eta_i\eta_\ell \widetilde{\Omega}_{jk}$. Mimicking the study of $Z_{3a}$, we have the decomposition
\begin{align*}
	Z_{4b} &=\frac{1}{v}\sum_{i, j, k, \ell (dist)} \alpha_{ijk\ell}  W^2_{jk}W_{\ell i} +   \frac{1}{v}\sum_{\substack{ i, j, k, \ell (dist)\\s\neq j,t\neq k, (s,t)\neq (k,j)}} \alpha_{ijk\ell} W_{js}W_{kt}W_{\ell i}\cr
	&\equiv \widetilde{Z}_{4b} + Z^*_{4b}. 
\end{align*}
Further we have, using \eqref{eqn:assn1}, \eqref{eqn:assn2}, \eqref{eqn:tiOm_bd}, and \eqref{eqn:beta_bds}, we have 
\begin{align*}
	\var(\ti Z_{4b}) &\les 
	\frac{1}{\| \theta \|_1^4} 
	\bigg( \sum_{ijk\ell} \big[ \, [\beta_j^2 \beta_k^2 + \beta_j \beta_k \beta_\ell \beta_i  ] \theta_i^3 \theta_j^3 \theta_k^3 \theta_\ell^3
	\big]
	+ \sum_{ijk\ell j' k'}
	\beta_j \beta_k \beta_{j'} \beta_{k'} 
	\theta_i^3 \theta_j^2 \theta_k^2 \theta_\ell^3 \theta_{j'}^2 \theta_{k'}^2 \bigg) 
	\\&\les \frac{1}{\| \theta \|_1^4}
	\big( \| \gam \|_2^4 \| \theta \|_2^4 + \| \gam \|_2^4 \| \theta \|_2^8 \big) \les \frac{ \| \gam \|_2^4 \| \theta \|_2^8 }{ \| \theta \|_1^4 }. 
\end{align*}
Similarly,
\begin{align*}
	\var(Z_{4b}^*) &\les \frac{1}{ \| \theta \|_1^4 }
	\bigg( \sum_{ijk\ell s t} \big[ \beta_j^2 \beta_k^2 
	\theta_i^2 \theta_j^2 \theta_k^2 \theta_\ell^2 \theta_s \theta_t 
	+  \beta_k^2 \beta_\ell \beta_j  
	\theta_i^2 \theta_j^3 \theta_k^3 \theta_\ell^3 \theta_s^2 \theta_t 
	+ \beta_j \beta_k^2 \beta_\ell 
	\theta_i^2 \theta_j^3 \theta_k^3 \theta_\ell^3 \theta_s^2 \theta_t \bigg) 
	\\&\les \frac{ \| \gam \|_2^4 \| \theta\|_2^4}{ \| \theta \|_1^2 }. 
\end{align*}
It follows that 
\begin{align*}
	\var(Z_{4b}) \les \frac{ \| \gam \|_2^4 \| \theta\|_2^4}{ \| \theta \|_1^2 }.
\end{align*}

We study $Z_{4c}$ using the decomposition 
\begin{align*}
	Z_{4c} &
	= \frac{1}{v}\sum_{i, \ell (dist)} \beta_{i\ell}  W^3_{\ell i}  + \frac{2}{v}\sum_{\substack{i, \ell (dist)\\s\notin\{i,\ell\}}} \beta_{i\ell}  W_{is}W^2_{\ell i} + \frac{1}{v}\sum_{\substack{i, \ell (dist)\\s\notin\{ i,\ell\}, t\notin\{\ell, i\}}} \beta_{i\ell}  W_{is}W_{\ell t}W_{\ell i}\cr
	&\equiv \widetilde{Z}_{4c} + Z^*_{4c} + Z^{\dag}_{4c}. 
\end{align*}
from \cite[Supplement, pg.68]{JinKeLuo21}. Only 
\[
\tilde Z_{4c} = \frac{1}{v}\sum_{i, \ell (dist)} \alpha_{i\ell}  W^3_{\ell i} 
\]
has nonzero mean, where $\alpha_{i\ell} = \sum_{j, k (dist)\notin\{i,\ell\}} \eta_j\eta_k\widetilde{\Omega}_{jk}$. By \eqref{eqn:tiOm_bd}
\begin{align*}
	|\alpha_{i\ell}| \les \| \gam \|_2^2 \| \theta \|_2^2.
\end{align*}
Hence
\begin{align*}
	| \E \ti Z_{4c} | \les \frac{1}{\| \theta \|_1^2} \sum_{i \ell } \| \gam \|_2^2 \| \theta \|_2^2 \theta_i \theta_\ell \les \| \gam \|_2^2 \| \theta \|_2^2. 
\end{align*}
Except for when $(i, \ell) = (\ell, i)$, the summands of $\ti Z_{4c}$ are uncorrelated. Thus
\begin{align*}
	\var(\ti Z_{4c}) \les \frac{1}{ \|\theta \|_1^4} \sum_{i \ell} 
	\| \gam \|_2^4 \| \theta \|_2^4 \theta_i \theta_\ell 
	\les \frac{ \| \gam \|_2^4 \| \theta \|_2^4}{\| \theta \|_1^2}.
\end{align*}
Applying the casework from \cite[Supplement, pg.68]{JinKeLuo21},
\begin{align*}
	\var(Z_{4c}^*) &\les \sum_{\substack{i,\ell (dist) \\ s \notin \{ i , \ell \} }} 
	\sum_{\substack{i',\ell' (dist) \\ s' \notin \{ i' , \ell' \} }} 
	|\alpha_{i\ell} \alpha_{i'\ell'} | \cov( W_{is} W_{\ell i}^2, W_{i' s'} W_{\ell' i'}^2 )| 
	\\&\les \frac{1}{ \|\theta \|_1^4} \big( \sum_{i \ell s}  \| \gam \|_2^4 \| \theta \|_2^4 \theta_i^2  \theta_\ell \theta_s
	+ \sum_{i\ell s \ell'} \| \gam \|_2^4 \| \theta \|_2^4 
	\| \theta \|_2^4 \theta_i^3  \theta_\ell \theta_s \theta_{\ell'} \big)
	\\&\les \frac{ \| \gam \|_2^4 \| \theta \|_2^4}{ \| \theta \|_1^4 } 
	\big( \| \theta \|_2^2 \| \theta \|_1^2 + \| \theta \|_2^2 \| \theta \|_1^3 \big)
	\les \frac{ \| \gam \|_2^4 \| \theta \|_2^6 }{ \| \theta \|_1 }. 
\end{align*}
Next, in \cite[Supplement, pg.69]{JinKeLuo21} it is shown that 
\begin{align*}
	\mathrm{Var}(Z^{\dag}_{4c}) &\les \frac{1}{v^2}\sum_{\substack{i, \ell (dist)\\s\notin\{ i,\ell\}, t\notin\{\ell, i\}}} \alpha^2_{i\ell}\cdot\mathrm{Var}( W_{is}W_{\ell t}W_{\ell i})
\end{align*}
Thus
\begin{align*}
	\mathrm{Var}(Z^{\dag}_{4c}) &\les
	\sum_{i \ell s } \| \gam \|_2^4 \| \theta \|_2^4 \theta_i^2  \theta_\ell^2 \theta_s \theta_t 
	\les \frac{ \| \gam \|_2^4 \| \theta \|_2^8 }{ \| \theta \|_1^2}. 
\end{align*}
Combining the results for $\ti Z_{4c}, Z_{4c}^*, Z_{4c}^\dag$, we have
\begin{align*}
	| \E Z_{4c} | \les \| \gam \|_2^2 \| \theta \|_2^2,
	\quad \var(Z_{4c}) \les \frac{ \| \gam \|_2^4 \| \theta \|_2^8}{\|\theta \|_1}. 
\end{align*}
Combining the results for $Z_{4a}, Z_{4b},$ and $Z_{4c}$, we have
\begin{align*}
	| \E Z_{4} | \les \| \gam \|_2^2 \| \theta \|_2^2,
	\quad \var(Z_4) \les   \frac{ \| \gam \|_2^4 \| \theta \|_2^6}{\| \theta \|_1}
\end{align*}

To study $Z_5$, we use the decomposition
\begin{align} \label{proof-Z5-decompose}
	Z_5 &= 2\sum_{i, j, k, \ell (dist)} \eta_i(\eta_j-\teta_j)\eta_j(\eta_k-\teta_k) \widetilde{\Omega}_{k \ell}  \widetilde{\Omega}_{\ell i} + \sum_{i, j, k, \ell (dist)} \eta_i(\eta_j-\teta_j)^2 \eta_k \widetilde{\Omega}_{k \ell}  \widetilde{\Omega}_{\ell i}\cr
	&\qquad  +\sum_{i, j, k, \ell (dist)} (\eta_i-\teta_i)\eta^2_j(\eta_k-\teta_k) \widetilde{\Omega}_{k \ell}  \widetilde{\Omega}_{\ell i} \cr
	&\equiv Z_{5a} + Z_{5b} + Z_{5c}. 
\end{align}
from \cite[Supplement, pg. 70]{JinKeLuo21}. We further decompose $Z_{5a}$ as in \cite[Supplement, pg.70]{JinKeLuo21}: 
\[
Z_{5a} =  \frac{2}{v}\sum_{j, k (dist)}\alpha_{jk}W_{jk}^2  + \frac{2}{v}\sum_{\substack{j, k (dist)\\s\neq j, t\neq k,\\(s,t)\neq (k,j)}}\alpha_{jk}W_{js}W_{kt} \equiv \widetilde{Z}_{5a} + Z^*_{5a}. 
\]
where $\alpha_{jk} = \sum_{i,\ell (dist)\notin\{j,k\}}\eta_i\eta_j\widetilde{\Omega}_{k \ell}  \widetilde{\Omega}_{\ell i}$. Note that by \eqref{eqn:tiOm_bd} and \eqref{eqn:beta_bds},
\begin{align*}
	| \alpha_{jk} | \les \sum_{i\ell} (\beta_k \theta_k) (\beta_\ell \theta_\ell)^2
	(\beta_i \theta_i) \les \theta_j (\beta_k \theta_k) \| \gam \|_2^3 \| \theta \|_2. 
\end{align*}
Only $\tilde Z_{5a}$ has nonzero mean. By \eqref{eqn:assn1} and \eqref{eqn:assn2}, 
\begin{align*}
	|\E  Z_{5a} | = 	|\E \ti Z_{5a} | \les \frac{1}{ \| \theta \|_1^2} 
	\sum_{jk} \theta_j (\beta_k \theta_k) \| \gam \|_2^3 \| \theta \|_2 \cdot \theta_j \theta_k \les \frac{ \| \gam \|_2^4 \| \theta \|_2^4 }{ \| \theta \|_1^2}. 
\end{align*}
Now we study the variance of $Z_{5a}$. In \cite[Supplement, pg.70]{JinKeLuo21} it is shown that
\begin{align*}
	\mathrm{Var}(\widetilde{Z}_{5a}) &\les \frac{1}{v^2}\sum_{j,k (dist)}\alpha_{jk}^2\,\mathrm{Var}(W^2_{jk}) \cr
	\mathrm{Var}(Z^*_{5a})&\les \frac{1}{v^2}\sum_{\substack{j, k (dist)\\s\neq j, t\neq k,\\(s,t)\neq (k,j)}} \alpha^2_{jk}\, \mathrm{Var}(W_{js}W_{kt}). 
\end{align*}
Thus by \eqref{eqn:assn2} and \eqref{eqn:beta_bds}, 
\begin{align*}
	\mathrm{Var}(\widetilde{Z}_{5a}) &\les \frac{\| \gam \|_2^6 \| \theta \|_2^4 }{ \| \theta \|_1^4} \big(
	\sum_{jk} \theta_j^3 \beta_k^2  \theta_k^3 
	\big) \les \frac{ \| \gam \|_2^8 \| \theta \|_2^6 }{ \| \theta \|_1^4} 
	\\ 
	\mathrm{Var}(Z^*_{5a})&\les 
	\frac{\| \gam \|_2^6 \| \theta \|_2^4 }{ \| \theta \|_1^4} \big(
	\sum_{jk} \theta_j^2  \beta_k^2 \theta_k^2 \cdot \theta_j \theta_s \theta_k \theta_t  
	\big) \les \frac{ \| \gam \|_2^8 \| \theta \|_2^6 }{ \| \theta \|_1^2 }. 
\end{align*}
We conclude that 
\begin{align*}
	\var(Z_{5a}) \les \frac{ \| \gam \|_2^8 \| \theta \|_2^6 }{ \| \theta \|_1^2}. 
\end{align*}

Next we study $Z_{5b}$ using the decomposition
\[
Z_{5b} = \frac{1}{v}\sum_{j, s (dist)} \alpha_j W^2_{js} + \frac{1}{v}\sum_{\substack{j\\ s, t (dist) \notin\{j\}}} \alpha_j W_{js}W_{jt}\equiv \widetilde{Z}_{5b} + Z^*_{5b}. 
\]
from \cite[Supplement, pg.71]{JinKeLuo21}, where $\alpha_j = \sum_{i,k,\ell (dist)\notin\{j\}}\eta_i\eta_k \widetilde{\Omega}_{k \ell}  \widetilde{\Omega}_{\ell i}$. Note that by \eqref{eqn:assn2} and \eqref{eqn:tiOm_bd},
\begin{align*}
	|\alpha_j| \les \sum_{ik\ell} \theta_i \theta_k (\beta_k \theta_k) (\beta_\ell \theta_\ell)^2 (\beta_i \theta_i) 
	\les \| \gam \|_2^4 \| \theta \|_2^2. 
\end{align*}
Only $\widetilde{Z}_{5b}$ above has nonzero mean, and we have 
\begin{align*}
	| \E Z_{5b} | =	| \E Z_{5b}| \les 
	\frac{ \| \gam \|_2^4  \| \theta \|_2^2}{ \| \theta \|_1^2} \sum_{j,s} \theta_j \theta_s  \les \| \gam \|_2^4 \| \theta \|_2^2. 
\end{align*}
Similarly for the variances,
\begin{align*}
	\var( \ti Z_{5b} ) &\les 	\frac{ \| \gam \|_2^8  \| \theta \|_2^4}{ \| \theta \|_1^4} \sum_{js} \theta_j \theta_s \les \frac{ \| \gam \|_2^8 \| \theta \|_2^4}{\| \theta \|_1^2} \\
	\var(  Z_{5b}^* ) &\les 	\frac{ \| \gam \|_2^8  \| \theta \|_2^4}{ \| \theta \|_1^4} \sum_{jst} \theta_j^2 \theta_s \theta_t
	\les \frac{ \| \gam \|_2^8 \| \theta \|_2^6}{\| \theta \|_1^2}, 
\end{align*}
and it follows that
\begin{align*}
	\var(Z_{5b}) \les \frac{ \| \gam \|_2^8 \| \theta \|_2^6}{\| \theta \|_1^2}. 
\end{align*}

Next we study 
\begin{align*}
	Z_{5c} &= \sum_{\substack{i, j, k, \ell\\ (dist)}} (\eta_j-\teta_j)\eta^2_i(\eta_k-\teta_k) \widetilde{\Omega}_{k \ell}  \widetilde{\Omega}_{\ell j} = \sum_{\substack{i, j, k, \ell\\ (dist)}}(\eta_i^2\widetilde{\Omega}_{k\ell}\widetilde{\Omega}_{\ell j})(\eta_j-\teta_j)(\eta_k-\teta_k)
	\\&= \frac{1}{v} \sum_{\substack{i, j, k, \ell (dist)\\ s \neq j, t \neq k }}(\eta_i^2\widetilde{\Omega}_{k\ell}\widetilde{\Omega}_{\ell j})W_{js} W_{kt}
	=  \frac{1}{v} \sum_{\substack{j, k (dist)\\ s \neq j, t \neq k }}\alpha_{jk} W_{js} W_{kt}
\end{align*}
where $\alpha_{jk} =\sum_{\substack{i, \ell (dist) \\ i, \ell \notin\{ j, k \}}}\eta_i^2\widetilde{\Omega}_{k\ell}\widetilde{\Omega}_{\ell j}$. Note that by \eqref{eqn:tiOm_bd} and  \eqref{eqn:v0_v1_bd} ,
\begin{align*}
	|\alpha_{jk}| \les \sum_{i\ell} \theta_i^2 (\beta_k \theta_k) (\beta_\ell \theta_\ell)^2 (\beta_j \theta_j) \les (\beta_j \theta_j) (\beta_k \theta_k) \| \theta \|_2^2 \| \gam \|_2^2 . 
	\num \label{eqn:Z5c_alpha_bd}
\end{align*}
We further decompose 
\begin{align*}
	Z_{5c}
	&= \frac{1}{v} \sum_{\substack{j, k \\ (dist) }}\alpha_{jk} W_{jk}^2 
	+ \frac{1}{v} \sum_{\substack{j, k (dist)\\ s,t \notin \{j, k \}  }}\alpha_{jk} W_{js} W_{kt}
	\equiv \ti Z_{5c} + Z_{5c}^*.
\end{align*}
Only the first term has nonzero mean. It follows that
\begin{align*}
	| \E Z_{5c} | = | \E \ti Z_{5c} |&\les \frac{\| \theta \|_2^2 \| \gam \|_2^2}{\| \theta \|_1^2} \sum_{j,k, s, t} (\beta_j \theta_j) (\beta_k \theta_k)   \cdot \theta_j  \theta_k 
	\les \frac{ \| \gam \|_2^4 \| \theta \|_2^4}{\| \theta \|_1^2}. 
\end{align*}
Note that $Z_{5c}$ and $Z_{5a}$ have the same form, but with a different setting of the coefficient $\alpha_{jk}$. Mimicking the variance bounds for $Z_{5a}$ we obtain the bound
\begin{align*}
	\var(Z_{5c}) \les \frac{ \| \gam \|_2^8 \| \theta \|_2^4}{\|\theta\|_1^2}. 
\end{align*}
Combining the previous bounds we obtain 
\begin{align*}
	| \E Z_5 | \les \| \gam \|_2^4 \| \theta \|_2^2, 
	\quad \var(Z_5) \les \frac{ \| \gam \|_2^8 \| \theta \|_2^6}{ \| \theta \|_1^2}. 
\end{align*}

Next we study $Z_6 = Z_{6a} + Z_{6b}$ as defined in \cite[Supplement, pg.72]{JinKeLuo21}, where 
\begin{align*}
	Z_{6a} &= \sum_{\substack{i, j, k, \ell\\ (dist)}}(\eta_i\eta_\ell \widetilde{\Omega}_{j\ell}\widetilde{\Omega}_{ki})(\eta_j-\teta_j)(\eta_k-\teta_k) = \frac{1}{v} \sum_{\substack{j, k (dist)\\ s \neq j, t \neq k }}\alpha_{jk}\rp{6a} W_{js} W_{kt} \\
	\\Z_{6b} &= 2\sum_{\substack{i, j, k, \ell\\ (dist)}} (\eta_i\eta_\ell \widetilde{\Omega}_{jk}\widetilde{\Omega}_{\ell i})(\eta_j-\teta_j)(\eta_k-\teta_k)
	= \frac{1}{v} \sum_{\substack{j, k (dist)\\ s \neq j, t \neq k }}\alpha_{jk}\rp{6b} W_{js} W_{kt}
\end{align*}
and
\begin{align*}
	\alpha_{jk}\rp{6a} &= \sum_{\substack{i, \ell (dist) \\ i, \ell \notin\{ j, k \}}} \eta_i \eta_\ell \ti \Omega_{jk} \ti \Omega_{\ell i} \\
	\alpha_{jk}\rp{6b} &= \sum_{\substack{i, \ell (dist) \\ i, \ell \notin\{ j, k \}}} \eta_i \eta_\ell \ti \Omega_{j\ell} \ti \Omega_{k i}.
\end{align*}
Thus $Z_{6a}$ and $Z_{6b}$ take the same form as $Z_{5c}$, but with a different setting of $\alpha_{jk}$. Note that by \eqref{eqn:beta_bds} and similar arguments from before,
\begin{align*}
	\max( |\alpha_{jk}\rp{6a}|, | \alpha_{jk}\rp{6b} | ) 
	\les (\beta_j \theta_j) (\beta_k \theta_k) \| \theta \|_2^2 \| \gam \|_2^2,
\end{align*}
which is the same as the upper bound on $|\alpha_{jk}|$ associated to $Z_{5c}$ given in \eqref{eqn:Z5c_alpha_bd}. It follows that
\begin{align*}
	| \E Z_{6} | \les \frac{ \| \gam \|_2^4 \| \theta \|_2^4}{\| \theta \|_1^2},
	\quad \var(Z_6) \les \frac{ \| \gam \|_2^8 \| \theta \|_2^4}{\|\theta\|_1^2}. 
\end{align*}
We have proved all claims in Lemma \ref{lem:Ub}. \qed 

\subsubsection{Proof of Lemma \ref{lem:Uc}}

The terms $T_1$ and $F$ do not depend on $\ti \Omega$, and thus the claimed bounds transfer directly from \cite[Lemma G.9]{JinKeLuo21}. Thus we focus on $T_2$. We use the decomposition $T_2 = 2(T_{2a} + T_{2b} + T_{2c} + T_{2d})$ from \cite[Supplement, pg.73]{JinKeLuo21} 
where
\begin{align*}
	T_{2a} &=  \sum_{i_1, i_2, i_3, i_4(dist)}\eta_{i_2}\eta_{i_3}\eta_{i_4}\big[(\eta_{i_1} - \tilde{\eta}_{i_1}) (\eta_{i_2} - \tilde{\eta}_{i_2})  (\eta_{i_3} - \tilde{\eta}_{i_3})    \big]\cdot  \widetilde{\Omega}_{i_4i_1},\cr
	T_{2b} &=  \sum_{i_1, i_2, i_3, i_4(dist)}\eta_{i_2}\eta_{i_3}^2\big[(\eta_{i_1} - \tilde{\eta}_{i_1}) (\eta_{i_2} - \tilde{\eta}_{i_2}) (\eta_{i_4} - \tilde{\eta}_{i_4})     \big]\cdot  \widetilde{\Omega}_{i_4i_1}, \cr
	T_{2c} &= \sum_{i_1, i_2, i_3, i_4(dist)}\eta_{i_1}\eta_{i_3}\eta_{i_4}\big[ (\eta_{i_2} - \tilde{\eta}_{i_2})^2  (\eta_{i_3} - \tilde{\eta}_{i_3})    \big]\cdot  \widetilde{\Omega}_{i_4i_1},\cr
	T_{2d} &= \sum_{i_1, i_2, i_3, i_4(dist)}\eta_{i_1}\eta_{i_3}^2\big[(\eta_{i_2} - \tilde{\eta}_{i_2})^2 (\eta_{i_4} - \tilde{\eta}_{i_4})     \big]\cdot  \widetilde{\Omega}_{i_4i_1}. 
\end{align*}
We study each term separately. 

For $T_{2a}$, in  \cite[Supplement, pg.89]{JinKeLuo21}, we have the decomposition $T_{2a} = X_{a1} + X_{a2} + X_{a3} + X_b$ where
\begin{align*}
	X_{a1} & = -\frac{1}{v^{3/2}}\sum_{i_1, i_2, i_3, i_4 (dist)}\sum_{j_3\neq i_3}\eta_{i_2}\eta_{i_3}\eta_{i_4} W^2_{i_1i_2}W_{i_3j_3}\widetilde{\Omega}_{i_1i_4}, \cr
	X_{a2} & = -\frac{1}{v^{3/2}}\sum_{i_1, i_2, i_3, i_4 (dist)}\sum_{j_2\neq i_2}\eta_{i_2}\eta_{i_3}\eta_{i_4} W^2_{i_1i_3}W_{i_2j_2}\widetilde{\Omega}_{i_1i_4}, \cr
	X_{a3} & = -\frac{1}{v^{3/2}}\sum_{i_1, i_2, i_3, i_4 (dist)}\sum_{j_1\neq i_1}\eta_{i_2}\eta_{i_3}\eta_{i_4} W^2_{i_2i_3}W_{i_1j_1}\widetilde{\Omega}_{i_1i_4}, \cr
	X_b &=-\frac{1}{v^{3/2}}\sum_{i_1, i_2, i_3, i_4 (dist)}
	\sum_{\substack{j_1, j_2, j_3 \\ j_k\neq i_\ell, k, \ell = 1, 2, 3}} 
	\eta_{i_2}\eta_{i_3}\eta_{i_4} W_{i_1j_1}W_{i_2j_2}W_{i_3j_3}\widetilde{\Omega}_{i_1i_4}. 
\end{align*}
There it is shown that $\E T_{2a} = 0$. Further it is argued that
\begin{align*}
	\var( X_{a1} ) &= \E X_{a1}^2
	\\&= \frac{1}{v^3} \sum_{\substack{i_1,i_2,i_3,i_4 (dist) \\ i_1',i_2',i_3',i_4'(dist)}}    \sum_{\substack{j_3, j_3'  \\  j_3 \neq i_3, j_3' \neq i_3'}} 
	\eta_{i_2}\eta_{i_3}\eta_{i_4}\eta_{i_2'}\eta_{i_3'}\eta_{i_4'}\mathbb{E}[ W^2_{i_1i_2} W_{i_3j_3} W^2_{i_1'i_2'} W_{i_3'j_3'}]\widetilde{\Omega}_{i_1i_4}\widetilde{\Omega}_{i_1'i_4'} \num \label{eqn:ABC_casework}
	\\&\equiv V_A + V_B + V_C, 
\end{align*}
where the terms $V_A, V_B, V_C$ correspond to the contributions from cases $A, B, C$, respectively, described in \cite[Supplement, pg.89]{JinKeLuo21}. Concretely, the nonzero terms of  \eqref{eqn:ABC_casework} fall into three cases:
\begin{itemize}
	\item[Case A.] $\{i_1, i_2\} = \{i_3',j_3'\}$ and
	$\{i_3, j_3 \} = \{i_1', i_2'\}$
	\item[Case B.] $\{i_3, j_3\} = \{i_3',j_3'\}$ 
	and $\{i_1, i_2\} = \{i_1',i_2'\}$ 
	\item[Case C.]  $\{i_3, j_3\} = \{i_3',j_3'\}$ 
	and $\{ i_1, i_2\} \neq \{i_1', i_2'\}$.
\end{itemize}
Here $V_A, V_B,$ and $V_C$ are defined to be the contributions from each case. 


Applying \eqref{eqn:assn2}, \eqref{eqn:etai_bd}, and \eqref{eqn:tiOm_bd},
\begin{align*}
	|\eta_{i_2}\eta_{i_3}\eta_{i_4}\eta_{i_2'}\eta_{i_3'}\eta_{i_4'} \widetilde{\Omega}_{i_1i_4} \widetilde{\Omega}_{i_1'i_4'}| 
	&\les \theta_{i_2} \theta_{i_3} \theta_{i_4}
	\theta_{i_2'} \theta_{i_3'} \theta_{i_4'}
	(\beta_{i_1} \theta_{i_1}) 
	(\beta_{i_4} \theta_{i_4})
	(\beta_{i_1'} \theta_{i_1'})  
	(\beta_{i_4'} \theta_{i_4'}) 
	\\ &\les 
	\theta_{i_2} \theta_{i_3} \theta_{i_4}
	\theta_{i_2'} \theta_{i_3'} \theta_{i_4'}
	(\beta_{i_1} \theta_{i_1}) 
	(\beta_{i_4} \theta_{i_4})
	\theta_{i_1'}
	(\beta_{i_4'} \theta_{i_4'}) 
	\num \label{eqn:caseABC_bd}. 
\end{align*}
Note that using the last inequality reduces the required casework while still yielding a good enough bound. Mimicking the casework in Case A of \cite[Supplement, pg.90]{JinKeLuo21} and applying \eqref{eqn:beta_bds}, we have 
\begin{align*}
	V_A &\les \frac{1}{ \| \theta \|_1^6} 
	\sum_{ \substack{i_1, i_2, i_3 \\ i_4 ,i_4', j_3} } \, 
	\sum_{\substack{b_1, b_2 \\ (b_1 + b_2 = 1)}}
	\beta_{i_1} \beta_{i_4} \beta_{i_4'} \theta_{i_1}^{2 + b_1} 
	\theta_{i_2}^{2 + b_2} \theta_{i_3}^{3} \theta_{j_3}^{2} 
	\theta_{i_4}^{2} 		\theta_{i_4'}^{2} 
	\\&\les \frac{1}{ \| \theta \|_1^6}  \big( \| \gam\|_2^3 \| \theta\|_2^3 \| \theta \|_2^4 \| \theta \|_3^3 + \| \gam \|_2^3 \| \theta \|_2^3 \| \theta \|_2^2 \| \theta \|_3^6 \big) \les  \frac{ \| \gam \|_2^3 \| \theta \|_2^9}{ \| \theta \|_1^6}. 
\end{align*}
Similarly, applying \eqref{eqn:caseABC_bd} along with \eqref{eqn:etai_bd}, \eqref{eqn:tiOm_bd}, and \eqref{eqn:beta_bds} yields
\begin{align*}
	V_B &\les \frac{1}{ \| \theta \|_1^6} 
	\sum_{ \substack{i_1, i_2, i_3 \\ i_4 ,i_4', j_3} } \, 
	\sum_{\substack{c_1, c_2 \\ (c_1 + c_2 = 1)}}
	\beta_{i_1} \beta_{i_4} \beta_{i_4'} \theta_{i_1}^3 
	\theta_{i_2}^3 \theta_{i_3}^{2+c_1} \theta_{j_3}^{1+c_2}
	\theta_{i_2}^2 \theta_{i_4'}^2
	\les \frac{ \| \gam \|_2^3 \| \theta \|_2^7 }{ \| \theta \|_1^5 }. 
\end{align*}
and
\begin{align*}
	V_C &\les 
	\sum_{ \substack{i_1, i_2, i_3,i_4 \\ i_1' , i_2', i_4', j_3} } \, 
	\sum_{\substack{c_1, c_2 \\ (c_1 + c_2 = 1)}}
	\beta_{i_1} \beta_{i_4} \beta_{i_1'} \beta_{i_4'}   
	\theta_{i_1}^2 \theta_{i_2}^2 \theta_{i_3}^{2+c_1} 
	\theta_{j_3}^{1+c_2} \theta_{i_4}^2 \theta_{i_1'}^2
	\theta_{i_2'}^2 \theta_{i_4'}^2
	\les \frac{ \| \gam \|_2^4 \| \theta \|_2^{10}}{ \| \theta \|_1^5}. 
\end{align*}
Thus
\begin{align*}
	\var(X_{a1}) \les \| \gam \|_2^4. 
\end{align*}
The arguments for $X_{a2}$ and $X_{a3}$ are similar, and the corresponding $V_A, V_B, V_C$ satisfy the same inequalities above.  We simply state the bounds:
\begin{align*}
	\E X_{a_2} &= \E X_{a3} = 0,
	\quad \var(X_{a2}) \les\| \gam \|_2^4, \quad  \var(X_{a3}) \les \| \gam \|_2^4. 
\end{align*}

Next we consider $X_b$ as defined in \cite[Supplement, pg.89]{JinKeLuo21}. We have $\E X_b = 0$ and focus on the variance. In \cite[Supplement, pg.91]{JinKeLuo21} it is shown that 
\begin{align*} 
	\var(X_b) &= \mathbb{E}[X_{b}^2]  
	\\& = v^{-3} \sum_{\substack{i_1,i_2,i_3,i_4 (dist) \\ i_1',i_2',i_3',i_4'(dist)}}   
	\sum_{\substack{j_3, j_3'  \\  j_3 \neq i_3, j_3' \neq i_3'}} 
	\eta_{i_2}\eta_{i_3}\eta_{i_4}\eta_{i_2'}\eta_{i_3'}\eta_{i_4'} 
	\mathbb{E}[ W_{i_1j_1} W_{i_2j_2}W_{i_3j_3}  W_{i_1'j_1'} W_{i_2' j_2'}  W_{i_3'j_3'}] \widetilde{\Omega}_{i_1i_4}\widetilde{\Omega}_{i_1'i_4'}, 
\end{align*} 
Note that
\begin{align*}
	\mathbb{E}[ W_{i_1j_1} W_{i_2j_2}W_{i_3j_3}  W_{i_1'j_1'} W_{i_2' j_2'}  W_{i_3'j_3'}] \neq 0
\end{align*}
if and only if the two sets of random variables $\{ W_{i_1j_1}, W_{i_2j_2}, W_{i_3j_3}\}$ and $\{ W_{i_1'j_1'}, W_{i_2'j_2'}, W_{i_3'j_3'}\}$ are identical. Applying \eqref{eqn:etai_bd} and \eqref{eqn:tiOm_bd},
\begin{align*}
	|\eta_{i_2}\eta_{i_3}\eta_{i_4}\eta_{i_2'}\eta_{i_3'}\eta_{i_4'} 	\widetilde{\Omega}_{i_1i_4}\widetilde{\Omega}_{i_1'i_4'} |
	&\les \theta_{i_2}\theta_{i_3}\theta_{i_4}\theta_{i_2'}\theta_{i_3'}\theta_{i_4'} (\beta_{i_1} \theta_{i_1}) 
	(\beta_{i_4} \theta_{i_4})
	\theta_{i_1'}
	(\beta_{i_4'} \theta_{i_4'}) 
	\\&\les \beta_{i_1} \beta_{i_4} \beta_{i_4'} 
	\theta_{i_1}^{1 + a_1}   \theta_{j_1}^{a_2}   \theta_{i_2}^{1 + a_3} \theta_{j_2}^{a_4}  \theta_{i_3}^{1 + a_5} \theta_{j_3}^{a_6}  \theta_{i_4}^2 \theta_{i_4'}^2
\end{align*}
if $\E [ W_{i_1j_1} W_{i_2j_2}W_{i_3j_3}  W_{i_1'j_1'} W_{i_2' j_2'}  W_{i_3'j_3'}] \neq 0$, where $a_i \in \{0,1\}$ and $\sum_{i =1}^6 a_i = 3$. Thus by \eqref{eqn:assn1}, \eqref{eqn:assn2}, and \eqref{eqn:beta_bds},
\begin{align*}
	\var(X_b) &\les \max_a \frac{1}{ \| \theta \|_1^6}
	\sum_{\substack{i_1, i_2, i_3, i_4 \\ i_4', j_1, j_2, j_3} }
	\beta_{i_1} \beta_{i_4} \beta_{i_4'} 
	\theta_{i_1}^{2 + a_1}    \theta_{j_1}^{1+a_2}  \theta_{i_2}^{2 + a_3}  \theta_{j_2}^{1+a_4}  \theta_{i_3}^{2 + a_5} \theta_{j_3}^{1+ a_6}  \theta_{i_4}^2 \theta_{i_4'}^2
	\\&\les \frac{1}{ \| \theta \|_1^6}
	\sum_{\substack{i_1, i_2, i_3, i_4 \\ i_4', j_1, j_2, j_3} }
	\beta_{i_1} \beta_{i_4} \beta_{i_4'} 
	\theta_{i_1}^{2}    \theta_{j_1}^{1}  \theta_{i_2}^{2}  \theta_{j_2}^{1}  \theta_{i_3}^{2} \theta_{j_3}^{1}  \theta_{i_4}^2 \theta_{i_4'}^2
	\\&\les \frac{ \| \gam \|_2^3 \| \theta \|_2^3 \| \theta \|_2^4 \| \theta \|_1^3 }{ \| \theta \|_1^6} \les \frac{ \| \gam \|_2^3 \| \theta \|_2^7}{ \| \theta \|_1^3} \les \| \gam \|_2^3 \| \theta \|_2. 
\end{align*}
Combining the results for $X_{a1}, X_{a2}, X_{a3}$ and $X_{b}$, we conclude that
\begin{align*}
	\E T_{2a} = 0, \qquad \var(T_{2a}) \les \| \gam \|_2^4 \| \theta \|_2. 
\end{align*}

The argument for $T_{2b}$ is similar to the one for $T_{2a}$, so we simply state the results:
\begin{align*}
	\E T_{2b}  = 0
	, \qquad \var(T_{2b}) \les \| \gam \|_2^4 \| \theta \|_2. 
\end{align*}

Next we study $T_{2c}$, providing full details for completeness. Using the definition of $T_{2c}$ in \cite[Supplement, pg.92]{JinKeLuo21}, we have the following decomposition by careful casework.
\begin{align*}
	Y_{a} & =  -\frac{1}{v^{3/2}}\sum_{i_1, i_2, i_3, i_4 (dist)} \eta_{i_1}\eta_{i_3}\eta_{i_4} W^3_{i_2i_3}\widetilde{\Omega}_{i_1i_4},  \cr
	Y_{b1} & = -\frac{1}{v^{3/2}}
	\sum_{i_1, i_2, i_3, i_4 (dist)}
	\sum_{\substack{(i_2,j_2) \neq(j_3,  i_3) \\ j_2 \neq i_2, j_3 \neq i_3}}
	\eta_{i_1}\eta_{i_3}\eta_{i_4} W^2_{i_2j_2}W_{i_3j_3}\widetilde{\Omega}_{i_1i_4}, \cr
	Y_{b2} & = -\frac{1}{v^{3/2}}
	\sum_{i_1, i_2, i_3, i_4(dist)}
	\sum_{\ell_2 \notin \{ i_3, i_2 \}}
	\eta_{i_1}\eta_{i_3}\eta_{i_4} W^2_{i_2i_3}W_{i_2\ell_2}\widetilde{\Omega}_{i_1i_4}, \cr
	Y_{b3} & = -\frac{1}{v^{3/2}}
	\sum_{i_1, i_2, i_3, i_4 (dist)}
	\sum_{j_2 \notin \{ i_3, i_2 \}}
	\eta_{i_1}\eta_{i_3}\eta_{i_4} W^2_{i_2i_3}W_{i_2j_2}\widetilde{\Omega}_{i_1i_4}, \cr
	Y_c &=-\frac{1}{v^{3/2}} \sum_{i_1, i_2, i_3, i_4 (dist)}
	\sum_{\substack{j_2, \ell_2, j_3 \\ j_2\neq i_2, \ell_2\neq i_2, j_3\neq i_3\\  
			j_2\neq \ell_2, (i_2, j_2) \neq (j_3, i_3), (i_2, \ell_2) \neq ( j_3, i_3 ) }} 
	\eta_{i_1}\eta_{i_3}\eta_{i_4} W_{i_2j_2}W_{i_2\ell_2}W_{i_3j_3} \widetilde{\Omega}_{i_1i_4}. 
\end{align*}
Note that, by the change of variables $\ell_2 \to j_2$, it holds that $Y_{b2} = Y_{b3}$. 

The only term with nonzero mean is $Y_a$. We have by  \eqref{eqn:v0_v1_bd}, \eqref{eqn:tiOm_bd}, \eqref{eqn:etai_bd}, and \eqref{eqn:beta_bds} that
\begin{align*}
	| \E Y_a | &\les \frac{1}{ \| \theta \|_1^3} \sum_{i_1, i_2, i_3, i_4}
	\theta_{i_1} \theta_{i_3} \theta_{i_4} (\beta_{i_1} \theta_{i_1})
	(\beta_{i_4} \theta_{i_4}) \cdot |\E W_{i_2 i_3}^3 |
	\les \frac{1}{\| \theta \|_1^3} \sum_{i_1, i_2, i_3, i_4}
	\beta_{i_1} \beta_{i_4} \theta_{i_1}^2 \theta_{i_2} \theta_{i_3}^2 \theta_{i_4}^2 
	\\&\les \frac{ \| \gam \|_2^2 \| \theta \|_2^4 }{\| \theta \|_1^2}. 
\end{align*}
For the variance, by independence of $\{W_{ij}\}_{i>j }$, \eqref{eqn:assn2}, \eqref{eqn:tiOm_bd}, and \eqref{eqn:beta_bds}, we have
\begin{align*}
	\var(Y_a)
	&\les \frac{1}{\|\theta \|_1^6} 
	\sum_{i_2, i_3} \big( \sum_{i_1, i_4} \theta_{i_1} \theta_{i_3} \theta_{i_4} (\beta_{i_1} \theta_{i_1})
	(\beta_{i_4} \theta_{i_4}) \big)^2 \theta_{i_2} \theta_{i_3}
	\les \frac{1}{\|\theta \|_1^6} 
	\sum_{i_2, i_3} \| \gam \|_2^4 \| \theta \|_2^4 \theta_{i_2} \theta_{i_3}^2 
	\\&\les \frac{ \| \gam \|_2^4 \| \theta \|_2^6 }{ \| \theta \|_1^5}. 
\end{align*}

For $Y_{b1}, Y_{b2}, Y_{b3}$ we make note of the identity
\begin{align*}
	W_{ij}^2 &= (1 - 2 \Omega_{ij}) W_{ij} + \Omega_{ij}(1 -\Omega_{ij}) \equiv A_{ij} W_{ij} + B_{ij}. 
	\num \label{eqn:bernoulli_identity}
\end{align*}
Write
\begin{align*}
	Y_{b1} &= 
	-\frac{1}{v^{3/2}}
	\sum_{i_1, i_2, i_3, i_4 (dist)}
	\sum_{\substack{(i_2,j_2) \neq(j_3,  i_3) \\ j_2 \neq i_2, j_3 \neq i_3}}
	\eta_{i_1}\eta_{i_3}\eta_{i_4} A_{i_2j_2} W_{i_2j_2}W_{i_3j_3}\widetilde{\Omega}_{i_1i_4}
	\\&\quad -\frac{1}{v^{3/2}}
	\sum_{i_1, i_2, i_3, i_4 (dist)}
	\sum_{\substack{(i_2,j_2) \neq(j_3,  i_3) \\ j_2 \neq i_2, j_3 \neq i_3}}
	\eta_{i_1}\eta_{i_3}\eta_{i_4} B_{i_2j_2} W_{i_3j_3}\widetilde{\Omega}_{i_1i_4}
	\equiv Y_{b1,A} + Y_{b1,B}. 
\end{align*}
By similar arguments from before, and noting that $|A_{i_2, j_2}| \les 1$, 
\begin{align*}
	\var(Y_{b1,A}) &\les
	\frac{1}{\| \theta \|_1^6} 
	\sum_{\substack{(i_2,j_2) \neq(j_3,  i_3) \\ j_2 \neq i_2, j_3 \neq i_3}} 
	\bigg( \sum_{i_1, i_4} \eta_{i_1} \eta_{i_3} \eta_{i_4} (\beta_{i_1} \theta_{i_1}) (\beta_{i_4} \theta_{i_4}) \bigg)^2 | \E W_{i_2j_2}W_{i_3j_3} |
	\\&\les \frac{1}{\| \theta \|_1^6} 
	\sum_{i_2, j_2, i_3, j_3} 
	\bigg( \sum_{i_1, i_4} \eta_{i_1} \eta_{i_3} \eta_{i_4} (\beta_{i_1} \theta_{i_1}) (\beta_{i_4} \theta_{i_4}) \bigg)^2 \cdot \theta_{i_2} \theta_{j_2} \theta_{i_3} \theta_{j_3}
	\\& \les  \frac{1}{\| \theta \|_1^6} 
	\sum_{i_2, j_2, i_3, j_3 } \| \gam \|_2^4 \| \theta \|_2^4 
	\theta_{i_2} \theta_{j_2} \theta_{i_3}^3\theta_{j_3}  
	\les \frac{ \| \gam \|_2^4 \| \theta \|_2^6}{ \| \theta \|_1^3}. 
\end{align*}
Similarly, using $|B_{ij}| \les \Omega_{ij} \les \theta_i \theta_j$,  
\begin{align*}
	\var(Y_{b1,B}) &\les
	\frac{1}{\| \theta \|_1^6} 
	\sum_{i_3, j_3 (dist)} \bigg( \sum_{i_1, i_2, i_4, j_2} 
	\eta_{i_1} \eta_{i_3} \eta_{i_4} \theta_{i_2} \theta_{j_2} (\beta_{i_1} \theta_{i_1}) 
	(\beta_{i_4} \theta_{i_4}) \bigg)^2 
	\cdot | \E W_{i_3, j_3} | 
	\\&\les \frac{1}{\| \theta \|_1^6} 
	\sum_{i_3, j_3 } \| \gam \|_2^4 \| \theta\|_2^4 \| \theta \|_1^2 \theta_{i_3}^3 \theta_{j_3} 
	\les \frac{ \| \gam \|_2^4 \| \theta \|_2^6}{ \| \theta \|_1^3}. 
\end{align*}
It follows that
\begin{align*}
	\var(Y_{b1}) \les \frac{ \| \gam \|_2^4 \| \theta \|_2^6}{ \| \theta \|_1^3}
\end{align*}

To control $\var(Y_{b2})$, again we invoke the identity \eqref{eqn:bernoulli_identity} to write
\begin{align*}
	Y_{b2} &= 
	-\frac{1}{v^{3/2}}
	\sum_{i_1, i_2, i_3, i_4(dist)}
	\sum_{\ell_2 \notin \{ i_3, i_2 \}}
	\eta_{i_1}\eta_{i_3}\eta_{i_4} A_{i_2i_3} W_{i_2i_3}W_{i_2\ell_2}\widetilde{\Omega}_{i_1i_4}
	\\& \quad -\frac{1}{v^{3/2}}
	\sum_{i_1, i_2, i_3, i_4(dist)}
	\sum_{\ell_2 \notin \{ i_3, i_2 \}}
	\eta_{i_1}\eta_{i_3}\eta_{i_4} B_{i_2i_3}W_{i_2\ell_2}\widetilde{\Omega}_{i_1i_4}
	\equiv Y_{b2,A} + Y_{b2, B}.
\end{align*}
Using similar arguments from before, we have
\begin{align*}
	\var(Y_{b2, A})
	&\les \frac{1}{\| \theta \|_1^6}
	\sum_{i_2 i_3 \ell_2} 
	\bigg( \sum_{i_1 i_4} \theta_{i_1} \theta_{i_3} \theta_{i_4}
	(\beta_{i_1} \theta_{i_1} ) 	(\beta_{i_4} \theta_{i_4}) \bigg)^2 
	\theta_{i_2}^2 \theta_{i_3} \theta_{\ell_2} 
	\\&	\les \frac{1}{\| \theta \|_1^6}
	\sum_{i_2 i_3 \ell_2} \| \gam \|_2^4 \| \theta \|_2^4 		\theta_{i_2}^2 \theta_{i_3}^3 \theta_{\ell_2} 
	\les \frac{ \| \gam \|_2^4 \| \theta \|_2^8}{ \| \theta \|_1^5}. 
\end{align*}
Furthermore,
\begin{align*}
	\var(Y_{b2, B})
	&\les \frac{1}{\| \theta \|_1^6}
	\sum_{i_2, \ell_2} \bigg( \sum_{i_1, i_3, i_4} 
	\theta_{i_1} \theta_{i_3} \theta_{i_4}
	(\beta_{i_1} \theta_{i_1} ) 	(\beta_{i_4} \theta_{i_4}) \theta_{i_2} \theta_{i_3} 
	\bigg)^2
	\theta_{i_2} \theta_{\ell_2}
	\\& \les \frac{1}{\| \theta \|_1^6}
	\sum_{i_2, \ell_2} \| \gam \|_2^4 \| \theta \|_2^8 \theta_{i_2}^3 \theta_{\ell_2} \les \frac{ \| \gam \|_2^4 \| \theta \|_2^{10}}{\| \theta \|_1^5}. 
\end{align*}
Since $Y_{b2} = Y_{b3}$, we have
\begin{align*}
	\var(Y_{b2}) = \var( Y_{b3}) \les \frac{ \| \gam \|_2^4 \| \theta \|_2^{10}}{\| \theta \|_1^5}. 
\end{align*}

Next we study the variance of $Y_{2c}$. For notational brevity, let
\begin{align*}
	\mc{R}_{i_1, i_2, i_3} =\bigg \{ (j_2, \ell_2, j_3)\bigg| j_2\neq i_2, \ell_2\neq i_2, j_3\neq i_3 j_2\neq \ell_2, (i_2, j_2) \neq (j_3, i_3), (i_2, \ell_2) \neq ( j_3, i_3 ) 
	\bigg\}.
\end{align*} 
We have
\begin{align*}
	&\var(Y_{c})
	\\&= \frac{1}{v^3} \sum_{\substack{i_1, i_2, i_3, i_4 (dist)
			\\i_1', i_2', i_3', i_4' (dist) }}
	\, \, \sum_{\substack{(j_2, \ell_2, j_3) \in \mc{R}_{i_1, i_2, i_3} \\ 
			(j_2', \ell_2', j_3') \in \mc{R}_{i_1', i_2', i_3'}}	} 	
	\eta_{i_1}\eta_{i_3}\eta_{i_4}  \widetilde{\Omega}_{i_1i_4}
	\eta_{i_1'}\eta_{i_3'}\eta_{i_4'}  \widetilde{\Omega}_{i_1'i_4'}
	\E\big[ W_{i_2j_2}W_{i_2\ell_2}W_{i_3j_3}
	W_{i_2'j_2'}W_{i_2'\ell_2'}W_{i_3'j_3'}
	\big] 
	\num \label{eqn:VarY2c_mainbd}
\end{align*}
Note that $W_{i_2j_2}W_{i_2\ell_2}W_{i_3j_3}$ and $W_{i_2'j_2'}W_{i_2'\ell_2'}W_{i_3'j_3'}$ above are uncorrelated unless
\[\bigg\{ \{i_2, j_2\}, \{i_2, \ell_2\}, \{i_3, j_3\} \bigg \}
= \bigg\{ \{i_2', j_2'\}, \{i_2', \ell_2'\}, \{i_3', j_3'\} \bigg\}.\] 
In particular, $i_3' \in \{ i_2, j_2, \ell_2, i_3, j_3 \}$ when the above holds. Hence for some choice of $a_i \in \{0, 1\}$ with $\sum_{i = 1}^5 a_i = 1$, 
\begin{align*}
	\var(Y_{c}) &\les \frac{1}{v^3} 
	\sum_{ \substack{i_1, i_2, i_3, i_4 \\ i_1', i_4',  j_2, \ell_2, j_3} } 
	\theta_{i_2}^{a_1} 
	\theta_{j_2}^{a_2} 
	\theta_{\ell_2}^{a_3} 
	\theta_{i_3}^{a_4} 
	\theta_{j_3}^{a_5} 	
	\cdot
	\theta_{i_1} \theta_{i_3}
	\theta_{i_4} (\beta_{i_1} \theta_{i_1})
	(\beta_{i_4} \theta_{i_4})
	\theta_{i_1'} \theta_{i'_4} (\beta_{i_1'} \theta_{i_1'})
	(\beta_{i_4'} \theta_{i_4'})
	\cdot 	\theta_{i_2}^2 \theta_{j_2} \theta_{\ell_2}
	\theta_{i_3} \theta_{j_3}
	\\& \les 
	\frac{1}{v^3} 
	\sum_{ \substack{i_1, i_2, i_3, i_4 \\ i_1', i_4',  j_2, \ell_2, j_3} }
	\beta_{i_1} \beta_{i_1'} \beta_{i_4} \beta_{i_4'} 
	\theta_{i_1}^2 \theta_{i_2}^{2 + a_1}
	\theta_{i_3}^{ 2 + a_4} 
	\theta_{i_4}^2
	\theta_{i_1'}^2
	\theta_{i_4'}^2 \theta_{j_2}^{ 1 + a_2}
	\theta_{\ell_2}^{1 + a_3}
	\theta_{j_3}^{1 + a_5}
	\\&\les 	\frac{1}{v^3} 
	\sum_{ \substack{i_1, i_2, i_3, i_4 \\ i_1', i_4',  j_2, \ell_2, j_3} }
	\beta_{i_1} \beta_{i_1'} \beta_{i_4} \beta_{i_4'} 
	\theta_{i_1}^2 \theta_{i_2}^{2 }
	\theta_{i_3}^{ 2 } 
	\theta_{i_4}^2
	\theta_{i_1'}^2
	\theta_{i_4'}^2 \theta_{j_2}^{ 1 }
	\theta_{\ell_2}^{1 }
	\theta_{j_3}^{1 }
	\les \frac{  \| \gam \|_2^4 \| \theta \|_2^8  }{ \|\theta \|_1^3}, 
\end{align*}
where in the last line we apply \eqref{eqn:assn2} followed by \eqref{eqn:beta_bds}. Combining our results above we have
\begin{align*}
	|\E T_{2c} | \les \frac{ \| \gam \|_2^2 \| \theta \|_2^4 }{\| \theta \|_1^2},
	\qquad 
	\var(T_{2c}) \les \frac{ \| \gam \|_2^4 \| \theta \|_2^6}{ \| \theta \|_1^2 }. 
\end{align*}

The argument for $T_{2d}$ is omitted since it is similar to the one for $T_{2c}$ (note that the two terms have similar structure). The results are stated below.
\begin{align*}
	| \E T_{2d} | &\les \frac{ \| \gam \|_2^2 \| \theta \|_2^4}{ \| \theta \|_1^2},
	\qquad \var(T_{2d}) \les \frac{ \| \gam \|_2^4 \| \theta \|_2^8}{ \| \theta \|_1^3}. 
\end{align*}

Combining the results for $T_{2a}, \ldots, T_{2d}$ yields 
\begin{align*}
	|\E T_2 | \les  \frac{ \| \gam \|_2^2 \| \theta \|_2^4}{ \| \theta \|_1^2} ,
	\qquad \var(T_2) \les \frac{ \| \gam \|_2^4 \| \theta \|_2^8}{ \| \theta \|_1^2}, 
\end{align*}
as desired. \qed 

\subsubsection{Proof of Lemma \ref{lem:real_SgnQ_tistarQ}}	

As before, we only need to analyze the alternative hypothesis. In \cite[Supplement,pg.103]{JinKeLuo21} it is shown that $\ti Q^* - Q^*$ is a sum of $O(1)$ terms of the form 
\begin{align*}
	Y = 	\Bigl(\frac{v}{V}\Bigr)^{N_{\tilde{r}}}\sum_{i,j,k,\ell (dist)}a_{ij}b_{jk}c_{k\ell}d_{\ell i},
	\num \label{compareXY-0}
\end{align*}
where $a,b,c,d\in \{\widetilde{\Omega}, W, \delta, -(\teta-\eta)(\teta-\eta)^\T\}$, and $N_{\ti r}$ denotes the number of $a, b, c, d$ that are equal to $-(\teta-\eta)(\teta-\eta)^\T$.

Similarly, let $N_W$ denote the number of $a,b,c,d$ that are equal to $W$, and $N_{\tilde \Omega}$ and $N_\delta$ are similarly defined. Write
\beq \label{compareXY-1}
Y =  \Bigl(\frac{v}{V}\Bigr)^m X, \quad\quad \mbox{where}\quad X =  \sum_{i,j,k,\ell (dist)}a_{ij}b_{jk}c_{k\ell}d_{\ell i}. 
\eeq
Note that for this proof, we do not need the explicit decomposition: we only will use the fact that $\ti Q^* - Q^*$ is a sum of $O(1)$ terms. At times, we refer to these terms of the form $Y$ composing $\ti Q^* - Q^*$ as \textit{post-expansion sums}. 

In \cite{JinKeLuo21} it is shown that $4 \geq N_{\tilde r} \geq 1$ for every post-expansion sum (note that the upper bound of $4$ is trivial). It turns out that this is the \textit{only} constraint on the post-expansion sums; so we need to analyze every single possible combination of nonnegative integers $(N_{\tilde \Omega}, N_W, N_\delta, N_{\tilde r})$ where their sum is $4$ and $N_{\tilde r} \geq 1$ and then arrange $a,b,c,d \in \{ \tilde \Omega, W, \delta, - (\tilde \eta - \eta)(\tilde \eta - \eta)^{\mathsf{T}}\}$ in all possible ways according to \eqref{compareXY-0}. This leads to a total of $34$ possibilities, all of which are shown in Table \ref{tb:remainder} reproduced from \cite{JinKeLuo21}. 

\begin{table}[tb!]  
	\centering
	\caption{\textit{Note: This table and caption reproduced from Table G.4 of \cite{JinKeLuo21}.}The $34$ types of the $175$ post-expansion sums for $(\widetilde{Q}^*_n-Q_n^*)$.}    \label{tb:remainder}
	\scalebox{.95}{
		\begin{tabular}{lccclcc}
			Notation  & $\#$ & $N_{\tilde{r}}$ &  ($N_{\delta}, N_{\widetilde{\Omega}}, N_W)$ &     Examples & $N^*_W$\\
			\hline 
			$R_1$  & 4  &1&(0, 0, 3)   & $\sum_{i, j, k,\ell (dist)} \tilde{r}_{ij}W_{jk} W_{k\ell} W_{\ell i}$ & 5\\  
			$R_2$  & 8  &1&(0, 1, 2)   & $\sum_{i, j, k,\ell (dist)} \tilde{r}_{ij} \widetilde{\Omega}_{jk} W_{k\ell} W_{\ell i}$ & 4\\ 
			$R_3$  & 4  & &     & $\sum_{i, j, k,\ell (dist)} \tilde{r}_{ij} W_{jk}\widetilde{\Omega}_{k\ell} W_{\ell i}$ &  4\\ 
			$R_4$ & 8 &1&(0, 2, 1)   & $\sum_{i, j, k,\ell (dist)} \tilde{r}_{ij} \widetilde{\Omega}_{jk} \widetilde{\Omega}_{k\ell} W_{\ell i}$ & 3\\  
			$R_5$ & 4 &&   & $\sum_{i, j, k,\ell (dist)} \tilde{r}_{ij} \widetilde{\Omega}_{jk}W_{k\ell} \widetilde{\Omega}_{\ell i}$ & 3\\   
			$R_6$ & 4 & 1& (0, 3, 0)   & $\sum_{i, j, k,\ell (dist)} \tilde{r}_{ij}\widetilde{\Omega}_{jk} \widetilde{\Omega}_{k\ell} \widetilde{\Omega}_{\ell i}$  & 2\\   
			$R_7$   & 8  & 1 & (1, 0, 2)   & $\sum_{i, j, k,\ell (dist)} \tilde{r}_{ij} \delta_{jk} W_{k\ell} W_{\ell i}$ & 5\\  
			$R_8$   & 4  &  &   & $\sum_{i, j, k,\ell (dist)} \tilde{r}_{ij} W_{jk}\delta_{k\ell} W_{\ell i}$ & 5\\ 
			$R_9$  & 8  &1 &(1, 1, 1)   & $\sum_{i, j, k,\ell (dist)} \tilde{r}_{ij}\delta_{jk}\widetilde{\Omega}_{k\ell}  W_{\ell i}$ & 4   \\ 
			$R_{10}$  & 8  & &   & $\sum_{i, j, k,\ell (dist)} \tilde{r}_{ij}\widetilde{\Omega}_{jk}  W_{k \ell}\delta_{\ell i}$ &   
			4  \\ 
			$R_{11}$ & 8  &&   & $\sum_{i, j, k,\ell (dist)} \tilde{r}_{ij}W_{jk}\delta_{k\ell }\widetilde{\Omega}_{\ell i}  $ &   
			4 \\ 
			$R_{12}$ & 8 &1& (1, 2, 0)   & $\sum_{i, j, k,\ell (dist)}\tilde{r}_{ij} \delta_{jk}  \widetilde{\Omega}_{k\ell} \widetilde{\Omega}_{\ell i}$ & 3\\ 
			$R_{13}$ & 4 & & & $\sum_{i, j, k,\ell (dist)}\tilde{r}_{ij} \widetilde{\Omega}_{jk} \delta_{k\ell}  \widetilde{\Omega}_{\ell i}$ & 3\\
			$R_{14}$ & 8 & 1 &(2, 0, 1)   & $\sum_{i, j, k,\ell (dist)} \tilde{r}_{ij}\delta_{jk} \delta_{k\ell} W_{\ell i}$    &  5\\  
			$R_{15}$ & 4 &  &   & $\sum_{i, j, k,\ell (dist)} \tilde{r}_{ij}\delta_{jk} W_{k\ell }\delta_{\ell i} $    &  5\\ 
			$R_{16}$ & 8 & 1 & (2, 1, 0) & $\sum_{i, j, k,\ell (dist)} \tilde{r}_{ij}\delta_{jk} \delta_{k\ell} \widetilde{\Omega}_{\ell i}$    &  4\\  
			$R_{17}$  & 4 &  &  & $\sum_{i, j, k,\ell (dist)} \tilde{r}_{ij}\delta_{jk}\widetilde{\Omega}_{k\ell} \delta_{\ell i}$    & 4 \\  
			$R_{18}$ & 4 &  1 & (3,  0,  0)  & $\sum_{i, j, k,\ell (dist)} \widetilde{r}_{ij}\delta_{jk} \delta_{k\ell} \delta_{\ell i}$ & 5\\ 
			\hline 
			$R_{19}$   & 4  & 2 & (0, 0, 2)   & $\sum_{i, j, k,\ell (dist)} \tilde{r}_{ij} \tilde{r}_{jk} W_{k\ell} W_{\ell i}$ &6 \\ 
			$R_{20}$      & 2  &  &    & $\sum_{i, j, k,\ell (dist)} \tilde{r}_{ij} W_{jk}\tilde{r}_{k\ell} W_{\ell i}$ & 6\\  
			$R_{21}$   & 4 & 2 & (0, 2, 0)   & $\sum_{i, j, k,\ell (dist)} \tilde{r}_{ij} \tilde{r}_{jk} \widetilde{\Omega}_{k\ell} \widetilde{\Omega}_{\ell i}$ & 4\\ 
			$R_{22}$     & 2  &  &    & $\sum_{i, j, k,\ell (dist)} \tilde{r}_{ij} \widetilde{\Omega}_{jk}\tilde{r}_{k\ell} \widetilde{\Omega}_{\ell i}$ & 4\\
			$R_{23}$     & 4  & 2 & (2, 0, 0)   & $\sum_{i, j, k,\ell (dist)} \tilde{r}_{ij} \tilde{r}_{jk} \delta_{k\ell} \delta_{\ell i}$ & 6\\ 
			$R_{24}$      & 2  &  &    & $\sum_{i, j, k,\ell (dist)} \tilde{r}_{ij} \delta_{jk}\tilde{r}_{k\ell} \delta_{\ell i}$ & 6\\    
			$R_{25}$  & 8  & 2 & (0, 1, 1)   & $\sum_{i, j, k,\ell (dist)} \tilde{r}_{ij} \tilde{r}_{jk} \widetilde{\Omega}_{k\ell} W_{\ell i}$ & 5\\ 
			$R_{26}$      & 4  &  &    & $\sum_{i, j, k,\ell (dist)} \tilde{r}_{ij} \widetilde{\Omega}_{jk}\tilde{r}_{k\ell} W_{\ell i}$ & 5\\  
			$R_{27}$   & 8  & 2 & (1, 1, 0)   & $\sum_{i, j, k,\ell (dist)} \tilde{r}_{ij} \tilde{r}_{jk} \delta_{k\ell} \widetilde{\Omega}_{\ell i}$ & 5\\ 
			$R_{28}$     & 4  &  &    & $\sum_{i, j, k,\ell (dist)} \tilde{r}_{ij} \delta_{jk}\tilde{r}_{k\ell} \widetilde{\Omega}_{\ell i}$ & 5\\
			$R_{29}$      & 8  & 2 & (1, 0, 1)   & $\sum_{i, j, k,\ell (dist)} \tilde{r}_{ij} \tilde{r}_{jk} \delta_{k\ell} W_{\ell i}$ & 6\\ 
			$R_{30}$      & 4  &  &    & $\sum_{i, j, k,\ell (dist)} \tilde{r}_{ij} \delta_{jk}\tilde{r}_{k\ell} W_{\ell i}$ & 6\\  
			\hline 
			$R_{31}$   & 4  & 3 & (0, 0, 1)   & $\sum_{i, j, k,\ell (dist)} \tilde{r}_{ij} \tilde{r}_{jk} \tilde{r}_{k\ell} W_{\ell i}$ & 7\\ 
			$R_{32}$   & 4  & 3 & (0, 1, 0)   & $\sum_{i, j, k,\ell (dist)} \tilde{r}_{ij} \tilde{r}_{jk} \tilde{r}_{k\ell} \widetilde{\Omega}_{\ell i}$ & 6\\  
			$R_{33}$  & 4  & 3 & (1, 0, 0)   & $\sum_{i, j, k,\ell (dist)} \tilde{r}_{ij} \tilde{r}_{jk} \tilde{r}_{k\ell} \delta_{\ell i}$ & 7\\   
			\hline
			$R_{34}$     & $1$  & 4 & (0, 0, 0)   & $\sum_{i, j, k,\ell (dist)} \tilde{r}_{ij} \tilde{r}_{jk} \tilde{r}_{k\ell} \tilde{r}_{\ell i}$ & 8\\   
			\hline
		\end{tabular} 
	} 
\end{table} 

In \cite[Supplement,pg.103]{JinKeLuo21} it is shown that 
\begin{align*}
	| \E[Y - X]| &\leq o( \| \theta \|_2^{-2} ) \sqrt{ \E[X^2]} + o(1), \text{ and }
	\\ \var(Y) &\leq 2 \var(X) + o( \| \theta \|_2^{-4} ) \E[X^2] + o(1). 
	\num \label{eqn:XvsY_moments}
\end{align*}
The proof of \eqref{eqn:XvsY_moments} in \cite{JinKeLuo21} only requires the heterogeneity assumptions \eqref{eqn:assn2}--\eqref{eqn:assn4} and the following two conditions. First, we must have the tail inequality
\beq \label{lem-event-tail}
\mathbb{P}(|V-v|>t) \leq \begin{cases}
	2\exp\bigl( -\frac{C_1}{\|\theta\|_1^2} t^2\bigr), & \mbox{when } x_n\|\theta\|_1 \leq t \leq \|\theta\|_1^2,\\
	2\exp\bigl( -C_2t \bigr), &\mbox{when }t > \|\theta\|_1^2. 
\end{cases} 
\eeq 
Second, it must hold that $|Y-X|$ is dominated by a polynomial in $V$. See \cite[Lemma G.10 and G.11]{JinKeLuo21} for further details.  Both conditions are satisfied in our setting, so indeed \eqref{eqn:XvsY_moments} applies.

Let $N_W$ and $N_\delta$ denote the number of $a, b, c, d$ that are equal to $W$ and $\delta$, respectively. As in \cite{JinKeLuo21}, we define 
\begin{align*}
	N_W^* = N_W + N_\delta + 2 N_{\ti r}
	\num \label{eqn:NW*}
\end{align*}
and divide our analysis into parts based on this parameter.

\paragraph{Analysis of terms with $N^*_W\leq 4$} For convenience, we reproduce Table G.5 from \cite{JinKeLuo21} in Table \ref{tb:Order-4}. The left column of Table \ref{tb:Order-4} lists all of the terms with $N^*_W\leq 4$ , where note that factors of $(\frac{v}{V})^{N_{\ti r}}$ are removed. In the right column terms are listed that have similar structure to those on the left. Precisely, a term in the left column has the form 
\begin{align*}
	X &= \sum_{i_1, \ldots, i_m \in \mathcal{R}} c_{i_1, \ldots, i_m} 
	G_{i_1, \ldots, i_m}, 	
\end{align*}
and its adjacent term on the right column has the form
\[
X^* =  \sum_{i_1, \ldots, i_m \in \mathcal{R}} c^*_{i_1, \ldots, i_m} 
G_{i_1, \ldots, i_m}, 
\]
analogous to $T$ and $T^*$ from Lemma \ref{lem:transfer}. By inspection, we see that for each term in the left column, the canonical upper bounds $\overline{c_{i_1, \ldots, i_m}}$ and $ \overline{c^*_{i_1, \ldots, i_m}}$  on the coefficients $c_{i_1, \ldots, i_m}$ and $c^*_{i_1, \ldots, i_m}$ satisfy 
\[\overline{c_{i_1, \ldots, i_m}} \les \overline{c^*_{i_1, \ldots, i_m}}.\] 
Recall that these canonical upper bounds were defined in Section \ref{sec:proof_strategy}. Thus the conclusion of Lemma \ref{lem:transfer} applies, and we have for each term $X$ in the left column of Table \ref{tb:Order-4},
\begin{align*}
	|\E X| \les \overline{ \E X^* }, 
	\qquad \var(X) \les \overline{\var(X^*)}. 
\end{align*}

\begin{table}[tb!]  
	\centering
	\caption{ \textit{For clarity, this table and caption are borrowed from Table G.5 of \cite{JinKeLuo21}.}The $14$ types of post-expansion sums with $N^*_W\leq 4$. The right column displays the post-expansion sums defined before which have similar forms as the post-expansion sums in the left column. For some terms in the right column, we permute $(i,j,k,\ell)$ in the original definition for ease of comparison with the left column. (In all expressions, the subscript ``$i,j,k,\ell (dist)$" is omitted.)}    \label{tb:Order-4}
	\scalebox{.9}{
		\begin{tabular}{lc | l c}
			&     Expression  &  & Expression\\
			\hline   
			$R_2$     & $\sum (\teta_i-\eta_i)(\teta_j-\eta_j) \widetilde{\Omega}_{jk} W_{k\ell} W_{\ell i}$ & $Z_{1b}$ &  $\sum (\teta_i-\eta_i)\eta_j (\teta_j-\eta_j)\eta_k W_{k\ell} W_{\ell i}$\\ 
			$R_3$  & $\sum (\teta_i-\eta_i)(\teta_j-\eta_j) W_{jk}\widetilde{\Omega}_{k\ell} W_{\ell i}$ &  $Z_{2a}$ & $\sum \eta_\ell (\teta_j-\eta_j)W_{jk}\eta_k(\teta_i-\eta_i)W_{i\ell}$\\ 
			$R_4$ &  $\sum (\teta_i-\eta_i)(\teta_j-\eta_j) \widetilde{\Omega}_{jk} \widetilde{\Omega}_{k\ell} W_{\ell i}$ & $Z_{3d}$ & $\sum (\teta_i-\eta_i)\eta_j(\teta_j-\eta_j)\eta_k\widetilde{\Omega}_{k\ell}W_{\ell  i}$\\  
			$R_5$ &  $\sum(\teta_i-\eta_i)(\teta_j-\eta_j) \widetilde{\Omega}_{jk}W_{k\ell} \widetilde{\Omega}_{\ell i}$ &$Z_{4b}$ & $\sum \widetilde{\Omega}_{ij}(\teta_j-\eta_j)\eta_kW_{k\ell}\eta_\ell(\teta_i-\eta_i)$ \\   
			$R_6$ &  $\sum (\teta_i-\eta_i)(\teta_j-\eta_j)\widetilde{\Omega}_{jk} \widetilde{\Omega}_{k\ell} \widetilde{\Omega}_{\ell i}$  & $Z_{5a}$ & $\sum \eta_i(\teta_j-\eta_j)\widetilde{\Omega}_{jk}\widetilde{\Omega}_{k\ell}\eta_\ell (\teta_i-\eta_i)$ \\   
			$R_9$  & $\sum(\teta_i-\eta_i)(\teta_j-\eta_j)^2\eta_k \widetilde{\Omega}_{k\ell}  W_{\ell i}$ & $T_{1d}$ &   
			$\sum \eta_{\ell}(\teta_j-\eta_j)^2\eta^2_k(\teta_i-\eta_i)W_{i\ell}$\\ 
			& $\sum(\teta_i-\eta_i)(\teta_j-\eta_j)\eta_j (\teta_k-\eta_k)\widetilde{\Omega}_{k\ell}  W_{\ell i}$ & $T_{1a}$ &   
			$\sum \eta_{\ell}(\teta_j-\eta_j)\eta_j(\teta_k-\eta_k)\eta_k(\teta_i-\eta_i)W_{i\ell}$\\ 
			$R_{10}$  & $\sum (\teta_i-\eta_i)^2(\teta_j-\eta_j) \widetilde{\Omega}_{jk}  W_{k \ell}\eta_\ell$ &   
			$T_{1c}$ & $\sum (\teta_j-\eta_j)\eta_kW_{k\ell}\eta_\ell (\teta_i-\eta_i)^2\eta_j$ \\ 
			& $\sum (\teta_i-\eta_i)(\teta_j-\eta_j) \widetilde{\Omega}_{jk}  W_{k \ell}(\teta_\ell -\eta_\ell)\eta_i$ & $T_{1a}$ & $\sum (\teta_j-\eta_j)\eta_kW_{k\ell}(\teta_\ell-\eta_\ell)\eta_i (\teta_i-\eta_i)\eta_j$  
			\\ 
			$R_{11}$  & $\sum (\teta_i-\eta_i)(\teta_j-\eta_j) W_{jk}\eta_k (\teta_\ell-\eta_\ell)\widetilde{\Omega}_{\ell i}$ & $T_{1a}$  & $\sum (\teta_i-\eta_i)\eta_k W_{kj} (\teta_j-\eta_j)\eta_\ell (\teta_\ell-\eta_\ell)\eta_i$   
			\\ 
			& $\sum (\teta_i-\eta_i)(\teta_j-\eta_j)W_{jk}(\teta_k-\eta_k)\eta_\ell \widetilde{\Omega}_{\ell i}  $ & $T_{1b}$ & $\sum \eta_i (\teta_k-\eta_k) W_{kj} (\teta_j-\eta_j)\eta^2_\ell (\teta_i-\eta_i)$   
			\\ 
			$R_{12}$ &  $\sum (\teta_i-\eta_i)(\teta_j-\eta_j)^2\eta_k  \widetilde{\Omega}_{k\ell} \widetilde{\Omega}_{\ell i}$ & $T_{2c}$ & $\sum \eta_i(\teta_j-\eta_j)^2\eta_k\widetilde{\Omega}_{k\ell}\eta_\ell(\teta_i-\eta_i)$\\ 
			&  $\sum (\teta_i-\eta_i)(\teta_j-\eta_j)\eta_j(\teta_k-\eta_k) \widetilde{\Omega}_{k\ell} \widetilde{\Omega}_{\ell i}$ & $T_{2a}$ & $\sum \eta_i(\teta_j-\eta_j)\eta_j(\teta_k-\eta_k)\widetilde{\Omega}_{k\ell}\eta_\ell(\teta_i-\eta_i)$\\ 
			$R_{13}$ & $\sum (\teta_i-\eta_i)(\teta_j-\eta_j) \widetilde{\Omega}_{jk} (\teta_k-\eta_k)\eta_\ell  \widetilde{\Omega}_{\ell i}$ & $T_{2b}$ & $\sum \eta_i(\teta_j-\eta_j)\widetilde{\Omega}_{jk}(\teta_k-\eta_k)\eta_{\ell}^2(\teta_i-\eta_i)$\\
			$R_{16}$ &  $\sum (\teta_i-\eta_i)(\teta_j-\eta_j)^2\eta_k(\teta_k-\eta_k)\eta_{\ell} \widetilde{\Omega}_{\ell i}$    &  $F_b$ &  $\sum \eta_i (\teta_j-\eta_j)^2\eta_k (\teta_k-\eta_k)\eta_\ell^2(\teta_i-\eta_i)$\\    
			&  $\sum (\teta_i-\eta_i)(\teta_j-\eta_j)^2\eta^2_k(\teta_\ell-\eta_\ell)\widetilde{\Omega}_{\ell i}$ & $F_b$ &  $\sum \eta_i (\teta_j-\eta_j)^2 \eta_k^2 (\teta_\ell-\eta_\ell)\eta_\ell(\teta_i-\eta_i)$\\ 
			&  $\sum (\teta_i-\eta_i)(\teta_j-\eta_j)\eta_j(\teta_k-\eta_k)^2\eta_\ell \widetilde{\Omega}_{\ell i}$  &  $F_b$ &  $\sum \eta_i (\teta_j-\eta_j)\eta_j(\teta_k-\eta_k)^2\eta_\ell^2(\teta_i-\eta_i)$\\
			&  $\sum (\teta_i-\eta_i)(\teta_j-\eta_j)\eta_j(\teta_k-\eta_k)\eta_k(\teta_\ell -\eta_\ell) \widetilde{\Omega}_{\ell i}$  &  $F_a$ &  $\sum \eta_i (\teta_j-\eta_j)\eta_j(\teta_k-\eta_k)\eta_k(\teta_\ell-\eta_\ell)\eta_\ell (\teta_i-\eta_i)$\\ 
			$R_{17}$  & $\sum (\teta_i-\eta_i)(\teta_j-\eta_j)\eta_j(\teta_k-\eta_k)\widetilde{\Omega}_{k\ell}(\teta_\ell-\eta_\ell)\eta_i$  & $F_a$ &  $\sum \eta_i (\teta_j-\eta_j)\eta_j(\teta_k-\eta_k)\eta_k(\teta_\ell-\eta_\ell)\eta_\ell (\teta_i-\eta_i)$\\ 
			& $\sum (\teta_i-\eta_i)(\teta_j-\eta_j)^2\eta_k \widetilde{\Omega}_{k\ell}(\teta_\ell-\eta_\ell)\eta_i$    & $F_b$ &  $\sum \eta_i (\teta_j-\eta_j)^2 \eta_k^2 (\teta_\ell-\eta_\ell)\eta_\ell(\teta_i-\eta_i)$\\ 
			& $\sum (\teta_i-\eta_i)^2(\teta_j-\eta_j)^2 \eta_k\widetilde{\Omega}_{k\ell} \eta_\ell$    & $F_c$ &  $\sum\eta_\ell(\teta_i-\eta_i)^2\eta_k^2(\teta_j-\eta_j)^2\eta_\ell$\\
			$R_{21}$   & $\sum (\teta_i-\eta_i)(\teta_j-\eta_j)^2(\teta_k-\eta_k) \widetilde{\Omega}_{k\ell} \widetilde{\Omega}_{\ell i}$ & $F_b$ &  $\sum \eta_i (\teta_j-\eta_j)^2\eta_k (\teta_k-\eta_k)\eta_\ell^2(\teta_i-\eta_i)$\\
			$R_{22}$     & $\sum (\teta_i-\eta_i)(\teta_j-\eta_j) \widetilde{\Omega}_{jk}(\teta_k-\eta_k)(\teta_\ell-\eta_\ell) \widetilde{\Omega}_{\ell i}$ &$F_a$ &  $\sum \eta_i (\teta_j-\eta_j)\eta_j(\teta_k-\eta_k)\eta_k(\teta_\ell-\eta_\ell)\eta_\ell (\teta_i-\eta_i)$\\ 
			\hline
		\end{tabular} 
	} 
\end{table} 

As discussed in Section \ref{sec:proof_strategy}, the upper bounds on the means and variances in Lemmas \ref{lem:ideal_SgnQ}--\ref{lem:Uc} are in fact upper bounds on $\overline{ \E X^* }$ and $\overline{\var(X^*)}$. By \eqref{eqn:XvsY_moments} and Lemmas \ref{lem:ideal_SgnQ}--\ref{lem:Uc}, for every post-expansion sum $Y$ with $N_W^* \leq 4$ we have
\begin{align*}
	|\E Y| &\leq | \E X | + o( \| \theta \|_2^{-2} ) \sqrt{ \E[X^2]} 
	= | \E X | + o( \| \theta \|_2^{-2} ) \sqrt{ \E[X]^2 + \var(X)} 
	\\&\les  \ti \lambda^2 \lambda_1  +  
	o( \| \theta \|_2^{-2} ) \cdot \sqrt{  \ti \lambda^4 \lambda_1^2 + \lambda_1^4 + \ti \lambda^6 + \ti \lambda^2 \lambda_1^3 }
	\\&\les \ti \lambda^2 \lambda_1 + \lambda_1^2 + \ti \lambda^3 + |\ti \lambda| \lambda_1^{3/2}
	= o(\ti \lambda^4)
\end{align*}
by the assumption that $| \ti \lambda |/\sqrt{\lambda_1} \to \infty$. Similarly,
\begin{align*}
	\var(Y) &\les  \var(X) + o( \| \theta \|_2^{-4} ) \E[X^2] 
	= \var(X) + o( \| \theta \|_2^{-4} )( \E[X]^2 + \var(X) )
	\\&\les  \lambda_1^4 + \ti \lambda^6 + \ti \lambda^2 \lambda_1^3 
	+ o( \| \theta \|_2^{-4} ) \cdot
	\big(  \ti \lambda^4 \lambda_1^2 + \lambda_1^4 + \ti \lambda^6 + \ti \lambda^2 \lambda_1^3 \big) \les o(\ti \lambda^8). 
\end{align*}

\paragraph{Analysis of terms with $N^*_W > 4$} 

Recall that
\begin{align*}
	\eta=\frac{1}{\sqrt{v}}(\mathbb{E}A){\bf 1}_n,\;\; \teta = \frac{1}{\sqrt{v}}A{\bf 1}_n,\;\; v= {\bf 1}_n'(\mathbb{E}A){\bf 1}_n\cr. 
\end{align*}
Define
\begin{align}
	\label{eqn:G_def}
	G_i = \teta_i - \eta_i. 
\end{align}

Among the post-expansion sums in Table \eqref{tb:remainder} satisfying $N^*_W =5$, only $R_7, R_8,$ and $R_{25}$--$R_{28}$ depend on $\ti \Omega$. Each of these terms falls into one of the types
\begin{align*}
	J'_5 &= \sum_{i, j, k,\ell (dist)} \widetilde{\Omega}_{jk} (G_iG_j G_kG_\ell W_{\ell i}),\\
	J'_6 &= \sum_{i, j, k,\ell (dist)} \widetilde{\Omega}_{k\ell}(G_iG^2_j G_k W_{\ell i}) \\
	J_9 &= \sum_{i, j, k,\ell (dist)} \eta_k \widetilde{\Omega}_{\ell i} (G_iG_j^2 G_kG_\ell)
	\\ J_{10} &= \sum_{i, j, k,\ell (dist)} \eta_\ell \widetilde{\Omega}_{\ell i}(G_iG_j^2 G^2_k).
\end{align*}
See \cite[Supplement, Section G.4.10.2]{JinKeLuo21} for more details.

To handle $J'_5$ and $J'_6$, we compare them to 
\begin{align*}
	J_5 &= \sum_{i, j, k,\ell (dist)} \eta_j\eta_k (G_iG_j G_kG_\ell W_{\ell i}) \\
	J_6 &= \sum_{i, j, k,\ell (dist)} \eta_k\eta_\ell (G_iG^2_j G_k W_{\ell i}),
\end{align*}
both of which are considered in \cite[Supplement, Section G.4.10.2]{JinKeLuo21}. Note that neither $J_5$ nor $J_5$ depends on $\ti \Omega$. Setting $T = J'_5$ and $T^* = J_5$ in Lemma \ref{lem:transfer} and noting that 
$| \ti \Omega_{jk} | \les \theta_j \theta_k$ by \eqref{eqn:beta_bds}, we see that the hypotheses of Lemma \ref{lem:transfer} are satisfied. In \cite[Supplement, Section G.4.10.2]{JinKeLuo21}, it is shown that 
\begin{align*}
	\E[ J_5^2 ] \leq \overline{ \E[ J_5 ] }^2 + 
	\overline{ \var(J_5) } = o( \| \theta \|_2^8 ).
\end{align*}
Applying Lemma \ref{lem:transfer}, we conclude that 
\[\E[ J_5^{'2}] = o( \| \theta \|_2^8 ).\]

Similarly, it is shown in \cite[Supplement, Section G.4.10.2]{JinKeLuo21} that 
\begin{align*}
	\E[ J_6^2 ] \leq \overline{ \E[ J_6 ] }^2 + 
	\overline{ \var(J_6) } = o( \| \theta \|_2^8 ).
\end{align*}
Setting $T = J'_6$ and $T^* = J_6$, the hypotheses of Lemma \ref{lem:transfer} are satisfied because $| \ti \Omega_{k\ell}| \les \theta_k \theta_\ell$. We conclude that 
\[\E[ J_6^{'2}] = o( \| \theta \|_2^8 ).\]

The terms $J_9$ and $J_{10}$ can be analyzed explicitly using the strategy described in Section \ref{sec:proof_strategy}. We omit the full details and instead give a simplified proof in the case where $\| \theta \|_2 \gg [\log(n)]^{5/2}$. The event
\beq  \label{eventE}
E = \cap_{i=1}^n E_i, \qquad \mbox{where}\quad E_i= \big\{ \sqrt{v} |G_i|\leq C_0\sqrt{\theta_i\|\theta\|_1\log(n)} \bigr\}. 
\eeq
is introduced in \cite[Supplement,pg.110]{JinKeLuo21}.
By applying Bernstein's inequality and the union bound, it is shown that $E$ holds with probability at least $1 - n^{-C_0/2.01}$. Applying the crude bound $|G_i| \leq n$ and triangle inequality, we see that $|J_9| \les n^9$ with high probability, and thus for $C_0$ sufficiently large,
\begin{align*}
	\E[ |J_9|^2 \cdot \mf{1}_{E^c} ]= o(1). 
\end{align*}
Under the event $E$, we have by \eqref{eqn:tiOm_bd}, 
\begin{align*}
	|J_9|&\leq \sum_{i,j,k,\ell}|\eta_k\widetilde{\Omega}_{\ell i}||G_iG_j^2G_kG_\ell|\cr
	&\les \sum_{i,j,k,\ell}(\theta_i \theta_k\theta_\ell)\frac{\sqrt{\theta_i\theta_j^2\theta_k\theta_\ell\|\theta\|_1^5 [\log(n)]^5}}{\sqrt{v^5}} \cr
	&\les \frac{[\log(n)]^{5/2}}{\sqrt{\|\theta\|_1^5}} \Bigl(\sum_i \theta^{3/2}_i\Bigr)\Bigl(\sum_j \theta_j\Bigr)\Bigl(\sum_k \theta^{3/2}_k\Bigr)\Bigl(\sum_\ell \theta^{3/2}_\ell\Bigr) \cr
	&\les \frac{ [\log(n)]^{5/2}}{\sqrt{\|\theta\|_1^3}} \Bigl(\sum_i \theta^{3/2}_i\Bigr)^3\cr
	&\les \frac{ [\log(n)]^{5/2}}{\sqrt{\|\theta\|_1^3}} \Bigl(\sum_i\theta_i^2\Bigr)^{3/2}\Bigl(\sum_i\theta_i\Bigr)^{3/2}\cr
	&\les [\log(n)]^{5/2}\|\theta\|^3.
\end{align*}
It follows that 
\begin{align*}
	\E[ J_9^2 ] =\var(J_9) + \E[ J_9]^2 = o(\| \theta \|_2^8 ). 
\end{align*}

We give a similar, simplified argument for $J_{10}$ assuming that $\| \theta \|_2 \gg [\log(n)]^{5/2}$. Under the event $E$, we have 
\begin{align*}
	|J_{10}| &\leq  \sum_{i, j, k,\ell} |\eta_\ell \widetilde{\Omega}_{\ell i}| |G_iG_j^2 G^2_k|\cr
	&\les \sum_{i,j,k,\ell}(\theta_i\theta_\ell^2)\frac{\sqrt{\theta_i\theta_j^2\theta^2_k\|\theta\|_1^5[\log(n)]^5}}{\sqrt{v^5}}\cr
	&\les \frac{[\log(n)]^{5/2}}{\sqrt{\|\theta\|_1^5}}\Bigl(\sum_i \theta_i^{3/2}\Bigr)\Bigl(\sum_j \theta_j\Bigr)\Bigl(\sum_k\theta_k\Bigr)\Bigl(\sum_\ell \theta_\ell^2\Bigr)\cr
	&\les \frac{[\log(n)]^{5/2}}{\sqrt{\|\theta\|_1^5}}\bigl(\|\theta\|\sqrt{\|\theta\|_1}\bigr)\|\theta\|^2_1\|\theta\|^2\cr
	&\les [\log(n)]^{5/2}\|\theta\|^3;
\end{align*} 
Hence
\begin{align*}
	\E[ J_{10}^2 ] =\var(J_{10}) + \E[ J_{10}]^2 = o(\| \theta \|_2^8 ). 
\end{align*}

Next we consider the terms with $N_W^*=6$. The only term that depends on $\ti \Omega$ is $R_{32}$, which has the form
\[
K_5' = \sum_{i, j, k,\ell (dist)} \widetilde{\Omega}_{ik} G_iG_j^2G_kG_\ell^2.
\]
The variance of $K_5'$ can be analyzed explicitly using the strategy described in Section \ref{sec:proof_strategy}. To save space, we give a simplified argument when $\| \theta \|_2 \gg [\log(n)]^{3/2}$. Again let $E$ denote the event \eqref{eventE}. Under this event we have
\begin{align*}
	|K_5'| &\les \sum_{i,j,k,\ell} (\theta_i\theta_k)\frac{\sqrt{\theta_i\theta_j^2\theta_k\theta_\ell^2}\|\theta\|_1^3[\log(n)]^3}{v^3}\cr
	&\les \frac{[\log(n)]^3}{\|\theta\|_1^3}\Bigl(\sum_{i}\theta_i^{3/2}\Bigr)\Bigl(\sum_j\theta_j\Bigr)\Bigl(\sum_k \theta_k^{3/2}\Bigr)\Bigl(\sum_\ell\theta_\ell\Bigr) \cr
	&\les \frac{[\log(n)]^3}{\|\theta\|_1^3}\bigl(\|\theta\|\sqrt{\|\theta\|_1}\bigr)^2\|\theta\|_1^2\cr
	&\les [\log(n)]^3\|\theta\|^2,
\end{align*}
Above we apply \eqref{eqn:tiOm_bd} and \eqref{eqn:beta_bds} as well as Cauchy--Schwarz. It follows that
\begin{align*}
	\E[ K_5^{'2} ] =\var(K_5') + \E[ K_5']^2 = o(\| \theta \|_2^8).  
\end{align*}

Finally, all terms with $N^*_W \geq 7$ have no dependence on $\ti \Omega$, and thus the bounds carry over immediately (see \cite[Supplement, Section G.4.10.4]{JinKeLuo21} for details). This completes the proof of the lemma. \qed 

\subsubsection{Proof of Lemma \ref{lem:real_SgnQ_starQ}}

Define 
\[
\epsilon_{ij}^{(1)} = \eta_i^*\eta_j^*-\eta_i\eta_j, \quad \epsilon_{ij}^{(2)} =  (1-\frac{v}{V})\eta_i\eta_j, \quad \epsilon^{(3)}_{ij}= -(1-\frac{v}{V})\delta_{ij}. 
\]
Note that $\epsilon_{ij}\rp{1}$ is a nonstochastic term. As shown in \cite[Supplement, pg. 119]{JinKeLuo21}, we have
\begin{align*}
	| \epsilon_{ij}\rp{1} | \les \frac{ \| \theta \|_\infty }{\| \theta \|_1} \cdot \theta_i \theta_j,
\end{align*}
which implies that
\begin{align*}
	| \epsilon_{ij}\rp{1} | \les \frac{ 1 }{\| \theta \|_2^2} \cdot \theta_i \theta_j
	\num \label{eqn:epsilon1_bd}
\end{align*}
by \eqref{eqn:assn2}. 

As discussed in \cite[Supplement, Section G.3]{JinKeLuo21}, $Q - Q^*$ is a finite sum of terms of the form
\beq \label{lastlemma-1}
\sum_{i,j,k,\ell (dist)} a_{ij}b_{jk}c_{k\ell}d_{\ell i}, \qquad \mbox{where}\quad a,b,c,d\in \{\widetilde{\Omega}, W,\delta, \tilde{r}, \epsilon\rp{1}, \epsilon\rp{2}, \epsilon\rp{3} \}.
\eeq

Let $Y$ denote an arbitrary term of the form above, and given $X \in \{\widetilde{\Omega}, W,\delta, \tilde{r}, \epsilon\rp{1}, \epsilon\rp{2}, \epsilon\rp{3} \}$, let $N_X$ denote the total number of $a, b, c, d$ that are equal to $X$. It holds that
\[
Y= \big( \frac{v}{V} \big)^{N_{\ti r}} (-1)^{N_\epsilon^{(3)}}\Bigl(1 - \frac{v}{V}\Bigr)^{N^{(2)}_{\epsilon}+N^{(3)}_{\epsilon}} X, \qquad X\equiv \sum_{i,j,k,\ell (dist)} a_{ij}b_{jk}c_{k\ell}d_{\ell i}. 
\]
where
\beq \label{lastlemma-6}
\begin{cases}
	a, b, c, d \in \{\widetilde{\Omega}, W, \delta, (V/v)\tilde{r}, \epsilon^{(1)}, \eta \eta^\T \}, \\
	\mbox{number of $\eta_i\eta_j$ in the product is $N^{(2)}_{\epsilon}$},\\
	\mbox{number of $\delta_{ij}$ in the product is $N_{\delta}+N_{\epsilon}^{(3)}$},\\
	\mbox{number of any other term in the product is same as before}. 
\end{cases}
\eeq
Let $x_n$ denote a sequence of real numbers such that  $\sqrt{ \log(\| \theta \|_1 )} \ll x_n \ll \| \theta \|_1$. Mimicking the argument in \cite[Supplement,pg.121]{JinKeLuo21}, it holds that 
\begin{align*}
	\E[Y^2] \les \Bigl(\frac{x^2_n}{\|\theta\|_1^2}\Bigr)^{N^{(2)}_{\epsilon}+N^{(3)}_{\epsilon}}\cdot \mathbb{E}[X^2]  + o(1),
\end{align*}
By  \eqref{eqn:assn4}, there exists a sequence $\log( \| \theta \|_1 ) \ll x_n \ll \| \theta \|_1 /\| \theta \|_2^2$. Hence,
\begin{align*}
	\E[Y^2] \les \Bigl(\frac{1}{\|\theta\|_2^4}\Bigr)^{N^{(2)}_{\epsilon}+N^{(3)}_{\epsilon}}\cdot \mathbb{E}[X^2]  + o(1),
	\num \label{eqn:EY2_bd}
\end{align*}
Thus we focus on controlling $\E[ X^2]$. 

Consider a new random variable $X^*$ defined to be
\begin{align*}
	X^* \equiv \sum_{i,j,k,\ell (dist)} a_{ij}^* b_{jk}^* c_{k\ell}^* d_{\ell i}^*
\end{align*}
where 
\begin{align*}
	a^* &= 
	\begin{cases}
		\frac{ 1 }{\| \theta \|_2^2} \cdot \theta \theta^\T  &\quad \text{ if } a = \epsilon\rp{1} 
		\\
		\theta \theta^\T &\quad \text{ if } a \in \{ \ti \Omega, \eta \eta^\T \}
		\\
		a &\quad \text{ otherwise }
	\end{cases}  
	\\ b^* &= 
	\begin{cases}
		\frac{ 1 }{\| \theta \|_2^2} \cdot \theta \theta^\T  &\quad \text{ if } b = \epsilon\rp{1}
		\\
		\theta \theta^\T &\quad \text{ if } b \in \{ \ti \Omega, \eta \eta^\T \}
		\\
		b &\quad \text{ otherwise }
	\end{cases}  
	\\ c^* &= 
	\begin{cases}
		\frac{ 1 }{\| \theta \|_2^2} \cdot \theta \theta^\T  &\quad \text{ if } c = \epsilon\rp{1} 
		\\
		\theta \theta^\T &\quad \text{ if } c \in \{ \ti \Omega, \eta \eta^\T \}
		\\
		c &\quad \text{ otherwise }
	\end{cases}  
	\\ d^* &= 
	\begin{cases}
		\frac{ 1 }{\| \theta \|_2^2} \cdot \theta \theta^\T  &\quad \text{ if } d = \epsilon\rp{1}
		\\
		\theta \theta^\T &\quad \text{ if } \in \{ \ti \Omega, \eta \eta^\T \}
		\\
		d &\quad \text{ otherwise }. 
	\end{cases}  
\end{align*}

Also define 
\[
\ti X = \sum_{ijk\ell (dist)} \ti a_{ij} \ti b_{jk} \ti c_{k\ell} \ti d_{\ell i}
\]
where 
\begin{align*}
	\ti a &= 
	\begin{cases}
		\theta \theta^\T  &\quad \text{ if } a \in \{\epsilon\rp{1} , \ti \Omega, \eta \eta^\T \} 
		\\
		a &\quad \text{ otherwise }
	\end{cases}  
	\\ \ti b &= 
	\begin{cases}
		\theta \theta^\T  &\quad \text{ if } b  \in \{\epsilon\rp{1} , \ti \Omega, \eta \eta^\T \} 
		\\
		b &\quad \text{ otherwise }
	\end{cases}  
	\\ \ti c &= 
	\begin{cases}
		\theta \theta^\T  &\quad \text{ if } c \in \{\epsilon\rp{1} , \ti \Omega, \eta \eta^\T \}  
		\\
		c &\quad \text{ otherwise }
	\end{cases}  
	\\ \ti d &= 
	\begin{cases}
		\theta \theta^\T  &\quad \text{ if } d \in  \{\epsilon\rp{1} , \ti \Omega, \eta \eta^\T \}
		\\
		d &\quad \text{ otherwise }. 
	\end{cases}  
\end{align*}
Note that $X^* = \big( \frac{1}{\| \theta \|_2^2 } \big)^{N_{\epsilon}\rp{1}} \ti X$ and $\ti a, \ti b, \ti c, \ti d \in \{\theta \theta^\T,  W, \delta, (V/v)\ti r \}$. Later we show that
\begin{align*}
	\E[ X^2] \les \E[X^{*2}]
	\num \label{eqn:XX*_comparison}
\end{align*}

First we bound $\E[\ti X^2]$ in the case when $N_W + N_\delta + N_{\ti r} = 0$. Note that for all such terms in $Q - Q^*$, we have $N_\epsilon\rp{1} + N_\epsilon\rp{2} + N_\epsilon\rp{3} + N_{\ti \Omega} = 4$ and $N_{\ti \Omega} < 4$. In particular, $\ti X$ and $X^*$ are nonstochastic.  If $N_{\ti \Omega} = 3$, then by \eqref{eqn:etai_bd} and \eqref{eqn:beta_bds}, 
\begin{align*}
	|\ti X| &= \big| 
	\sum_{ijk\ell (dist) } \ti \Omega_{ij} 
	\ti \Omega_{jk} 	\ti \Omega_{k\ell} \theta_i \theta_\ell \big| \les 
	\frac{1}{\| \theta \|_2^2} 
	\sum_{ijk\ell} \beta_i \theta_i^2 \beta_j^2 \theta_j^2 \beta_k^2 \theta_k^2 \beta_\ell \theta_\ell^2
	\les \| \gam \|_2^6 \| \theta \|_2^2
\end{align*}
If $N_{\ti \Omega} = 2$, there are two cases. First,
\begin{align*}
	|\ti X| &=\big| \sum_{ijk\ell (dist)}
	\ti \Omega_{ij} 	\ti \Omega_{jk} 	\theta_k \theta_\ell \theta_\ell \theta_i  
	\big|
	\les \sum_{ijk\ell }
	\beta_i \theta_i \beta_j^2 \theta_j^2 \beta_k \theta_k^2 \theta_\ell^2 \theta_i 
	\les \| \gam \|_2^4 \| \theta \|_2^4,
\end{align*}
and second
\begin{align*}
	|\ti X| &=\big| 
	\sum_{ijk\ell (dist)} \ti \Omega_{ij} \theta_j \theta_k \ti \Omega_{k\ell} \theta_\ell \theta_i \big|
	\les 
	\sum_{ijk\ell } \beta_i \theta_i^2 \beta_j \theta_j^2  \beta_k \theta_k^2 \beta_\ell \theta_\ell^2 
	\les \| \gam \|_2^4 \| \theta \|_2^4
\end{align*}
Finally if $N_{\ti \Omega} = 1$, 
\begin{align*}
	|\ti X| &= \big| \sum_{ ijk\ell (dist)} 
	\ti \Omega_{ij} \theta_j \theta_k^2 \theta_\ell^2 \theta_i  \big| 
	\les \sum_{ ijk\ell } 
	\ti \beta_i \theta_i^2 \beta_j \theta_j^2  \theta_k^2 \theta_\ell^2 
	\les \| \gam \|_2^2 \| \theta \|_2^6. 
\end{align*}
Note that when $N_W + N_\delta + N_{\ti r} = 0$
\[
|X| \les |X^*|
\]
by \eqref{eqn:etai_bd}, \eqref{eqn:tiOm_bd}, and \eqref{eqn:epsilon1_bd}. By the bounds above, we conclude that
\begin{align*}
	|Y| \les \big( \frac{1}{\| \theta \|_2^2} \big)^{N_\eps\rp{1} + N_\eps\rp{2} + N_\eps\rp{3}} |\ti X| \les \max_{1 \leq k \leq 3}
	\| \gam \|_2^{2k} \| \theta \|_2^{2(4 -  k)}
	\les | \ti \lambda |^3. 
	\num \label{eqn:realSgnQ_nonstoch}
\end{align*}

Next we bound $\E[\ti X^2]$ in the case when $N_W + N_\delta + N_{\ti r} > 0$. By Lemma \ref{lem:tilde_Omega} and the definition of $f \in \mathbb{R}^2$ there,  we have $\ti \Omega_{ij} = \alpha_i \alpha_j \theta_i \theta_j $ where $\alpha  = \Pi f$. Observe that in Lemmas \ref{lem:ideal_SgnQ}--\ref{lem:real_SgnQ_tistarQ},  we bound the mean and variance of all terms of the form
\[
Z\equiv \sum_{i,j,k,\ell (dist)} a_{ij} b_{jk} c_{k\ell} d_{\ell i}, \qquad \mbox{where}\quad a, b, c, d\in \{\widetilde{\Omega}, W, \delta, (V/v) \tilde{r}\}.
\]
As a result, the proofs of Lemmas \ref{lem:ideal_SgnQ}--\ref{lem:real_SgnQ_tistarQ} produce a function $F$ such that 
\begin{align*}
	\E[ Z^2] \leq F(\theta, \beta; N_{\ti \Omega}, N_W,  N_\delta, N_{\ti r}),
\end{align*}
where recall that $|\alpha_i| \leq \beta_i$.

Note that in what follows, we use $'$ to denote a new variable rather than the transpose. As a direct corollary to the proofs of Lemmas \ref{lem:ideal_SgnQ}--\ref{lem:real_SgnQ_tistarQ}, if we define a new matrix $\ti \Omega' = \alpha'_i \alpha'_j \theta_i \theta_j$ where $\alpha'$ is a vector with a coordinate-wise bound of the form $|\alpha'_i| \leq \beta'_i$, then 
\[
Z' \equiv \sum_{i,j,k,\ell (dist)} a_{ij} b_{jk} c_{k\ell} d_{\ell i}, \qquad \mbox{where}\quad a, b, c, d\in \{\widetilde{\Omega}', W, \delta, (V/v)\tilde{r}\}
\]
satisfies 
\begin{align*}
	\E[Z^{'2}] \leq F(\theta, \beta'; N_{\ti \Omega'}', N_W',  N_\delta', N_{\ti r}'),
\end{align*}
where, for example, $N'_\delta$ counts the number of appearances of $\delta$ in $Z'$. This can be verified by tracing each calculation in Lemmas \ref{lem:ideal_SgnQ}--\ref{lem:real_SgnQ_tistarQ} line by line, replacing all occurences of $\tilde \Omega$ with $\tilde \Omega'$, and replacing every usage of the bound $|\alpha_i| \leq \beta_i$ with $|\alpha_i'| \leq \beta'_i$ instead. In other words, our proofs make no use of the specific value of $\alpha = \Pi f$. 

In particular, if $\alpha = \mf{1}$, then $\ti \Omega' = \theta \theta^\T$. In this case we may set $\beta = \mf{1}$. Observe that $\ti X$ has the form of $Z'$  with this choice of $\ti \Omega'$. Hence,
\begin{align*}
	\E[\ti X^2] \leq F(\theta, \mf{1}; \ti N_{\ti \Omega'}, \ti N_W,  \ti N_\delta, \ti N_{\ti r}). 
	\num \label{eqn:X2_F_bd}
\end{align*}
By careful inspection of the bounds in Lemmas \ref{lem:ideal_SgnQ}--\ref{lem:real_SgnQ_tistarQ}, we see that
\begin{align*}
	F(\theta, \mf{1}; N_{\ti \Omega'}, N_W,  N_\delta, N_{\ti r}) \les \| \theta \|_2^{12}. 
	\num \label{eqn:F_bd}
\end{align*}

In  \cite[Supplement, Section G.3]{JinKeLuo21}  it is shown that all terms in the decomposition of $Q - Q^*$ satisfy $N_\epsilon\rp{1} + N_\epsilon\rp{2} + N_\epsilon\rp{3} > 0$. Using this fact along with \eqref{eqn:EY2_bd}, \eqref{eqn:XX*_comparison}, \eqref{eqn:X2_F_bd} and \eqref{eqn:F_bd}, 
\begin{align*}
	\E[Y^2] &\les \Bigl(\frac{1}{\|\theta\|_2^4}\Bigr)^{N^{(2)}_{\epsilon}+N^{(3)}_{\epsilon}}\cdot \big( \frac{1}{\| \theta \|_2^2 } \big)^{2N_{\epsilon}\rp{1}} \cdot  \mathbb{E}[\ti X^2]  + o(1)
	\les \| \theta \|_2^8. 
	\num \label{eqn:realSgnQ_stoch}
\end{align*}

Observe that \eqref{eqn:realSgnQ_nonstoch} and \eqref{eqn:realSgnQ_stoch} recover the bounds in Lemma \ref{lem:real_SgnQ_starQ} under the alternative hypothesis, and the bounds under the null hypothesis transfer directly from \cite[Lemma G.12]{JinKeLuo21}. Thus it only remains to justify \eqref{eqn:XX*_comparison} when $N_W + N_\delta + N_{\ti r} > 0$. Let us write 
\begin{align*}
	X &= \sum_{i_1, \ldots, i_m} c_{i_1, \ldots, i_m}
	G_{i_1, \ldots, i_m}
	\\ X^* &=  \sum_{i_1, \ldots, i_m} c^*_{i_1, \ldots, i_m}
	G_{i_1, \ldots, i_m}
\end{align*}
in the form described in Section \ref{sec:proof_strategy}, where now
\begin{itemize}
	\item $c_{i_1, \ldots, i_m} = \prod_{(s, s') \in A} \Gamma_{i_s, i_{s'}}\rp{s,s'} $ is a nonstochastic term where $A \subset [m] \times [m]$ and 
	\[\Gamma\rp{s, s'} \in \{ \ti \Omega, \eta^* \mf{1}^\T, \eta \mf{1}^\T , \mf{1} \mf{1}^\T,  \epsilon\rp{1} , \eta \eta^\T\}\]
	\item $c^*_{i_1, \ldots, i_m} = \prod_{(s, s') \in A} \Gamma_{i_s, i_{s'}}\rp{s,s'} $ is a nonstochastic term where $A \subset [m] \times [m]$ and 
	\[\Gamma\rp{s, s'} \in \{ \eta^* \mf{1}^\T, \eta \mf{1}^\T , \mf{1} \mf{1}^\T, \theta \theta^\T/ \| \theta \|_2^2,  \theta \theta^\T  \}\]
	\item $G_{i_1, \ldots, i_m} = \prod_{ (s, s') \in B } W_{i_s, i_{s'}}$ where $B \subset [m] \times [m]$.
\end{itemize}
If $\Gamma\rp{s, s'} \in \{ \theta \theta^\T ,   \theta \theta^\T/ \| \theta \|_2^2 \}$, we simply let $\overline{\Gamma\rp{s, s'}} = \Gamma\rp{s, s'}$ and define 
\[
\overline{c^*_{i_1, \ldots, i_m}} 
= \prod_{(s, s') \in A} \overline{ \Gamma_{i_s, i_{s'}}\rp{s,s'} }
\]
as in Section \ref{sec:proof_strategy}. We also define the canonical upper bound $\overline{\E X^*}$ on $| \E X^*|$ and the canonical upper bound $\overline{\var(X^*)}$ on $\var(X^*)$ similarly to Section \ref{sec:proof_strategy}. By the discussion above and \eqref{eqn:X2_F_bd},
\begin{align*}
	\overline{\E[X^*]} 
	\equiv \big( \frac{1}{\| \theta \|_2^2 } \big)^{N_{\epsilon}\rp{1}} \sqrt{ F(\theta, \mf{1}; \ti N_{\ti \Omega'}, \ti N_W,  \ti N_\delta, \ti N_{\ti r}) },
\end{align*}
and
\begin{align*}
	\overline{\var(X^*)} 
	\equiv \big( \frac{1}{\| \theta \|_2^2 } \big)^{2N_{\epsilon}\rp{1}} F(\theta, \mf{1}; \ti N_{\ti \Omega'}, \ti N_W,  \ti N_\delta, \ti N_{\ti r}). 
\end{align*}
Next observe that 
\begin{align*}
	|c_{i_1, \ldots, i_m} |
	\les 	|c^*_{i_1, \ldots, i_m} |
	\les |\overline{c^*_{i_1, \ldots, i_m}} |. 
\end{align*}
By a mild extension of Lemma \ref{lem:transfer} it follows that
\begin{align*}
	| \E X | &\les \overline{  \E X^*  }
	\\ \var( X) &\les \overline{ \var(X^*)},
\end{align*}
which verifies \eqref{eqn:XX*_comparison} and completes the proof.  \qed 	

\subsection{Calculations in the SBM setting}
\label{sec:SBM_calc}
We compute the order of $\lambda_1$ and $\ti \lambda_1 = \lambda_2$ in the SBM setting (which are the two nonzero eigenvalues of $\Omega$).  By basic algebra, 
$\lambda_1,\lambda_2$ are also the two nonzero eigenvalues of the following matrix
\begin{equation*}
	\left[
	\begin{array}{cc}
		N & 0 \\
		0 & n-N  
	\end{array} 
	\right]^{1/2}\times
	\left[
	\begin{array}{cc}
		a & b \\
		b & c  
	\end{array} 
	\right]
	\times
	\left[
	\begin{array}{cc}
		N & 0 \\
		0 & n-N  
	\end{array} 
	\right]^{1/2}= 		 	 \left[
	\begin{array}{cc}
		aN & \sqrt{N(n-N)}b \\
		\sqrt{N(n-N)}b & (n-N)c  
	\end{array} 
	\right],  
\end{equation*}
where $b$  is given by (\ref{model2}).
By direct calculations and pluging the definitions of $b$,
\begin{align*}
	\lambda_1=&\frac{aN+(n-N)c+\sqrt{(aN-(n-N)c)^2+4N(n-N)b^2}}{2}\\
	=&\frac{aN+(n-N)c+|(n-N)c-aN|\frac{n}{n-2N}}{2}. 
\end{align*}
Recall that 
\[
b = \frac{nc - N(a+c)}{n - 2 N}. 
\] 
It is required that $b \geq 0$. Therefore, 
\begin{equation}\label{condition1}
	n c - (a+c) N \geq 0,\qquad \text{and so}\qquad (n-N)c \geq aN.  
\end{equation}
By direct calculations, it follows that
\[
\lambda_1=\frac{(n-N)^2c-aN^2}{n-2N}=\frac{(n-N)c((n-N)-\frac{aN}{(n-N)c}N)}{n-2N}\sim \frac{(n-N)c(n-N)}{n-2N}\sim nc
\]
where in the last two $\asymp$, we have used  $(n-N)c \geq aN$ and  $N=o(n)$.
Similarly,
\[
\lambda_2=\frac{aN+(n-N)c-\sqrt{(aN-(n-N)c)^2+4N(n-N)b^2}}{2}=\frac{(a-c)N(n-N)}{n-2N}\sim N(a-c). 
\]

\section{Proof of Theorem \ref{thm:chi2} (Powerlessness of $\chi^2$ test)} 


We compare the SgnQ test with the $\chi^2$ test. Recall we assume 
$\theta_i = \mf{1}_n$. The $\chi^2$ test statistic is defined to be
\[
X_n = \frac{1}{\hat \alpha (1 - \hat \alpha)(n-1) } \sum_{i = 1}^n \big( (A \mf{1}_n)_i - \hat \alpha n \big)^2, \qquad \text{ where } \hat \alpha = \frac{1}{n(n-1)} \sum_{i \neq j} A_{ij}. 
\]
We also define an idealized $\chi^2$ test statistic by
\[
\ti X_n = \frac{1}{ \alpha (1 -  \alpha)(n-1) } \sum_{i = 1}^n \big( (A \mf{1}_n)_i -  \alpha n \big)^2, \qquad \text{ where } \alpha = \frac{1}{n(n-1)} \sum_{i \neq j} \Omega_{ij} . 
\]
The $\chi^2$ test is defined to be
\[
\chi^2_n = \mf{1}\bigg[ \frac{ |X_n - n|}{\sqrt{2n}} > z_{\gamma/2} \bigg],
\]
where $z_\gamma$ is such that $\p[ |N(0,1)| \geq z_\gamma ] = \gamma$. Similarly, the idealized $\chi^2$ test is defined by
\[
\ti \chi^2_n = \mf{1}\bigg[ \frac{ |\ti X_n - n|}{\sqrt{2n}} > z_{\gamma/2} \bigg],
\]
In certain degree-homogeneous settings, the $\chi^2$ test is known to have full power \cite{Ery1, cammarata2022power}. 

We prove the following, which directly implies Theorem \ref{thm:chi2}. 
\begin{thm} \label{thm:chi2_sup} 
	Suppose that \eqref{altconditions} holds and that $|\ti \lambda|/\sqrt{ \lambda_1} \to \infty$, and recall that under these conditions, the power of the SgnQ test goes to $1$. Next suppose that the following regularity conditions hold under the null and alternative: 
	\begin{itemize}
		\item[(i)] $\theta = \mf{1}_n$
		\item[(ii)] $\alpha \to 0$ 
		\item[(iii)] $\alpha^2 n \to \infty$
		\item[(iv)] $\sum_{ij} (\Omega_{ij} - \alpha)^2 = o( \alpha n^{3/2} )$. 
	\end{itemize}
	Then the power of both the $\chi^2$-test and idealized $\chi^2$-test goes to $\gamma$ (which is the prescribed level of the test).  
\end{thm}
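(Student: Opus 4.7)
The strategy is: first reduce the data-dependent statistic $X_n$ to the idealized $\tilde X_n$; then prove $(\tilde X_n - n)/\sqrt{2n} \to N(0,1)$ under both the null and the alternative. Since the limiting distribution is the same in both cases, the level-$\gamma$ test rejects with probability tending to $\gamma$ under the alternative, which is the claim. For the reduction, observe that $\hat\alpha$ is an average of $n(n-1)$ Bernoullis with variance of order $\alpha/n^2$, so by Chebyshev $\hat\alpha/\alpha = 1 + o_P(1)$ under condition (iii). A Taylor expansion comparing $\hat\alpha(1-\hat\alpha)$ with $\alpha(1-\alpha)$ and $\sum_i (y_i - n\hat\alpha)^2$ with $\sum_i (y_i - n\alpha)^2$ then gives $|X_n - \tilde X_n|/\sqrt{n} \to 0$ in probability.

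For $\tilde X_n$ itself, write $y_i - n\alpha = b_i + S_i$, where $b_i = m_i - n\alpha$ is deterministic with $m_i = \E y_i = \sum_{j \neq i} \Omega_{ij}$, and $S_i = \sum_{j \neq i} W_{ij}$ is mean zero. Expanding,
\[
(n-1)\alpha(1-\alpha)\, \tilde X_n \;=\; \sum_i S_i^2 \;+\; 2 \sum_i b_i S_i \;+\; \sum_i b_i^2 .
\]
The bias $\sum_i b_i^2$ is the crux. Here we leverage the identifiability calibration $P h \propto \mf{1}_K$ from Section~\ref{subsec:identifiability}: together with $\theta = \mf{1}_n$, this yields the degree-matching identity $(\Omega \mf{1}_n)_i \equiv L$ for some constant $L$ independent of $i$. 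Hence $|b_i| = |L - \Omega_{ii} - n\alpha| = O(1)$ uniformly in $i$, so $\sum_i b_i^2 = O(n) = o(\alpha n^{3/2})$ by (iii). For the cross term, the covariance structure $\cov(S_i, S_{i'}) = \Omega_{ii'}(1 - \Omega_{ii'}) = O(\alpha)$ for $i \ne i'$ and $\var(S_i) = \sum_{j \ne i}\Omega_{ij}(1-\Omega_{ij}) = O(n\alpha)$ yields $\var(\sum_i b_i S_i) = O(n^2 \alpha)$, which is negligible after dividing by $\sqrt{n}\cdot(n-1)\alpha(1-\alpha)$ since (iii) gives $\alpha n \to \infty$.

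It remains to handle the main term $\sum_i S_i^2$. Its expectation equals $\sum_{i \ne j}\Omega_{ij}(1-\Omega_{ij})$; since $\sum_{i \ne j}(\Omega_{ij}-\alpha) = 0$ by the definition of $\alpha$, algebra gives $\E[\sum_i S_i^2] = n(n-1)\alpha(1-\alpha) - \sum_{i \ne j}(\Omega_{ij}-\alpha)^2$, so condition (iv) ensures this differs from the null value by only $o(\alpha n^{3/2})$ and thus by $o(\sqrt{n})$ after normalization. Decompose $\sum_i S_i^2 = U + V$ with $U = 2\sum_{i<j} W_{ij}^2$ and $V = 2\sum_i \sum_{j<k,\, j,k \ne i} W_{ij} W_{ik}$; moment computations show that $U$ and $V$ are uncorrelated with $\var(U) = O(n^2\alpha)$ and $\var(V) = O(n^3\alpha^2)$, so under (iii) the degenerate $U$-statistic $V$ dominates and its standard deviation matches the required scale $\asymp \sqrt{n}\cdot(n-1)\alpha(1-\alpha)$.

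The main technical obstacle is the CLT for $V$, which I would establish via a martingale-difference decomposition indexed by the independent Bernoulli edge variables $\{A_{ij}\}_{i<j}$ in lexicographic order; conditions (ii) and (iii) furnish the Lindeberg/Lyapunov bounds needed to verify that the normalized $V$ converges to $N(0,1)$. Combining this CLT with the negligibility of $U$ (of order $\sqrt{n^2\alpha}\ll n^{3/2}\alpha$), the bias, and the cross term, one obtains $(\tilde X_n - n)/\sqrt{2n} \to N(0,1)$ under both hypotheses, and the reduction step of the first paragraph completes the proof.
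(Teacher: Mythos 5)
Your proposal is correct and its core mirrors the paper's argument — decompose the quadratic form into a bias, a linear term, and a degenerate second-order chaos; kill the bias with degree matching and condition (iv); kill the linear term with a variance bound and condition (iii); prove the CLT for the chaos with a martingale central limit theorem — but you organize the bookkeeping differently in two places worth noting. First, you center $A_{ij}$ by the true mean $\Omega_{ij}$ from the outset, which makes the degree-matching bound $b_i=(\Omega\mathbf{1})_i-\Omega_{ii}-n\alpha=O(1)$ explicit and turns the bias into $\sum_i b_i^2=O(n)$ plus the centering shift $\sum_{i\neq j}(\Omega_{ij}-\alpha)^2$. The paper instead keeps the $\alpha$-centering, works with $T_n=\sum_{i,j,k\,(\text{dist})}(A_{ik}-\alpha)(A_{jk}-\alpha)$, and exploits degree matching through $\sum_{i,j,k}\bar\Omega_{ik}\bar\Omega_{jk}=0$ so that only the coincident-index remainder of $T_{n3}$ survives; the two are equivalent up to terms of order $n\alpha=o(\alpha n^{3/2})$. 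Your version isolates the role of $(\Omega\mathbf{1})_i\equiv n\alpha_0$ a bit more transparently. Second, for the reduction from $X_n$ to $\widetilde X_n$ you use a direct additive Chebyshev-plus-Taylor comparison, whereas the paper adopts the multiplicative factorization $\tfrac{X_n-n}{\sqrt{2n}}=\sqrt{\tfrac{n-2}{n-1}}\,U_n V_n Z_n$ from \cite{cammarata2022power}, in which $V_n=\hat T_n/T_n\to_P1$ is shown only after $Z_n\Rightarrow N(0,1)$ is established; your additive route avoids that two-step dependence at the cost of tracking several cross-terms explicitly.

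Two details are only gestured at and should be tightened if you flesh this out. (a) You only claim $\var(V)\asymp n^3\alpha^2$, but for the limiting law to be exactly $N(0,1)$ — and hence for the rejection probability to converge to $\gamma$ rather than some other constant — you need the sharper statement $\var(V)\sim 2n(n-1)(n-2)\alpha^2(1-\alpha)^2$, which the paper proves by writing $\var(T_{n1})=2\mathbf{1}'\Omega^2\mathbf{1}+O(\alpha n^2)$ and using $\mathbf{1}'\Omega^2\mathbf{1}=\sum_k(\Omega\mathbf{1})_k^2\sim n^3\alpha^2$ together with $\alpha n\to\infty$. Your degree-matching identity gives this almost for free ($m_i\sim n\alpha$ uniformly), so it is a one-line addition. (b) The martingale CLT is invoked but not verified; the paper checks the conditional-variance and Lyapunov conditions directly and uses conditions (ii)–(iii) at both steps. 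Modulo those two elaborations, your argument is sound.
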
 

Note that the previous theorem implies Theorem \ref{thm:chi2}. By  Theorem \ref{thm:alt-SgnQ}, SgnQ has full power even without the extra regularity conditions (i)--(iv).  On the other hand, for any fixed alternative DCBM satisfying  (i)--(iv), Theorem \ref{thm:chi2_sup} implies that $\chi^2$ has power $\kappa$.  

\begin{proof}[Proof of Theorem \ref{thm:chi2_sup}]
	Theorem \ref{thm:alt-SgnQ} confirms that SgnQ  has full power provided that \eqref{altconditions} holds and that $|\ti \lambda|/\sqrt{ \lambda_1} \to \infty$. It remains to justify the powerlessness of the $\chi^2$ test. 
	
	Consider an SBM in the alternative such that $\Omega \mf{1} = (\alpha n) \mf{1}$ and $|\ti \lambda|/\sqrt{ \lambda_1} \asymp N(a - c)/\sqrt{nc} \to \infty$. To do this we select an integer $N > 0$ to be the size of the smaller community and set $b = \frac{ cn - (a + c)N }{n - 2N}$. The remaining regularity conditions are satisfied if $c \to 0$ and $cn \ll N(a - c)^2 \ll c n^{3/2}$. We show that both $X_n$ and $\ti X_n$ are asymptotically normal under the specified alternative, which is enough to imply Theorem \ref{thm:chi2_sup}.
	
	In \cite{cammarata2022power} it is shown that
	\beq \label{chi1-equivalent} 
	\hat T_n \equiv [(n-1) \hat{\alpha} (1 - \hat \alpha)](X_n-n) = \sum_{i,j,k \text{ (dist.)}}(A_{ik}-\hat{\alpha})(A_{jk}-\hat{\alpha}). 
	\eeq 
	
	We introduce an idealized version $T_n$ of $\hat T_n$, which is
	\begin{align*}
		T_n = \sum_{i,j,k \text{ (dist.)}}(A_{ik}-{\alpha})(A_{jk}- {\alpha}),
	\end{align*}
	
	Following \cite{cammarata2022power}, we have 
	\begin{equation} \label{chi2-proof-2}
		\frac{X_n-n}{\sqrt{2n}}=\left(\frac{n-2}{n-1}\right)^{1/2}U_nV_nZ_n. 
	\end{equation}
	where 
	\begin{align*}
		U_n = \frac{\alpha_n(1-\alpha_n)}{\hat{\alpha}_n(1-\hat{\alpha}_n)} & \text{,} & V_n = \frac{\hat{T}_n}{T_n} & \text{,} & Z_n=\frac{\frac{T_n}{(n-1)\alpha_n(1-\alpha_n)}}{\sqrt{\frac{2n(n-2)}{(n-1)}}}.
	\end{align*}
	
	Since the terms of $\hat \alpha$ are bounded,  the law of large numbers implies that $U_n \stackrel{\p}{\goto} 1$. Furthermore, since $\alpha n \to \infty$ by assumption that $\alpha^2 n \to \infty$,
	a straightforward application of the Berry-Esseen theorem implies that 
	\begin{equation*}
		\sqrt{\frac{n(n-1)}{2}}\frac{\hat{\alpha}_n-\alpha_n}{\sqrt{\alpha_n(1-\alpha_n)}} \Rightarrow \mathcal{N}( \mu ,1). 
	\end{equation*}
	
	With the previous fact, mimicking the argument in \cite[pg.32]{cammarata2022power}, it also follows that 
	\[
	V_n \stackrel{\p}{\goto} 1,
	\]
	\textit{provided we can show that} $Z_n \Rightarrow N(0,1)$. We omit the details since the argument is very similar.
	
	Thus it suffices to study $Z_n$. We first analyze $T_n$, which we decompose as
	\begin{align*}
		T_n &= 
		\sum_{i,j,k \text{ (dist.)}}(A_{ik}-\Omega_{ik})(A_{jk}-\Omega_{jk})
		+ 2 \sum_{ijk (dist)} (\Omega_{ik}- \alpha)(A_{jk}-\Omega_{jk})
		\\&\quad +  \sum_{ijk (dist)} (\Omega_{ik} - \alpha) (\Omega_{jk} - \alpha)
		\equiv T_{n1} + T_{n2} + T_{n3}.
	\end{align*}
	
	Observe that $T_{n3}$ is non-stochastic. The second and third term are negligible compared to $T_{n1}$. Define $\overline{\Omega} = \Omega - \alpha \mf{1} \mf{1}'$. By direct calculations,
	\begin{align*}
		\E T_{n2} =0, \qquad 
	\end{align*}
	and
	\begin{align*}
		\var(T_{n2}) &= 8\sum_{j < k (dist)} \big( \sum_{i \notin \{ j, k \}} \overline{\Omega}_{ik} \big)^2 \Omega_{jk} (1 - \Omega_{jk}) 
		= 8\sum_{j < k (dist)} \big( \overline{\Omega}_{jk} + \overline{\Omega}_{kk} \big)^2 \Omega_{jk} (1 - \Omega_{jk})
		\les \alpha n^2. 
	\end{align*}
	
	Next,
	\begin{align*}
		|T_{n3}|  &= \big|  \sum_{ijk} \overline{\Omega}_{ik} \overline{\Omega}_{jk} 
		- \sum_{ijk (not \, dist.)} \overline{\Omega}_{ik} \overline{\Omega}_{jk} \big| 
		= \big|  \sum_{ijk (not \, dist.)} \overline{\Omega}_{ik} \overline{\Omega}_{jk} \big| 
		\\&\les \big| \sum_{ij} \overline{ \Omega}_{ii}  \overline{ \Omega}_{ji} \big| 
		+\big|  \sum_{ik} \oO_{ik}^2\big|  + \big|  \sum_i \overline{\Omega}_{ii}^2 \big| 
		= 0 + o( \alpha n^{3/2} ) + n = o( \alpha n^{3/2} ),
	\end{align*}
	where we apply the third regularity condition.  
	
	Now we focus on $T_{n1}$. By direct calculations
	\begin{align*}
		\E T_{n1} = 0,
	\end{align*}
	and
	\begin{align*}
		\var \, T_{n1} &= 
		2	\sum_{i, j, k (dist) } \Omega_{ik}(1 -\Omega_{ik}) \Omega_{jk} (1 - \Omega_{jk})
		\\&= 2\sum_{i, j, k  } \Omega_{ik}(1 -\Omega_{ik}) \Omega_{jk} (1 - \Omega_{jk})-
		2\sum_{i, j, k (not \, dist.) } \Omega_{ik}(1 -\Omega_{ik}) \Omega_{jk} (1 - \Omega_{jk})
		\\&= 2 \mf{1}' \Omega^2 \mf{1} 
		- 2	\sum_{i, j, k (not \, dist.) } \Omega_{ik}(1 -\Omega_{ik}) \Omega_{jk} (1 - \Omega_{jk})
	\end{align*}
	Note that
	\begin{align*}
		2 \mf{1}' \Omega^2 \mf{1} \sim 2n(n-1)(n-2) \alpha^2 
	\end{align*}
	since $\alpha \to 0$. Moreover, with some simple casework we can show
	\begin{align*}
		\sum_{i, j, k (not \, dist.) } \Omega_{ik}(1 -\Omega_{ik}) \Omega_{jk} (1 - \Omega_{jk})
		\les \alpha n^2 = o( \alpha^2 n^3 ),
	\end{align*}
	where we use that $\alpha n \goto\infty$ (because $\alpha^2 n \goto\infty$). Hence
	\begin{align*}
		\var \, T_{n1} \sim 2 n (n-1) (n-2) \alpha^2 (1 - \alpha)^2 \sim 
		2 n (n-1) (n-2) \alpha^2 (1 - \alpha)^2. 
	\end{align*}
	
	To study $T_{n1}$ we apply the martingale central limit theorem using a similar argument to \cite{cammarata2022power}). Define $W_{ij} = A_{ij} - \Omega_{ij}$ and 
	\begin{align*} 
		T_{n,m}&=\sum_{(i,j,k)\in I_m}W_{ik}W_{jk}, \qquad \text{ and } \qquad T_{n,0}=0,\cr
		Z_{n,m}&=\sqrt{\frac{n-1}{2n(n-2)}}\frac{T_{n,m}}{(n-1)\alpha_n(1-\alpha_n)}, \qquad \text{ and } \qquad Z_{n,0}=0.
	\end{align*}
	where 
	\begin{equation*}
		I_m=\{(i,j,k)\in [ m ]^3 \text{ s.t. $i,j,k$ are distinct}\},
	\end{equation*}
	and $m \leq n$. Define a filtration $\{ \mc{F}_{n,m} \}$ where  $\mathcal{F}_{n,m}=\sigma\{W_{ij}, (i,j)\in[m]^2\}$ for all $m\in[n]$, and let  $\mathcal{F}_{n,0}$ be the trivial $\sigma$-field. It is straightforward to verify that $T_{n,m}$  and $Z_{n,m}$ are martingales with respect to this filtration. We further define a martingale difference sequence
	\[
	X_{n,m} = Z_{n,m} - Z_{n,m-1}
	\]
	for all $m \in [n]$.
	
	If we can show that the following conditions hold
	\begin{align}
		\text{(a) }& \sum_{m=1}^n\mathbb{E}[X_{n,m}^2|\mathcal{F}_{n,m-1}]\xrightarrow{\mathbb{P}}1 \label{chi2-proof-5},\\
		\text{(b) }& \forall\epsilon>0, \sum_{m=1}^n\mathbb{E}[X_{n,m}^2\mf{1}\{|X_{n,m}>\epsilon|\}|\mathcal{F}_{n,m-1}]\xrightarrow{\mathbb{P}}0\label{chi2-proof-6},
	\end{align}
	then the Martingale Central Limit Theorem implies that $Z_n\Rightarrow\mathcal{N}(0,1)$.
	
	Our argument follows closely \cite{cammarata2022power}. First consider \eqref{chi2-proof-5}. It suffices to show that 
	\begin{equation}     \label{a0}
		\E\left[\sum_{m=1}^n\mathbb{E}[X_{n,m}^2|\mathcal{F}_{n,m-1}]\right] \xrightarrow{n \to \infty} 1,
	\end{equation}
	and
	\begin{equation}     \label{b0}
		\var\left(\sum_{m=1}^n\mathbb{E}[X_{n,m}^2|\mathcal{F}_{n,m-1}]\right) \xrightarrow{n\to\infty}0. 
	\end{equation}
	For notational brevity, define
	\begin{equation*}
		C_n:=(n-1)\alpha_n(1-\alpha_n)\sqrt{\frac{2n(n-2)}{n-1}}.
	\end{equation*}
	
	Mimicking the argument in \cite[pgs.33-34]{cammarata2022power} shows the following. Note that all sums below are indexed up to $m- 1$.
	\begin{align*} 
		\E[C_n^2X_{n,m}^2|&\mathcal{F}_{n,m-1}] = 4\sum_{k\neq j;\; i\neq l}W_{jk}W_{il}\E\left[W_{mk}W_{mi}\right]
		+ 4\sum_{k\neq j;\; i\neq l}W_{jk}\E\left[W_{im}W_{km}W_{lm}\right]
		\\&+ \sum_{i\neq j;\; k\neq l}\E\left[W_{im}W_{jm}W_{km}W_{lm}\right].
		\num \label{chi2-proof-7}
	\end{align*}
	Continuing, we have
	\begin{align*}
		\E[C_n^2X_{n,m}^2|\mathcal{F}_{n,m-1}] &= 
		4\sum_{i}\sum_{j\neq i,l\neq i}W_{ij}W_{il} 
		\Omega_{mi}(1-\Omega_{mi})
		+ 2\sum_{i, j (dist)} \Omega_{im}(1 - \Omega_{im}) \Omega_{jm} ( 1 - \Omega_{jm} )
		\\&= 4\sum_{ij\ell (dist)} W_{ij}W_{il} 
		\Omega_{mi}(1-\Omega_{mi}) 
		+ 4 \sum_{i, j (dist)} W_{ij}^2 \Omega_{mi} (1 - \Omega_{mi})
		\\&\qquad + 2\sum_{i, j (dist)} \Omega_{im}(1 - \Omega_{im}) \Omega_{jm} ( 1 - \Omega_{jm} ). 
		\num \label{eqn:condl_2m}
	\end{align*}
	Computing expectations,
	\begin{align*}
		\E[	\E[C_n^2 X_{n,m}^2&|\mathcal{F}_{n,m-1}] ]
		\\  &= 4 \sum_{i,j (dist)} \Omega_{ij}(1 - \Omega_{ij}) \Omega_{mi} (1 - \Omega_{mi})
		+ 2\sum_{i, j (dist)} \Omega_{im}(1 - \Omega_{im}) \Omega_{jm} ( 1 - \Omega_{jm} )
	\end{align*}
	Summing over $m$ and a simple combinatorial argument yields
	\begin{align*}
		C_n^2 \E\big[ \sum_{m = 1}^n	\E[X_{n,m}^2|\mathcal{F}_{n,m-1}] \big ]
		&= 2 \sum_{i, j, k (dist)} \Omega_{ik}(1 -\Omega_{ik}) \Omega_{jk} (1 - \Omega_{jk}) \sim C_n^2. 
	\end{align*}
	
	Using the identity
	\[
	W_{ij}^2 = (1 - 2 \Omega_{ij}) W_{ij} + \Omega_{ij}(1 - \Omega_{ij} ),
	\]
	we 
	have
	\begin{align*}
		\E[C_n^2X_{n,m}^2|\mathcal{F}_{n,m-1}] &= 
		4\sum_{ij\ell (dist)} W_{ij}W_{il} 
		\Omega_{mi}(1-\Omega_{mi}) 
		+ 4 \sum_{i, j (dist)} W_{ij}^2 \Omega_{mi} (1 - \Omega_{mi})
		\\&= 	24 \sum_{i < j < \ell} W_{ij}W_{il} 
		\Omega_{mi}(1-\Omega_{mi}) 
		+ 8 \sum_{i<j} W_{ij} (1 - 2 \Omega_{ij}) \Omega_{mi} (1 - \Omega_{mi})
		\\ &\quad + 4 \sum_{i < j} \Omega_{ij}(1 - \Omega_{ij}) \Omega_{mi} (1 - \Omega_{mi}). 
	\end{align*}
	Thus
	\begin{align*}
		\sum_{m=1}^n\mathbb{E}[C_n^2 X_{n,m}^2|\mathcal{F}_{n,m-1}]
		&= 24 \sum_{i < j < \ell} \big( \sum_{m > \max(i,j,\ell) } \Omega_{mi}(1 - \Omega_{mi}) \, \,\big ) W_{ij} W_{i\ell} 
		\\ &\quad + 8 \sum_{i < j} \big( \sum_{m > \max(i,j,\ell) } \Omega_{mi}(1 - \Omega_{mi}) \, \,\big) (1 - 2 \Omega_{ij}) W_{ij}. 
	\end{align*}
	All terms above are uncorrelated. Hence,
	\begin{align*}
		\var\left(\sum_{m=1}^n\mathbb{E}[C_n^2 X_{n,m}^2|\mathcal{F}_{n,m-1}]\right)
		&=  24^2 \sum_{i < j < \ell}  \big( \sum_{m > \max(i,j,\ell) } \Omega_{mi}(1 - \Omega_{mi}) \, \,\big )^2 \Omega_{ij}(1  - \Omega_{ij}) \Omega_{i\ell} (1 - \Omega_{i\ell}) 
		\\&\quad + 64 \sum_{i<j} \big( \sum_{m > \max(i,j,\ell) } \Omega_{mi}(1 - \Omega_{mi}) \, \,\big )^2 (1 - 2 \Omega_{ij})^2 \Omega_{ij}(1 - \Omega_{ij})
		\\&\les n^2 \cdot C_n^2, 
	\end{align*} 
	whence,
	\begin{align*}
		\var\left(\sum_{m=1}^n\mathbb{E}[ X_{n,m}^2|\mathcal{F}_{n,m-1}]\right)
		&\les \frac{n^2}{C_n^2} \asymp \frac{n^2}{\alpha^2 n^3} \to 0
	\end{align*}
	since $\alpha^2  n \to \infty$. Thus we have shown \eqref{a0} and \eqref{b0}, which together prove \eqref{chi2-proof-5}.
	
	Next we prove \eqref{chi2-proof-6}, again following the argument in \cite{cammarata2022power}. In \cite[pg.36]{cammarata2022power} it is shown that it suffices to prove 
	\begin{equation} \label{chi2-proof-10}
		\sum_{m=1}^n\mathbb{E}[X_{n,m}^4]  \xrightarrow{n\to\infty} 0.
	\end{equation}
	Further in \cite[pg.37]{cammarata2022power}, it is shown that 
	\begin{align*}
		\E[C_n^4X_{n,m}^4]=& 16\biggl[\sum_{i<j}\E[W_{jm}^4]\E[(W_{ij}+W_{im})^4]\cr
		& +3\sum_{\substack{i<j,u<v\\i\neq u, j\neq v}}\E[W_{jm}^2]\,\E[(W_{ij}+W_{im})^2]\,\E[W_{vm}^2]\,\E[(W_{uv}+W_{um})^2]\\
		&+3\sum_{\substack{i<j,v\\j\neq v}}\E[W_{jm}^2]\,\E[W_{vm}^2]\,\E[(W_{ij}+W_{im})^2(W_{iv}+W_{im})^2]\\
		&+3\sum_{\substack{i,u<j\\i\neq u}}\E[(W_{ij}+W_{im})^2]\,\E[(W_{uj}+W_{um})^2]\, \E[W_{jm}^4]\biggr].
	\end{align*}
	Going through term by term, we have for $n$ sufficiently large
	\begin{align*}
		\sum_{i<j}\E[W_{jm}^4]\E[(W_{ij}+W_{im})^4]
		&\les \sum_{i,j} \Omega_{jm} \big( \Omega_{ij}+ \Omega_{im} \big)
		\les \alpha^2 n^2
	\end{align*}
	Next
	\begin{align*}
		\sum_{\substack{i<j,u<v\\i\neq u, j\neq v}}\E[W_{jm}^2]\,&\E[(W_{ij}+W_{im})^2]\,\E[W_{vm}^2]\,\E[(W_{ij}+W_{im})^2]
		\les \sum_{ijuv} \Omega_{jm} ( \Omega_{ij}+ \Omega_{jm} )
		\Omega_{vm} (\Omega_{uv}+ \Omega_{um}) 
		\\&=  \sum_{ijuv} \Omega_{jm} \Omega_{ij}
		\Omega_{vm} \Omega_{uv} 
		+  \sum_{ijuv} \Omega_{jm}  \Omega_{ij} 
		\Omega_{vm} \Omega_{um}
		+  \sum_{ijuv} \Omega_{jm}^2
		\Omega_{vm}  \Omega_{uv}
		\\& \quad +  \sum_{ijuv} \Omega_{jm}^2
		\Omega_{vm} \Omega_{um}
		\\&\les \alpha^4 n^4 + \alpha^3 n^3
	\end{align*}
	With a similar argument, we also have, for $n$ sufficiently large, 
	\begin{align*}
		\sum_{\substack{i<j,v\\j\neq v}}\E[W_{jm}^2]\,\E[W_{vm}^2]\,\E[(W_{ij}+W_{im})^2(W_{iv}+W_{im})^2] &\les \alpha^2 n^2 + \alpha^3 n^3
		\\ \sum_{\substack{i,u<j\\i\neq u}}\E[(W_{ij}+W_{im})^2]\,\E[(W_{uj}+W_{um})^2]\, \E[W_{jm}^4]\biggr] &\les \alpha^3 n^3 + \alpha^2 n^2. 
	\end{align*} 
	Thus
	\begin{align*}
		\sum_{m=1}^n\mathbb{E}[X_{n,m}^4] \les \frac{ \alpha^4 n^5 }{C_n^4}
		\sim  \frac{ \alpha^4 n^5 }{ \alpha^4 n^6}  \goto 0, 
	\end{align*}
	which verifies \eqref{chi2-proof-10}. Since \eqref{chi2-proof-10} implies \eqref{chi2-proof-6}, this completes the proof.
\end{proof}

\section{Proof of Theorem~\ref{thm:statLB1} (Statistical lower bound)}

Let $f_0(A)$ be the density under the null hypothesis. Let $\mu(\Pi)$ be the density of $\Pi$, and let $f_1(A|\Pi)$ be the conditional density of $A$ given $\Pi$. The $L_1$ distance between two hypotheses is 
\[
\ell^*\equiv \frac{1}{2} \mathbb{E}_{A\sim f_0}\bigl| \mathbb{E}_{\Pi\sim \mu}L(A, \Pi) -1 \bigr|, \qquad L(A, \Pi) = f_1(A|\Pi)/f_0(A). 
\]
Define 
\beq \label{LBproof1-eq1}
{\cal M}=\bigl\{\Pi: \mbox{$\Pi$ is an eligible membership matrix and $\sum_{i}\pi_i(1)\leq 2n\epsilon$} \bigr\}. 
\eeq
Write $L^{{\cal M}}(A,\Pi)=L(A,\Pi)\cdot 1\{\Pi\in {\cal M}\}$ and define $L^{{\cal M}^c}(A,\Pi)$ similarly.  
By direct calculations, we have
\begin{align} \label{LBproof1-eq2}
	\ell^*&= \frac{1}{2} \mathbb{E}_{A\sim f_0}\bigl| \mathbb{E}_{\Pi\sim \mu}L^{\cal M}(A, \Pi) -1 + \mathbb{E}_{\Pi\sim \mu}L^{{\cal M}^c}(A, \Pi) \bigr|\cr
	&\leq \frac{1}{2} \mathbb{E}_{A\sim f_0}\bigl| \mathbb{E}_{\Pi\sim \mu}L^{\cal M}(A, \Pi) -1\bigr| +\frac{1}{2}\mathbb{E}_{A\sim f_0} \mathbb{E}_{\Pi\sim \mu}L^{{\cal M}^c}(A, \Pi)\cr
	&\equiv \frac{1}{2}\ell_0+\frac{1}{2}\ell_1. 
\end{align} 
Note that $\mathbb{E}_{A\sim f_0} \mathbb{E}_{\Pi\sim \mu}L^{{\cal M}^c}(A, \Pi)=\int_{\Pi\in{\cal M}^c} f_1(A|\Pi)\mu(\Pi)d\Pi dA=\int_{\Pi\in{\cal M}^c}\mu(\Pi)d\Pi=\mu({\cal M}^c)$. We bound the probability of $\mu\in {\cal M}^c$. Note that $\pi_i(1)$ are independent Bernoulli variables with mean $\epsilon$, where $\epsilon\asymp n^{-1}N$. It follows by Bernstein inequality that if $t = 100\sqrt{N \log N}$, the we have conservatively, 
\begin{align*}
	\mathbb{P}\Bigl(\Bigl|\sum_{i}\pi_i(1)-N\Bigr| > t\Bigr)\leq 2\exp\biggl(-\frac{t^2/2}{n\eps + t/3}\biggr) \leq 2\exp\biggl(-\frac{100^2 N (\log N) /2}{200N}\biggr) \les N^{-c} = o(1)
	\num \label{eqn:clique_size_concentration}
\end{align*}
for some $c > 0$. 
It follows that 
\beq  \label{LBproof1-eq3}
\ell_1=\mu({\cal M}^c) = o(1). 
\eeq
By Cauchy-Schwarz inequality, 
\begin{align*}
	\ell_0^2 & \leq \mathbb{E}_{A\sim f_0}\bigl| \mathbb{E}_{\Pi\sim \mu}L^{\cal M}(A, \Pi) -1\bigr|^2\cr
	&= \mathbb{E}_{A\sim f_0}\bigl( \mathbb{E}_{\Pi\sim \mu}L^{\cal M}(A, \Pi))^2  -2\mathbb{E}_{A\sim f_0}\mathbb{E}_{\Pi\sim \mu}L^{{\cal M}}(A, \Pi) + 1\cr
	&= \mathbb{E}_{A\sim f_0}\bigl( \mathbb{E}_{\Pi\sim \mu}L^{\cal M}(A, \Pi))^2  -2\bigl[ 1 - \mathbb{E}_{A\sim f_0}\mathbb{E}_{\Pi\sim \mu}L^{{\cal M}^c}(A, \Pi)\bigr] + 1\cr
	&\leq \mathbb{E}_{A\sim f_0}\bigl( \mathbb{E}_{\Pi\sim \mu}L^{\cal M}(A, \Pi))^2 -1 +o(1), 
\end{align*}
where the third line is from $\mathbb{E}_{A\sim f_0}\mathbb{E}_{\Pi\sim \mu}L(A, \Pi)=1$ and the last line is from \eqref{LBproof1-eq3}. We plug it into \eqref{LBproof1-eq2} to get
\beq \label{LBproof1-eq}
\ell^*\leq \sqrt{\ell_2-1} + o(1), \qquad\mbox{where}\quad \ell_2\equiv \mathbb{E}_{A\sim f_0}\bigl( \mathbb{E}_{\Pi\sim \mu}L^{{\cal M}}(A, \Pi))^2. 
\eeq 
It suffices to prove that $\ell_2\leq 1+o(1)$. 

Below, we study $\ell_2$. Let $\widetilde{\Pi}$ be an independent copy of $\Pi$. Define
\[
S(A, \Pi, \widetilde{\Pi}) = L(A, \Pi)\cdot L(\widetilde{\Pi}, A). 
\]
It is easy to see that
\beq  \label{LBproof1-eq4}
\ell_2 =   \mathbb{E}_{A\sim f_0, \Pi,\tilde{\Pi}\sim \mu}\bigl[S(A,\Pi,\widetilde{\Pi})\cdot 1\{\Pi\in {\cal M}, \widetilde{\Pi}\in {\cal M}\}\bigr].  
\eeq
Denote by $p_{ij}$ and $q_{ij}(\Pi)$ the values of $\Omega_{ij}$ under the null and the alternative, respectively. Write $\delta_{ij}(\Pi)=(q_{ij}(\Pi)-p_{ij})/p_{ij}$. 
By definition, 
\[
S(A, \Pi, \widetilde{\Pi}) = \prod_{i<j} \left[\frac{q_{ij}(\Pi)q_{ij}(\widetilde{\Pi})}{p_{ij}^2}\right]^{A_{ij}}\left[\frac{(1-q_{ij}(\Pi))(1-q_{ij}(\widetilde{\Pi}))}{(1-p_{ij})^2}\right]^{1-A_{ij}}. 
\]
Write for short $q_{ij}(\Pi)=q_{ij}$, $q_{ij}(\widetilde{\Pi})=\tilde{q}_{ij}$, $\delta_{ij}(\Pi)=\delta_{ij}$ and $\delta_{ij}(\widetilde{\Pi})=\tilde{\delta}_{ij}$. By straightforward calculations, we have the following claims:
\beq \label{LBproof1-eq7}
\mathbb{E}_{A\sim f_0}[S(A,\Pi,\widetilde{\Pi})] = \prod_{i<j}\Bigl(1+\frac{p_{ij}\delta_{ij}\tilde{\delta}_{ij}}{1-p_{ij}}\Bigr), 
\eeq
and
\begin{align}  \label{LBproof1-eq8}
	\ln S(A, \Pi, \widetilde{\Pi}) & = \sum_{i<j}A_{ij}\ln\biggl[\frac{(1+\delta_{ij})(1+\tilde{\delta}_{ij})}{(1-\frac{p_{ij}}{1-p_{ij}}\delta_{ij})(1-\frac{p_{ij}}{1-p_{ij}}\tilde{\delta}_{ij})}\biggr] \cr
	&\qquad + \ln \biggl[ \Bigl(1-\frac{p_{ij}}{1-p_{ij}}\delta_{ij}\Bigr)\Bigl(1-\frac{p_{ij}}{1-p_{ij}}\tilde{\delta}_{ij}\Bigr) \biggr]. 
\end{align}
The expression \eqref{LBproof1-eq8} may be useful for the case of $Nc\to 0$. In the current case of $Nc\to\infty$, we use \eqref{LBproof1-eq7}. It follows from \eqref{LBproof1-eq4} that
\begin{align} \label{LBproof1-eq9}
	\ell_2 &= \mathbb{E}_{\Pi,\tilde{\Pi}\sim \mu}\biggl[ \prod_{i<j}\Bigl(1+\frac{p_{ij}\delta_{ij}\tilde{\delta}_{ij}}{1-p_{ij}}\Bigr)  \cdot 1\{\Pi\in {\cal M}, \widetilde{\Pi}\in {\cal M}\}\biggr]\cr
	&=\mathbb{E}_{\Pi,\tilde{\Pi}\sim \mu}\biggl[ \exp\biggl( \sum_{i<j}\ln\Bigl(1+\frac{p_{ij}\delta_{ij}\tilde{\delta}_{ij}}{1-p_{ij}}\Bigr)\biggr)  \cdot 1\{\Pi\in {\cal M}, \widetilde{\Pi}\in {\cal M}\}\biggr]\cr
	&\leq \mathbb{E}_{\Pi,\tilde{\Pi}\sim \mu}\biggl[ \exp(X)  \cdot 1\{\Pi\in {\cal M}, \widetilde{\Pi}\in {\cal M}\}\biggr], \quad\mbox{with}\;\; X \equiv \sum_{i<j}\frac{p_{ij}\delta_{ij}\tilde{\delta}_{ij}}{1-p_{ij}}. 
\end{align}
where the last line is from the universal inequality of $\ln(1+t)\leq t$. 

We further work out the explicit expressions of $p_{ij}$, $\delta_{ij}$ and $\tilde{\delta}_{ij}$. Let $h=(\epsilon, 1-\epsilon)'$, and recall that $\alpha_0=a\epsilon+b(1-\epsilon)$. The condition of $b$ in \eqref{model2} guarantees that 
\[
Ph=\alpha_0 {\bf 1}_2, \qquad \alpha_0=a\epsilon+b(1-\epsilon). 
\]
By direct calculations, 
\beq \label{LBproof1-add}
\alpha_0 = \frac{c(1-\epsilon)^2-a\epsilon^2}{1-2\epsilon}. 
\eeq
It follows that
\beq \label{LBproof1-eq5}
P = \alpha_0 {\bf 1}_2{\bf 1}_2' + M, \qquad \mbox{where}\quad M = \frac{a-c}{1-2\epsilon}\xi\xi', \quad \xi= (1-\epsilon,  -\epsilon)'. 
\eeq 
Write $z_i=\pi_i-h$. Since $Ph=\alpha_0 {\bf 1}_2$ and $z_i'{\bf 1}_2=0$,  we have
\begin{align*}
	\Omega_{ij} &= \theta_j\theta_j (h+z_i)'P(h+z_i) \cr
	&= \theta_i\theta_j (h'Ph + z_i'Pz_j)\cr
	&= \theta_i\theta_j (\alpha_0 + z_i'Pz_j) \cr
	&= \theta_i\theta_j (\alpha_0 + z_i'Mz_j)\cr
	&= \theta_i\theta_j \Bigl[ \alpha_0 + \frac{a-c}{1-2\epsilon}(\xi'z_i)(\xi'z_j)\Bigr]. 
\end{align*}
Let $t_i$ be the indicator that node $i$ belongs to the first community and write $u_i = t_i - \frac{N}{n}$. Then, $\pi_i=(t_i, 1-t_i)$ and $z_i = u_i (1, -1)'$. It follows that $
\xi'z_i =  u_i$. Therefore, 
\beq \label{LBproof1-eq6}
\Omega_{ij}= \theta_i\theta_j \Bigl[ \alpha_0 + \frac{a-c}{1-2\epsilon}u_iu_j\Bigr], \qquad \mbox{where}\quad u_i \overset{iid}{\sim} \mathrm{Bernoulli}(\epsilon)-\epsilon. 
\eeq
Consequently, 
\[
p_{ij}=\alpha_0\theta_i\theta_j, \qquad  \delta_{ij}(\Pi)=  \frac{a-c}{(1-2\epsilon)\alpha_0}u_iu_j. 
\]
We plug it into \eqref{LBproof1-eq9} to obtain  
\beq \label{LBproof1-eq10}
X = \sum_{i<j}\frac{\theta_i\theta_j}{1-\alpha_0\theta_i\theta_j} \frac{(a-c)^2}{(1-2\epsilon)^2\alpha_0}u_iu_j\tilde{u}_i\tilde{u}_j.   
\eeq

Below, we use \eqref{LBproof1-eq10} to bound $\ell^2$. Since $\alpha_0\theta_{\max}^2=O(c\theta_{\max}^2)=o(1)$, by Taylor expansion of $(1-\alpha_0\theta_i\theta_j)^{-1}$, we have  
\[
X = \frac{(a-c)^2}{(1-2\epsilon)^2\alpha_0} \sum_{i<j}\sum_{s=1}^\infty \alpha_0^{s-1} \theta_i^s\theta_j^su_iu_j\tilde{u}_i\tilde{u}_j. 
\]
Let $b_i = \theta_i \theta_{\max}^{-1} < 1$. We re-write $X$ as
\[
X = \gamma  \sum_{s=1}^\infty w_s X_s, 
\]
where
\begin{align} \gamma = \frac{\theta_{\max}^2 (a-c)^2}{(1 - \alpha_0 \theta_{\max}^2)(1-2\epsilon)^2\alpha_0}, \;\; w_s =(1-\alpha_0 \theta_{\max}^2) \alpha_0^{s-1} \theta_{\max}^{2s-2}, \;\; \text{ and } 
	X_s = \sum_{i<j}b_i^s b_j^s u_i u_j \tilde{u}_i \tilde{u}_j. 
	\label{eqn:X_decomp_data}
\end{align}
Let $\check{\mathbb{E}}$ be the conditional expectation by conditioning on the event of $\{\Pi\in {\cal M}, \widetilde{\Pi}\in {\cal M}\}$. It follows from \eqref{LBproof1-eq9} that 
\begin{align} \label{LBproof1-eq11}
	\ell_2 &= \mathbb{P}(\Pi\in {\cal M}, \widetilde{\Pi}\in {\cal M})\cdot  \check{\mathbb{E}}[\exp(X)]\cr
	&=  \mathbb{P}(\Pi\in {\cal M}, \widetilde{\Pi}\in {\cal M}) \cdot \check{\mathbb{E}}\Bigl[\exp\Bigl(\gamma \sum_{s=1}^\infty w_s X_s\Bigr)\Bigr] \cr
	&\leq \mathbb{P}(\Pi\in {\cal M}, \widetilde{\Pi}\in {\cal M})\cdot \sum_{s=1}^\infty w_s \check{\mathbb{E}}[\exp(\gamma X_s)]\cr
	&= \sum_{s=1}^\infty w_s \; \mathbb{E}\bigl[\exp(\gamma X_s)\cdot 1\{  \Pi\in {\cal M}, \widetilde{\Pi}\in {\cal M}\}\bigr].
\end{align}
The third line follows using Jensen's inequality and that $\sum_{s \geq 1} w_s = 1$.

It suffices to bound the term in \eqref{LBproof1-eq11} for each $s\geq 1$. Note that 
\beq \label{LBproof1-eq12}
X_s \leq  Y_s^2, \qquad Y_s = \sum_i b_i^s u_i\tilde{u}_i. 
\eeq 
We recall that $u_i=t_i-\epsilon$, where $t_i=\pi_i(1)\in \{0,1\}$. The event $\{\Pi\in {\cal M}, \widetilde{\Pi}\in {\cal M}\}$ translates to $\max\{\sum_i t_i, \; \sum_i\tilde{t}_i\}\leq 2n\epsilon$. Note that
\[
u_i\tilde{u}_i = \begin{cases} (1-\epsilon)^2, & \mbox{when }t_i+\tilde{t}_i=2,\cr
	-\epsilon(1-\epsilon), & \mbox{when }t_i+\tilde{t}_i=1,\cr
	\epsilon^2, &\mbox{where }t_i+\tilde{t}_i=0. 
\end{cases}
\]
It follows that $|u_i\tilde{u}_i|\leq (t_i+\tilde{t}_i)/2 + \epsilon^2$. Note that $\epsilon=O(N/n)$. Therefore, on this event, 
\[
|Y_s|\leq  \sum_i [(t_i+\tilde{t}_i)/2+ \epsilon^2]\leq 2n\epsilon+n\epsilon^2 \leq 3N.  
\]
We immediately have
\beq \label{LBproof1-eq13}
\mathbb{E}\bigl[\exp(\gamma X_s)\cdot 1\{  \Pi\in {\cal M}, \widetilde{\Pi}\in {\cal M}\}\bigr]\leq \mathbb{E}\Biggl[\exp(\gamma Y_s^2) \cdot 1\{ |Y_s|\leq 3 N\}\Biggr]. 
\eeq
The following lemma is useful. 
\begin{lemma} \label{lem:truncated-Bernstein}
	Let $Z$ be a random variable satisfying that  
	\[
	\mathbb{P}(|Z|>t)\leq 2\exp\Bigl(-\frac{t^2/2}{\sigma^2 + bt}\Bigr), \qquad\mbox{for all $t>0$}. 
	\]
	Then, for any $\gamma >0$ and $B>0$ such that $\gamma(\sigma^2+bB)<1/2$, we have
	\[
	\mathbb{E}\bigl[\exp(\gamma Z^2)1\{ |Z|\leq B\}\bigr] \leq 1 + \frac{4\gamma (\sigma^2+bB)}{1-2\gamma (\sigma^2+bB)}. 
	\]
\end{lemma}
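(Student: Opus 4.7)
\smallskip

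\noindent\textbf{Proof proposal.} The plan is to reduce the bound to a one–dimensional exponential integral via the layer-cake identity applied to the truncated variable $Y = Z^2\, \mathbf{1}\{|Z|\leq B\}$. Specifically, since $e^{\gamma Y} - 1 = \int_0^Y \gamma e^{\gamma t}\, dt$, taking expectations and applying Fubini gives
\[
\mathbb{E}\bigl[\exp(\gamma Z^2)\,\mathbf{1}\{|Z|\leq B\}\bigr] \;=\; 1 + \int_0^{\infty} \gamma e^{\gamma t}\, \mathbb{P}(Y > t)\, dt.
\]
Since $Y>t$ forces both $|Z|>\sqrt{t}$ and $|Z|\leq B$, the integrand vanishes for $t\geq B^2$ and otherwise satisfies $\mathbb{P}(Y>t)\leq \mathbb{P}(|Z|>\sqrt{t})$.

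Next I would apply the hypothesized Bernstein-type tail bound at level $\sqrt{t}$, which yields
\[
\mathbb{P}(|Z|>\sqrt{t}) \;\leq\; 2\exp\!\left(-\frac{t/2}{\sigma^2 + b\sqrt{t}}\right).
\]
The key trick is the truncation: on the relevant range $0<t<B^2$ we have $b\sqrt{t}\leq bB$, so $\sigma^2 + b\sqrt{t}\leq \sigma^2 + bB$. This eliminates the $\sqrt{t}$-dependence in the denominator and produces the clean bound
\[
\mathbb{P}(Y > t) \;\leq\; 2\exp\!\left(-\frac{t}{2(\sigma^2 + bB)}\right) \qquad \text{for all } t > 0.
\]
Without this truncation step the resulting integral would not be elementary; this is really the only point of the argument that requires any thought.

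Finally I would plug the bound into the layer-cake integral, extend the upper limit from $B^2$ to $\infty$ (which only loosens the estimate), and evaluate the resulting exponential integral:
\[
\int_0^{\infty} 2\gamma \exp\!\left(\gamma t - \frac{t}{2(\sigma^2+bB)}\right) dt \;=\; \frac{4\gamma(\sigma^2+bB)}{1 - 2\gamma(\sigma^2+bB)},
\]
where convergence and positivity of the denominator both use the standing hypothesis $\gamma(\sigma^2+bB) < 1/2$. Combining this with the layer-cake identity yields the claimed bound. The argument is essentially routine tail-integration; no genuine obstacle arises beyond the truncation observation above.
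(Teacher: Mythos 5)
Your proposal is correct and follows essentially the same route as the paper: a tail-integration/layer-cake computation (the paper does it via integration by parts of $\int_0^{B^2} e^{\gamma x}\,\mathrm{d}\mathbb{P}_{Z^2}(x)$), the truncation observation $b\sqrt{t}\le bB$ on $t<B^2$, extension of the integral to infinity, and evaluation of the exponential integral using $\gamma(\sigma^2+bB)<1/2$. One small imprecision: your layer-cake step applied to $Y=Z^2\,\mathbf{1}\{|Z|\le B\}$ computes $\mathbb{E}[e^{\gamma Y}]$, which exceeds the target $\mathbb{E}\bigl[e^{\gamma Z^2}\mathbf{1}\{|Z|\le B\}\bigr]$ by $\mathbb{P}(|Z|>B)$, so the displayed "$=$" should be "$\le$" — but this only loosens the estimate in the favorable direction, so the conclusion stands.
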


Note that $Y_s=\sum_i b_i^s u_i\tilde{u}_i$ is a sum of independent, mean-zero variables, where $|b_i^s u_i \tilde{u}_i |\leq 2$ and $\sum_i\mathrm{Var}(b_i^s u_i\tilde{u}_i)\leq \sum_i b_i^{2s}2\epsilon^2\leq 2n\epsilon^2$. It follows from Bernstein's inequality that
\[
\mathbb{P}(|Y_s|>t)\leq \exp\biggl( -\frac{t^2/2}{ 2n\epsilon^2 + 2t}\biggr), \qquad \mbox{for all }t>0. 
\]
To apply Lemma \ref{lem:truncated-Bernstein}, we set 
\begin{align*}
	b &= 2, \qquad \sigma^2=2n\epsilon^2\leq 2n^{-1} N^2, \qquad Z = Y_s, \qquad B = 3N,
\end{align*}
and $\gamma$ as in \eqref{eqn:X_decomp_data}. The choice of $B$ is in light of \eqref{LBproof1-eq13}. 	Furthermore, by \eqref{LBproof1-add}, we have $\alpha_0\asymp  c$. Also we have $\theta_{\max}^2 \alpha_0 \to 0$. Hence,
$$\gamma =  \frac{\theta_{\max}^2 (a-c)^2}{(1 - \alpha_0 \theta_{\max}^2)(1-2\epsilon)^2\alpha_0} \leq C \cdot \big( \frac{ \theta_{\max}^2 (a - c)^2}{c} \big).$$
Thus by the hypothesis $\frac{ \theta_{\max}^2 N(a - c)^2}{c} \to 0$, it holds that $\gamma( \sigma^2 + b B) < 1/2$ for $n$ sufficiently large. Applying Lemma \ref{lem:truncated-Bernstein}, we obtain
\begin{align*}
	\mathbb{E}\bigl[\exp(\gamma X_s)\cdot 1\{  \Pi\in {\cal M}, \widetilde{\Pi}\in {\cal M}\}\bigr] &\leq 1+ C(\gamma(\sigma^2+bB))\cr
	&\leq 1 + C \cdot \big( \frac{ \theta_{\max}^2 N (a - c)^2}{c} \big) 
\end{align*}
%
We further plug it into \eqref{LBproof1-eq11} to get
\begin{align*}
	\ell_2 &\leq \sum_{s=1}^\infty w_s\Bigl[1+ C \cdot \big( \frac{ \theta_{\max}^2 N (a - c)^2}{c} \big) \Bigr] \leq 1 + \big( \frac{ \theta_{\max}^2 N (a - c)^2}{c} \big),
\end{align*}
where we use that $\sum w_s  = 1$.


It follows immediately that
\[
\ell_2\leq 1+o(1), \qquad \mbox{if}\quad \theta_{\max}\frac{\sqrt{N}(a-c)}{\sqrt{c}}\to 0. 
\]
This proves the claim. \qed

\subsection{Proof of Lemma~\ref{lem:truncated-Bernstein}}

Let $X$ denote a nonnegative random variable, and define $\overline{F}(x) = \p_X[ X \geq x]$. For any positive number $\beta>0$, we have
\begin{align*}
	\mathbb{E}[\exp(\gamma X)1\{X<\beta\}] & =\int_0^\beta e^{\gamma x} \, \mathrm{d} \p_X(x) \cr
	&= -e^{\gamma x} \bar{F}(x)\bigg|_0^\beta + \int_0^{\beta} \gamma e^{\gamma x} \bar{F}(x)dx  \cr
	&= 1 - e^{\gamma \beta}\bar{F}(\beta) + \int_0^{\beta} \gamma e^{\gamma x}\bar{F}(x)dx\cr
	&\leq 1 + \int_0^{\beta}\gamma e^{\gamma x} \bar{F}(x)dx. 
\end{align*}
We apply it to $X=Z^2$ and $\beta=B^2$ to get 
\begin{align*}
	\mathbb{E}\bigl[\exp(\gamma Z^2)1\{ |Z|\leq B\}\bigr] &\leq 1 + \int_0^{B^2}\gamma \exp(\gamma x) \mathbb{P}(|Z|>\sqrt{x})dx \cr
	&\leq 1+ 2\gamma \int_0^{B^2}\exp(\gamma x) \exp\biggl\{-\frac{x}{2(\sigma^2 + b\sqrt{x})}\biggr\}dx\cr
	&\leq 1+ 2\gamma \int_0^{B^2} \exp(\gamma x) \exp\biggl\{-\frac{x}{2(\sigma^2 + bB)}\biggr\}dx\cr
	&\leq 1 + 2 \gamma \int_{0}^{\infty}\exp\biggl\{-  \frac{1-2\gamma (\sigma^2 + bB)}{2(\sigma^2 + bB)} x\biggr\} dx\cr
	&\leq 1 + \frac{4\gamma (\sigma^2 + bB)}{1-2\gamma (\sigma^2 + bB)}. 
\end{align*}
This proves the claim. 
\qed

\section{Proof of Theorem \ref{thm:scan} (Tightness of the statistical lower bound) }

Let $\rho \in \mb{R}^n$. We consider the global testing problem in the DCBM model where  
\begin{enumerate}[label=\emph{\Alph*}), ref=\emph{\Alph*}]
	\item $P = \begin{pmatrix}
		1 & b \\
		b & 1 
	\end{pmatrix}$
	\item $b = \ti b/\sqrt{ ac }$, 
	\item 	$\theta_i = \rho_i \sqrt{a}$ for $i \in S$, 
	\item $\theta_i = \rho_i \sqrt{c}$ for $i \notin S$, and
	\item $ a N_0 + \ti b(n - N_0) = \ti b N_0 + c (n - N_0)$,  \label{assn:degree_matching}
\end{enumerate}

Recall that $h = (N_0/n, 1 - N_0/n)^\T$, and $N_0$ is the size of the smaller community in the alternative. Observe that the null model $K = 1$ is parameterized by setting $a = c = \ti b = 1$.


Recall that $\eps = N/n$. We define
\begin{align*}
	\alpha_0 \equiv \frac{a N_0 + \ti b(n - N_0)}{n}. 
\end{align*}
Note that by Assumption \eqref{assn:degree_matching},
\begin{align*}
	\tilde 	b &= \frac{n c - (a+c) N_0}{n - 2N_0} \num \label{model2}
	\\	a \epsilon &= O(c) \num \label{eqn:aeps_vs_c}, \text{ and }
	\\ c &\sim \ti b \sim \alpha_0 \num \label{eqn:alpha0_vs_c_b} .
\end{align*}

Our assumptions in this section are the following: 
\begin{enumerate}[label=\emph{\alph*}), ref=\emph{\alph*}]
	\item There exists an absolute constant $C_\rho > 0$ such that	$	\rho_{\max} \leq C_\rho \, \rho_{\min} $
	\label{assn:mild_deg_het}
	\item $\frac{\rho_{\max}^2 \alpha_0 n}{\sqrt{\log n}}\to \infty $ \label{assn:diverging_degrees}
	\item An integer $N$ is known such that $N_0 = N[1 + o(1)]$. 
	\label{assn:N0_estimate}
\end{enumerate}

Note that since we tolerate a small error in the clique size by Assumption \eqref{assn:N0_estimate}, our setting indeed matches that of the statistical lower bound, by \eqref{eqn:clique_size_concentration}. 

Define the signed scan statistic
\begin{align*}
	\phi_{sc} = \max_{D \subset [n]: |D| = N} 
	\mf{1}'_D \big( A - \hat \eta \hat \eta^\T  \big) \mf{1}_D. 	
	\num \label{eqn:scan_stat_def}
\end{align*}
For notational brevity, define $n\rp{2} = \binom{n}{2}$. Let
\[
\hat \gamma = \frac{1}{n\rp{2}} \sum_{i,j} A_{ij}.  
\]
The estimator $\hat \gamma$ provides a constant factor approximation of the edge density of the least-favorable null model. See Lemma \ref{lem:gamma_estimate} for further details.

Next let 
\begin{align}
	h(u) = (1 + u) \log( 1+ u) - u, 
\end{align}
and note that this function is strictly increasing on $\mb{R}_{\geq 0}$. Define a random threshold $\hat \tau$ to be  
\begin{align*}
	\hat \tau =  C^*  \hat \gamma N^2 h^{-1}\bigg( \frac{C^* N \log(\frac{ne}{N})}{\hat \gamma N^2}  \bigg)
	\num \label{eqn:tauhat_def}
\end{align*}
Let $C^*>0$ denote a sufficiently large constant, to be determined, that depends only on $C_\rho$ from Assumption \eqref{assn:mild_deg_het}. 
Finally define the scan test to be 
\begin{align*}
	\vp_{sc} = \mf{1}\big[ |\phi_{sc}| >   \hat \tau  \big]
\end{align*}
Note that, if we assume $a \geq c$, as in the main text, then $b < 1$. In this case, we can simply take
\begin{align*}
	\vp_{sc} = \mf{1}\big[ \phi_{sc} >   \hat \tau  \big],
\end{align*}
and the same guarantees hold. On the other hand, if $b > 1$, then the scan test skews negative, as our proof shows. 

\begin{thm}
	\label{thm:scan_test}
	If 
	\begin{align*}
		h \bigg( \frac{ \| \theta_S \|_1^2 |1 - b^2|}{  \rho_{\max}^2 \alpha_0 N_0^2} \bigg)
		\gg \frac{ \log\frac{ne}{N_0} }{\rho_{\max}^2 \alpha_0 N_0} 
		\num \label{eqn:scan_SNR},
	\end{align*}
	then the type 1 and 2 error of $\vp_{sc}$ tend to $0$ as $n \to \infty$.
\end{thm}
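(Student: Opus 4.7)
I would prove Theorem~\ref{thm:scan_test} by reducing both the type~I and type~II analyses to Bennett-type tail bounds for sums of independent centered Bernoullis, after replacing the scan statistic's centering term $\hat\eta\hat\eta^\T$ by its population counterpart $\eta^*\eta^{*\T}$ via Bernstein concentration on $\hat\gamma$ and on $(A-\Omega)\mf{1}$. Because $\Omega - \eta^*\eta^{*\T} = \ti\Omega$ by Lemma~\ref{lem:tilde_Omega}, this gives the uniform decomposition
\begin{equation*}
\mf{1}'_D(A - \hat\eta\hat\eta^\T)\mf{1}_D \;=\; \mf{1}'_D\ti\Omega\mf{1}_D \;+\; \mf{1}'_D W\mf{1}_D \;+\; o(\hat\tau),
\end{equation*}
valid on a high-probability event simultaneously for all $D$ of size $N$. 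Combined with Bernstein control of $\hat\gamma$, this also lets me treat $\hat\tau$ as deterministic up to a $1+o(1)$ multiplicative factor absorbed into $C^*$.

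\textbf{Type~I.} Under the null $\ti\Omega = 0$, so it remains to bound $\max_{|D|=N} |\mf{1}'_D W \mf{1}_D|$. For each fixed $D$, the quadratic form $\mf{1}'_D W \mf{1}_D = 2\sum_{i<j\in D} W_{ij}$ is a sum of $\binom{N}{2}$ independent centered Bernoullis, each bounded by $1$, with variance proxy $V_D \le \sum_{i<j\in D}\Omega_{ij} \les \rho_{\max}^2 \alpha_0 N^2$. Bennett's inequality yields $\p(|\mf{1}'_D W \mf{1}_D| > t) \le 2\exp(-V_D\, h(t/V_D))$, and a union bound over $\binom{n}{N}\le(ne/N)^N$ subsets costs at most $N\log(ne/N)$ in the exponent. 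The definition of $\hat\tau$, with $C^*$ chosen sufficiently large relative to the universal constants, is exactly calibrated so that $V_D\, h(\hat\tau/V_D) \gg N\log(ne/N)$ uniformly in $D$, which drives the type~I error to $0$.

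\textbf{Type~II.} Under the alternative I instantiate $D = D^*$, a set of size exactly $N$ with $|D^*\triangle S| = o(N_0)$, which exists by Assumption~\eqref{assn:N0_estimate}. A direct computation from Lemma~\ref{lem:tilde_Omega}, using $v_0 \sim \|\theta_{S^c}\|_1^2 \sim \|\theta\|_1^2$ (since $N_0\ll n$) and mild degree heterogeneity from Assumption~\eqref{assn:mild_deg_het}, gives $|\mf{1}'_{D^*}\ti\Omega\mf{1}_{D^*}| \sim |1-b^2|\,\|\theta_S\|_1^2$. Applying $h$ to the SNR hypothesis~\eqref{eqn:scan_SNR} and monotonicity of $h^{-1}$ shows this signal exceeds $\hat\tau$ by an arbitrarily large factor. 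A final application of Bennett to $\mf{1}'_{D^*} W \mf{1}_{D^*}$, whose variance is at most $\|\theta_S\|_1^2$, shows the alternative-side fluctuation is of smaller order than the signal under~\eqref{eqn:scan_SNR}, so $|\phi_{sc}|>\hat\tau$ with probability $1-o(1)$.

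\textbf{Main obstacle.} The technically delicate step is the $D$-uniform replacement $\hat\eta\hat\eta^\T \to \eta^*\eta^{*\T}$: because the scan ranges over exponentially many sets, the replacement error must be controlled uniformly, not just in expectation. The dominant contribution comes from $v_0^{-1/2}\mf{1}_D^\T W\mf{1}_n$, which Bernstein controls at rate $\sqrt{N/n}$ for a single $D$; after a union bound over $\binom{n}{N}$ and squaring, the contribution to $\mf{1}'_D\hat\eta\hat\eta^\T\mf{1}_D$ is of order $(N^2/n) \log(ne/N)$, which Assumption~\eqref{assn:diverging_degrees} and the $h^{-1}$ scaling of $\hat\tau$ comfortably dominate. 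A secondary subtlety is the discrepancy between $N$ and $N_0$ in the signal computation, but Assumption~\eqref{assn:N0_estimate} makes this a $1+o(1)$ factor absorbed into $C^*$.
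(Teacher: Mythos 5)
Your proposal is correct in outline and follows essentially the same route as the paper: concentrate $\hat\gamma$ to de-randomize the threshold, control the plug-in error $\mf{1}_D^\T(\hat\eta\hat\eta^\T-\eta^*\eta^{*\T})\mf{1}_D$ uniformly over $|D|=N$ (Lemma \ref{lem:scan_plugin}), prove the type I bound by Bennett's inequality plus a union bound over the $\binom{n}{N}$ candidate sets, and prove the type II bound by evaluating the ideal scan at the planted set, where the signal is $\sim|1-b^2|\,\|\theta_S\|_1^2$ (Lemmas \ref{lem:scan_ideal} and \ref{lem:real_scan_test}). Two sketch-level points need fixing in a full write-up: the dominant plug-in contribution is the cross term $2\,|\mf{1}_D^\T\eta^*|\cdot|\mf{1}_D^\T(\hat\eta-\eta^*)| \asymp \rho_{\max}\sqrt{\alpha_0}\,N^2\sqrt{\log(ne/N)/n}$ rather than the squared deviation $(N^2/n)\log(ne/N)$ you quote (it is still dominated by $\hat\tau$, which is all that is needed), and the assertion that \eqref{eqn:scan_SNR} makes the signal exceed both $\hat\tau$ and the Bernstein fluctuation $\lesssim \|\theta_S\|_1\log(ne/N_0)$ is not immediate from monotonicity alone but requires the two-regime analysis of $h^{-1}$ (small versus large argument), exactly the step carried out around \eqref{eqn:scan_alt_var_bd} in the paper.
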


We interpret the previous result in the following concrete settings.
\begin{cor}
	\label{cor:scan_test}
	If \[
	\frac{\rho_{\max}^2 \alpha_0 N_0}{ \log \frac{ne}{N_0} } \to 0,
	\]
	then $\vp_{sc}$ has type 1 and 2 errors tending to $0$ as $n \to \infty$, provided that 
	\[
	\frac{\rho_{\max}^2 N_0 (a - c)}{ \log \frac{ne}{N_0} } \gg 1. 
	\]
	If 
	\[
	\frac{\rho_{\max}^2 \alpha_0 N_0}{ \log \frac{ne}{N_0} } \to \infty ,
	\]
	then $\vp_{sc}$ has type 1 and 2 errors tending to $0$ as $n \to \infty$, provided that 
	\[
	\frac{ \rho_{\max}^2 N_0 (a - c) }{ \sqrt{\rho_{\max}^2 N_0 \alpha_0 \log \frac{ne}{N_0} } } \gg 1. 
	\]
\end{cor}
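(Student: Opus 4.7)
The plan is to reduce the sufficient condition \eqref{eqn:scan_SNR} of Theorem \ref{thm:scan_test} to an elementary comparison between two scalar quantities, and then verify that comparison under each of the two regimes of the corollary via the basic inequality $h(u) \geq c_0 \min(u, u^2)$ for an absolute constant $c_0 > 0$ (which follows from $h(u) \sim u^2/2$ as $u \to 0$, together with the fact that $h$ is increasing and $h(u)/u \to \infty$).

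The first step is to simplify the argument of $h$. By assumption \ref{assn:mild_deg_het}, $\rho_{\max} \asymp \rho_{\min}$, so $\|\theta_S\|_1^2 = a(\sum_{i \in S}\rho_i)^2 \asymp a\rho_{\max}^2 N_0^2$. Using the explicit formula $\ti b = c - (a-c)N_0/(n - 2N_0)$ from \eqref{model2}, one expands
\[
ac - \ti b^2 = c(a-c)\,\frac{n}{n - 2N_0} \;-\; \left(\frac{(a-c)N_0}{n - 2N_0}\right)^{\!2},
\]
and the second (error) term is negligible compared with the first because $(a-c)N_0/n \leq a N_0/n = O(c)$ by \eqref{eqn:aeps_vs_c}, while $N_0 = o(n)$. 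Combined with $\alpha_0 \sim c$ from \eqref{eqn:alpha0_vs_c_b}, this yields $|1-b^2| \asymp (a-c)/a$, and hence
\[
u \;\equiv\; \frac{\|\theta_S\|_1^2\, |1-b^2|}{\rho_{\max}^2 \alpha_0 N_0^2} \;\asymp\; \frac{a-c}{c}.
\]
Writing also $R \equiv \log(ne/N_0)/(\rho_{\max}^2 \alpha_0 N_0)$, the hypothesis of Theorem \ref{thm:scan_test} becomes simply $h(u) \gg R$.

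Finally, a short algebraic computation rewrites the two corollary-level SNR conditions as $u/R \to \infty$ and $u/\sqrt{R} \to \infty$, respectively, while the two regimes correspond to $R \to \infty$ and $R \to 0$. In the first case, $R \to \infty$ together with $u \gg R$ forces $u \to \infty$, so $h(u) \gtrsim u \gg R$. In the second case, $R \to 0$ and $u \gg \sqrt{R}$: if $u$ remains bounded then $h(u) \gtrsim u^2 \gg R$, whereas if $u \to \infty$ then $h(u) \gtrsim u \gg \sqrt{R} \gg R$ since $\sqrt{R}/R \to \infty$. In both cases \eqref{eqn:scan_SNR} holds and Theorem \ref{thm:scan_test} applies. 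The only nontrivial step in the plan is the expansion of $ac - \ti b^2$ and the verification that its quadratic correction is lower order; everything else is bookkeeping and elementary inequalities on $h$.
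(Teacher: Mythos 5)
Your proof is correct and takes essentially the same route as the paper's: reduce the hypothesis of Theorem~\ref{thm:scan_test} to $h(u)\gg R$ with $u\asymp (a-c)/c$ and $R=\log(ne/N_0)/(\rho_{\max}^2\alpha_0 N_0)$, then verify this in the two regimes using elementary facts about $h$. Two small remarks. First, your explicit expansion of $ac-\ti b^2$ and the verification that the quadratic correction is lower order is a clean justification of the step the paper states tersely (and the paper's displayed formula $\|\theta_S\|_1^2|1-b^2|=\rho_{\max}^2 N_0^2(a-\ti b^2/\sqrt{c})$ appears to have a typo --- it should be $\ti b^2/c$ and an $\asymp$ rather than $=$, since $\rho_i$ need not equal $\rho_{\max}$ --- while your version is the correct one). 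Second, in the $R\to 0$ regime the dichotomy ``$u$ bounded'' vs.\ ``$u\to\infty$'' is not exhaustive as stated (a sequence can oscillate between the two); however, the inequality $h(u)\geq c_0\min(u,u^2)$ that you introduce at the outset handles both branches without any case split on the behavior of the sequence, so the argument is sound --- just phrase the conclusion directly via that inequality rather than via a limit dichotomy. The paper instead leans on convexity of $h$ with $h(0)=0$ to pass from $u\gg\sqrt R$ to $h(u)\gg h(\sqrt R)$; both devices give the same thing.
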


\begin{proof}
	Note that
	\[
	\| \theta_S \|_1^2 |1 - b^2|
	= \rho_{\max}^2 N_0^2 (a - \ti b^2/\sqrt{c} )
	\sim  \rho_{\max}^2 N_0^2 (a - c ). 
	\]
	In the first case, 
	\[
	h \bigg( \frac{ \| \theta_S \|_1^2 |1 - b^2|}{  \rho_{\max}^2 \alpha_0 N_0^2} \bigg)
	\gg h \bigg( \frac{ \log \frac{ne}{N_0} }{  \rho_{\max}^2 \alpha_0 N_0} \bigg)
	\gtrsim \frac{ \log \frac{ne}{N_0} }{  \rho_{\max}^2 \alpha_0 N_0}. 
	\]
	We use the fact that $h(u) \gtrsim u$ for $u \geq 1$. 
	
	In the second case, 
	\begin{align*}
		h \bigg( \frac{ \| \theta_S \|_1^2 |1 - b^2|}{  \rho_{\max}^2 \alpha_0 N_0^2} \bigg) 
		&\gg h \bigg( \frac{N_0 \cdot \sqrt{ \rho_{\max}^2 N_0 \alpha_0 \log \frac{ne}{N_0} }}{  \rho_{\max}^2 \alpha_0 N_0^2} \bigg) 
		= h \bigg( \sqrt{ \frac{ \log \frac{ne}{N_0} }{  \rho_{\max}^2 \alpha_0 N_0 }  } \bigg) 
		\gtrsim \frac{ \log\frac{ne}{N_0} }{\rho_{\max}^2 \alpha_0 N_0}. 
	\end{align*}
	
\end{proof}

The upper bounds in the second part of Corollary \ref{cor:scan_test} is the best possible up to logarithmic factors. For example, suppose that $\theta_{\max} \les \theta_{\min}$ in Theorem \ref{thm:statLB1}. Then the upper bound for the second case of Corollary \ref{cor:scan_test} matches the lower bound of Theorem \ref{thm:statLB1} up to logarithmic factors.

To prove Theorem \ref{thm:scan}, first we establish concentration of $\hat \gamma$. 

\begin{lemma}
	\label{lem:gamma_estimate}
	Recall 
	\begin{align*}
		\hat \gamma = \frac{1}{n\rp{2}} \sum_{i,j (dist)} A_{ij}.
	\end{align*}
	There exists an absolute constant $C > 0$ such that for all $\delta > 0$, it holds that
	\begin{align*}
		| \hat \gamma - \E \hat \gamma | \leq \frac{ C \sqrt{ \rho_{\max}^2 \alpha_0 \log(1/\delta) } }{n}
	\end{align*}
	with probability at least $1 - \delta$. 
\end{lemma}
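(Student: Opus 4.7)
The plan is to apply Bernstein's inequality directly to $M(\hat\gamma-\E\hat\gamma)=\sum_{i<j}(A_{ij}-\Omega_{ij})$, where $M=\binom{n}{2}$. The summands are independent, mean-zero, and bounded in $[-1,1]$, so the only nontrivial ingredient is a sharp upper bound on the total variance $\sigma^2:=\sum_{i<j}\Omega_{ij}(1-\Omega_{ij})\leq\sum_{i<j}\Omega_{ij}$.

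For the variance bound, I will split $\sum_{i<j}\Omega_{ij}$ according to whether each endpoint lies in $S$. Under parameterizations A)--E) of this section, $\Omega_{ij}$ equals $\rho_i\rho_j a$, $\rho_i\rho_j\tilde b$, or $\rho_i\rho_j c$ depending on whether $0$, $1$, or $2$ of $i,j$ lie in $S^c$. Counting pairs and using $\rho_i\rho_j\leq\rho_{\max}^2$ gives
$$\sigma^2\leq \rho_{\max}^2\Bigl[\tfrac12 N_0^2 a+N_0(n-N_0)\tilde b+\tfrac12(n-N_0)^2 c\Bigr].$$
I will then invoke \eqref{eqn:aeps_vs_c} (whence $aN_0=O(\alpha_0 n)$, so $N_0^2 a=O(n^2\alpha_0)$) together with \eqref{eqn:alpha0_vs_c_b} ($c\sim\tilde b\sim\alpha_0$) to conclude that $\sigma^2\leq C\,n^2\rho_{\max}^2\alpha_0$ for an absolute constant $C$.

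Applying Bernstein's inequality then yields, for any $s\geq 0$,
$$\mathbb{P}\bigl(|M(\hat\gamma-\E\hat\gamma)|\geq s\bigr)\leq 2\exp\!\Bigl(-\frac{s^2/2}{\sigma^2+s/3}\Bigr).$$
Setting $s=Mt$ with $t=C_0 n^{-1}\sqrt{\rho_{\max}^2\alpha_0\log(1/\delta)}$, the ratio $s^2/\sigma^2$ is at least a constant multiple of $\log(1/\delta)$, while $s/\sigma^2=O\bigl(\sqrt{\log(1/\delta)/(n^2\rho_{\max}^2\alpha_0)}\bigr)$. Choosing $C_0$ sufficiently large (depending on $C$) then pushes the exponent below $-\log(1/\delta)$, giving the advertised tail bound.

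The only real obstacle is ensuring the linear (Bernstein) correction $s/3$ does not dominate the variance term $\sigma^2$ in the denominator, i.e.\ that $s\lesssim\sigma^2$. This is equivalent to $\log(1/\delta)\lesssim n^2\rho_{\max}^2\alpha_0$, which by Assumption \eqref{assn:diverging_degrees} holds for every $\delta$ used downstream in the proof of Theorem~\ref{thm:scan_test} (where $\delta$ will be taken inverse polynomial in $n$, so $\log(1/\delta)=O(\log n)\ll n^2\rho_{\max}^2\alpha_0$). Outside that sub-Gaussian regime one simply recovers a weaker Bernstein-type bound, which is irrelevant for our application.
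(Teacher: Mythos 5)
Your proof is correct and follows essentially the same route as the paper: bound the total variance $\sum_{i<j}\Omega_{ij}(1-\Omega_{ij})$ by $O(n^2\rho_{\max}^2\alpha_0)$ (you do it by splitting on community membership, the paper routes through $(\Omega\mathbf{1}_n)_i\asymp\rho_{\max}^2\alpha_0 n$ and sums, but the conclusion is the same), then apply Bernstein's inequality. Your observation that the linear Bernstein correction forces the restriction $\log(1/\delta)\lesssim n^2\rho_{\max}^2\alpha_0$ — so the lemma as stated is literally true only in that sub-Gaussian regime — is a point the paper's proof also implicitly relies on but leaves unstated; it is harmless here since downstream $\delta$ is always taken inverse-polynomial in $n$.
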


\begin{proof}
	As a preliminary, we claim that
	\begin{align*}
		(\Omega \mf{1})_i \asymp \rho_{\max}^2 \alpha_0 n. 
		\num \label{eqn:Om1}
	\end{align*}
	To see this, note that if $i \in S$, then by \eqref{assn:degree_matching}
	\begin{align*}
		(\Omega \mf{1})_i 
		&= \sum_{j} \Omega_{ij} 
		= \theta_i ( \| \theta_S \|_1 + b \| \theta_{S^c} \|_1 )
		\\ &\asymp \rho_{\max} \sqrt{a} \cdot \big( \sqrt{a} N \rho_{\max} + 
		\frac{\ti b }{\sqrt{ac}} \cdot \sqrt{c} \rho_{\max} \big)
		= \rho_{\max}^2 \alpha_0 n. 
	\end{align*}
	The claim for $i \notin S$ follows by a similar argument applying  \eqref{assn:degree_matching}. 
	It follows that 
	\begin{align*}
		v_0 = \mf{1}^\T \Omega \mf{1} \asymp \rho_{\max}^2 \alpha_0 n^2
	\end{align*}
	
	The expectation is
	\begin{align*}
		\E \hat \gamma = \frac{1}{n\rp{2}} \sum_{i,j (dist)} \Omega_{ij}, 
	\end{align*}
	and the variance is 
	\begin{align*}
		\var(\hat \gamma) =  \frac{1}{(n\rp{2})^2} \sum_{i,j (dist)} \Omega_{ij}(1 - \Omega_{ij}). 
	\end{align*}
	
	By Bernstein's inequality,
	\begin{align*}
		\p\big[ n\rp{2}  \big|\hat \gamma - \E \hat \gamma \big| > t  \big] 
		\leq 2 \exp\bigg( -\frac{ct^2}{  \sum_{i,j (dist)} \Omega_{ij}  + t  }   \bigg). 
		\num \label{eqn:bernstein_again}
	\end{align*}
	
	By Assumptions \eqref{assn:mild_deg_het} and \eqref{assn:diverging_degrees}, 
	\begin{align*}
		\sum_{i,j (dist)} \Omega_{ij} \asymp \rho_{\max}^2 \alpha_0 n^2 \gg n. 
	\end{align*}
	
	Setting 
	\[
	t = \tau \equiv C \sqrt{  \rho_{\max}^2 \alpha_0 n^2 \log(1/\delta)  }
	\]
	for a large enough absolute constant $C> 0$, \eqref{eqn:bernstein_again}  implies that 
	\[
	| \hat \gamma - \E \hat \gamma | \leq \frac{\tau}{n^2} 
	\asymp \frac{ \sqrt{  \rho_{\max}^2 \alpha_0 \log(1/\delta} ) }{n} 
	\]
	with probability at least $1 -\delta$. 
\end{proof}

Next we control the error arising from the plug-in effect of approximating $\eta^*$ by $\hat \eta $. 
\begin{lemma}
	Given $D \subset [n]$, define
	\[
	L_D \equiv \mf{1}^\T_D (\eta^* \eta^{* \T} - \hat \eta \hat \eta^\T ) \mf{1}_D. 
	\]
	Then under the null and alternative hypothesis, 
	\begin{align*}
		\max_{|D| = N}  | L_D |  \les \sqrt{ N_0^3 \rho_{\max}^2 \alpha_0 \log( \frac{ne}{N_0} )  }
	\end{align*}
	with probability at least $1 - \binom{n}{N}^{-1} -2 v_0^{-c_1}$, for an absolute constant $c_1 > 0$.
	\label{lem:scan_plugin}
\end{lemma}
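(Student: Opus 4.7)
\textbf{Proof proposal for Lemma \ref{lem:scan_plugin}.}
The plan is to factor $L_D = \langle \mf{1}_D, \eta^* \rangle^2 - \langle \mf{1}_D, \hat\eta \rangle^2 = (\langle \mf{1}_D, \eta^*\rangle + \langle \mf{1}_D, \hat\eta\rangle)\cdot (\langle \mf{1}_D, \eta^*\rangle - \langle \mf{1}_D, \hat\eta\rangle)$ and then control each factor. For the first factor, I would use the identity $\eta^* = v_0^{-1/2}\,\Omega\mf{1}$, the estimate $(\Omega\mf{1})_i \asymp \rho_{\max}^2 \alpha_0 n$ from \eqref{eqn:Om1}, and $v_0 \asymp \rho_{\max}^2 \alpha_0 n^2$ to obtain $\langle \mf{1}_D, \eta^*\rangle \asymp \rho_{\max}\sqrt{\alpha_0}\, N$; the same scale will hold for $\langle \mf{1}_D, \hat\eta\rangle$ once the concentration of the difference is established.

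For the second factor, I would use the decomposition
\begin{equation*}
\langle \mf{1}_D, \hat\eta\rangle - \langle \mf{1}_D, \eta^*\rangle
= \frac{\mf{1}_D^\T(A-\Omega)\mf{1}}{\sqrt{v_0}} + \mf{1}_D^\T A \mf{1}\cdot\left(\tfrac{1}{\sqrt{V}} - \tfrac{1}{\sqrt{v_0}}\right),
\end{equation*}
handling the two summands separately. The first summand reduces (modulo the deterministic lower-order bias $-\mf{1}_D^\T\diag(\Omega)\mf{1} = O(N\rho_{\max}^2)$) to $v_0^{-1/2}\,\mf{1}_D^\T W \mf{1}$, which is a sum of $O(nN)$ independent centered bounded variables of variance $\asymp nN\rho_{\max}^2\alpha_0$. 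Applying Bernstein's inequality with deviation $t \asymp \sqrt{nN\rho_{\max}^2\alpha_0 \cdot N\log(ne/N)}$ and taking a union bound over the $\binom{n}{N}$ choices of $D$, using $\log\binom{n}{N} \leq N\log(ne/N)$, yields
\begin{equation*}
\max_{|D|=N}\frac{|\mf{1}_D^\T W \mf{1}|}{\sqrt{v_0}}\;\lesssim\; \frac{N\sqrt{\log(ne/N)}}{\sqrt{n}}
\end{equation*}
with probability at least $1-\binom{n}{N}^{-1}$. For the normalization summand, a single application of Bernstein (as in Lemma \ref{lem:gamma_estimate}) gives $|V-v_0| \lesssim \sqrt{v_0\log v_0}$ with probability $1-2v_0^{-c_1}$; combining this with $|\frac{1}{\sqrt{V}}-\frac{1}{\sqrt{v_0}}| \lesssim \frac{|V-v_0|}{v_0^{3/2}}$ via a one-step Taylor expansion and the crude bound $\mf{1}_D^\T A\mf{1} \lesssim \rho_{\max}^2\alpha_0 nN$ shows this contribution is of order $N\sqrt{\log v_0}/n$, which is dominated by the first summand under assumption \eqref{assn:diverging_degrees}.

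Multiplying the $O(\rho_{\max}\sqrt{\alpha_0} N)$ bound on the first factor by the $O(N\sqrt{\log(ne/N)/n})$ bound on the second gives
\begin{equation*}
\max_{|D|=N} |L_D| \;\lesssim\; \rho_{\max}\sqrt{\alpha_0}\, N \cdot \frac{N\sqrt{\log(ne/N)}}{\sqrt{n}} \;=\; \sqrt{\tfrac{N}{n}} \cdot \sqrt{N^3\rho_{\max}^2\alpha_0\log(ne/N)},
\end{equation*}
which is stronger than the stated bound since $N \leq n$, and the claimed bound follows after absorbing $N_0 = N(1+o(1))$. The main obstacle in this program is calibrating the Bernstein tail so that the bound one pays for the union over $\binom{n}{N}$ sets is exactly $N\log(ne/N)$ rather than $N\log n$; this requires choosing the deviation $t$ to match the subexponential regime of Bernstein (verifying that $N\log(ne/N) \ll nN\rho_{\max}^2\alpha_0 \cdot$ (variance budget)) rather than the subgaussian regime, which is where Assumption \eqref{assn:diverging_degrees} is invoked.
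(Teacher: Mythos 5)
Your skeleton is essentially the paper's own argument: the difference-of-squares factorization of $L_D$ is algebraically the same as the paper's splitting $L_D = \mf{1}_D^\T \eta^*(\eta^*-\hat\eta)^\T\mf{1}_D + \mf{1}_D^\T(\eta^*-\hat\eta)\hat\eta^\T\mf{1}_D$, and the three ingredients — Bernstein for $\mf{1}_D^\T W\mf{1}$ with a union bound over $\binom{n}{N}$ sets costing $N\log(ne/N)$, concentration of $V$ around $v_0$ to handle $1/\sqrt{V}-1/\sqrt{v_0}$, and a separate diagonal correction — are exactly the ones the paper uses, with the same probability budget $1-\binom{n}{N}^{-1}-2v_0^{-c_1}$.

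The one genuine flaw is your final bookkeeping, in particular the claim that you obtain the strictly stronger bound $\sqrt{N/n}\cdot\sqrt{N^3\rho_{\max}^2\alpha_0\log(ne/N)}$. You dismiss the diagonal bias as lower order, but after normalization it contributes $\sum_{i\in D}\Omega_{ii}/\sqrt{v_0}\lesssim \rho_{\max}\sqrt{\alpha_0}$ to the difference factor (using $aN\lesssim \alpha_0 n$ and $v_0\asymp\rho_{\max}^2\alpha_0 n^2$), which in dense regimes with $N\ll\sqrt{n}$ is \emph{not} dominated by your Bernstein term $N\sqrt{\log(ne/N)/n}$; its product with the first factor is $\asymp\rho_{\max}^2\alpha_0 N$, which can exceed your improved bound while still fitting under the lemma's stated bound. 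Similarly, your claim that $\langle\mf{1}_D,\hat\eta\rangle$ has the same scale $\rho_{\max}\sqrt{\alpha_0}N$ as $\langle\mf{1}_D,\eta^*\rangle$ requires the fluctuation to be dominated by the mean, which Assumption (\ref{assn:diverging_degrees}) ($\rho_{\max}^2\alpha_0 n\gg\sqrt{\log n}$) does not guarantee; keeping the fluctuation produces the extra squared term $N\epsilon\log(ne/N)$, which the paper retains and then absorbs into $\sqrt{N^3\rho_{\max}^2\alpha_0\log(ne/N)}$ using that assumption. So your argument proves the lemma as stated once these two terms are carried through rather than discarded, but not the $\sqrt{N/n}$-improved bound you assert.
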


\begin{proof}
	
	In this proof, $c > 0$ is an absolute constant that may vary from line to line.
	
	Given $D \subset [n]$, let 
	\begin{align*}
		L_D \equiv \mf{1}^\T_D (\eta^* \eta^{* \T} - \hat \eta \hat \eta^\T ) \mf{1}_D
		= \mf{1}^\T_D \eta^* ( \eta^{*} - \hat \eta)^\T \mf{1}_D 
		+ \mf{1}^\T_D ( \eta^{*} - \hat \eta)  \hat \eta^{\T} \mf{1}_D 
		\num \label{eqn:LD_decomp}
	\end{align*}
	Our first goal is to control
	\begin{align*}
		\big| \mf{1}_D^\T	(\hat \eta - \eta^*)  	\big| .  
	\end{align*}
	
	Define $\overline{\Omega} = \Omega - \diag(\Omega)$. Note that
	\begin{align*}
		\hat \eta - \eta^* &= 
		\frac{A \mf{1}}{\sqrt{V}} - \frac{ \Omega \mf{1}}{\sqrt{v_0}} 
		= \big( \frac{A \mf{1}}{\sqrt{V}} - \frac{A \mf{1}}{\sqrt{v_0}} \big) 
		+  \big(\frac{A \mf{1}}{\sqrt{v_0}} -  \frac{ \overline{ \Omega} \mf{1}}{\sqrt{v_0}}   \big)
		+ \big( \frac{ \overline{ \Omega} \mf{1}}{\sqrt{v_0}}  -  
		\frac{  \Omega \mf{1}}{\sqrt{v_0}}
		\big) 
		\num \label{eqn:etahat_etastar}
	\end{align*}
	We study each term of \eqref{eqn:etahat_etastar}. First note that 
	\begin{align*}
		(\overline{\Omega} \mf{1})_i = (\Omega \mf{1})_i - \Omega_{ii}
		= \rho_{\max}^2  \alpha_0 n + O(1),
	\end{align*}
	and thus
	\begin{align*}
		v_0 = \sum_i (\Omega \mf{1})_i &\sim \sum_i (\overline{\Omega} \mf{1})_i  = v, \text{ and}
		\\	|v_0 - v| &\les 1
		\num \label{eqn:v0_vs_v}
	\end{align*}
	
	Next note that 
	\begin{align*}
		\var\big( \mf{1}_D^\T \big( A \mf{1} - \overline{\Omega} \mf{1} \big) \big)
		&\les  \sum_{\substack{i \in [n], j \in D \\ i \neq j}} \Omega_{ij} 
		\les |D| \rho_{\max}^2 \alpha_0 n. 
	\end{align*}
	By Bernstein's inequality,
	\begin{align*}
		\p\big[  \big|  \mf{1}_D^\T \big( A \mf{1} - \overline{\Omega} \mf{1} \big)  \big|  \geq t \big] \leq 2 \exp\bigg(  -\frac{ct^2}{|D| \rho_{\max}^2 \alpha_0 n + t}  \bigg)
		\num \label{eqn:bernstein_plugin_statUB}
	\end{align*}
	for all $t > 0$. 
	Setting
	\[
	t = \tau \equiv \sqrt{4/c} \cdot  \sqrt{ |D| \rho_{\max}^2 \alpha_0 n \log(1/\delta) }, 
	\]
	we have
	\begin{align*}
		\frac{1}{\sqrt{v_0}} \big|  \mf{1}_D^\T \big( A \mf{1} - \overline{\Omega} \mf{1} \big)  \big|  \les  \frac{  \sqrt{ |D| \rho_{\max}^2 \alpha_0 n \log(1/\delta) }   }{\sqrt{\rho_{\max}^2 \alpha_0 n^2}}
		= \sqrt{ (|D|/n) \cdot \log(1/\delta) }
		\num \label{eqn:LD1}
	\end{align*}
	with probability at least $1 - \delta$. 
	
	%

	%
	%
	%
	%
	
	
	Next, it is shown in \cite[Supplement, pg.100]{JinKeLuo21} that for $\sqrt{ \log \| \theta \|_1} \ll x_n \ll \| \theta \|_1$, 
	\begin{align*}
		\p\big[ |V - v| > x_n \| \theta \|_1 \big] 
		= \p\bigg[ |\sqrt{V} - \sqrt{ v} | > \frac{ x_n \| \theta \|_1}{\sqrt{V} + \sqrt{v}} \bigg]
		\leq 2 \exp(-c x_n^2 ).
	\end{align*}
	Hence
	\begin{align*}
		\p\bigg[ |\sqrt{V} - \sqrt{ v} | > \frac{x_n \| \theta \|_1}{\sqrt{v}} \bigg] \leq 2 \exp(-c x_n^2 ),
	\end{align*}
	Note that by \eqref{eqn:aeps_vs_c} and \eqref{eqn:alpha0_vs_c_b},
	\begin{align*}
		\frac{ \|\theta \|_1 }{ \sqrt{v}} \asymp \frac{ N_0 \rho_{\max} \sqrt{a} + (n - N_0) \rho_{\max} \sqrt{c} }{\rho_{\max} \sqrt{\alpha_0} n}
		\asymp 1. 
	\end{align*}
	By \eqref{eqn:v0_vs_v}, we have
	\begin{align*}
		\p\bigg[ |\sqrt{V} - \sqrt{ v_0} | >  \frac{x_n \| \theta \|_1}{\sqrt{v}} \bigg] \leq 2 \exp(-c x_n^2 ).
		\num \label{eqn:V_vs_v0}
	\end{align*}
	Hence with probability at least $1 - 2 \exp(-c x_n^2 )$,
	\[
	V \gtrsim v_0. 
	\]
	It follows that 
	\begin{align*}
		\p\bigg[ \big| \frac{1}{\sqrt{V}} - \frac{1}{\sqrt{v_0}} \big| \geq \frac{x_n \| \theta \|_1}{ v_0 \sqrt{v} }  \bigg] 
		&= 	\p\bigg[ \frac{  |\sqrt{V} - \sqrt{ v_0} |   }{\sqrt{V \cdot v_0} } 
		\geq \frac{x_n \| \theta \|_1}{ v_0  \sqrt{v}}  \bigg]  
		\leq 2 \exp(-c x_n^2 ).
	\end{align*}
	Hence with probability at least $1 - \delta - 2 \exp(-c x_n^2) $,
	\begin{align*}
		\bigg|\mf{1}_D^\T ( \frac{A \mf{1}}{\sqrt{V}} - \frac{A \mf{1}}{\sqrt{v_0}}  )\bigg|
		&\leq   \frac{x_n \cdot \big(|D|  \rho_{\max}^2  \alpha_0 n + \sqrt{ |D| \rho_{\max}^2 \alpha_0 n \log(1/\delta) } \big) }{v_0}
		\\& \asymp \frac{x_n \cdot \big(|D|  \rho_{\max}^2  \alpha_0 n + \sqrt{ |D| \rho_{\max}^2 \alpha_0 n \log(1/\delta) } \big) }{ \rho_{\max}^2 \alpha_0 n^2}. 
		\num \label{eqn:LD2}
	\end{align*}
	
	
	For the last term of \eqref{eqn:etahat_etastar}, 
	\begin{align*}
		\mf{1}_D^\T \big( \frac{ \overline{ \Omega} \mf{1}}{\sqrt{v_0}}  -  
		\frac{  \Omega \mf{1}}{\sqrt{v_0}}
		\big) 
		&= \frac{ \sum_{i \in D} \Omega_{ii} }{\sqrt{v_0}} 
		\asymp \frac{    \rho_{\max}^2 a |D \cap S| +   \rho_{\max}^2 c |D \cap S^c|    }{   \sqrt{\rho_{\max}^2 \alpha_0 n^2}     }
		\\ &\les \rho_{\max} a \epsilon / \sqrt{\alpha_0} \les \rho_{\max} \sqrt{ c} \les 1. 
		\num \label{eqn:LD3}
	\end{align*}
	
	Next we control $\mf{1}_D^\T \hat \eta$. By \eqref{eqn:bernstein_plugin_statUB} and \eqref{eqn:V_vs_v0}, 
	\begin{align*}
		| \mf{1}_D^\T \hat \eta | =	\frac{ | \mf{1}_D^\T A \mf{1}  |}{\sqrt{V}} 
		\les \frac{ |D|  \rho_{\max}^2  \alpha_0 n + \sqrt{ |D| \rho_{\max}^2 \alpha_0 n \log(1/\delta) } }{ \sqrt{v_0} - c x_n }
		\num \label{eqn:1_D_etahat}
	\end{align*}
	with probability at least $1 - \delta - 2\exp(-c x_n^2)$. It also holds that
	\begin{align*}
		| \mf{1}_D^\T  \eta^* | =	\frac{| \mf{1}_D^\T \Omega \mf{1}|}{\sqrt{v_0}} 
		= \frac{ |D| \rho_{\max}^2 \alpha_0 n }{ \rho_{\max} \sqrt{\alpha_0} n }
		= |D| \rho_{\max} \sqrt{\alpha_0}.  
		\num \label{eqn:1_D_etastar}
	\end{align*}
	
	Next we set $x_n= \sqrt{ \log \| \theta \|_1 } \asymp \sqrt{ \log v_0}$. Then from \eqref{eqn:LD2} and \eqref{eqn:1_D_etahat}, 
	\begin{align*}
		\bigg|\mf{1}_D^\T ( \frac{A \mf{1}}{\sqrt{V}} - \frac{A \mf{1}}{\sqrt{v_0}}  )\bigg|
		& \asymp \frac{ \sqrt{ \log v_0} \cdot \big(|D|  \rho_{\max}^2  \alpha_0 n + \sqrt{ |D| \rho_{\max}^2 \alpha_0 n \log(1/\delta) } \big) }{ \rho_{\max}^2 \alpha_0 n^2}
		\\ &\asymp \sqrt{\log v_0} \cdot \big( (|D|/n) + \frac{\sqrt{(|D|/n) \log(1/\delta)}}{  \rho_{\max} \sqrt{\alpha_0} n } \big) 	,
		\num \label{eqn:LD2_next}
	\end{align*}
	and
	\begin{align*}
		| \mf{1}_D^\T \hat \eta |  &\les \frac{ |D|  \rho_{\max}^2  \alpha_0 n + \sqrt{ |D| \rho_{\max}^2 \alpha_0 n \log(1/\delta) } }{ \sqrt{v_0} }
		\\ &\asymp \frac{ |D|  \rho_{\max}^2  \alpha_0 n + \sqrt{ |D| \rho_{\max}^2 \alpha_0 n \log(1/\delta) } }{ \rho_{\max} \sqrt{\alpha_0} n }
		\\ & \asymp |D| \rho_{\max} \sqrt{\alpha_0}  
		+ \sqrt{ (|D|/n) \cdot \log(1/\delta) }
		\num \label{eqn:1_D_etahat_next}
	\end{align*}
	with probability at least $1 - \delta -2 v_0^{-c_1}$. 
	
	By \eqref{eqn:LD1},\eqref{eqn:LD3}, \eqref{eqn:1_D_etastar},  \eqref{eqn:LD2_next}, and   \eqref{eqn:1_D_etahat_next}
	\begin{align*}
		|L_D | &\leq \big| \mf{1}^\T_D \eta^* ( \eta^{*} - \hat \eta)^\T \mf{1}_D  \big|
		+ \big| \mf{1}^\T_D ( \eta^{*} - \hat \eta)  \hat \eta^{\T} \mf{1}_D \big|
		\\& \les \big( |D| \rho_{\max} \sqrt{\alpha_0} 
		+ \sqrt{ (|D|/n) \cdot \log(1/\delta) } \big) \cdot \big(
		\sqrt{\log v_0} (|D|/n) + \sqrt{ (|D|/n) \log(1/\delta) } + 1
		\big).
	\end{align*}
	with probability at least $1 - \delta -2 v_0^{-c_1}$.

	It follows that, setting $\delta = 1/\binom{n}{N}^2$ above and applying the union bound,
	\begin{align*}
		\max_{|D| = N}  | L_D |
		&\les \big( N \rho_{\max} \sqrt{\alpha_0}  
		+ \sqrt{ N \epsilon  \cdot \log(\frac{ne}{N}) } \, \big) \cdot \big(
		\epsilon\sqrt{\log v_0}  + \sqrt{ N \epsilon \cdot \log(\frac{ne}{N}) } + 1
		\big)
	\end{align*}
	with probability at least $1 - \binom{n}{N}^{-1}  - 2 v_0^{-c_1} \to 1$.  Note that
	\begin{align*}
		\frac{n \log \frac{ne}{N} }{ \log v_0 } 
		&\asymp 	\frac{n \log \frac{ne}{N} }{ \log( \rho_{\max}^2 \alpha_0 n^2 )} \gtrsim 1 \Rightarrow 
		\\ \frac{N^2}{n} \log \frac{ne}{N} &\gtrsim \frac{N^2}{n^2} \log( \rho_{\max}^2 \alpha_0 n^2 ) \Rightarrow
		\\   \sqrt{ N \epsilon \cdot \log(\frac{ne}{N}) } &\gtrsim  \epsilon\sqrt{\log v_0}. 
	\end{align*}
	Further, since $(N/n) \log \frac{ne}{N} \ll 1$ and $\rho_{\max}^2 \alpha_0 n \to \infty $ by Assumption  \eqref{assn:diverging_degrees}, 
	\begin{align*}
		N \log \frac{ne}{N} &\les \rho_{\max}^2 \alpha_0 n^2 \Rightarrow 
		\\ \frac{N}{n} \sqrt{ \log \frac{ne}{N} } &\les \sqrt{N \rho_{\max}^2 \alpha_0} \Rightarrow 
		\\ N \epsilon  \log \frac{ne}{N} &\les \sqrt{N^3 \rho_{\max}^2 \alpha_0 \log \frac{ne}{N} }. 
	\end{align*}
	Hence 
	\begin{align*}
		\max_{|D| = N}  | L_D |  \les \sqrt{ N^3 \rho_{\max}^2 \alpha_0 \log( \frac{ne}{N} )  }
		+ N \epsilon \log( \frac{ne}{N} )
		\les \sqrt{ N^3 \rho_{\max}^2 \alpha_0 \log( \frac{ne}{N} )  }
	\end{align*}
	with probability at least $1 - \binom{n}{N}^{-1}-2 v_0^{-c_1}$. Recalling that $N = N_0[1 + o(1)]$ yields the statement of the lemma. 

\end{proof}

Next we study an ideal version of $\phi_{sc}$.

\begin{lemma}
	\label{lem:scan_ideal}
	Define the ideal scan statistic
	\[
	\ti \phi_{sc} = \max_{|D| = N} \mf{1}_D^\T(  A - \eta^* \eta^{*\T} ) \mf{1}_D,
	\] 	
	and corresponding test
	\[
	\ti	\vp_{sc} = \mf{1}\bigg[ \ti \phi_{sc}  >  \ti \tau   \bigg],
	\]
	where 
	\[
	\ti \tau \equiv \ti C \hat \gamma N^2 h^{-1}\bigg( \frac{\ti C N \log(\frac{ne}{N})}{\hat \gamma N^2}  \bigg),  
	\]
	and $\tilde C > 0$ is a sufficiently large absolute constant that depends only on $C_\rho$ from Assumption \eqref{assn:mild_deg_het}. Then under the null hypothesis,
	\begin{align*}
		\p\big[  | \ti \phi_{sc} | > \ti \tau   \big] \leq 
		n^{-c_0} + \exp\big( - N \log\frac{ne}{N}  \big)
	\end{align*}
	and under the alternative hypothesis,
	\begin{align*}
		\p \big[  | \ti \phi_{sc} | \leq  \ti \tau   \big]
		\leq n^{-c_0} + \big( \frac{N}{ne} \big)^{10}
	\end{align*}
	for $n$ sufficiently large, where $c_0$ is an absolute constant.
\end{lemma}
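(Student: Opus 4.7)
Both parts rest on Bernstein's inequality in its $h$-parameterized form: if $X_1,\ldots,X_M$ are independent, centered, $|X_i|\le 1$, and $\sum_i\var(X_i)\le\sigma^2$, then $\mathbb{P}\bigl[\bigl|\sum_i X_i\bigr|\ge t\bigr]\le 2\exp\bigl(-\sigma^2 h(t/\sigma^2)\bigr)$; inverting, $t=\sigma^2 h^{-1}(s/\sigma^2)$ enforces failure probability at most $2\exp(-s)$. The threshold $\ti\tau=\ti C\hat\gamma N^2 h^{-1}\bigl(\ti C N\log(ne/N)/(\hat\gamma N^2)\bigr)$ is exactly this inversion with Bernstein variance proxy $\hat\gamma N^2$ and budget $s=\ti C N\log(ne/N)$; since $N\log(ne/N)\ge\log\binom{n}{N}$, this budget leaves room for a union bound over all size-$N$ subsets.

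Under the null, $K=1$ gives $\Omega=\theta\theta^\T$, hence $\eta^*=\theta$ and $\ti\Omega=0$, so for any fixed $D$ with $|D|=N$,
\[
\mf{1}_D^\T(A-\eta^*\eta^{*\T})\mf{1}_D = \mf{1}_D^\T W\mf{1}_D - \sum_{i\in D}\Omega_{ii},
\]
and the diagonal correction is at most $N\rho_{\max}^2\ll\ti\tau$ by assumption (b). The Bernstein variance of $\mf{1}_D^\T W\mf{1}_D$ is bounded by $\sigma_D^2\le\sum_{i,j\in D}\Omega_{ij}\le(\sum_{i\in D}\theta_i)^2\asymp\rho_{\max}^2 N^2$, and Lemma~\ref{lem:gamma_estimate} combined with assumption (a) gives $\hat\gamma\asymp\rho_{\max}^2$ on an event of probability $1-n^{-c_0}$, whence $\sigma_D^2\les\hat\gamma N^2$. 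Applying Bernstein with $t=\ti\tau$ and union-bounding over the $\binom{n}{N}\le(ne/N)^N$ subsets of size $N$ yields the null bound, with $\ti C$ chosen large enough depending only on $C_\rho$.

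For the alternative, it suffices to lower bound the statistic on a single size-$N$ test set $D$ with $|D\triangle S|=o(N_0)$, which exists by assumption (c). Writing $\mu:=\mf{1}_D^\T\ti\Omega\mf{1}_D$, Lemma~\ref{lem:tilde_Omega} gives $\ti\Omega=\frac{1-b^2}{v_0}\Theta\Pi ff^\T\Pi^\T\Theta$ with $f=(\|\theta_{S^c}\|_1,-\|\theta_S\|_1)^\T$, so
\[
|\mu|\asymp\frac{|1-b^2|\,\|\theta_S\|_1^2\,\|\theta_{S^c}\|_1^2}{v_0}\asymp|1-b^2|\,\|\theta_S\|_1^2,
\]
where the last $\asymp$ uses that $b\ge 0$ forces $\|\theta_{S^c}\|_1\gg\|\theta_S\|_1$ and hence $v_0\asymp\|\theta_{S^c}\|_1^2\asymp\rho_{\max}^2\alpha_0 n^2$. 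Setting $\Sigma^2=\rho_{\max}^2\alpha_0 N_0^2\asymp\hat\gamma N_0^2$, the SNR hypothesis~\eqref{eqn:scan_SNR} rewrites as $\Sigma^2 h(|\mu|/\Sigma^2)\gg N_0\log(ne/N_0)$, which by monotone inversion of $h$ is precisely $|\mu|\gg\ti\tau$. The diagonal contribution to the mean is $O(N_0\rho_{\max}^2 a)$ and absorbed into $|\mu|$. For the random deviation, Bernstein on $\mf{1}_D^\T W\mf{1}_D$ with the \emph{true} local variance $\sigma_D^2\les\|\theta_S\|_1^2$ yields $\mathbb{P}[|\mf{1}_D^\T W\mf{1}_D|\ge|\mu|/2]\le 2\exp\bigl(-c\,\|\theta_S\|_1^2\, h(|1-b^2|)\bigr)$; a case split on whether $|1-b^2|\ll 1$ (Gaussian regime, $h(u)\asymp u^2$) or $|1-b^2|\asymp 1$ (saturated regime, $h(u)\asymp 1$) shows that \eqref{eqn:scan_SNR} combined with assumption (b) forces this exponent past $10\log(ne/N_0)$, giving the claimed $(N/ne)^{10}$ factor.

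The main obstacle is that $\ti\tau$ is calibrated with the \emph{global} variance proxy $\hat\gamma N^2\asymp\rho_{\max}^2\alpha_0 N^2$, whereas under the alternative the \emph{local} variance $\sigma_S^2\asymp a\rho_{\max}^2 N_0^2$ inside the planted community is larger by a factor $a/\alpha_0$ that can be arbitrarily large when the community is much denser than the background. This forces the alternative-side Bernstein bound to be executed with $\sigma_S^2$ rather than $\hat\gamma N_0^2$, and one must verify that \eqref{eqn:scan_SNR} absorbs this enlargement in every regime of $|1-b^2|$ relative to $a/\alpha_0$. A secondary subtlety is the randomness of $\ti\tau$ through $\hat\gamma$; Lemma~\ref{lem:gamma_estimate} confines $\hat\gamma/(\rho_{\max}^2\alpha_0)$ to a constant interval on an event of probability $1-n^{-c_0}$, whose complement accounts for the $n^{-c_0}$ contribution to both stated bounds.
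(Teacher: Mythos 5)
Your high-level strategy matches the paper's: under the null, apply a Bennett/Bernstein bound to $Z_D \equiv \mf{1}_D^\T W\mf{1}_D$ conditioned on the event that $\hat\gamma \asymp \rho_{\max}^2$ (from Lemma~\ref{lem:gamma_estimate}), then union-bound over $\binom{n}{N}\le(ne/N)^N$ subsets; under the alternative, compute the deterministic mean $\mu=\mf{1}_D^\T\widetilde\Omega\mf{1}_D$ via Lemma~\ref{lem:tilde_Omega}, check that $|\mu|\gg\widetilde\tau$ by monotone inversion of the SNR condition, and bound the Bernstein fluctuation. The null side is essentially identical to the paper's proof (you are in fact slightly more careful than the paper in explicitly accounting for the diagonal correction $\sum_{i\in D}\Omega_{ii}$ and in adjusting $S$ to a set $D$ of cardinality exactly $N$, which the paper glosses over).

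Where you depart from the paper is the type-2 fluctuation bound, and this is also where you leave a genuine gap. You assert $\mathbb{P}[|Z_D|\ge|\mu|/2]\le 2\exp(-c\,\|\theta_S\|_1^2 h(|1-b^2|))$ — correct — and then claim a "case split on $|1-b^2|\ll 1$ vs.\ $|1-b^2|\asymp 1$" closes the argument, but you do not carry it out and you acknowledge that "one must verify that \eqref{eqn:scan_SNR} absorbs this enlargement in every regime." That verification is precisely the nontrivial content. Because $\|\theta_S\|_1^2\asymp r\gamma N_0^2$ with $r=a/\alpha_0\ge 1$, the SNR condition gives a lower bound involving $h(r|1-b^2|)$, and translating it into the needed lower bound $\|\theta_S\|_1^2 h(|1-b^2|)\gtrsim\log(ne/N_0)$ requires controlling the ratio $h(rw)/h(w)$ for $w=|1-b^2|$ against the factor $rN_0$. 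Since $h(u)/u^2$ is decreasing, the Gaussian-regime case is tight, and a naive reduction leaves the condition $N_0\gtrsim r$, which is not guaranteed a priori (one has only $r\lesssim n/N_0$). The paper sidesteps this by splitting on $u=\log(ne/N)/(\gamma N)$ — the quantity actually fed to $h^{-1}$ in the threshold — and uses the constraint $a\rho_{\max}^2\lesssim 1$ together with $u\le 0.01$ to force $r\lesssim N/\log(ne/N)$, closing exactly the gap your split leaves open; the case $u\geq 0.01$ is handled by a separate algebraic manipulation of $h^{-1}$. So your outline is correct and your diagnosis of the obstacle is sharp, but you have replaced the paper's case split with one that does not directly interface with the SNR hypothesis, and the verification you defer is the part that requires the paper's specific analysis.
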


\begin{proof}
	
	In this proof, $c>0$ is an absolute constant that may vary form line to line. 	
	
	Define the ideal scan statistic
	\[
	\ti \phi_{sc} = \max_{|D| = N} \mf{1}_D^\T(  A - \eta^* \eta^{*\T} ) \mf{1}_D. 
	\] 	
	Also define
	\[
	Z_D \equiv \sum_{i,j \in D (dist)} (A_{ij} - \Omega_{ij} )
	\]

	First consider the type 1 error. Under the null hypothesis, we have $\eta^* = \theta = \rho$ and $\alpha_0 = 1$. 
	Observe that 
	\[
	\sigma^2_D \equiv \var(Z_D) = \var\big( \sum_{i,j \in D (dist)} (A_{ij} - \theta_i \theta_j ) \big)
	\les \| \theta_D \|_1^2 \asymp \rho_{\max}^2 N^2
	\sim \rho_{\max}^2 N_0^2
	\]
	By the Bennett inequality, \cite[Theorem 2.9.2]{Ver18},
	\begin{align*}
		\p\big[ \sum_{i,j \in D} (A_{ij} - \theta_i \theta_j ) > t \big]
		\leq \exp\bigg( - \sigma_D^2 \, h\bigg( \frac{t}{\sigma_D^2} \bigg) \bigg),
		\num \label{eqn:scan_bennett}
	\end{align*}
	where $h(u) = (1 + u) \log(1 + u) - u$.

	
	
	Next, by Lemma \ref{lem:gamma_estimate},
	\[
	|\hat \gamma - \E \hat \gamma| \les \frac{ \sqrt{ \log n} }{n}
	\]
	with probability $n^{-c_0}$. Also recall that
	\[
	\E \, \hat \gamma = \frac{1}{n\rp{2}}	\sum_{i,j (dist)} \Omega_{ij} \asymp \rho_{\max}^2 \alpha_0 = \rho_{\max}^2  \gg \frac{ \sqrt{ \log n }}{n}
	\]
	by Assumptions \eqref{assn:mild_deg_het} and  \eqref{assn:diverging_degrees}. It follows that there exist absolute constants $c_0, c_\gamma, C_\gamma  > 0$ such that 
	\begin{align*}
		c_\gamma \rho_{\max}^2 < \hat \gamma  <C_\gamma  \rho_{\max}^2
		\num \label{eqn:event_E}
	\end{align*}
	with probability $n^{-c_0}$. Let $\mc{E}$ denote this event. Under $\mc{E}$, we have that for $\ti C$ sufficiently large,
	\[
	\ti C \hat \gamma N^2 h^{-1}\bigg( \frac{\ti C N \log(\frac{ne}{N})}{\hat \gamma N^2}  \bigg) 
	\geq \sigma_D^2 h^{-1} \bigg ( \frac{2N \log\frac{ne}{N}}{\sigma^2_D} \bigg)
	\]
	It follows from this, the union bound, and the Bennett inequality,
	\begin{align*}
		\p\bigg[ | \ti  \phi_{sc} | >   \ti C  \hat \gamma N^2 h^{-1}\bigg( \frac{\ti C N \log(\frac{ne}{N})}{\hat \gamma N^2}  \bigg) \bigg] 
		&\leq \p[\mc{E}^c ] + \p\bigg[ | \ti  \phi_{sc} | >   \ti C  \hat \gamma N^2 h^{-1}\bigg( \frac{\ti C N \log(\frac{ne}{N})}{\hat \gamma N^2}  \bigg),\, \, \mc{E}  \bigg] 
		\\&\leq n^{-c_0} + \sum_{|D| = N} \p\bigg[  |Z_D|   >  \ti C  \hat \gamma N^2 h^{-1}\bigg( \frac{\ti C N \log(\frac{ne}{N})}{\hat \gamma N^2}  \bigg)  \bigg]
		\\&\leq n^{-c_0} + \sum_{|D| = N} \p\bigg[ | Z_D |  >  \sigma_D^2 h^{-1} \bigg ( \frac{2N \log\frac{ne}{N}}{\sigma^2_D} \bigg)  \bigg]
		\\&\leq n^{-c_0} + \big( \frac{ne}{N} \big)^N \exp\big( - 2N \log\frac{ne}{N}  \big).
	\end{align*}
	This shows that the type 1 error for the ideal scan statistic is $o(1)$.
	
	Next consider the type 2 error. We have by Lemma \eqref{lem:tilde_Omega},
	\begin{align*}
		\mf{1}_S^\T(  A - \eta^* \eta^{*\T} ) \mf{1}_S
		&= \sum_{i,j \in S (dist)} (A_{ij} - \Omega_{ij} )
		+ \mf{1}_S^\T \ti \Omega \mf{1}_S		= Z_S + \| \theta_S \|_1^2 (1 - b^2) \cdot \frac{ \| \theta_{S^c} \|_1^2}{ v_0 }. 
	\end{align*}
	Note that by \eqref{eqn:v0_vs_v}
	\begin{align*}
		\| \theta_S \|_1^2 (1 - b^2) \cdot \frac{ \| \theta_{S^c} \|^2_1}{ v_0 } \sim 
		\| \theta_S \|_1^2 (1 - b^2).
	\end{align*}
	Next,
	\begin{align*}
		\var(Z_S) = \sum_{i,j \in S (dist)} \Omega_{ij}(1-\Omega_{ij})
		\les \| \theta_S \|_1^2 \asymp \rho_{\max}^2 N a
		\sim  \rho_{\max}^2 N_0 a
	\end{align*}
	By Bernstein's inequality,
	\begin{align*}
		|Z_S| \les \sqrt{ \| \theta_S \|_1^2 \log(1/\delta) } \vee \log(1/\delta)
		\leq \| \theta_S \|_1 \log(1/\delta) 
	\end{align*} 
	with probability at least $1 - \delta$. Setting $\delta = (\frac{N}{ne})^{10}$, we have
	\[
	|Z_S| \les \| \theta_S \|_1 \log \big( \frac{ne}{N} \big) 
	\]
	with probability at least $1 - (\frac{N}{ne})^{10}$. 
	
	Next we show that 
	\begin{align*}
		\| \theta_S \|_1 |1 - b^2| \gtrsim  \log\frac{ne}{N}
		\num \label{eqn:scan_alt_var_bd}
	\end{align*}
	using \eqref{eqn:scan_SNR}, which we rewrite as
	\begin{align*}
		\| \theta_S \|_1^2 |1 - b^2|
		\gg \gamma N_0^2  h^{-1} \bigg(\frac{\log \frac{ne}{N_0}}{\gamma N_0}\bigg)
		\sim \gamma N^2  h^{-1} \bigg(\frac{\log \frac{ne}{N}}{\gamma N}\bigg)
		\num \label{eqn:scan_SNR_rewrite}
	\end{align*}
	where $\gamma = \rho_{\max}^2 \alpha_0$. Recall that $\alpha_0 = 1$ under the null, and $\alpha_0 \sim c$ under the alternative. Let
	\[
	u = \frac{\log \frac{ne}{N}}{\gamma N}. 
	\]
	Consider two cases: (i) $u \leq 0.01$,  
	and (ii) $u \geq 0.01$. For $u' \leq h^{-1}(0.01)$, we have $h(u') \asymp (u')^2$, and therefore $h^{-1}(u) \asymp u^2$ for $u \leq 0.01$. In this case \eqref{eqn:scan_SNR_rewrite} implies 
	\begin{align*}
		\| \theta_S \|_1^2 |1 - b^2|
		\gg \gamma N^2 \sqrt{ \frac{ \log \frac{ne}{N} }{ \gamma N}  }
		= \sqrt{ \gamma N^3 \log \frac{ne}{N} }.  
	\end{align*}
	In addition, 
	\[
	\| \theta_S \|_1 = N \sqrt{a} \rho_{\max},
	\]
	so that
	\begin{align*}
		\| \theta_S \|_1 (1 - b^2)
		\gg \sqrt{ \frac{   \gamma  N  \log \frac{ne}{N} }{a \rho_{\max}^2 } }
		\gtrsim \log \frac{ne}{N}
	\end{align*}
	since $u \leq 0.01$ and $a \rho_{\max}^2 \les 1$. Thus in case (i), \eqref{eqn:scan_alt_var_bd} is satisfied for $n$ sufficiently large. 
	
	Now consider case (ii) where $u \geq 0.01$. Note that $h(u) \leq (u+1)\log(u+1)$, and thus
	\[
	\frac{1}{2}(u + 1)  \leq u \leq h^{-1}( (u+1)\log(u+1) ). 
	\]
	Let $\vp \equiv	(u+1)\log(u+1) \geq u$ and observe that
	\[
	u + 1 = \frac{ \vp}{\log (u + 1) } \geq \frac{ \vp }{ \log \vp}. 
	\]
	Hence
	\[
	h^{-1}( (u+1)\log(u+1) ) \geq \frac{1}{2} \cdot  \frac{ (u+1)\log(u+1) }{\log\big[ (u+1)\log(u+1)\big]}. 
	\] 
	Applying \eqref{eqn:scan_SNR_rewrite}, 
	\begin{align*}
		\| \theta_S \|_1^2 |1 - b^2|
		&\gg  \gamma N^2 \cdot  \frac{ (\frac{\log \frac{ne}{N}}{\gamma N}+1)\log(\frac{\log \frac{ne}{N}}{\gamma N}+1) }{\log\big[ (\frac{\log \frac{ne}{N}}{\gamma N}+1)\log(\frac{\log \frac{ne}{N}}{\gamma N}+1)\big]}
		\gtrsim N \log \frac{ne}{N}. 
	\end{align*}	
	Hence
	\begin{align*}
		\| \theta_S \|_1 |1 - b^2|
		\gg \frac{  \log \frac{ne}{N} }{ \sqrt{a } \rho_{\max} } \gtrsim \log \frac{ne}{N}. 
	\end{align*}
	Thus in case (ii), \eqref{eqn:scan_alt_var_bd} is also satisfied. 
	
	Next we have, 
	\begin{align*}
		\p\bigg[ 
		| \ti \phi_{sc} | \leq 
		\ti C  \hat \gamma N^2 &h^{-1}\bigg( \frac{\ti C N \log(\frac{ne}{N})}{\hat \gamma N^2}  \bigg)
		\bigg]
		\\&\leq n^{-c_0} + 
		\p\bigg[ 
		| \ti \phi_{sc} | \leq 
		\ti C  \hat \gamma N^2 h^{-1}\bigg( \frac{\ti C N \log(\frac{ne}{N})}{\hat \gamma N^2}  \bigg), \, \mc{E} 
		\bigg]
		\\&\leq 
		n^{-c_0} + 
		\p\bigg[ \, \,
		\bigg| \| \theta_S \|_1^2 (1 - b^2) + Z_S \bigg| 
		\leq C \gamma N^2 h^{-1} \bigg(   \frac{C N \log(\frac{ne}{N})}{\gamma N^2}  \bigg) \,
		\bigg]
		\\& \leq n^{-c_0} 
		+ \p\bigg[
		|Z_S| \geq \big| \| \theta_S \|_1^2 (1 - b^2)  \big| - C \gamma N^2 h^{-1} \bigg(   \frac{C N \log(\frac{ne}{N})}{\gamma N^2}  \bigg)
		\bigg],
	\end{align*}
	where $C> 0$ is a sufficiently large absolute constant. In the second line and third lines we use the event $\mc{E}$ from \eqref{eqn:event_E}, and in the last line we use the triangle inequality. By \eqref{eqn:scan_SNR}, we have conservatively that 
	\[
	\big| \, \| \theta_S \|_1^2 (1 - b^2)  \big| - C \gamma N^2 h^{-1} \bigg(   \frac{C N \log(\frac{ne}{N})}{\gamma N^2}  \bigg) \geq \frac{1}{2} \big| \| \theta_S \|_1^2 (1 - b^2)  \big|
	\gg \| \theta_S \|_1 \log \frac{ne}{N}
	\]
	for $n$ sufficiently large. Thus for $n$ sufficiently large,
	\begin{align*}
		\p\bigg[ 
		| \ti \phi_{sc} | \leq 
		\ti C  \hat \gamma N^2 h^{-1}\bigg( \frac{\ti C N \log(\frac{ne}{N})}{\hat \gamma N^2}  \bigg)
		\bigg]
		&\leq n^{-c_0} + \p\bigg[
		|Z_S| \geq\frac{1}{2} \big| \, \| \theta_S \|_1^2 (1 - b^2) \big| \,
		\bigg]
		\\&\leq n^{-c_0} + \big( \frac{N}{ne} \big)^{10}. 
	\end{align*}
	Therefore the type 2 error for the ideal scan statistic is also $o(1)$. 
\end{proof}

\begin{lemma}
	Let $\phi_{sc}$ denote the scan statistic defined in \eqref{eqn:scan_stat_def}, and let $\hat \tau$ denote the random threshold defined in \eqref{eqn:tauhat_def}. Then under the null hypothesis,
	\[
	\p\big[ | \phi_{sc} | > \hat \tau \big] 
	\leq \binom{n}{N}^{-1} + v_0^{-c_1} +  
	n^{-c_0} + \exp\big( - N \log\frac{ne}{N}  \big),
	\]
	and under the alternative hypothesis,for $n$ sufficiently large we have
	\[
	\p\big[ | \phi_{sc} | < \hat \tau \big] 
	\leq \binom{n}{N}^{-1} + v_0^{-c_1} + n^{-c_0} + \big( \frac{N}{ne} \big)^{10}. 
	\]
	\label{lem:real_scan_test}
\end{lemma}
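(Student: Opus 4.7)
The plan is to transfer the conclusions of Lemma \ref{lem:scan_ideal} from the ideal scan test to the real one by paying a plug-in error bounded in Lemma \ref{lem:scan_plugin}. The starting identity is that, uniformly in $D$ with $|D|=N$,
\[
\mf{1}_D^\T(A-\hat\eta\hat\eta^\T)\mf{1}_D = \mf{1}_D^\T(A-\eta^*\eta^{*\T})\mf{1}_D + L_D,
\]
so that $|\phi_{sc}-\ti\phi_{sc}|\leq \max_{|D|=N}|L_D|$. Let $\mc{E}$ be the intersection of the events from Lemma \ref{lem:gamma_estimate} (on which $\hat\gamma \asymp \rho_{\max}^2\alpha_0$) and Lemma \ref{lem:scan_plugin} (on which $\max_{|D|=N}|L_D|\leq C_L\sqrt{N^3\rho_{\max}^2\alpha_0\log(ne/N)}$ for some absolute $C_L$). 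Then $\p(\mc{E}^c)\leq n^{-c_0}+\binom{n}{N}^{-1}+2v_0^{-c_1}$, so it suffices to prove the desired type I and type II bounds on $\mc{E}$.

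For the type I bound, I would first choose the constant $C^*$ in $\hat\tau$ large enough (depending only on $\ti C$, $C_\rho$ and $C_L$) that on $\mc{E}$ under the null, $\hat\tau \geq \ti\tau + C_L\sqrt{N^3\rho_{\max}^2\log(ne/N)}$. The argument mirrors the case analysis at the end of the proof of Lemma \ref{lem:scan_ideal}: in the small-argument regime of $h^{-1}$ one has $\ti\tau\asymp\sqrt{\rho_{\max}^2 N^3\log(ne/N)}$, which matches the plug-in bound up to constants, while in the large-argument regime $\ti\tau\gtrsim N\log(ne/N)$ dominates the plug-in bound. In either regime, multiplying the outer prefactor and the argument of $h^{-1}$ by a large enough constant $C^*/\ti C$ produces the slack needed. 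On the intersection of $\mc{E}$ with the type I success event of Lemma \ref{lem:scan_ideal}, we then get $|\phi_{sc}|\leq |\ti\phi_{sc}|+\max_D|L_D|\leq \ti\tau+\max_D|L_D|\leq \hat\tau$, and the claimed type I bound follows by a union bound over failure events.

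For the type II bound, rather than compare thresholds directly it is cleaner to evaluate the scan at the true support $D=S$. Using Lemma \ref{lem:tilde_Omega},
\[
\phi_{sc}\geq \mf{1}_S^\T(A-\hat\eta\hat\eta^\T)\mf{1}_S = Z_S + \|\theta_S\|_1^2(1-b^2)\,\|\theta_{S^c}\|_1^2/v_0 + L_S,
\]
with $Z_S=\sum_{i,j\in S,\,i\neq j}(A_{ij}-\Omega_{ij})$. Bernstein's inequality gives $|Z_S|\les\|\theta_S\|_1\log(ne/N)$ with probability at least $1-(N/(ne))^{10}$, and on $\mc{E}$ one has $|L_S|\les \sqrt{N^3\rho_{\max}^2\alpha_0\log(ne/N)}$ and $\hat\tau\les C^{**}\gamma N^2 h^{-1}(\log(ne/N)/(\gamma N))$ with $\gamma=\rho_{\max}^2\alpha_0$ and $C^{**}$ depending only on $C^*$. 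I would then invoke the SNR hypothesis \eqref{eqn:scan_SNR} together with the computation \eqref{eqn:scan_alt_var_bd} carried out inside the proof of Lemma \ref{lem:scan_ideal} to show that the signal $\|\theta_S\|_1^2(1-b^2)\|\theta_{S^c}\|_1^2/v_0\sim \|\theta_S\|_1^2(1-b^2)$ strictly dominates each of $|Z_S|$, $|L_S|$, and $\hat\tau$, yielding $\phi_{sc}>\hat\tau$ on a high-probability event with the stated failure probability.

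The main obstacle is the two-regime behavior of $h^{-1}$: since $h^{-1}(u)\asymp\sqrt u$ for small $u$ and $h^{-1}(u)\asymp u/\log u$ for large $u$, neither a Lipschitz estimate nor a single homogeneity law relates $\hat\tau$ to $\ti\tau$ cleanly, so the constant-absorption step in both the type I comparison and the type II domination of $\hat\tau$ must be done by case split, paralleling the end of the proof of Lemma \ref{lem:scan_ideal}. Modulo this bookkeeping, the argument is a routine plug-in transfer from the ideal scan to the data-driven one.
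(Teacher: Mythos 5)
Your plan is correct and structurally the same as the paper's: bound $|\phi_{sc}-\ti\phi_{sc}|\le\max_D|L_D|$ via Lemma~\ref{lem:scan_plugin}, show the plug-in error is dominated by the threshold $\hat\tau$, and transfer the type~I/II guarantees from Lemma~\ref{lem:scan_ideal}. There are two minor differences worth noting. First, you flag the two-regime behavior of $h^{-1}$ as the main obstacle and propose a case split, but no case split is needed: the paper simply uses the elementary inequality $h(u)\le u^2$ (valid for all $u\ge0$, since $\log(1+u)\le u$), which immediately gives
\[
h\!\left(\frac{\sqrt{N^3\gamma\log(ne/N)}}{\gamma N^2}\right)\le\frac{\log(ne/N)}{\gamma N}
\quad\Longrightarrow\quad
\sqrt{N^3\gamma\log(ne/N)}\le\gamma N^2 h^{-1}\!\left(\frac{\log(ne/N)}{\gamma N}\right),
\]
so the plug-in error is at most one ``unit'' of the ideal threshold, and taking $C^*\ge2\ti C$ supplies the slack in a single line. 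Second, for type~II you propose evaluating the scan directly at $D=S$ and re-running the signal-dominates argument, whereas the paper instead just writes $\p[|\phi_{sc}|\le\hat\tau]\le\p[|\ti\phi_{sc}|\le\hat\tau+\max_D|L_D|]$, absorbs $\hat\tau+\max_D|L_D|\le 2C^*\hat\gamma N^2 h^{-1}(\cdot)$ into the constant of Lemma~\ref{lem:scan_ideal}, and invokes that lemma wholesale. Your route is correct but partially re-derives the lemma it could be applying; the paper's is shorter and more modular. Both approaches rely in the same place on hypothesis \eqref{eqn:scan_SNR} to ensure the signal dominates the (constant-inflated) threshold.
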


\begin{proof}
	
	We show that the plug-in effect is negligible compared to the threshold and signal-strength.
	
	By Lemma \ref{lem:scan_plugin}, 
	\[
	\max_{|D| = N}  | L_D |  \les \sqrt{ N_0^3 \gamma \log( \frac{ne}{N_0} )  }
	\]
	with high probability. Since $h(u) \leq u^2$ for $u \geq 0$, it follows that 
	\begin{align*}
		h\bigg(  \frac{ \sqrt{N_0^3 \gamma \log( \frac{ne}{N_0} )} }{\gamma N_0^2} \bigg) 
		&\leq \frac{ N_0^3 \gamma \log( \frac{ne}{N_0} )}{\gamma^2 N_0^4}	
		= \frac{ \log \frac{ne}{N_0} }{ \gamma N_0 } \Rightarrow
		\\ \sqrt{N_0^3 \gamma \log( \frac{ne}{N_0} )} 
		&\leq \gamma N_0^2 h^{-1} \bigg(   \frac{ \log \frac{ne}{N_0} }{ \gamma N_0 }    \bigg) \Rightarrow
		\\ \sqrt{N^3 \gamma \log( \frac{ne}{N} )} 
		&\leq [1+ o(1)]\gamma N^2 h^{-1} \bigg(   \frac{ \log \frac{ne}{N} }{ \gamma N }    \bigg).
	\end{align*}

	Under the null, we have by Lemma \ref{lem:scan_ideal} that
	\begin{align*}
		\p\big[
		| \phi_{sc} | \geq \hat \tau 
		\big] 
		&\leq 		\p\big[ \,
		| \ti \phi_{sc} | \geq \hat \tau - \max_{|D| = N} |L_D | \,
		\big] 
		\\& 	\leq \binom{n}{N}^{-1} + v_0^{-c_1} + 
		\p\bigg[
		| \ti \phi_{sc} | \geq  C^*  \hat \gamma N^2 h^{-1}\bigg( \frac{C^* N \log(\frac{ne}{N})}{\hat \gamma N^2}  \bigg) - \gamma N^2 h^{-1} \bigg(   \frac{ \log \frac{ne}{N} }{ \gamma N }    \bigg)
		\bigg] 
		\\&\leq \binom{n}{N}^{-1} + v_0^{-c_1} +  
		n^{-c_0} + \exp\big( - N \log\frac{ne}{N}  \big)
	\end{align*}
	for $C^*> 0$ a sufficiently large absolute constant. It suffices to take $C^* \geq 2\ti C$. 
	
	Under the alternative hypothesis, we have by Lemma  \ref{lem:scan_ideal} that 
	\begin{align*}
		\p\big[
		| \phi_{sc} | \leq \hat \tau 
		\big] 
		&\leq \p\big[ \,
		| \ti \phi_{sc} | \leq \hat \tau + \max_{|D| = N} |L_D | \,
		\big] 
		\\&\leq \binom{n}{N}^{-1} + v_0^{-c_1}
		+ \p\bigg[ \, | \ti \phi_{sc} | \leq 
		C^*  \hat \gamma N^2 h^{-1}\bigg( \frac{C^* N \log(\frac{ne}{N})}{\hat \gamma N^2}  \bigg) + \gamma N^2 h^{-1} \bigg(   \frac{ \log \frac{ne}{N} }{ \gamma N }    \bigg)
		\bigg]
		\\&\leq \binom{n}{N}^{-1} + v_0^{-c_1}
		+ \p\bigg[ \, | \ti \phi_{sc} | \leq 
		2C^*  \hat \gamma N^2 h^{-1}\bigg( \frac{C^* N \log(\frac{ne}{N})}{\hat \gamma N^2}  \bigg) 
		\bigg]
		\\&\leq \binom{n}{N}^{-1} + v_0^{-c_1} + n^{-c_0} + \big( \frac{N}{ne} \big)^{10}
	\end{align*}
	for $n$ sufficiently large.

\end{proof}

Observe that Theorem \ref{thm:scan} follows directly from Lemma \ref{lem:real_scan_test}. 


\section{Proof of Theorem \ref{thm:compLB} (Computational lower bound) }

In this section, we provide the proof of Theorem \ref{thm:compLB}. For convenience, we denote $b = \frac{nc-(a+c)N}{n-2N}, d = \frac{c(n-N)^2-a N^2}{n(n-2N)}$. Under $H_0$, all upper triangular entries $A$ are i.i.d. Bernoulli distributed with probability $d$. Then an orthonormal basis of the adjacency matrix of graph $D$ is
\begin{equation*}
	f_{\Gamma}(A) = \prod_{i<j: (i, j)\in \Gamma} \frac{A_{ij} - d}{\sqrt{d(1-d)}}.
\end{equation*}
Here, $\Gamma \subseteq \{(i, j): 1\leq i <j\leq n\}$ takes all subsets of all upper triagonal entries of $A$. Denote $|\Gamma|$ as the cardinality of $\Gamma$ and $B(D) = \left\{\Gamma \subseteq \{\text{unordered pairs } (i, j): i\neq j, i,j\in [n]\}, \Gamma\neq \emptyset, |\Gamma|\leq D\right\}$ as all subsets of off-diagonal entries of $A$ of cardinality at most $D$. By Proposition \ref{pr:low-degree} and the property of the orthonormal basis function of $A$,
\begin{equation*}
	\begin{split}
		& \sup_{\substack{f \text{ is polynomial; } \text{degree}(f) \leq D\\\mathbb{E}_{H_0}f(A) = 0; \textrm{Var}_{H_0}(A)=1}}\mathbb{E}_{H_1}f(A) = \|LR^{\leq D} - 1\| \\
		= & \left\{\sum_{\Gamma \in B(D)}  \left(\mathbb{E}_{H_0} f_{\Gamma}(A) (LR^{\leq D}(A) - 1)\right)^2\right\}^{1/2} \overset{(*)}{=}  \left\{\sum_{\Gamma \in B(D)} \left(\mathbb{E}_{H_0}f_{\Gamma}(A) LR(A) \right)^2\right\}^{1/2}\\
		= & \left\{\sum_{\Gamma \in B(D)} \mathbb{E}_{H_1} \left(f_{\Gamma}(A)\right)^2\right\}^{1/2}  {=} \left\{\sum_{\Gamma\in B(D)} \left(\mathbb{E}_{H_1} \prod_{(i, j)\in \Gamma} \frac{A_{ij} - d}{\sqrt{d(1-d)}} \right)^2\right\}^{1/2}.
	\end{split}
\end{equation*}
Here, $(*)$ is due to $\mathbb{E}_{H_0} f_{\Gamma}LR^{\leq D} = \mathbb{E}_{H_0} f_{\Gamma}LR$ by the property of projection and $\mathbb{E}_{H_0}f_{\Gamma}(A)=0$ for any $\Gamma\in B(D)$. Therefore, to establish the desired computational lower bound, we only need to prove
\begin{equation*}
	\sum_{\Gamma\in B(D)} \left(\mathbb{E}_{H_1} \prod_{(i, j)\in \Gamma} \frac{A_{ij} - d}{\sqrt{d(1-d)}} \right)^2 = o(1)
\end{equation*}
under the described asymptotic regime. For convenience, we denote
$$p_1 = \frac{a-d}{\sqrt{d(1-d)}}, \quad p_2 = \frac{b-d}{\sqrt{d(1-d)}}, \quad p_3 = \frac{c-d}{\sqrt{d(1-d)}}.$$
We can calculate that
\begin{equation*}
	a - d = \frac{(n-N)^2(a - c)}{n(n-2N)}, \quad b - d = - \frac{(n-N)N(a - c)}{n(n-2N)}, \quad c - d = \frac{N^2(a-c)}{n(2-2N)}.
\end{equation*}
and
\begin{equation}\label{eq:relation-gamma-delta}
	c-d = - \frac{N}{n-N}\left(b-d\right) = \left(\frac{N}{n-N}\right)^2\left(a-d\right).
\end{equation}
Since $b = \frac{c(n-N)-aN}{n-2N} \geq 0$ and $N\leq n/3$, we know $a \leq c(n-N)/N$ and 
$$c\geq d = \frac{c(n-N)^2-aN^2}{n(n-2N)} \geq \frac{c(n-N)^2 - N(n-N)c}{n(n-2N)} \geq (n-N)/n \cdot c\geq 2/3\cdot c.$$ Under the asymptotic regime of this theorem, we have $d = \frac{c(n-N)^2 - aN^2}{n(n-2N)}$ and
\begin{equation}\label{eq:p_1-asymptotic}
	p_1 = \frac{(n-N)^2(a-c)}{n(n-2N) \sqrt{d(1-d)}} \asymp \frac{a-c}{\sqrt{c}}, 
\end{equation}
i.e., there exists constant $\delta>1$ such that $\delta^{-1} c \leq p_1 \leq \delta c$. By \eqref{eq:relation-gamma-delta}, we have $p_3 = -N/(n-N) p_2 = N^2/(n-N)^2 p_1$. For any fixed $\Gamma \subseteq\{(i, j): 1\leq i<j\leq n\}$, 
\begin{equation*}
	\begin{split}
		& \mathbb{E}_{H_1}\prod_{(i, j)\in \Gamma} \frac{A_{ij} - d}{\sqrt{d(1-d)}} =  \mathbb{E}_{\Pi} \left\{ \mathbb{E}\left\{\prod_{(i,j)\in \Gamma} \frac{A_{ij}-d}{\sqrt{d(1-d)}}\Bigg|  \begin{array}{ll}\text{$A$ has two communities assigned by } \Pi
		\end{array}\right\}\right\}\\
		= & \mathbb{E}_{\Pi} p_1^{|\Gamma\cap K\otimes K|}\cdot p_2^{|\Gamma\cap K\otimes K^c|}\cdot  p_3^{|\Gamma\cap K^c \otimes K^c|} = \mathbb{E}_{\Pi}\prod_{(i, j)\in \Gamma} \left\{p_1\cdot \left(-N/(n-N)\right)^{\pi_i +\pi_j -2}\right\}\\
		= & p_1^{|\Gamma|}\cdot \left(\frac{-N}{n-N}\right)^{\sum_{(i,j)\in \Gamma} (\pi_i + \pi_j -2)} = p_1^{|\Gamma|}\cdot \left(\frac{-N}{n-N}\right)^{\sum_{(i,j)\in \Gamma} (\pi_i + \pi_j -2)}\\
		= & p_1^{|\Gamma|} \cdot \prod_{i=1}^n \left(\frac{-N}{n-N}\right)^{(\pi_i - 1)\cdot |\{j': (i, j')\in \Gamma\}|} \overset{(a)}{=} p_1^{|\Gamma|}\cdot \prod_{i=1}^n \left\{\left(\frac{N}{n}\right) +  \frac{n-N}{n}\left(\frac{-N}{n-N}\right)^{|\{j': (i, j')\in \Gamma\}|}\right\}.
	\end{split}
\end{equation*}
Here, (a) is because $\mathbb{P}(\pi_i =1) = N/n$; $\mathbb{P}(\pi_i =2) = (n-N)/n$. 
Thus, the following fact holds: if there exists a node $i$ that appears exactly one time in $\Gamma$, i.e., $|\{j': (i, j') \in \Gamma\}| = 1$, $\mathbb{E}_{H_1}\prod_{(i, j)\in \Gamma} \frac{A_{ij} - d}{\sqrt{d(1-d)}}=0$. On the other hand, for all $\Gamma$ that each node appear zero times or at least two times, we have
\begin{equation*}
	\begin{split}
		& \mathbb{E}_{H_1}\prod_{(i, j)\in \Gamma} \frac{A_{ij} - d}{\sqrt{d(1-d)}} \leq p_1^{|\Gamma|}\cdot \left\{\frac{N}{n} + \frac{n-N}{n}\left(\frac{-N}{n-N}\right)^{2}\right\}^{|\{i: i \text{ appears at least 2 times in }\Gamma\}|}\\ 
		\leq & p_1^{|\Gamma|} \cdot \left(\frac{2N}{n}\right)^{|\{i: i \text{ appears at least 2 times in }\Gamma\}|}.
	\end{split}
\end{equation*}
Finally, we denote 
$$B_0(D) = \left\{\Gamma\in B(D): \text{ each node in } [n] \text{ appears zero time or at least 2 times}\right\},$$
$$m(\Gamma) = |\{i: i \text{ appears in some pair of } \Gamma\}|.$$
For any $\Gamma\in B_0(D)$, we must have $m(\Gamma) \leq |\Gamma| \leq m(\Gamma)(m(\Gamma)-1)/2$. Then,
\begin{equation*}
	\begin{split}
		& \sum_{\Gamma\in B(D)} \left(\mathbb{E}_{H_1} \prod_{(i, j)\in \Gamma} \frac{A_{ij} - d}{\sqrt{d(1-d)}} \right)^2 = \sum_{\Gamma\in B_0(D)} \left(\mathbb{E}_{H_1} \prod_{(i, j)\in \Gamma} \frac{A_{ij} - d}{\sqrt{d(1-d)}} \right)^2\\
		= & \sum_{\Gamma\in B_0(D)} p_1^{2|\Gamma|} \cdot \left(\frac{2N}{n}\right)^{2|\{i: i \text{ appears at least 2 times in }\Gamma\}|} \leq \sum_{\Gamma\in B_0(D)} p_1^{2|\Gamma|} \cdot \left(\frac{2N}{n}\right)^{2m(\Gamma)}\\
		= & \sum_{m=2}^{D} \sum_{g=m}^{D\wedge m(m-1)/2} ~ \sum_{\substack{\Gamma \in B_0(D)\\ m(\Gamma)=m\\|\Gamma|=g}} p_1^{2g}\left(\frac{2N}{n}\right)^{2m}
		\overset{(a)}{\leq} \sum_{m=2}^D \sum_{g=m}^{D\wedge \frac{m(m-1)}{2}} \binom{n}{m} m^g p_1^g \left(\frac{2N}{n}\right)^m\\
		\leq & \sum_{m=2}^D \sum_{g=m}^{D\wedge \frac{m(m-1)}{2}} \frac{m^g p_1^{2g} (2N)^{2m}}{m!\cdot  n^m} \leq \sum_{m=2}^D \frac{D\max\left\{(mp_1^2)^{m}, (mp_1^2)^{D\wedge m(m-1)/2}\right\}\cdot (2N)^{2m}}{n^m}\\
		= & D\sum_{m=2}^D \left(\frac{\max\{mp_1^2, (mp_1^2)^M\}\cdot (2N)^2}{n}\right)^m \overset{(b)}{=} o(1)
	\end{split}
\end{equation*}
Here, $M = \max_{m\geq 1} \frac{D\wedge m(m-1)/2}{m} \leq \sqrt{D/2-1}$; (a) is because the number of $\Gamma\in B_0(D)$ with $m(\Gamma)=m$ and $|\Gamma|=g$ is at most $\binom{n}{m}\cdot m^g$; (b) is due to the asymptotic assumption and \eqref{eq:p_1-asymptotic}, which leads to
$$ \frac{N}{\sqrt{n}}\left(p_1 \vee p_1^M\right) \leq n^{-\varepsilon}.$$
We have thus finished the proof of this theorem. \quad $\square$

\begin{prop}[Proposition 1.15 of \cite{kunisky2019notes}]\label{pr:low-degree}
	Given data $A$, consider the simple hypothesis testing problem: $H_0$ versus $H_1$. Let the likelihood ratio function be $LR(A) = \frac{p_{H_1}(A)}{p_{H_0}(A)}$. Define $\|f\| = \sqrt{\mathbb{E}_{H_0}f^2(A)}$ and $f^{\leq D}$ as the projection of any function $f$ to the subspace of polynomials of degree at most $D$, i.e., $f^{\leq D} = \textrm{argmin}_{\substack{g\text{ is polynomial}\\ \text{degree}(g)\leq D}} \|f - g\|$. Then for any positive integer $D$, we have
	\begin{equation*}
		\|LR^{\leq D}(A)-1\| = \max_{\substack{f: \textrm{degree}(f)\leq D\\ \mathbb{E}_{H_0}f^2(A)=1\\ \mathbb{E}_{H_0}f(A)=0}} \mathbb{E}_{H_1} f(A);
	\end{equation*}
	\begin{equation*}
		\frac{LR^{\leq D}(A)-1}{\|LR^{\leq D}(A)-1\|} =  \textrm{argmax}_{\substack{f: \textrm{degree}(f)\leq D\\ \mathbb{E}_{H_0}f^2(A)=1\\ \mathbb{E}_{H_0}f(A)=0}} \mathbb{E}_{H_1} f(A).
	\end{equation*}
\end{prop}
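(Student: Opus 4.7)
The plan is to recognize the result as a Cauchy--Schwarz calculation inside the Hilbert space $L^2(H_0)$, where inner product and norm are given by $\langle f, g \rangle = \mathbb{E}_{H_0}[f(A)g(A)]$ and $\|f\|^2 = \mathbb{E}_{H_0}[f^2(A)]$. The polynomials of degree at most $D$ form a finite-dimensional subspace $V_D \subseteq L^2(H_0)$, and $f^{\leq D}$ is the orthogonal projection of $f$ onto $V_D$. In particular, the constant function $1$ lies in $V_D$ for every $D \geq 0$, so $\mathbb{E}_{H_0}[LR] = 1$ implies $\langle LR^{\leq D}, 1 \rangle = \langle LR, 1 \rangle = 1$, i.e.\ $LR^{\leq D} - 1$ is orthogonal to the constants.

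The first key step is the identity $\mathbb{E}_{H_1}[f(A)] = \langle f, LR \rangle = \langle f, LR^{\leq D} \rangle$ for every $f \in V_D$; the first equality is just the definition of the likelihood ratio (assuming $H_1 \ll H_0$, which is implicit), and the second is the defining property of orthogonal projection together with $f \in V_D$. The second step is to use the centering constraint $\mathbb{E}_{H_0}[f(A)] = \langle f, 1 \rangle = 0$ to rewrite
\begin{equation*}
\mathbb{E}_{H_1}[f(A)] \;=\; \langle f, LR^{\leq D} \rangle \;=\; \langle f, 1 \rangle + \langle f, LR^{\leq D} - 1 \rangle \;=\; \langle f, LR^{\leq D} - 1 \rangle.
\end{equation*}

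The third step is Cauchy--Schwarz: under $\|f\| = 1$, one has $\langle f, LR^{\leq D} - 1 \rangle \leq \|LR^{\leq D} - 1\|$, with equality precisely when $f$ is a positive scalar multiple of $LR^{\leq D} - 1$. Because $LR^{\leq D} \in V_D$ and $1 \in V_D$, the candidate $f^\star = (LR^{\leq D} - 1)/\|LR^{\leq D} - 1\|$ lies in $V_D$; the observation from the first paragraph that $LR^{\leq D} - 1 \perp 1$ shows $\mathbb{E}_{H_0}[f^\star] = 0$; and by construction $\|f^\star\| = 1$. Hence $f^\star$ is admissible and attains the Cauchy--Schwarz bound, giving both identities in the proposition simultaneously.

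There is no serious obstacle here beyond verifying admissibility of the maximizer; the only minor care-point is the edge case $\|LR^{\leq D} - 1\| = 0$, in which case the supremum is trivially $0$ (the Cauchy--Schwarz inequality gives $\mathbb{E}_{H_1}[f] = 0$ for every admissible $f$) and the argmax expression is undefined but the supremum statement still holds. One should also remark that the result implicitly requires $LR \in L^2(H_0)$ for the projection $LR^{\leq D}$ to be well-defined as an element of $L^2(H_0)$; under the regularity conditions assumed in the surrounding low-degree framework (cf.\ \cite{kunisky2019notes}), this is automatic.
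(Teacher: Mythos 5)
Your proof is correct and is the standard orthogonal-projection plus Cauchy--Schwarz argument for this result; the paper itself states the proposition as a quoted fact from \cite{kunisky2019notes} without supplying a proof, and your argument matches the canonical one in that source, including the correct handling of admissibility of the maximizer and the degenerate case $\|LR^{\leq D}-1\|=0$.
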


\section{Proof of Theorem \ref{thm:EST} (Power of EST)}


The EST statistic is defined to be
\[
\phi_{EST}^{(v)}  \equiv \sup_{ |S| \leq v } \sum_{i,j \in S} A_{ij}, 
\]
and the EST is defined to be
\[
\vp_{EST} = \mf{1}\big[ \phi_{EST}^{(r)} \geq e \big],
\]
where $v, e$ are relatively prime and satisfy 
\[
\frac{\omega}{1 - \beta} < \frac{v}{e} <  \delta. 
\]

Such $v$ and $e$ exist because 
\[
\frac{\omega}{1 - \beta} < \delta, 
\]
by assumption. Furthermore, we have
\[
v < e
\]
since $\omega, \delta \in (0, 1)$. 

To prove the statement, we require some preliminaries. Let $G(n, p)$ denote an Erd\H{o}s-R\'{e}nyi graph with parameter $p$. A graph $H$ with $v$ vertices and $e$ edges is said to be \textit{balanced} if for all (not necessarily induced) subgraphs $H' \subset H$ with $v'$ vertices and $e'$ edges, it holds that 
\[
e/v > e'/v'. 
\]

Next, the power of EST hinges on two well-known facts from probabilistic combinatorics. The first concerns the appearance of an arbitrary graph $H$ in $G(n, p)$. 
\begin{thm}[Adapted from Theorem 4.4.2. of \cite{alon2016probabilistic}]
	\label{thm:H_witness}
	Let $H$ denote a graph with $v$ vertices and $e$ edges. Then if $p \ll n^{-v/e}$, the random graph $G(n, p)$ does not have $H$ as a subgraph, with high probability as $n \to \infty$.
	
	On the other hand, if $H$ is balanced and $p \gg n^{-v/e}$, the random graph $G(n, p)$ contains  $H$ as a subgraph, with high probability as $n \to \infty$. 
\end{thm}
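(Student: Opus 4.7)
The plan is to establish Theorem~\ref{thm:H_witness} by the standard first and second moment method applied to $X$, the number of (labeled) copies of $H$ appearing as a subgraph of $G(n,p)$. Concretely, let $C_n$ denote the number of injective maps from $V(H)$ into $[n]$ whose image preserves edges of $H$, so that $C_n = n(n-1)\cdots(n-v+1)/|\mathrm{Aut}(H)|$ counts unlabeled copies, and $\mathbb{E}[X] = C_n p^e \asymp n^v p^e$.

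For the first (non-appearance) direction, I would simply invoke Markov's inequality: $\mathbb{P}(X \geq 1) \leq \mathbb{E}[X] \asymp n^v p^e$. The assumption $p \ll n^{-v/e}$ gives $n^v p^e = (n\, p^{e/v})^v \to 0$, so $X=0$ with high probability, and in particular $G(n,p)$ contains no copy of $H$.

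For the second (appearance) direction, I would apply Chebyshev's inequality in the form $\mathbb{P}(X = 0) \leq \mathrm{Var}(X)/\mathbb{E}[X]^2$ and show the right-hand side is $o(1)$. Write $X = \sum_\alpha \mathbf{1}_\alpha$ where $\alpha$ ranges over potential copies of $H$ in $K_n$; then
\[
\mathbb{E}[X^2] = \sum_{\alpha,\beta} \mathbb{P}(\mathbf{1}_\alpha = \mathbf{1}_\beta = 1),
\]
and for a pair $(\alpha,\beta)$ whose images share $v'$ vertices and whose induced common subgraph has $e'$ edges, the joint probability equals $p^{2e - e'}$. Grouping pairs by the isomorphism type of the overlap,
\[
\mathrm{Var}(X) \les \sum_{H' \subseteq H} n^{2v - v'} p^{2e - e'},
\]
where $H'$ ranges over subgraphs of $H$ with $v' = |V(H')| \geq 1$ and $e' = |E(H')|$; the pairs with $v' = 0$ contribute only to $\mathbb{E}[X]^2$ and cancel. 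Dividing by $\mathbb{E}[X]^2 \asymp n^{2v} p^{2e}$ yields
\[
\frac{\mathrm{Var}(X)}{\mathbb{E}[X]^2} \les \sum_{H' \subseteq H} n^{-v'} p^{-e'}.
\]
The balancedness hypothesis $e'/v' < e/v$ for every proper nonempty subgraph, together with $e'/v' = e/v$ for $H' = H$ itself, implies $n^{-v'} p^{-e'} = (n p^{e'/v'})^{-v'}$ and hence each term tends to $0$ whenever $p \gg n^{-v/e}$. This yields $\mathbb{P}(X = 0) \to 0$.

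The only mildly subtle step is the variance computation: one must verify that grouping pairs $(\alpha,\beta)$ by the isomorphism type of their intersection truly produces the claimed bound, and that the term $H' = H$ (identical copies) gives $n^{-v} p^{-e} \to 0$ under the assumed scaling. This is where balancedness is essential, since without it some proper $H' \subset H$ with $e'/v' > e/v$ would force a larger threshold $p \gg n^{-v'/e'}$ for the second moment to concentrate. The argument is otherwise routine, and both parts follow the treatment in \cite{alon2016probabilistic}.
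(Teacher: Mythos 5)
Your proof is correct: the first-moment/Markov argument for the non-appearance direction and the second-moment/Chebyshev argument (with the overlap decomposition over subgraphs $H'$ and the use of balancedness to control each term $n^{-v'}p^{-e'}$) is exactly the standard proof of Theorem 4.4.2 in \cite{alon2016probabilistic}. The paper itself supplies no proof of this statement — it is quoted directly from that reference — so your write-up simply reconstructs the argument the citation points to, and it does so accurately.
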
 

\begin{thm}[\cite{rucinski1986strongly,catlin1988graphs}]
	\label{thm:balance_existence} 
	There exists a balanced graph with $v$ vertices and $e$ edges if and only if $1 \leq v - 1 \leq e \leq \binom{v}{2}$. 
\end{thm}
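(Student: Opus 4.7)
The plan is to prove both directions of the equivalence separately. For the necessity direction, the upper bound $e \leq \binom{v}{2}$ is immediate since any simple graph on $v$ vertices has at most this many edges. For the lower bound $e \geq v-1$, I would argue that a balanced graph $H$ must be connected. Indeed, if $H$ had an isolated vertex $x$, then $H' = H \setminus \{x\}$ would have $v-1$ vertices and $e$ edges, giving density $e/(v-1) > e/v$, contradicting balance. More generally, if $H$ decomposed into components of sizes $(v_1, e_1), \ldots, (v_k, e_k)$ with $k \geq 2$, then by the mediant inequality $\max_i e_i/v_i \geq (\sum_i e_i)/(\sum_i v_i) = e/v$, and since each component is a proper subgraph of $H$, the strict inequality in the definition of balance fails. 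Hence $H$ is connected, forcing $e \geq v-1$.

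For sufficiency, I would construct, for each $(v,e)$ with $v-1 \leq e \leq \binom{v}{2}$, an explicit balanced graph. The handle-able endpoint cases are clear: for $e = v-1$, any tree on $v$ vertices is balanced (any proper subgraph is a proper sub-forest with density at most $(v'-1)/v' < (v-1)/v$), and for $e = \binom{v}{2}$, the complete graph $K_v$ is balanced (a proper induced subgraph $K_{v'}$ has density $(v'-1)/2 < (v-1)/2$). For the intermediate range, my approach is to use near-regular graphs: set $d = 2e/v$ and construct a connected graph on $v$ vertices in which every vertex has degree $\lfloor d \rfloor$ or $\lceil d \rceil$ with the correct multiplicities (which is always possible by the Erd\H{o}s--Gallai conditions given $e \geq v-1$). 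For any proper $H' \subset H$ on $v'$ vertices, counting handshakes gives $2e' = \sum_{u \in H'} \deg_{H'}(u) \leq \sum_{u \in H'} \deg_H(u) \leq v' \lceil d \rceil$, yielding $e'/v' \leq \lceil d \rceil / 2$, which is at most $e/v$ up to a rounding gap that I would exploit.

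The main obstacle will be upgrading the weak inequality $e'/v' \leq e/v$ to the strict inequality required by balance. This reduces to showing that $H'$ strictly ``loses'' at least one edge or degree unit relative to a full-density subgraph of the same size. When $d$ is not an integer the rounding gap $\lceil d\rceil/2 - e/v$ is strictly positive, so the claim holds whenever $H'$ contains at least one vertex of degree $\lfloor d \rfloor$; choosing the near-regular graph as a vertex-transitive graph (e.g., a circulant $C_v(\{1,2,\ldots,\lfloor d/2\rfloor\})$ with at most one extra chord per vertex) ensures the $\lceil d\rceil$-vertices are distributed so no proper subset captures all of them. When $d$ is an integer, strict inequality follows from connectivity because at least one edge of $H$ must leave any proper induced $H'$. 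Covering these case distinctions carefully, and verifying the existence of the near-regular connected construction for every admissible $(v,e)$ pair, is where the technical work lies; once established, sufficiency follows immediately.
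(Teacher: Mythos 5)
The paper offers no proof of this statement---it is quoted directly from \cite{rucinski1986strongly,catlin1988graphs}---so your argument has to stand on its own. The necessity direction does: the mediant inequality shows that a graph that is balanced in the strict sense used here must be connected (a densest component would tie or beat the whole graph), hence $e \geq v-1$, and $e \leq \binom{v}{2}$ is trivial. The endpoint constructions (trees, $K_v$) and the integer-$d$ case of sufficiency are also sound, since in a connected $d$-regular graph every proper subgraph loses at least one unit from the degree sum and therefore has density at most $d/2 - 1/(2v') < e/v$.

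The gap is in the non-integer-$d$ case, and it is quantitative rather than cosmetic. Write $d = \lfloor d \rfloor + f$ with $f \in (0,1)$; your near-regular graph has $fv$ vertices of degree $\lceil d \rceil$ and $(1-f)v$ of degree $\lfloor d \rfloor$. If $H'$ has $v'$ vertices of which $k$ have degree $\lfloor d \rfloor$ in $H$, the handshake bound gives only $e'/v' \leq \lceil d \rceil/2 - k/(2v')$, and making this smaller than $e/v = d/2$ requires $k > (1-f)v'$, i.e.\ $H'$ must contain strictly more than its proportional share of the low-degree vertices (or compensate via boundary edges). So your claim that one vertex of degree $\lfloor d \rfloor$ in $H'$ suffices is false whenever $f$ is not close to $1$: a single low-degree vertex improves the density bound by $1/(2v')$, while the rounding gap to be closed is $(1-f)/2$. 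Concretely, for $v = 10$, $e = 16$ (so $d = 3.2$, two vertices of degree $4$, eight of degree $3$), a $5$-vertex subgraph containing both degree-$4$ vertices and sending only one edge to the outside would have $2e' = 17 - 1 = 16$, hence density $8/5 = e/v$ exactly, violating strict balance even though it contains three vertices of degree $\lfloor d \rfloor$. Excluding such subgraphs is an edge-expansion requirement on every vertex subset, and neither vertex-transitivity nor ``spreading out the chords'' delivers it automatically; this is precisely why the cited constructions proceed by an explicit induction on $(v,e)$ rather than by near-regularity plus a degree count. As written, sufficiency is not established for general $(v,e)$ in the intermediate range.
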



Now we continue the proof. Recall that $v$ and $e$ are integers chosen such that $\frac{\omega}{1 - \beta} < v/e < \delta$. 


\textit{Type 1 error:} 
Observe that
\[
b = \frac{cn - (a + c)N}{n - 2N}
= c \cdot \frac{n - N}{n - 2N}
- a \cdot \frac{N}{n - 2N},
\]
and thus
\begin{align*}
	\alpha &= a \eps + b(1 - \eps) 
	= a \eps + (1 - \eps) \big(c \cdot \frac{n - N}{n - 2N}
	- a \cdot \frac{N}{n - 2N}\big)
	\\&= a \bigg( \frac{N}{n} - (1 - \eps) \frac{N}{n - 2N} \bigg)
	+ (1 - \eps) \cdot \frac{n - N}{n - 2N} \cdot c 
	= -a \cdot \frac{N^2}{n(n - 2N)}
	+ (1 - \eps) \cdot \frac{n - N}{n - 2N} \cdot c 
	\sim c.,
\end{align*}
where above we use that $a \eps \leq c$. 

Thus under the alternative, $A$ is distributed as Erd\H{o}s-R\'{e}nyi with parameter 
\[
\alpha  \sim c = n^{-\delta} 
\ll n^{-v/e},
\]
by our choice of $v$ and $e$. By the first part of Theorem \ref{thm:H_witness}, no subset of size $v$ of $A$ contains more than $e$ edges, with high probability as $n \to \infty$. 

To be more precise, there are a finite number of graphs $H_1, \ldots, H_{L}$ with $v$ vertices and at least $e$ edges, where $L$ is a constant depending only on $v$. For each graph $H_i$, Theorem \ref{thm:H_witness} contains $H_i$ as a subgraph with probability tending $0$ as $n \to \infty$. The type 1 error of EST thus vanishes by the  union bound. 

\textit{Type 2 error:} Let $H$ denote a balanced graph on $v$ vertices and $e$ edges, whose existence is guaranteed by Theorem \ref{thm:balance_existence}. Consider the induced subgraph on $\mathcal{C}_1$, the smaller community, which is an Erd\H{o}s-R\'{e}nyi random graph on $N$ vertices with parameter $a = n^{-\omega}$.  By our choice of $v$ and $e$, we have
\[
a = n^{-\omega} = N^{-\frac{\omega}{1 - \beta}}
\gg N^{-v/e}.
\]
By Theorem \ref{thm:H_witness}, $\mathcal{C}_1$ contains a copy of $H$ with high probability. Since $H$ has $e$ edges, we conclude that $\phi_{EST}\rp{v} \geq e$, and thus the null is rejected with high probability as $n \to \infty$. 


\bibliography{bibliography}
\bibliographystyle{iclr2023_conference}

\end{document}